\date{\today}
\title[Hodge de Rham theory and analytic torsion for spaces with horn type singularities]{Hodge de Rham theory and analytic torsion for spaces with horn type singularities}
\thanks{2000 {\em Mathematics Subject Classification: 58J52 and 57Q10}}
\author{M. Spreafico}
\address[Mauro Spreafico]{\tt Dipartimento di matematica ed applicazioni, Universit\'a Milano Bicocca, Milano, and INFN Lecce, Italy}
\email{mauro.spreafico@unimib.it}
\numberwithin{equation}{section}
\newtheorem{theo}[subsubsection]{Theorem}
\newtheorem{lem}[subsubsection]{Lemma}
\newtheorem{corol}[subsubsection]{Corollary}
\newtheorem{defi}[subsubsection]{Definition}
\newtheorem{prop}[subsubsection]{Proposition}
\newtheorem{rem}[subsubsection]{Remark}
\newcommand{\beq}{\begin{equation}}
\newcommand{\eeq}{\end{equation}}
\newcommand{\te}{\theta}
\newcommand{\de}{{\delta}}
\newcommand{\vv}{{\varphi}}
\newcommand{\ep}{\epsilon}
\newcommand{\al}{\alpha}
\newcommand{\be}{\beta}
\newcommand{\ka}{\kappa}
\newcommand{\la}{\lambda}
\newcommand{\ga}{\gamma}
\newcommand{\om}{\omega}
\renewcommand{\Re}{{\rm Re}}
\renewcommand{\Im}{{\rm Im\hspace{2pt}}}
\renewcommand{\b}{{\partial}}
\newcommand{\rk}{{\rm rk}}
\newcommand{\da}{{\dagger}}
\renewcommand{\det}{{\rm det\hspace{1pt}}}
\newcommand{\bu}{{\bullet}}
\newcommand{\Sp}{{\rm Sp}}
\renewcommand{\d}{{\rm d}}
\newcommand{\Det}{{\rm Det\hspace{1pt}}}
\newcommand{\Tr}{{\rm Tr\hspace{.5pt} }}
\renewcommand{\t}{\tilde}
\newcommand{\lp}{\langle}
\newcommand{\rp}{\rangle}
\newcommand{\lk}{\rm lk}
\newcommand{\Z}{{\mathds{Z}}}
\newcommand{\R}{{\mathds{R}}}
\newcommand{\C}{{\mathds{C}}}
\newcommand\sh{{\rm sh}}
\newcommand\ch{{\rm ch}}
\newcommand\e{{\rm e}}
\newcommand{\es}{\mathsf{e}}
\newcommand{\gs}{\mathsf{g}}
\newcommand{\ns}{\mathsf{n}}
\newcommand{\alphas}{\mathsf{\alpha}}
\newcommand{\g}{{\mathsf{g}}}
\newcommand{\uf}{\mathfrak{u}}
\newcommand{\ff}{\mathfrak{f}}
\newcommand{\pf}{{\mathfrak{p}}}
\newcommand{\mf}{{\mathfrak{m}}}
\newcommand{\af}{{\mathfrak a}}
\newcommand{\dr}{\updelta}
\newcommand{\is}{\mathsf{i}}
\newcommand{\LL}{\mathcal{L}}
\newcommand{\CF}{\mathfrak{C}}
\newcommand{\D}{\mathsf{D}}
\newcommand{\DD}{\mathcal{D}}
\renewcommand{\H}{\mathcal{H}}
\newcommand{\FF}{\mathcal{F}}
\newcommand{\DF}{\mathfrak{D}}
\newcommand{\CS}{\mathsf{C}}
\newcommand{\DS}{\mathsf{D}}
\newcommand{\E}{\mathcal{E}}
\newcommand{\QQ}{\mathcal{Q}}
\newcommand{\Ha}{{\mathcal{H}}}
\newcommand{\A}{{\mathcal{A}}}
\newcommand{\UU}{\mathcal{U}}
\newcommand{\TT}{\mathcal{T}}
\newcommand{\Ei}{\mathcal Eig}
\DeclareMathOperator*{\Rz}{Res_0}
\DeclareMathOperator*{\Ru}{Res_1}
\newcommand{\ZZ}{{Z^\ka_{l}(W)}}
\begin{document}

\maketitle
\begin{abstract} We study the global analytic properties of a space $X$ with a horn type singularity. In particular, we introduce some de Rham complex of square integrable forms and we describe its homology and the spectral properties of the associated Hodge Laplace operator. All this is applied to produce a suitable description of the analytic torsion of $X$ and to prove an extension of the Cheeger M\"{u}ller theorem. 
\end{abstract}

\tableofcontents

\section{Introduction}
\label{intro}

If $X$ is a compact connected metric space such that, for some point $x_0$ in $X$, $X-\{x_0\}$ is  a 
dense incomplete smooth Riemannian manifold, we call $X$ a space with an isolated singularity. From the topological point of view, such a space is just the attachment of a cone to a manifold with boundary, but from the geometric point of view the singularity is characterised by the metric near the singular point. The first natural type of singularity is the conical one, where the metric reads locally $dx\otimes dx +x^2 H^2(x) \t g$, where $x$ is the distance from the singular point,  $\t g$ the metric on the section of the cone, and $H$ a smooth positive function. The global geometric and analytic properties of these spaces have been originally investigated by J. Cheeger \cite{Che0} \cite{Che1} \cite{Che2} (see also \cite{BS1} \cite{BS2}). Since then, a considerable number of works by several authors appeared. In particular, a good effort has been spent to find  a suitable extension of the Cheeger M\"{u}ller CM Theorem on these spaces \cite{HS1,HS4,HS5,MV, Lud, HS6}. The next natural type of singularity is that of an horn, where the metric reads locally $dx\otimes dx +x^{2\ka} H^2(x) \t g$, with $\ka>1$. Again, these spaces were originally considered by J. Cheeger \cite[Section 2]{Che3}. However, they have been much less intensively studied.  In \cite{LP}, the closed extensions of the exterior derivative operator are classified and the index problem is discussed (see also \cite{Lap}).

The aim of these notes is to study the global analytic properties of a space with an horn singularity, in particular pointing  to a suitable extension of the CM Theorem. Following the classical approach to global analysis on open manifolds   \cite{CY, Gaf1, Gaf2}, we decompose the  space $X$ 
into a model finite horn $Z$ and a smooth manifold with boundary $Y$. 
Then, we may deal with the analysis on the finite horn by decomposing the problem into a Sturm Liouville problem on the finite line $(0,1)$ times a classical problem on the section, that is a smooth manifold. A key point here is that in the case of an horn, the relevant SL problems are irregular singular problems. While  regular and regular singular SL problems have been intensively studied, only few works are available on  irregular singular problems. We dedicate Appendix \ref{SL} to develop all the result necessary for our purpose. There is work in progress with G. Metafune and D. Pallara on irregular singular SL problems.  

Back to the present work, the decomposition described above allow us to introduce the suitable closed extensions of the exterior derivative operator and to give an explicit characterisation of it, Section \ref{max}. Then, we   define  some de Rham complexes of Hilbert space and closed operators, Section \ref{drcomp}, and we compute their homology, Section \ref{DRcohom}. We find that the homology of some of these de Rham complexes coincides  with the intersection homology of Goresky and MacPhearson \cite{GM1, GM2}, that we recall in Section \ref{DRX}.  Then, we turn to the Hodge Laplace operator, Section \ref{hodge}. On one side we study the Hodge Laplace operator associated to the de Rham complexes above, on the other we define some suitable self adjoint extensions of the formal Hodge Laplace operator. We prove the equivalence of the two approaches, and we describe the main spectral properties of this operator. In particular, we have an explicit spectral resolution on the finite horn, and we  identify the harmonic fields of $X$ in terms of harmonic fields of $Y$, Proposition \ref{harmonicsX}. This permits to  construct explicit de Rham maps inducing the isomorphism in homology, Section \ref{DRX}. 

A remark on this result is in order. From the topological point of view, the obstruction for a space $X$ with an isolated singularity to be a manifold lies in the homology of the link of the singular point: if this is a sphere, then $X$ is an homology sphere, and therefore a manifold, otherwise it is not. A consequence is that duality does not hold for the classical homology of $X$. The original idea of Goresky and MacPherson was to introduce a suitable homology theory where duality held. The key technical point is to select the cells (in some combinatorial decomposition of $X$) that intersect the singular point. It is clear enough that eliminating all such cells one would end up in removing the cone $C(\lk(x))$ over the link of the singular point $x$, while on the other side, allowing all the cells intersecting $x$, one would end up with the homology of the pair $(X-C(\lk (x)), \lk(x))$, see Section \ref{DRX} and \cite{Spr12}. 
However, since the $q$ homology does not follow by the chain module in degree $q$, if we fix some degree $q_0$, and we define a cell complex removing some cells for $q\leq q_0$ and allowing  all cells for $q>q_0$, we end up with  the homology of the manifold $X-C(\lk (x))$, for $q<q_0$, and  with the homology of the pair $(X-C(\lk (x)), \lk(x))$, for $q>q_0$, but the homology in degree $q_0$ will depend on the interaction between the relevant homology exact sequences, see Propositions \ref{pop1} and \ref{pop2}.  This means that, using the duality for the manifold with boundary $X-C(\lk (x))$, we recover duality for $X$, when the dimension of $X$ is even $m=2k$, but we do not when dimension is odd. In such a case, to guarantee duality, we need to produce two different cell complexes, $C$ and $C'$, essentially with $q_0'=q_0+1$ (these correspond to the two middle perversites: $\mf$ and $\mf^c$), and duality will read $H_q(C)\cong H_{2k+1-q}(C')$, see the end of Section \ref{DRX}. 
What is the analytic counterpart of this? This is indeed a major problem, as witnessed by the fact that in the literature it has been introduced ad ad hoc condition, the Witt condition, that means in this case the vanishing of the homology of the link of the singularity in middle degree. Our solution to this problem is to introduce some boundary conditions at the singular point for the relevant Hodge Laplace operator. For note that the relevant SL problem reads
\[
-h^{2\al-1}\left(h^{1-2\al}f'\right)' +\frac{\t\la}{h^2} f=\la f,
\]
where $h(x)=x^{2\ka}H(x)$, and $\al$ is a dimensional parameter. This is in general an irregular singular problem, but in the case of the harmonics, $\la=\t\la=0$, and therefore  it becomes a regular problem when $\al=\frac{1}{2}$. This may happen only in odd dimension and in degrees $q_0$ and $q_0+1$. Thus, in these cases, we have different possible self adjoint extensions, and in particular the two classical ones (relative and absolute, or Dirichlet and Newman). Choosing these two extensions we have two different de Rham complexes whose associated Hodge Laplace operators have different harmonic fields in particular in the critical degrees. In such a way we guarantee the isomorphism with the intersection homology described above in all degrees and without any further assumption. 

Our last results concern analytic torsion and the Cheeger M\"{u}ller Theorem \cite{RS, Che0, Mul1}. Using the spectrum of the Hodge Laplace operator on the finite horn we express the torsion zeta function as the sum of some double and some simple Dirichlet series, Section \ref{torhorn}. We deal with the double series using the spectral decomposition lemma in \cite{Spr9}. Here the explicit knowledge of the analytic properties of some spectral functions   and of the asymptotic expansions of the solutions of the associated irregular singular SL problems is determinant. All this information is in Appendix \ref{SL}. Using this result, we prove a formula for the analytic torsion of the finite horn, Theorem \ref{t1}. Then, extending the result of Ray and Singer \cite{RS} on the variation of the analytic torsion under the variation of the metric, and using the glueing formula of Vishick Lesh \cite{Vis, Les2}, we have the analytic torsion of  $X$ \ref{tX}. The last result is to compare the analytic torsion with the combinatorial intersection RS torsion (see Section \ref{DRX} for the definition). The result is in Theorems \ref{tcm} and \ref{tcmX}, where we see that if the dimension of $X$ is even, then the two torsion coincide, while two anomaly terms appears in odd dimension. 

We add some remarks on the CM formula in Theorem \ref{tcmX}. The first anomaly term $A_{\rm comb, \pf}$ is purely combinatoric and may be explained as follows.  The Euler isomorphism maps the determinant line of the homology to the determinant line of the chain complex (compare \cite{Spr20}). The combinatorial part of the torsion, i.e. the Reidemeister torsion, is the coefficient of the image of a basis for the determinant homology line in the cell  basis of  the determinant chain line. In the case of the intersection chain complex, the cells do not determine a preferred basis for the chain line. A preferred basis may be fixed by using the natural standard integral basis. The anomaly $A_{\rm comb, \pf}$ measures precisely this change of basis. 

The second anomaly term $A_{\rm analy}$ is more involved. This term clearly comes from the analytic side, i.e. is a term in the explicit calculation of the analytic torsion that has not a counter part in the combinatorial torsion. Even if its natural meaning appears still quite obscure, we have now the possibility of comparing it in at least three different cases, and this gives us some clarifications on its character. The three cases are: regular boundary (direct straightforward calculation), conical singularity and horn singularity. The difference traduces in the different shape of the function $h$ appearing in the relevant SL problem. 
In the regular case, $h$ is a constant near $x=0$. This traduces in regular SL problems, and the anomaly reduces to 
$
A_{\rm analy}(W)=\frac{1}{4}\chi(W)\log 2.
$ 
This is the classical anomaly term originally detected by L\"{u}ck \cite{Luc}. Passing to the next case, the conical one, we have \cite[7.12.2]{HS6}
\beq\label{PL}
\begin{aligned}
A_{\rm analy}(W)=&\frac{1}{4}\chi(W)\log 2+\sum_{q=0}^{p-1}(-1)^{q+1} r_q \log (2p-2q-1)!!\\
&+\frac{1}{2}\sum_{q=0}^{p-1}(-1)^{q} \sum_{n=1}^{\infty} m_{{\rm cex},q,n} \log \frac{\mu_{q,n}+\alpha_{q}}{\mu_{q,n}-\alpha_{q}}.
\end{aligned}
\eeq
where $r_k=\rk H_q(W)$, $\al_q=q-\frac{1}{2}(1-2p)$, $2p=\dim W$, $\mu_{q,n}=\sqrt{\t\la_{q,n}+\al^2_q}$, and $\t\la_{q,n}$ are the eigenvalues of the Hodge Laplace operator on the section $W$. The first term is the same as above, but we observe that the second term too is a genuine combinatorial/homological one: in fact, as the Euler characteristic, it is the graded sum weighted by the rank of the homology, of the logarithm of some integer numbers (taking the constant 2 instead we have the Euler characteristic). The last term  has not a clear  meaning, but the  analysis below will give us at least some ideas for its appearance.  

The last case 
is that of the horn. The result is in Theorem \ref{tcm}. 
This tells us two things. First, the analytic anomaly term becomes now purely combinatorial, and exactly a generalisation of the second term of the conical case, for we have: 
\begin{align*}
A_{\rm analy}(W)(\ka)=&\frac{1}{4}\chi(W)\log 2+\frac{1}{2}\sum_{q=0}^{p-1}(-1)^{q+1} r_q \log \frac{2^{\ka(1-2\al_{q})}}{\pi} \Gamma^2\left(\frac{1}{2}-\ka\left(\al_q-\frac{1}{2}\right)\right), 
\end{align*}
and $A_{\rm analy}(W)(1)=\frac{1}{4}\chi(W)\log 2+\sum_{q=0}^{p-1}(-1)^{q+1} r_q \log (2p-2q-1)!!$. About this point also observe that this terms  reduces to the logarithm of an integer number whenever the parameter $\ka$ is itself an integer. 
These considerations permit to write the CM theorem in a more fascinating (but equivalent) way. For both the anomaly terms are combinatoric (since the representation is trivial) and graded, so we can use them to normalising the homology basis. Considering instead of $I^\pf \tau_{\rm RS}(\ZZ)$ the torsion in this renormalised graded homology basis, say $I^\pf \tau_{\rm \widehat{RS}}(\ZZ)$, we would have 
\[
\log T_{\pf, \rm abs}(\ZZ)=\log I^\pf \tau_{\rm \widehat {RS}}(\ZZ)+A_{\rm BM, abs}(W),
\] 
in all dimensions. The second thing we learn concerns the third anomaly term appearing in equation (\ref{PL}). This term is not homological, since it involves the positive eigenvalues of the section, and it vanishes in the case of the horns. We suggest as a possible motivation for the appearance of this term that it is due to the fact that in the case of the cone the degree of the singularity coincides with the degree of the operator. In such a case in fact, the behaviour of the solutions of the relevant SL problem near the singular point is determined by the solutions of the indicial equation, that depends on both the spectral parameter on the section $\t\la_{q,n}$,  and the dimensional parameter $\al_q$. Otherwise, namely when the degree of the singularity is different from the degree of the operator, then the behaviour of the solutions depends only on spectral parameter. Further investigation on this point is in progress.

\vspace{10pt}

\section{Spaces with horns type singularities}
\label{horn}

\subsection{Underlying geometry}
\label{geo}

Let $W$ be an orientable  compact connected Riemannian manifold of finite dimension $m$ without boundary  and with Riemannian structure $\t g$. Let $Z_l(W)=(0,l]\times W$ be the product manifold, where $(0,l]$ is a positive interval of the real line, $l>0$. This is an orientable open connected smooth manifold of dimension $n=m+1$ with boundary $\b Z_l(W)=\{l\}\times W$.  
Let $x$ be the natural global coordinate on $(0,l]$, and $h(x)=x^\ka H(x)$, with  $H$  a smooth non vanishing  function    on $[0,l]$, with $H(0)=1$, and $\ka$ a real number, with $\ka>1$. Then 
\beq\label{g1}
g=dx\otimes dx+h^2(x) \t g=d x\otimes dx +x^{2\ka} H^2(x) \t g,
\eeq
is a Riemannian metric on $\ZZ$. We denote by $\ZZ$ the manifold $Z_l(W)$ with the metric in equation (\ref{g1}), and we call it the open finite horn over $W$. We denote by $\overline{\ZZ}$ the space $\ZZ\cup\{0\}$, and we call it the completed finite metric horn over $W$. Note that $\overline{\ZZ}$ is homeomorphic to the cone $C(W)$. The boundary  of $\ZZ$ is of course diffeomorphic to $W$, and isometric to $(W,h(l)^2g)$.  
If $y$ is a local coordinate system on $W$, then $(x,y)$ is a local coordinate system on $\ZZ$. Following common notation, we call $W$ the section of $\ZZ$. Also following usual notation, a tilde will denote operations on the section, 
and not on the boundary. 

Next, let $(Y,W)$ be a compact connected orientable smooth Riemannian   manifold of dimension $n$ with boundary $\b Y$. Assume that $\b Y$ has two disjoint component $(\b Y)_1$ and $(\b Y)_2$, with $(\b Y)_1$  isometric to $(W,h^2(l) \t g)$,  for some $l>0$. Consider the space
$
X=\ZZ\cup_W Y,
$ 
where the glueing is smooth: $X$ is an open oriented connected smooth manifold with boundary $\b X=(\b Y)_2$. Then, also the metric $g_Y$ glues smoothly to the metric $g$ of $\ZZ$, for note that we have not requirements on the function $H$ near the boundary of $\ZZ$ (beside obviously that it does not vanish). We will denote the global metric on $X$ by $g_X$, and its restrictions on $Y$ and $\ZZ$ by $g_Y$ and $g_Z$, respectively. $X$ is an open connected orientable Riemannian manifold, that we call a space with an isolated metric horn singularity. We call the compact connected space $X\cup \{0\}$ the completion of $X$ and we denote it by $\overline{X}$. Observe that from the topological point of view the interesting object is $\overline X$, since $X$ is homeomorphic to $Y-\b Y$, and has the same simple homotopy type of $Y$.

Let $\iota=\iota(Y)$ be the injectivity radius of $Y$ \cite[2.7]{Gil}. Then, we have the collar $\CF=(0,l+\iota)\times W$ of $W$ inside $X$, and on it we may use  the local system of coordinates $(x,y)$ described above. Thus on $\CF$, and obviously on $\ZZ\subseteq \CF$ the decomposition and all the results given on $\ZZ$ hold.


Let $\pi_1(\overline{X})$ denote the fundamental group of $\overline{X}$. Let $\rho:\pi_1(\overline{X})\to O(E)$ be an orthogonal representation in some $k$ dimensional real vector space $E$. Let $\widetilde X$ denote the universal covering space of $\overline{X}$, and $E_\rho=E\otimes_\rho \widetilde X$ the associated vector bundle. Observe that this construction makes sense since $\overline{X}$ and $Y/\b Y$ have the same homotopy type. Therefore, any representation $\rho:\pi_1(\overline{X})\to O(V)$ restricts to the trivial representation $\rho_0:\pi_1(\overline{C_{0,l}(W)})\to O(E)$. Thus, the universal covering space of $\overline{X}$ is the universal covering manifold of $Y$ with a cone over the boundary of each sheet (compare \cite[6.9, 6.10, 7.16]{HS6}).  We will work with  forms on $X$ with coefficients in $E_\rho$: $\Omega^q(X,E_\rho)=\Gamma(X,\Lambda^q T^* E_\rho)=\Gamma(X,E_\rho\otimes  \Lambda^q T^* X)$.  In the following, if possible, we will omit explicit reference to the representation in the notation.

\subsection{Square integrable forms}

We assume that the vector space $E$ has an inner product, that will be omitted in the notation. 
Let $\gamma (\mathring X,E_\rho \otimes\Lambda^q T^*\mathring X)$ be the vector space of the  sections of the exterior algebra bundle with coefficients in $E_\rho$.    
On $\gamma (\mathring X,E_\rho \otimes\Lambda^q T^* \mathring X)$ we may consider the inner product
\beq\label{innerprod}
\lp\om,\vv\rp=\int_X \om\wedge \star \vv,
\eeq
for $\om,\vv\in \gamma(\mathring X,E_\rho \otimes\Lambda^q T^*\mathring X)$, and the induced norm $\|\om\|^2=\lp \om, \om\rp$. We denote by $L^2  (X,E_\rho)=L^2\gamma(\mathring X,E_\rho \otimes\Lambda^q T^*\mathring X)$  the Hilbert   space of the   sections in $\gamma(\mathring X,E_\rho \otimes\Lambda^q T^*\mathring X)$ with finite norm. We denote by $\Omega_0^q(X,E_\rho)$ the  vector sub space of the sections in $ \Omega^q(X,E_\rho)$ with compact support in $X$. It is clear that $ \Omega_0^q(X,E_\rho)\leq  \Omega^q(X,E_\rho)$ is a vector sub space of $L^2  (X,E_\rho)$. Note that

\[
\langle \om,\vv\rangle=\int_{X} \om\wedge\star \vv=\int_{\ZZ} \om\wedge\star \vv+\int_{Y} \om\wedge\star \vv.
\]

We may extend the usual definition of distributions  defining distributions on $q$ forms: given $\om\in L^1_{loc}\gamma(\mathring{X},E_\rho \otimes\Lambda^q T^*\mathring{X})$ (in particular $\om \in L^2(\mathring{X},E_\rho)$), we define the distribution $T_\om$ acting on $\Omega^{n-q}_0(\mathring{X},E_\rho)$, by
\[
T_\om(\vv)=\int_{\mathring{X}} \om\wedge \star\vv=\lp \om, \vv\rp,
\]
for each $\vv\in \Omega^{n-q}_0(\mathring{X},E_\rho)$. 
Observe the following explicit form for the distribution $T_\om$ on the collar $\CF$:

\[
T_{\om|_\CF}(\vv|_\CF)=\lp \om,\vv\rp_{\CF}=\int_0^{l+\ep} (\rho h^{1-2\al_{q}} f_1 g_1)(x)  dx \lp \t\om_1,\t\vv_1\rp_W
+\int_0^{l+\ep} (\rho h^{1-2\al_{q-1}} f_2 g_2)(x) dx  \lp \t\om_2,\t\vv_2\rp_W.
\]

This formula and formal duality of exterior derivative operators $d$ and $\de$, permit to extend these operators to the space of square integrable forms \cite[3.4]{Spr12} \cite[pg. 405]{Schm}. We use the notation 
\[
D: L^1_{loc}\gamma(\mathring{X},E_\rho \otimes\Lambda^q T^*\mathring{X})\to L^1_{loc} (\mathring{X},E_\rho \otimes\Lambda^{q+1} T^*\mathring{X}),
\]
for the the (weak) exterior derivative of  locally square integrable forms 
and
$
\DD^q=(-1)^{(m(q+1)+q} \star D^{n-q}\star,
$ 
for its adjoint. We have the Green formulas
$\lp d\om,\vv\rp=\lp \om,\DD \vv\rp$, 
for all $\om \in \Omega^{q}_0(M, E_\rho)$, and $\vv\in L^1_{loc}\gamma(\mathring{X},E_\rho \otimes\Lambda^{n-q} T^*\mathring{X})$.
and
$\lp D\om,\vv\rp=\lp \om,\de \vv\rp$, 
for all $\om  \in L^1_{loc}\gamma(\mathring{X},E_\rho \otimes\Lambda^q T^*\mathring{X})$, and $\vv\in \Omega^{n-q}_0(\mathring{X}, E_\rho)$.

\subsection{Explicit expression for the relevant geometric operators}
\label{explicit}

We describe in this section the explicit expression for the more relevant formal operators on the collar. Recall we have  an isometry $\sigma_\CF:(0,l+\ep)\times W\to X$, 
of the product space with a collar neighbourhood of $X_W$ inside $X$, we denote the image of $\sigma_\CF$ by $\CF$. The metric  restricted to $\CF$ reads
\[
g|_\CF=dx\otimes dx+x^\ka H^2(x) \t g,
\]
for some smooth positive $H$, with $H(0)=1$. Taking  $\omega\in \ga(\mathring{X}, \Lambda^q T^* \mathring{X})$ in the domain of either of $D$ or of $\DD$, and using separation of variables, 
\[
\omega|_\CF=f_1\tilde\omega_1+f_2 dx\wedge \tilde\omega_2,
\]
with  functions $f_1,f_2 :(0,l+\ep)\to \R$, and  square integrable forms $\tilde\omega_1\in \ga(W,\Lambda^q T^* W)$ and $\tilde\omega_2 \in \ga(W,\Lambda^{q-1} T^* W)$, and ($f^I$ denotes the weak derivative)
\begin{align*}
\star \omega|_\CF=&
h^{1-2\al_{q-1}} f_2\tilde\star \tilde\omega_2+(-1)^q h^{1-2\al_{q}} f_1 
dx\wedge\tilde\star \tilde\omega_1,\\
D\omega|_\CF  =&f_1 \tilde d\tilde\omega_1 + d x\wedge\left(  
f^I_1 \tilde\omega_1 - f_2  \tilde D\tilde\omega_2\right),\\
\DD \omega|_\CF =& (-1)^{(m+1)q+(m+1)+1}\star D \star\omega\\
=&-h^{2\al_{q-1}-1} (h^{1-2\al_{q-1}}f_2)^I\tilde\omega_2 + 
\frac{1}{h^2}f_1 \tilde \DD\tilde\omega_1  - \frac{1}{h^2}f_2dx \wedge \tilde \DD \tilde\omega_2.
\end{align*}

\begin{align*}
\Delta^{(q)}\omega|_\CF=&(\DD D+D\DD)\om|_\CF=\FF_1^q (f_1\t\omega_1)
-2\frac{h'}{h}f_2 \t D\t\omega_2
+dx \wedge\FF_2^{q} (f_2  \t\omega_2)
-2\frac{h'}{h^3}f_1 dx\wedge\t \DD\t\omega_1.
\end{align*}
where (compare with equation (\ref{F})
\beq\label{F0}\begin{aligned}
\FF_1^q (f\t\om)&=-h^{2\al_q-1}\left(h^{1-2\al_q}f^I\right)^I\t\om +\frac{f}{h^2} \t \Delta^{(q)}\t\om,\\
\FF_2^{q}&=\FF_1^{q-1}-(1-2\al_{q-1})\left(\frac{h'}{h}\right)' ,
\end{aligned}
\eeq
and $\t \Delta$ denotes the Hodge Laplace operator on square integrable forms on the section $W$.  
Note that for square integrable forms, there is the identification
\begin{align}\label{ide1}
L^2(\CF)&=
L^2((0,l+\ep),h^{1-2\al_{q}}\oplus  h^{1-2\al_{q-1}})\otimes   \left(L^2\ga^q(W)\times L^2\ga^{q-1}(W)\right).
\end{align}

If $\om$ and $\vv$ are locally integrable, then, a direct calculation exploiting the formula for the Hodge star given above,  gives
\beq\label{product}
\lp \om,\vv\rp|_\CF=\int_0^{l+\ep} h^{1-2\al_{q}} f_1 g_1  \lp \t\om_1,\t\vv_1\rp_W
+\int_0^{l+\ep} h^{1-2\al_{q-1}} f_2 g_2  \lp \t\om_2,\t\vv_2\rp_W.
\eeq

Moreover, we have the following Green formula. 

\begin{lem}\label{GreenDistributions} For all the pairs: $\om  \in L^1_{loc}\gamma(\mathring{X},E_\rho \otimes\Lambda^q T^*\mathring{X})$, in the domain of $D$, and $\vv\in \Omega^{n-q-1}_0(\mathring{X}, E_\rho)$, and all the pairs: $\vv  \in L^1_{loc}\gamma(\mathring{X},E_\rho \otimes\Lambda^q T^*\mathring{X})$, in the domain of $\DD$, and $\om\in \Omega^{n-q}_0(\mathring{X}, E_\rho)$, we have $\lp D\om ,\vv\rp=\lp \om, \DD\vv\rp$. 
\end{lem}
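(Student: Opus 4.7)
The plan is to reduce the lemma to the classical Green identity for smooth compactly supported test forms, observing that the weak operators $D$ and $\DD$ are constructed precisely so that the Green pairing is preserved against such test forms. Once that is in place, the statement will follow by a direct unwinding of the distributional definitions.

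First I would record the base case. For two smooth forms $\om$ and $\psi$ of compact support in $\mathring X$ with complementary degrees, Stokes' theorem applied to the compactly supported top form $\om\wedge\star\psi$ yields the classical identity $\lp d\om,\psi\rp=\lp\om,\de\psi\rp$, with no boundary contribution since $\psi$ vanishes in a neighbourhood of any boundary or singular locus. On such smooth compactly supported forms the weak $D$ agrees with the classical $d$, and from the definition $\DD=(-1)^{m(q+1)+q}\star D^{n-q}\star$ together with the agreement $D=d$ on smooth forms one sees that $\DD$ reduces to the classical codifferential $\de=(-1)^{m(q+1)+q}\star d\star$. So the classical identity reads $\lp D\om,\psi\rp=\lp\om,\DD\psi\rp$ in that regime.

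Second I would treat the first alternative of the lemma. By hypothesis $\om\in L^1_{loc}\gamma(\mathring X, E_\rho\otimes\Lambda^q T^*\mathring X)$ lies in the domain of $D$, which by the distributional construction in the preceding paragraphs means precisely that there exists an $L^1_{loc}$ form $D\om$ such that $T_{D\om}(\phi)=T_\om(\de\phi)$, i.e.\ $\lp D\om,\phi\rp=\lp\om,\de\phi\rp$, for every smooth compactly supported test form $\phi$ of the appropriate degree. Taking $\phi=\vv\in\Omega^{n-q-1}_0(\mathring X, E_\rho)$ and replacing $\de\vv$ by $\DD\vv$ (the two coincide on smooth compactly supported $\vv$ by the first step) yields $\lp D\om,\vv\rp=\lp\om,\DD\vv\rp$. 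The second alternative is completely symmetric: if $\vv$ is in the domain of $\DD$, the defining property is $\lp\phi,\DD\vv\rp=\lp d\phi,\vv\rp$ for every smooth compactly supported test $\phi$; choosing $\phi=\om\in\Omega^{n-q}_0(\mathring X, E_\rho)$ and substituting $d\om=D\om$ gives the identity.

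I do not expect a genuine obstacle. The only point that requires care is the classical Green identity together with the agreement of the weak $D$ and $\DD$ with $d$ and $\de$ on smooth compactly supported forms; both are routine but entail keeping track of the sign $(-1)^{m(q+1)+q}$ and the degree shifts across the two alternatives. Everything else is a direct translation of the distributional definitions of the domains of $D$ and $\DD$, so the lemma is essentially a repackaging of those definitions.
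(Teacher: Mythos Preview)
Your proposal is correct and is essentially the intended argument: the paper states this lemma without proof, treating it as an immediate consequence of the distributional definitions of $D$ and $\DD$ and the Green formulas recorded just before it, which is exactly what you unpack.
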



\vspace{10pt}

\section{De Rham theory}
\label{DR}

In this section we study some possible extensions of the classical de Rham theory on the space $X$ with an horn singularity. More precisely, we give some explicit description of the domain of the maximal closed extension of the exterior derivative that permits to produce some boundary conditions that characterise the minimal closed extension. Using this extension we introduce some de Rham complexes of closed operator, and we compute their homology. The results of this section should be compared with the analogous ones in \cite{Spr12} for the conical singularity.

\subsection{The exterior derivative operator and its adjoint}
\label{ss3.1}

We define the concrete operators in the Hilbert space  $L^2 (X,E_\rho)$ associated to formal operators $D^q$ and $\DD^q$.  
We define the pre minimal operators  by 
\begin{align*}
\D(\d^q_0)&=\Omega_0^q(\mathring{X},E_\rho),& \d_0^q\om=d^q\om,\\
\D(\dr^q_0)&=\Omega_0^q(\mathring{X},E_\rho),& \dr_0^q\om=\de^q\om,
\end{align*}
and the maximal operators by 
\begin{align*}
\D(\d^q_{\rm max})&=\{\om\in L^2 (X,E_\rho)~|~D^q \om\in L^2 (X,E_\rho)\}& \d_{\rm max}^q\om=D^q\om,\\
\D(\dr^q_{\rm max})&=\{\om\in L^2(X,E_\rho)~|~\DD^q \om\in L^2 (X,E_\rho)\},& \de_{\rm max}^q\om=\DD^q\om.
\end{align*}

The operators $\d^q_0$ and $\dr^q_0$ are densely defined and closable. 
Their closures $\d^q_{\rm min}=\overline{\d^q_0}$ and $\dr^q_{\rm min}=\overline{\dr^q_0}$ are the operators \cite{Wei}: 
\begin{align*}
\D(\d^q_{\rm min})&=\{ \om \in L^2(X,E_\rho)~|~ \exists \{\om_n\}\in \D( \d^q_0), \om_n\to_n \om, \d^q_{0}\om_n\to \vv\in L^2(X,E_\rho)\},\\
\d^q_{\rm min}\om&=\vv,\\
\D(\dr^q_{\rm min})&=\{  \om \in L^2(X,E_\rho)~|~\exists \{\om_n\}\in \D( \dr^q_0), \om_n\to_n \om, \de^q_{0}\om_n\to \vv\in L^2(X,E_\rho)\},\\
\dr^q_{\rm min}\om&=\vv.
\end{align*}

These operators are each other adjoints in the following sense (and therefore  closed):
\begin{align*}
(\d^q_{\rm min})^\da&=(\d^q_0)^\da=\dr^{q+1}_{\rm max},&(\dr^q_{\rm min})^\da&=(\dr^q_0)^\da=\d^{q-1}_{\rm max},\\
(\dr^{q}_{\rm max})^\da&=\d^{q-1}_{\rm min},&(\d^{q}_{\rm max})^\da&=\dr^{q+1}_{\rm min}.
\end{align*}

\subsection{Explicit characterisation of the maximal operators}
\label{max}

We need some technical results, with quite elementary proof. 

\begin{rem}\label{rem1} Note that $L^2((0,1), h^c)\subseteq L^2((0,1), h^{c'})$ if $c\leq c'$. Note also that $h^a f\in L^2((0,1), h^{2c})$ if and only if $f\in L^2((0,1), h^{2(a+c)})$.
\end{rem}

\begin{lem}\label{LemA.0} Let  $f\in L^2((0,l],h^{2a})$, and $f^I\in L^2((0,l],h^{2b})$, 
then $f\in AC_{\rm loc}((0,l])$, and $f$ behaves as follows near $x=0$:
\begin{enumerate}
\item if $b<\frac{1}{2\ka}$,  $a\leq-\frac{1}{2\ka}$, then
$
f(x)=O(x^{\frac{1}{2}-\ka b});
$

\item if $b<\frac{1}{2\ka}$,  $a>-\frac{1}{2\ka}$, then
$
f(x)=C+O(x^{\frac{1}{2}- \ka b}),
$
where $C\not=0$ is  a constant that depends on $f$;
\item if $b=\frac{1}{2\ka}$, then
$
f(x)=O(\sqrt{|\log x|});
$
\item if $b>\frac{1}{2\ka}$, then
$
f(x)=O(x^{\frac{1}{2}-\ka b}).
$
\end{enumerate}
\end{lem}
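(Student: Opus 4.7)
The plan is to use the fundamental theorem of calculus together with Cauchy--Schwarz with an appropriate split of the weight. Since the weight $h^{2b}$ is bounded above and bounded away from zero on every compact subinterval of $(0,l]$, the hypothesis $f^I \in L^2((0,l],h^{2b})$ yields $f^I \in L^2_{\rm loc}((0,l])\subset L^1_{\rm loc}((0,l])$, so $f\in AC_{\rm loc}((0,l])$ by a standard one--dimensional Sobolev argument, and $f(x_0)-f(x)=\int_x^{x_0}f^I(t)\,dt$ for $0<x\leq x_0\leq l$.

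Now split the integrand as $f^I(t)=\bigl(h^b(t)f^I(t)\bigr)\cdot h^{-b}(t)$ and apply Cauchy--Schwarz:
\[
|f(x_0)-f(x)|^2 \;\leq\; \|f^I\|_{L^2(h^{2b})}^2 \cdot \int_x^{x_0} h^{-2b}(t)\,dt .
\]
Since $h(t)=t^\kappa H(t)$ with $H$ smooth and $H(0)=1$, we have $h^{-2b}(t)\asymp t^{-2\kappa b}$ near $0$, and the behaviour of $\int_x^{x_0}h^{-2b}$ splits into three regimes. If $b<\frac{1}{2\kappa}$ the integral $\int_0^{x_0}h^{-2b}$ converges, so $f$ is Cauchy at $0$ and has a finite limit $C$; moreover $\int_0^x h^{-2b}(t)\,dt = O(x^{1-2\kappa b})$, which gives $f(x)=C+O(x^{1/2-\kappa b})$. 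If $b=\frac{1}{2\kappa}$, then $\int_x^{x_0}h^{-2b}(t)\,dt=O(|\log x|)$, giving $f(x)=O(\sqrt{|\log x|})$. If $b>\frac{1}{2\kappa}$, then $\int_x^{x_0}h^{-2b}(t)\,dt\sim \frac{x^{1-2\kappa b}}{2\kappa b-1}$, and since $\tfrac12-\kappa b<0$ the bound $|f(x)|\leq |f(x_0)|+O(x^{1/2-\kappa b})$ is dominated by the error term, yielding $f(x)=O(x^{1/2-\kappa b})$.

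The only remaining point is to distinguish cases (1) and (2). In case $b<\frac{1}{2\kappa}$ the limit $C$ exists, and it must be checked whether $C=0$. Here the hypothesis $f\in L^2((0,l],h^{2a})$ is used: if $C\neq 0$, then near $0$ one has $|f(x)|^2 h^{2a}(x) \geq \tfrac12 C^2 \, t^{2\kappa a}(1+o(1))$, and $\int_0 t^{2\kappa a}\,dt$ diverges precisely when $2\kappa a\leq -1$, i.e.\ $a\leq -\frac{1}{2\kappa}$. So in case (1) the constant $C$ must vanish, giving $f(x)=O(x^{1/2-\kappa b})$, whereas in case (2) the integrability of $C^2 t^{2\kappa a}$ at $0$ is compatible with $C\neq 0$, yielding $f(x)=C+O(x^{1/2-\kappa b})$. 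No step here is a genuine obstacle; the main care needed is the book--keeping of the three regimes of $b$ and the separate use of the $L^2(h^{2a})$ hypothesis only to exclude a nonzero limit in case (1).
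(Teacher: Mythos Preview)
Your proof is correct and follows essentially the same approach as the paper: the key step is the Cauchy--Schwarz split $|\int_x^{x_0} f^I| \leq \|f^I\|_{L^2(h^{2b})}\,(\int_x^{x_0} h^{-2b})^{1/2}$, followed by the elementary asymptotics of $\int_x^{x_0} t^{-2\kappa b}\,dt$ in the three regimes of $b$. You have in fact written out more than the paper does, including the explicit $AC_{\rm loc}$ justification and the use of the $L^2(h^{2a})$ hypothesis to force $C=0$ in case (1), which the paper leaves to the reader.
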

\begin{proof} 
The result follows observing that
\begin{align*}
\left|\int_x^l f^I\right|\leq \left|\int_x^l h^{-b}h^{b}f^I\right|
\leq\sqrt{\int_x^l h^{-2b}}
\sqrt{\int_0^l h^{2b} {f^I}^2}
\leq\sqrt{\int_x^l h^{-2b}}\|f^I\|.
\end{align*}
\end{proof}

\begin{lem}\label{lem1}  If $\om\in \D(\d^{(q)}_{\rm max})$, $\om|_\CF=f_1\t\om_1+dx\wedge f_2\t\om_2$, and  $f_1$ is not a constant, then  $f_1$ and $f_2$ are characterised as follows.  If $\t D\t\om_2=0$, then $f_2\in L^2((0,l+\ep),h^{1-2\al_{q-1}})$. Otherwise, $ f_1\in AC_{\rm loc}((0,l+\ep))$, and we have the following cases (where it is assumed that $\t D\t\om_2\not=0$, note however that this is irrelevant for the behaviour of $f_1$).

\begin{enumerate}
\item  If $\t D\t\om_1\not=0$, then   $f_2\in L^2((0,l+\ep),h^{1-2\al_{q}})$, $f_1\in L^2((0,l+\ep),h^{1-2\al_{q+1}})$,  $f_1^I\in L^2((0,l+\ep),h^{1-2\al_{q}})$, and (for small $x$)
$
f_1(x)=O(x^{\frac{1}{2}-\left(\frac{1}{2}-\al_q\right)\ka}).
$

\item If $\t D\t\om_1=0$, and either $\t\om_1\not= \t D\t\om_2$, or $f_2\not=f_1^I$, then $f_2\in L^2((0,l+\ep),h^{1-2\al_{q}})$, $f_1\in L^2((0,l+\ep),h^{1-2\al_{q}})$,  $f_1'\in L^2((0,l+\ep),h^{1-2\al_{q}})$, and (for small $x$)
\begin{enumerate}
\item if $\al_q\not=\frac{1}{2}$, then
$
f_1(x)=O(x^{\frac{1}{2}-\left(\frac{1}{2}-\al_q\right)\ka})
$, 
\item if $\al_q=\frac{1}{2}$, then
$
f_1(x)=C+O(x^{\frac{1}{2}-\left(\frac{1}{2}-\al_q\right)\ka})=C+O(\sqrt{x}),
$ 
where $C$ is a constant that depends on $f$;

\end{enumerate}

\item If $\t D\t\om_1=0$,  $\om_1= \t D\t\om_2$, and $f_2=f_1^I$, then $f_2\in L^2((0,l],h^{1-2\al_{q-1}})$, $f_1\in L^2((0,l+\ep),h^{1-2\al_{q}})$,  $f_1^I\in L^2((0,l+\ep),h^{1-2\al_{q-1}})$, and (for small $x$)
$
f_1(x)=O(x^{\frac{1}{2}-\left(\frac{1}{2}-\al_{q-1}\right)\ka}).
$

\end{enumerate}

\end{lem}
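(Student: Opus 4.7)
The strategy is to combine the formulas for $\omega|_\CF$ and $D\omega|_\CF$ from Section \ref{explicit} with the weighted pairing (\ref{product}) in order to extract weighted $L^2$ bounds on the scalars $f_1$, $f_1^I$, $f_2$, and then to apply Lemma \ref{LemA.0} for the pointwise behaviour of $f_1$ at $x=0$. From $\omega\in L^2$ one reads off the baseline $f_1\in L^2((0,l+\ep),h^{1-2\al_{q}})$ and $f_2\in L^2((0,l+\ep),h^{1-2\al_{q-1}})$, while $D\omega\in L^2$ is applied via
\[
D\omega|_\CF = f_1\,\t d\t\omega_1 + dx\wedge\bigl(f_1^I\t\omega_1 - f_2\,\t D\t\omega_2\bigr),
\]
in which the pure $(q{+}1)$-form piece carries the weight $h^{1-2\al_{q+1}}$ and the $dx$-wedge piece carries the weight $h^{1-2\al_{q}}$. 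When $\t D\t\omega_2=0$ the $f_2$ contribution to $D\omega$ disappears, so the only available bound on $f_2$ is the baseline one; when $\t D\t\omega_2\neq 0$ the plan is to project $D\omega|_\CF$ on independent directions in $L^2\gamma^\bullet(W)$ to reduce each case to one or two scalar conditions, which are then processed by Lemma \ref{LemA.0}.

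In case 1, $\t D\t\omega_1\neq 0$ directly gives $f_1\in L^2(h^{1-2\al_{q+1}})$; moreover $\t D\t\omega_2$ is exact (hence $\t d$-closed) while $\t\omega_1\notin\ker\t d$, so $\t\omega_1$ and $\t D\t\omega_2$ are linearly independent in $L^2\gamma^q(W)$, and projecting the $dx$-piece on these two directions yields $f_1^I,f_2\in L^2(h^{1-2\al_{q}})$ separately. Lemma \ref{LemA.0} applied with $a=-\tfrac12-\al_q$, $b=\tfrac12-\al_q$ (so $a=b-1$) then produces $f_1(x)=O(x^{1/2-\ka(1/2-\al_q)})$ in the relevant sub-cases, the logarithmic boundary sub-case of Lemma \ref{LemA.0} being generically excluded by the half-integer values of $\al_q=q-(m-1)/2$ for $\ka>1$. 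In case 2 the vanishing of $\t D\t\omega_1$ removes the extra bound on $f_1$, and under either linear independence of $\t\omega_1,\t D\t\omega_2$, or their dependence together with $f_2\neq f_1^I$, the surviving (non-cancelling) scalar combination still forces $f_1^I\in L^2(h^{1-2\al_{q}})$; Lemma \ref{LemA.0} with $a=b=\tfrac12-\al_q$ now lies in its case (1), (2) or (4), and the threshold $\al_q=\tfrac12$ is exactly where $a=0>-1/(2\ka)$, producing the non-trivial constant of sub-case 2(b) while otherwise the pure decay of sub-case 2(a) is recovered. In case 3 the combination $f_1^I\t\omega_1-f_2\t D\t\omega_2$ vanishes identically, so $D\omega$ imposes no new condition beyond $f_1^I=f_2\in L^2(h^{1-2\al_{q-1}})$, and Lemma \ref{LemA.0} with $a=\tfrac12-\al_q$, $b=\tfrac12-\al_{q-1}$ yields $f_1(x)=O(x^{1/2-\ka(1/2-\al_{q-1})})$.

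The main obstacle is the projection argument in case 2, where one has to distinguish cleanly between genuine linear independence of $\t\omega_1$ and $\t D\t\omega_2$ (from which both scalar conditions on $f_1^I$ and $f_2$ follow at once) and linear dependence without the arithmetic cancellation $f_2=f_1^I$ (where the single surviving scalar combination still encodes the $L^2$ bound on $f_1^I$), and to isolate the critical degree $\al_q=\tfrac12$, possible only when $\dim X$ is odd and in one specific degree $q$, where Lemma \ref{LemA.0} (2) is triggered and the leading constant $C$ of $f_1$ is allowed to be non-zero. The local absolute continuity $f_1\in AC_{\rm loc}((0,l+\ep))$ is then a direct consequence of Lemma \ref{LemA.0} applied to $f_1$ with its weighted derivative bound, and the remaining verification reduces to careful bookkeeping of the exponents of $h$ in each sub-case.
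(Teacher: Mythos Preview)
Your proposal is correct and follows essentially the same route as the paper: read off the baseline weighted $L^2$ memberships from $\omega\in L^2$, use the formula $D\omega|_\CF=f_1\t d\t\omega_1+dx\wedge(f_1^I\t\omega_1-f_2\t D\t\omega_2)$ and linear independence in $L^2\gamma^q(W)$ to separate the scalar conditions, and then apply Lemma~\ref{LemA.0} with the parameters $(a,b)$ you list in each case, using the half-integrality of $\al_q$ to exclude the logarithmic sub-case and to isolate $\al_q=\tfrac12$ as the only instance of Lemma~\ref{LemA.0}(2). The paper's own proof does exactly this, with the same values $a=\tfrac12-\al_{q+1}$, $b=\tfrac12-\al_q$ in case~1, $a=b=\tfrac12-\al_q$ in case~2, and $a=\tfrac12-\al_q$, $b=\tfrac12-\al_{q-1}$ in case~3.
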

\begin{proof} The proof follows by Lemma \ref{LemA.0}. 
In details, and considering the restriction on the finite horn, if $\om=f_1\t\om_1+dx\wedge f_2\t\om_2\in \D(\d^{(q)}_{\rm max})$, then obviously $f_1\in L^2((0,l],h^{1-2\al_{q}})$, and $f_2\in L^2((0,l],h^{1-2\al_{q-1}})$, since $\om$ is square integrable. It is clear that this is the unique condition on $f_2$ when $\t D \t\om_2=0$. For $D\om$ to be square integrable, since
\[
D^{(q)}\om=f_1\t D\t\om_1+dx \wedge(f_1^I\t\om_1-f_2\t D \t\om_2),
\]
there are the following  possibilities. 
\begin{enumerate}
\item  If $\t D\t\om_1\not=0$, then  it is necessary that $f_1\in L^2((0,l],h^{1-2\al_{q+1}})$, $f_2\in L^2((0,l],h^{1-2\al_{q}})$, and $f_1^I\in L^2((0,l],h^{1-2\al_{q}})$, since $\t\om_1$ can not be a multiple of $\t D\t \om_2$. Whence, applying Lemma \ref{LemA.0} to $f=f_1$, with  $a=\frac{1}{2}-\al_{q+1}$ and $b=\frac{1}{2}-\al_q$. We compute that
$
b=\frac{1}{2}-\al_q<\frac{1}{2\ka}
$, gives 
\beq\label{cond1}
\al_q> \frac{1}{2}-\frac{1}{2\ka}.
\eeq

Since $\ka>1$, we have that
$
0<\frac{1}{2}-\frac{1}{2\ka}<\frac{1}{2}. 
$
Since $\al_q$ is an half integer, condition (\ref{cond1}) is equivalent to $\al_q\geq \frac{1}{2}$. Proceeding analogously, the condition
$
b=\frac{1}{2}-\al_q\geq \frac{1}{2\ka},
$ 
gives $\al_q\leq 0$. The conditions on $a$ may be worked out in the same way, and this completes the proof in this first case.

\item If $\t D\t\om_1=0$, and either $\om_1\not= \t D\t\om_2$, or $f_2\not=f_1^I$, then $f_1\in L^2((0,l],h^{1-2\al_{q}})$, $f_2\in L^2((0,l],h^{1-2\al_{q}})$, and $f_1^I\in L^2((0,l],h^{1-2\al_{q}})$ (this happens in particular when $\t\om_1$ is an harmonic). 
Whence, applying Lemma \ref{LemA.0} to $f=f_1$, with $a=b=\frac{1}{2}-\al_q$, and proceeding as in the previous point we have the result.

\item If $\t D\t\om_1=0$,  $\om_1= \t D\t\om_2$, and $f_2=f_1^I$, then $f_1\in L^2((0,l],h^{1-2\al_{q}})$, and $f_2=f_1^I\in L^2((0,l],h^{1-2\al_{q-1}})$. Whence, applying Lemma \ref{LemA.0} to $f=f_1$, with $a=\frac{1}{2}-\al_{q}$, and $b=\frac{1}{2}-\al_{q-1}$, and proceeding as in the previous point we have the result.

\end{enumerate}

\end{proof}

\begin{lem}\label{lem2}  If $\om\in \D(\dr^{(q)}_{\rm max})$, $\om|_\CF=f_1\t\om_1+dx\wedge f_2\t\om_2$, and $f_2\not=h^{2\al_{q-1}-1}$,  then $f_1$ and  $f_2$ are characterised as follows. If $\t \DD \t\om_1=0$, then $f_1\in L^2((0,l+\ep),h^{1-2\al_{q}})$. Otherwise,  $ f_2\in AC_{\rm loc}((0,l+\ep))$, and we have the following cases (where it is assumed that $\t \DD \t\om_1\not =0$, note however that this is irrelevant for the behaviour of $f_2$).
\begin{enumerate} 

\item If $\t \DD\t\om_2\not=0$,  then $f_1\in L^2((0,l+\ep),h^{1-2\al_{q+1}})$, $f_2\in L^2((0,l+\ep),h^{1-2\al_{q}})$,  $(h^{1-2\al_{q-1}} f_2)^I\in L^2((0,l+\ep), h^{2\al_{q-1}-1})$, and (for small $x$)
$f_2(x)=O(x^{\frac{1}{2}-\left(\frac{1}{2}-\al_{q-1}\right)\ka})$. 

\item If $\t \DD\t\om_2=0$,  and either $\t\om_2\not = \t \DD\t\om_1$, or $h^{2\al_{q-1}-1}(h^{1-2\al_{q-1}} f_2)^I\not=\frac{f_1}{h^2}$, then  $f_1\in L^2((0,l+\ep),h^{1-2\al_{q+1}})$, $f_2\in L^2((0,l+\ep),h^{1-2\al_{q-1}})$,     
$h^{2\al_{q-1}-1}(h^{1-2\al_{q-1}} f_2)^I\in L^2((0,l+\ep), h^{1-2\al_{q-1}})$, and (for small $x$)
\begin{enumerate}
\item if $\al_{q-1}\not=\frac{1}{2}$, then
$
f_2(x)=O(x^{\frac{1}{2}-\left(\frac{1}{2}-\al_{q-1}\right)\ka});
$
\item if $\al_{q-1}=\frac{1}{2}$, then
$
f_2(x)=Ch^{2\al_{q-1}-1}+O(x^{\frac{1}{2}-\left(\frac{1}{2}-\al_{q-1}\right)\ka}), 
$
where $C$ is a constant that depends on $f$;

\end{enumerate}

\item If $\t \DD\t\om_2=0$,   $\t\om_2= \t \DD\t\om_1$, and $h^{2\al_{q-1}-1}(h^{1-2\al_{q-1}} f_2)^I=\frac{f_1}{h^2}$, i.e.
$
\om=f_1\t\om_1+f_2 dx\wedge\t\om_2=-\de^{(q+1)} (h^2 f_2 dx \wedge\t\om_1), 
$
then   $f_1\in L^2((0,l+\ep),h^{1-2\al_{q}})$, $f_2\in L^2((0,l+\ep),h^{1-2\al_{q-1}})$,  and   $h^{2\al_{q-1}-1}(h^{1-2\al_{q-1}} f_2)^I=\frac{f_1}{h^2}\in L^2((0,l+\ep), h^{1-2\al_{q-2}})$, and (for small $x$):
$f_2(x)=O(x^{\frac{1}{2}-\left(\frac{1}{2}-\al_{q-2}\right)\ka}). 
$
\end{enumerate}

\end{lem}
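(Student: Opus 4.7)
The plan is to mimic the argument of Lemma \ref{lem1}, replacing the role of $d$ by that of $\de$. Two ingredients drive everything: the explicit formula
\[
\DD\om|_\CF=-h^{2\al_{q-1}-1}\bigl(h^{1-2\al_{q-1}}f_2\bigr)^I\t\om_2+\frac{1}{h^2}f_1\,\t\DD\t\om_1-\frac{1}{h^2}f_2\,dx\wedge\t\DD\t\om_2,
\]
together with the product decomposition \eqref{ide1} applied in degrees $q$ (for $\om$) and $q-1$ (for $\DD\om$). The first of these already forces $f_1\in L^2\bigl((0,l+\ep),h^{1-2\al_q}\bigr)$ and $f_2\in L^2\bigl((0,l+\ep),h^{1-2\al_{q-1}}\bigr)$, and this is the only constraint as soon as $\t\DD\t\om_1=0$, establishing the opening sentence.

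First, I would read off from the displayed formula the three weighted-$L^2$ conditions coming from $\DD\om\in L^2(\CF)$: the coefficient of the $(q-1)$-angular direction $\t\om_2$ (equivalently $\t\DD\t\om_1$ when the two span a one-dimensional space) must lie in $L^2(h^{1-2\al_{q-1}})$, which gives $(h^{1-2\al_{q-1}}f_2)^I\in L^2(h^{2\al_{q-1}-1})$, while the coefficient of $dx\wedge\t\DD\t\om_2$ must lie in $L^2(h^{1-2\al_{q-2}})$, which, using $\al_{q-2}=\al_{q-1}-1$, translates into $f_2\in L^2(h^{1-2\al_q})$. Similarly the condition on $f_1/h^2$ (when $\t\DD\t\om_1\not=0$) gives $f_1\in L^2(h^{1-2\al_{q+1}})$ after the index shift $\al_{q+1}=\al_{q-1}+2$. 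Case 1 then follows immediately. For Case~2, the vanishing $\t\DD\t\om_2=0$ removes the last term of $\DD\om$, so the coefficient of $dx\wedge\t\DD\t\om_2$ need not be controlled, weakening the constraint on $f_2$ back to $L^2(h^{1-2\al_{q-1}})$ and the one on $f_1$ analogously.

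The case split in (3) handles the possible cancellation between the first two terms of $\DD\om$: when $\t\om_2=\t\DD\t\om_1$ (these are then parallel in the section), the prescription
\[
h^{2\al_{q-1}-1}\bigl(h^{1-2\al_{q-1}}f_2\bigr)^I=\frac{f_1}{h^2}
\]
produces an exact cancellation, and one verifies by direct computation, using $\al_q=\al_{q-1}+1$, that this is equivalent to $\om=-\de^{(q+1)}(h^2 f_2\,dx\wedge\t\om_1)$; in this regime the only surviving constraint, coming from $\om$ itself being square-integrable, is the weakest one, and the condition $h^{2\al_{q-1}-1}(h^{1-2\al_{q-1}}f_2)^I=f_1/h^2\in L^2(h^{1-2\al_{q-2}})$ is what appears in the statement.

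To obtain the asymptotic behavior near $x=0$, I would set $F_2:=h^{1-2\al_{q-1}}f_2$, so that $F_2\in L^2(h^{2a})$ with $2a$ computed from the weight for $f_2$ (namely $2a=1-2\al_{q-1}+2(1-2\al_{q-1})=3-6\al_{q-1}$ in Case 2, and $2a=1-2\al_q+2(1-2\al_{q-1})$ in Case 1), and $F_2^I\in L^2(h^{2\al_{q-1}-1})$, i.e.\ $b=\al_{q-1}-\tfrac{1}{2}$. Applying Lemma~\ref{LemA.0} to $F_2$ with this $b$ yields $F_2(x)=O\bigl(x^{1/2-\ka(\al_{q-1}-1/2)}\bigr)$ in all subcases except the threshold case $b=1/(2\ka)$; converting back via $f_2=h^{2\al_{q-1}-1}F_2\sim x^{\ka(2\al_{q-1}-1)}F_2$ gives the claimed
\[
f_2(x)=O\bigl(x^{\tfrac12-(\tfrac12-\al_{q-1})\ka}\bigr).
\]
The exceptional value $\al_{q-1}=\tfrac12$ in Case 2b is precisely the configuration that puts $a>-1/(2\ka)$ in Lemma~\ref{LemA.0}(2) and therefore produces a nonzero constant term for $F_2$, which through the relation $f_2=h^{2\al_{q-1}-1}F_2$ becomes the stated $Ch^{2\al_{q-1}-1}$ contribution (in that case equal to the constant $C$). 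I expect the main bookkeeping obstacle to be the verification of Case~3, particularly the equivalence between the coupled condition and $\om$ being exact, because one must track the index shifts $\al_q\leftrightarrow\al_{q-1}$ consistently; once this is in place, the rest is a routine application of Lemma~\ref{LemA.0} paralleling Lemma~\ref{lem1}.
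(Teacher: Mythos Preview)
Your approach is exactly the one the paper intends: Lemma \ref{lem2} is stated without proof precisely because it is the $\dr$-dual of Lemma \ref{lem1}, and one adapts that proof by plugging into the explicit formula for $\DD\om|_\CF$ and invoking Lemma \ref{LemA.0} for the substituted function $F_2=h^{1-2\al_{q-1}}f_2$. Your case split and the mechanism for the cancellation in Case~3 are correct.

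There is, however, a sign slip in your bookkeeping of the weight exponent $a$ for $F_2$. In Case~2 you have $f_2\in L^2(h^{1-2\al_{q-1}})$, and since $F_2=h^{1-2\al_{q-1}}f_2$, Remark~\ref{rem1} gives $F_2\in L^2(h^{2a})$ with $2a=(1-2\al_{q-1})-2(1-2\al_{q-1})=2\al_{q-1}-1$, i.e.\ $a=\al_{q-1}-\tfrac12=b$, not $2a=3-6\al_{q-1}$. This matters: with the incorrect $a$ the inequality $a>-1/(2\ka)$ would hold for every $\al_{q-1}\le\tfrac12$, so the constant term would appear in a whole range rather than ``precisely'' at $\al_{q-1}=\tfrac12$. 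With the correct $a=b=\al_{q-1}-\tfrac12$, the conditions $b<1/(2\ka)$ and $a>-1/(2\ka)$ force $\tfrac12-\tfrac{1}{2\ka}<\al_{q-1}<\tfrac12+\tfrac{1}{2\ka}$, and since $\al_{q-1}$ is a half-integer this isolates $\al_{q-1}=\tfrac12$ as claimed. (The same correction applies to your Case~1 value of $a$.) A second small point: in Case~2 the constraint on $f_1$ does \emph{not} weaken---the term $\tfrac{f_1}{h^2}\t\DD\t\om_1$ is still present, so $f_1\in L^2(h^{1-2\al_{q+1}})$ as stated; the weakening to $L^2(h^{1-2\al_q})$ happens only in Case~3 via the cancellation.
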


\begin{lem}\label{lll1} The form $\om=\t\om_1$ is in the domain of $\d_{\rm max}^{(q)}$, if and only if  either $\t\om_1$ is co exact and $\al_q<0$ or $\t\om_1\in \ker \t D$ and $\al_q<1$. The form $\vv=dx\wedge h^{2\al_{q-1}-1} \t\vv_2$ is in the domain of $\dr_{\rm max}^{(q)}$, if and only if  either $\t\vv_2$ is exact and $\al_{q-1}>1$ or $\t\vv_2\in \ker \t \DD$ and $\al_{q-1}>0$. 
\end{lem}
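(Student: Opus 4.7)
The plan is to specialize the formulas of Section \ref{explicit} to the two pure ansatze, then read off the $L^2$ and $D$/$\DD$ conditions as integrability of powers of $h$ near $x=0$. Since $h(x)\sim x^\ka$ there, $\int_0 h^c\,dx$ converges iff $\ka c>-1$; because the $\al_\bu$ are half integers and $\ka>1$, each such inequality collapses to a strict half-integer inequality on the corresponding $\al$.

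For the first assertion, insert $f_1=1$, $f_2=0$ into the formulas. By (\ref{product}),
$\|\om\|^2=\|\t\om_1\|_W^2\int_0^{l+\ep}h^{1-2\al_q}\,dx$, finite iff $\al_q<\tfrac12+\tfrac{1}{2\ka}$, i.e.\ $\al_q<1$. The formula for $D$ from Section \ref{explicit} gives $D\om|_\CF=\t D\t\om_1$ (since $f_1^I=0$), with no $dx$-part. Thus if $\t\om_1\in\ker\t D$ then $D\om=0\in L^2$ automatically, and the only constraint is $\om\in L^2$, yielding $\al_q<1$. If instead $\t\om_1$ is co-exact, then $\t D\t\om_1\neq0$ is a non-trivial $(q+1)$-form on $W$, and $D\om\in L^2$ requires $\int_0^{l+\ep}h^{1-2\al_{q+1}}\,dx<\infty$; using $\al_{q+1}=\al_q+1$ this tightens to $\al_q<0$, which is strictly stronger than the previous bound and therefore the binding one.

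For the second assertion, insert $f_1=0$ and $f_2=h^{2\al_{q-1}-1}$. The key algebraic cancellation
\[
h^{1-2\al_{q-1}}f_2=h^{1-2\al_{q-1}}\cdot h^{2\al_{q-1}-1}=1
\]
gives $(h^{1-2\al_{q-1}}f_2)^I=0$, so the formula for $\DD$ in Section \ref{explicit} collapses to $\DD\vv|_\CF=-h^{2\al_{q-1}-3}\,dx\wedge\t\DD\t\vv_2$. The square integrability of $\vv$ itself reads $\int_0^{l+\ep}h^{1-2\al_{q-1}}h^{2(2\al_{q-1}-1)}\,dx=\int_0^{l+\ep}h^{2\al_{q-1}-1}\,dx<\infty$, equivalent to $\al_{q-1}>0$. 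If $\t\vv_2\in\ker\t\DD$ then $\DD\vv=0$ and this is the only constraint. If instead $\t\vv_2$ is exact then $\t\DD\t\vv_2\neq 0$, and $\DD\vv\in L^2$ requires $\int_0^{l+\ep}h^{1-2\al_{q-2}}h^{2(2\al_{q-1}-3)}\,dx<\infty$; substituting $\al_{q-2}=\al_{q-1}-1$ this simplifies to $\int_0^{l+\ep}h^{2\al_{q-1}-3}\,dx<\infty$, equivalent to $\al_{q-1}>1$.

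The one observation that makes the statement clean is the identity $h^{1-2\al_{q-1}}\cdot h^{2\al_{q-1}-1}\equiv 1$, which is precisely what selects the coefficient $h^{2\al_{q-1}-1}$ as the unique one killing the leading term of $\DD$; the same phenomenon is responsible (trivially) for $f_1^I=0$ in the first half. Beyond that the argument is a bookkeeping of exponents near $x=0$, plus the orthogonal Hodge decomposition $\Omega^q(W)=\ker\t D\oplus\mathrm{Im}\,\t\DD$ (respectively $\Omega^{q-1}(W)=\ker\t\DD\oplus\mathrm{Im}\,\t D$) used to reduce any $\t\om_1$ (resp.\ $\t\vv_2$) to one of the two pure types considered. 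Since the integrability conditions we extract are both necessary and sufficient, the converse implications require no additional work.
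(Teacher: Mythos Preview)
Your proof is correct and follows precisely the approach the paper implicitly relies on: specialising the explicit formulas of Section~\ref{explicit} and reducing the domain conditions to integrability of powers of $h$ near $x=0$, exactly as in the surrounding Lemmas~\ref{LemA.0} and~\ref{lem1}. The paper states Lemma~\ref{lll1} without proof, and your direct computation is the intended one; the only remark is that the closing appeal to the Hodge decomposition is unnecessary, since the lemma is already phrased for the pure types $\t\om_1\in\ker\t D$ or $\t\om_1$ co-exact (respectively $\t\vv_2\in\ker\t\DD$ or $\t\vv_2$ exact).
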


\begin{lem}\label{lemmanew1}  If $\om\in \D(\d^{(q)}_{\rm max})$,  $\vv=\in \D(\dr^{(q+1)}_{\rm max})$, $\om|_\CF=f_1\t\om_1+f_2 dx\wedge\t\om_2$,   $\vv|_\CF=g_1\t\vv_1+dx\wedge g_2\t\vv_2$, then: 
\[
\lim_{x\to 0^+} (h^{1-2\al_q}f_1 g_2)(x)\lp\t\om_1,\t\vv_2\rp_W=0,
\]
except than when $\t\om_1$ and $\t\vv_2$ are either both exact, both co exact or both harmonic.
\end{lem}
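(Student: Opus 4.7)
My approach is to reduce the problem to Hodge orthogonality on the closed section $W$, using the asymptotic information for $f_1$ and $g_2$ supplied by Lemmas~\ref{lem1} and~\ref{lem2} only to verify that the exception list is sharp.

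Using the Hodge decomposition of $L^2\Omega^q(W)$ (equivalently, the spectral decomposition of $\t\Delta$ on $W$), I would first write
\[
\t\om_1 = (\t\om_1)_{\rm h} + (\t\om_1)_{\rm e} + (\t\om_1)_{\rm c}, \qquad \t\vv_2 = (\t\vv_2)_{\rm h} + (\t\vv_2)_{\rm e} + (\t\vv_2)_{\rm c},
\]
where the subscripts denote the harmonic, $\t D$-exact, and $\t\DD$-co-exact components. By bilinearity the quantity whose limit we need splits into nine contributions. Since the three Hodge subspaces of $L^2\Omega^q(W)$ are pairwise orthogonal, the six off-diagonal contributions carry an inner product factor equal to zero and hence vanish identically in $x$; their limits as $x\to 0^+$ are therefore zero. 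Only the three diagonal contributions (harmonic--harmonic, exact--exact, co-exact--co-exact) survive, and these are exactly the three configurations named in the exception clause; so the limit is zero whenever we are not in any of those three situations.

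For sharpness one should check that the three exceptions are genuinely necessary. Using $h(x)\sim x^\ka$ near $x=0$ together with the leading exponents in Lemmas~\ref{lem1} and~\ref{lem2}, the generic power of $h^{1-2\al_q}f_1 g_2$ turns out to be $x^1$, which vanishes at $x=0$. However, at the critical half-integer value $\al_q=\tfrac12$ the subcase~(2b) of Lemma~\ref{lem1} and the corresponding subcase of Lemma~\ref{lem2} produce constant leading terms in $f_1$ and $g_2$, while case~(3) of either lemma produces further non-decaying contributions coming from the exact/co-exact-on-the-horn structure. In each of the three diagonal pairings this can yield a non-zero limit, confirming sharpness.

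The main obstacle is the careful bookkeeping: for each Hodge type of $\t\om_1$ one has to determine which case of Lemma~\ref{lem1} governs the asymptotics of $f_1$ (case~(1) when $\t\om_1$ has a non-zero co-exact component, cases~(2) or~(3) when $\t\om_1$ is closed), and analogously for $\t\vv_2$ via Lemma~\ref{lem2}. Once Hodge orthogonality is invoked, however, the vanishing of the limit in every non-exception case is immediate, since the $L^2(W)$ inner product is an $x$-independent constant equal to zero.
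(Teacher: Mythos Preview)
The paper states this lemma without proof, so there is no argument to compare against directly; your approach via Hodge orthogonality on the section is correct and is in fact the natural one in the framework the paper sets up. The essential point, which you identify, is that the factor $\lp\t\om_1,\t\vv_2\rp_W$ is an $x$-independent constant, so whenever $\t\om_1$ and $\t\vv_2$ lie in distinct summands of the Hodge decomposition of $L^2\Omega^q(W)$ this constant is zero and the limit vanishes identically, regardless of the behaviour of $h^{1-2\al_q}f_1g_2$ near $x=0$.

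Your observation that the asymptotic estimates of Lemmas~\ref{lem1} and~\ref{lem2} are needed only for the sharpness of the exception list is well taken, and it is worth stressing that those estimates alone would \emph{not} prove the lemma: at the critical value $\al_q=\tfrac12$, both $f_1$ and $g_2$ can carry a nonzero constant leading term even when $\t\om_1$ is exact and $\t\vv_2$ is co-exact (cases~(2b) in the two lemmas), so $h^{1-2\al_q}f_1g_2\to C_1C_2\neq 0$ in general. It is precisely Hodge orthogonality that kills the inner-product factor in that off-diagonal situation. One small caveat on phrasing: your bilinear decomposition shows more precisely that the limit vanishes unless $\t\om_1$ and $\t\vv_2$ share a nonzero Hodge component; the lemma as stated is implicitly read for forms of pure Hodge type, which is how the paper uses it downstream.
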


\subsection{Boundary conditions}

We introduce some boundary conditions. The boundary conditions at the boundary $\b X$ are the classical ones, see for example \cite{Gil} or \cite{RS}. 
We further introduce some boundary conditions at the tip of the horn. 
Let $\al\in \DS(\dr_{\rm max}^q)$, $\al|_\CF(x,y)=a_1(x)\tilde\al_1(y)+a_2(x)dx\wedge \tilde\al_2(y)$.  We know that $a_2$ is absolutely continuous on $(0,l+\ep)$. We call absolute boundary value at the tip of the horn the value
\[
bv^{(q)}_{\rm abs}(x_0)(\al)=\lim_{x\to 0^+} (h^{1-2\al_{q-1}}f_1 a_2)(x)(\t\om_1,\t\al_2)_W,
\]
for any $\om\in \DS(\d^{(q-1)}_{\rm max})$, with $\om |_\CF=f_1\t\om_1+dx\wedge f_2\t\om_2$. Let $\al\in \DS(\d_{\rm max}^q)$, $\al|_\CF(x,y)=a_1(x)\tilde\al_1(y)+a_2(x)dx\wedge \tilde\al_2(y)$. We know that $a_1$ is absolutely continuous on $(0,l+\ep)$. We call relative boundary value at the tip of the horn the value
\[
bv^{(q)}_{\rm rel}(x_0)(\al)=\lim_{x\to 0^+} (h^{1-2\al_{q}}a_1 g_2)(x)(\t\al_1,\t\vv_2)_W, 
\]
for any $\vv\in \DS(\dr^{(q+1)}_{\rm max})$, $\vv|_\CF=g_1\t\vv_1+dx\wedge g_2\t\vv_2$.

\subsection{Green formula and  minimal operator}
\label{ss3.10}

Direct calculation gives the following results. 

\begin{prop}\label{greenformulaXd} Let $\om\in \D(\d_{\rm max}^q)$ and $\vv\in \D(\dr_{\max}^{q+1})$, such that $\om|_\CF=f_1\t\om_1+dx\wedge f_2\t\om_2$, and $\vv|_\CF=g_1\t\vv_1+dx\wedge g_2\t\vv_2$, then we have the following Green formula
\[
\lp \d^q_{\rm max} \om,\vv\rp=\lp\om, \dr_{\rm max}^{q+1}\vv\rp+\lim_{x\to 0^+} (h^{1-2\al_q}f_1 g_2)(x)  \lp\t\om_1,\t\vv_2\rp_W+\int_{\b X} t \om\wedge\star_W n\vv.
\]
\end{prop}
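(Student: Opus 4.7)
The plan is to decompose $X=\ZZ\cup_W Y$ along the interface $\{l\}\times W$ and handle the smooth and singular pieces separately. Writing $\lp D\om,\vv\rp_X = \lp D\om,\vv\rp_{\ZZ} + \lp D\om,\vv\rp_Y$ and likewise for $\lp\om,\DD\vv\rp_X$, on the smooth compact manifold with boundary $Y$ the classical Green formula yields
\[
\lp D\om,\vv\rp_Y - \lp\om,\DD\vv\rp_Y = \int_{\b Y} t\om\wedge\star_W n\vv,
\]
where $\b Y = W\sqcup \b X$ carries the induced orientation. It then remains to treat the horn $\ZZ$ and to verify that the $W$-interface contributions from $\ZZ$ and $Y$ cancel, leaving only the tip limit and the $\b X$ integral.

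On the horn piece, we work directly with the collar formulas of Section \ref{explicit}. Write $\om|_{\CF} = f_1\t\om_1 + dx\wedge f_2\t\om_2$ and $\vv|_{\CF} = g_1\t\vv_1 + dx\wedge g_2\t\vv_2$, and substitute the explicit expressions for $D\om$ and $\DD^{(q+1)}\vv$ from Section \ref{explicit} into the $(q{+}1)$-form and $q$-form inner product formulas (formula (\ref{product}) and its degree-shifted analogue). This produces pairings between tangential and normal components. Three of the four resulting groups of terms cancel, using two facts: the dimensional identity $\al_{q+1}=\al_q+1$, which converts exponents so that $h^{1-2\al_{q+1}}=h^{-1-2\al_q}$ and $h^{1-2\al_q}=h^{-1-2\al_{q-1}}$, and the classical Green formula on the closed section $W$, which moves $\t d$ across the inner product with no boundary term since $\b W = \emptyset$. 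The only non-cancelling contributions are the coupling between $f_1^I\t\om_1$ inside $D\om$ and $g_2\t\vv_2$ inside $\vv$, and the coupling between $f_1\t\om_1$ inside $\om$ and the $(h^{1-2\al_q}g_2)^I\t\vv_2$ term of $\DD\vv$. Their sum is the total derivative
\[
\lp D\om,\vv\rp_{\ZZ} - \lp\om,\DD\vv\rp_{\ZZ} = \int_0^l \bigl(f_1\,h^{1-2\al_q}\,g_2\bigr)^I \,\lp\t\om_1,\t\vv_2\rp_W\,dx.
\]

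By the fundamental theorem of calculus this equals $\bigl[f_1\,h^{1-2\al_q}\,g_2\bigr]_0^l\,\lp\t\om_1,\t\vv_2\rp_W$. A short computation restricting $\om\wedge\star\vv$ to the slice $\{l\}\times W$, using the explicit form of $\star\vv$ from Section \ref{explicit}, shows that the value at $x=l$ coincides with the $W$-interface integrand from $Y$'s Green formula, with opposite sign because of the outward-normal conventions (the outward normal from $\ZZ$ at $x=l$ is $+\b_x$, from $Y$ it is $-\b_x$). The two interface contributions cancel, and what remains is the tip contribution at $x\to 0^+$ together with the $\b X$ integral, producing the claimed identity.

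The main delicate point is justifying the fundamental theorem step at the singular endpoint $x=0$: one needs that $f_1\,h^{1-2\al_q}\,g_2$ is absolutely continuous on $(0,l]$ with integrable derivative, and that $\lim_{x\to 0^+}(f_1 h^{1-2\al_q}g_2)(x)$ exists. This is precisely the content of Lemmas \ref{lem1} and \ref{lem2}, which list the regularity and the power-law asymptotics near $x=0$ of the coefficients $f_1$ (arising from $\om\in\D(\d_{\rm max}^q)$) and $g_2$ (arising from $\vv\in\D(\dr_{\rm max}^{q+1})$) in each of the possible cases. Lemma \ref{lemmanew1} then confirms existence of the limit in all these cases and identifies the three degenerate configurations (both exact, both co-exact, or both harmonic) in which the limit can be non-zero.
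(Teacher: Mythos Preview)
Your proof is correct and follows the same approach the paper indicates (``direct calculation''): split $X$ along the interface, apply the classical Green formula on $Y$, compute explicitly on the horn using the collar formulas of Section \ref{explicit} to reduce the difference $\lp D\om,\vv\rp_{\ZZ}-\lp\om,\DD\vv\rp_{\ZZ}$ to the total derivative $\int_0^l (f_1 h^{1-2\al_q} g_2)^I\,\lp\t\om_1,\t\vv_2\rp_W\,dx$, and invoke the asymptotics of Lemmas \ref{lem1}--\ref{lll1} to justify the FTC at the singular endpoint. One small remark: Lemma \ref{lemmanew1} as stated only asserts vanishing of the limit outside the three exceptional configurations, not existence in those cases; existence there must be read off directly from the power-law asymptotics in Lemmas \ref{lem1}, \ref{lem2}, and \ref{lll1}, which you already cite.
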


\begin{prop}\label{domd1X} The closure of the  operator $\d^{q}_{0}$  is the operator $\d^{q}_{\rm min}$ with  the following domain:
\begin{align*}
\D(\d^{q}_{\rm min})
=&\left\{\om\in \D(\d^{q}_{\rm max})~|~\om|_\CF=f_1\t\om_1+dx\wedge f_2\t\om_2, \forall \vv\in \D(\dr^{q+1}_{\rm max}),\, \vv|_\CF=g_1\t\vv_1+dx\wedge  g_2\t\vv_2, \right.\\
&\hspace{20pt}\left.\,\lim_{x\to 0^+} (h^{1-2\al_q}f_1 g_2)(x)\lp\t\om_1,\t\vv_2\rp_W= (\om_{\rm tan})|_{\b X}=0 \right\}.
\end{align*}
\end{prop}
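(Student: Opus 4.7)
My plan is to identify $\d^{q}_{\rm min}$ with the adjoint $(\dr^{q+1}_{\rm max})^{*}$ and then translate the abstract adjoint condition into the two concrete boundary conditions using the Green formula of Proposition \ref{greenformulaXd}. From the adjointness relations recorded in Section \ref{ss3.1}, we have $(\d^{q}_{0})^{\da}=\dr^{q+1}_{\rm max}$, and since $\d^{q}_{0}$ is densely defined and closable, its closure equals its double adjoint, so $\d^{q}_{\rm min}=(\dr^{q+1}_{\rm max})^{*}$. Thus $\om\in\D(\d^{q}_{\rm min})$ if and only if there exists $\psi\in L^{2}(X,E_{\rho})$ with $\lp\om,\dr^{q+1}_{\rm max}\vv\rp=\lp\psi,\vv\rp$ for every $\vv\in\D(\dr^{q+1}_{\rm max})$, in which case $\d^{q}_{\rm min}\om=\psi$.

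For the inclusion $\subseteq$, suppose $\om\in\D(\d^{q}_{\rm min})$ with $\d^{q}_{\rm min}\om=\psi$. Testing first against $\vv\in\Omega^{q+1}_{0}(\mathring{X},E_\rho)\subseteq\D(\dr^{q+1}_{\rm max})$, the distributional identity $T_{\om}(\DD\vv)=T_{\psi}(\vv)$ gives $D^{q}\om=\psi$ in the distributional sense, hence $\om\in\D(\d^{q}_{\rm max})$ and $\d^{q}_{\rm max}\om=\psi$. With this in hand, Proposition \ref{greenformulaXd} rewrites the defining equation $\lp\d^{q}_{\rm max}\om,\vv\rp=\lp\om,\dr^{q+1}_{\rm max}\vv\rp$ as
\[
\lim_{x\to 0^{+}} (h^{1-2\al_{q}}f_{1}g_{2})(x)\,\lp\t\om_{1},\t\vv_{2}\rp_{W}+\int_{\b X} t\om\wedge\star_{W} n\vv=0,
\]
for every $\vv\in\D(\dr^{q+1}_{\rm max})$.

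The key step is to show that the tip limit and the boundary integral vanish separately. Using a smooth cut-off $\chi$ equal to $1$ near the tip and vanishing near $\b X$, I write $\vv=\chi\vv+(1-\chi)\vv$. A routine check shows that both pieces remain in $\D(\dr^{q+1}_{\rm max})$ (multiplication by a smooth bounded compactly-supported-derivative function preserves the $L^{2}$-domain of $\DD$). For $(1-\chi)\vv$ the tip limit is automatically zero, so the equation forces the boundary integral $\int_{\b X} t\om\wedge\star_{W} n((1-\chi)\vv)=0$; varying $\vv$ over $\D(\dr^{q+1}_{\rm max})$ the normal parts $n\vv|_{\b X}$ range over a dense subset of $L^{2}$ sections on $\b X$ (classical Gaffney argument for smooth boundary), whence $(\om_{\rm tan})|_{\b X}=0$. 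For $\chi\vv$ the boundary integral vanishes, so the tip limit must vanish for every such test form, and since $\chi$ is arbitrary supported near $0$ this gives the condition $\lim_{x\to 0^{+}}(h^{1-2\al_{q}}f_{1}g_{2})(x)\lp\t\om_{1},\t\vv_{2}\rp_{W}=0$ for every $\vv\in\D(\dr^{q+1}_{\rm max})$.

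For the reverse inclusion $\supseteq$, if $\om\in\D(\d^{q}_{\rm max})$ satisfies both the tip condition and $(\om_{\rm tan})|_{\b X}=0$, then Proposition \ref{greenformulaXd} immediately yields $\lp\d^{q}_{\rm max}\om,\vv\rp=\lp\om,\dr^{q+1}_{\rm max}\vv\rp$ for all $\vv\in\D(\dr^{q+1}_{\rm max})$, placing $\om$ in $\D((\dr^{q+1}_{\rm max})^{*})=\D(\d^{q}_{\rm min})$ with $\d^{q}_{\rm min}\om=\d^{q}_{\rm max}\om$. The main obstacle I anticipate is the independence step in the third paragraph: justifying that the cutoffs $\chi\vv$ and $(1-\chi)\vv$ stay in $\D(\dr^{q+1}_{\rm max})$ and that the normal components $n\vv|_{\b X}$ produced in this way exhaust a dense set in the appropriate $L^{2}$ space on $\b X$; the interior part is standard while the tip part uses only that $\chi$ is smooth and $H^{2}_{\rm loc}$-friendly on the collar, so no issue arises from the singularity.
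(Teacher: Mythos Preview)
Your proposal is correct and follows precisely the standard route the paper's terse ``Direct calculation gives the following results'' is gesturing at: identify $\d^{q}_{\rm min}$ with the adjoint $(\dr^{q+1}_{\rm max})^{\da}$ (already recorded in Section \ref{ss3.1}), invoke the Green formula of Proposition \ref{greenformulaXd} to express the defect as the sum of the tip limit and the smooth-boundary integral, and then separate the two contributions by a cutoff localised near the tip. Your check that $\chi\vv\in\D(\dr^{q+1}_{\rm max})$ reduces on the collar to the identity $\DD(\chi\vv)=\chi\,\DD\vv-\chi' g_{2}\t\vv_{2}$, with the extra term in $L^{2}$ because $\chi'$ is supported away from $x=0$; the density of $n\vv|_{\b X}$ is indeed classical since $\b X$ is smooth.
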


\begin{prop}\label{domdeX} The closure of the  operator $\dr^{q}_{0}$  is the operator $\dr^{q}_{\rm min}$ with  the following domain (here $p\geq 1$ is an integer):
\begin{align*}
\D(\dr^{(q)}_{\rm min})
=&\left\{\vv \in \D(\dr^{q}_{\rm max})~|~ \vv|_\CF=g_1\t\vv_1+dx\wedge  g_2\t\vv_2,\,\forall\om\in \D(\d^{q-1}_{\rm max}),\,\om|_\CF=f_1\t\om_1+dx\wedge f_2\t\om_2, \right.\\
&\hspace{20pt}\left.\,\lim_{x\to 0^+} (h^{1-2\al_{q-1}}f_1 g_2)(x)\lp\t\om_1,\t\vv_2\rp_W= (\vv_{\rm norm}) |_{\b X}=0 \right\}.
\end{align*}
\end{prop}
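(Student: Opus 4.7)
The plan is to mirror the proof of Proposition \ref{domd1X}, exploiting the adjoint relation $\dr^q_{\rm min}=(\d^{q-1}_{\rm max})^\da$ recorded in Section \ref{ss3.1}, together with the Green formula of Proposition \ref{greenformulaXd} (with the indices shifted so that the formula pairs $\d^{q-1}_{\rm max}\om$ against $\dr^q_{\rm max}\vv$). Denote by $\DD(\vv)$ the set on the right hand side of the statement; I must show $\DD(\vv)=\D(\dr^q_{\rm min})$.

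First I would prove the inclusion $\D(\dr^q_{\rm min})\subseteq\DD(\vv)$. Given $\vv\in\D(\dr^q_{\rm min})$, there exists $\vv_n\in\Omega^q_0(\mathring X,E_\rho)$ with $\vv_n\to\vv$ and $\de^q\vv_n\to\dr^q_{\rm min}\vv$ in $L^2$. Since each $\vv_n$ has compact support in $\mathring X$, it vanishes both near the tip and near $\b X$, so that the Green formula of Proposition \ref{greenformulaXd}, applied with $\om\in\D(\d^{q-1}_{\rm max})$ and $\vv_n$ in place of $\vv$, has no boundary contributions and yields $\lp\d^{q-1}_{\rm max}\om,\vv_n\rp=\lp\om,\de^q\vv_n\rp$. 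Passing to the limit, $\lp\d^{q-1}_{\rm max}\om,\vv\rp=\lp\om,\dr^q_{\rm min}\vv\rp=\lp\om,\dr^q_{\rm max}\vv\rp$, since $\dr^q_{\rm min}\subseteq\dr^q_{\rm max}$. Subtracting this identity from the Green formula of Proposition \ref{greenformulaXd} forces
\[
\lim_{x\to 0^+}(h^{1-2\al_{q-1}}f_1 g_2)(x)\lp\t\om_1,\t\vv_2\rp_W+\int_{\b X}t\om\wedge\star_W n\vv=0
\]
for every $\om\in\D(\d^{q-1}_{\rm max})$. Since the two summands are of an independent nature (one is supported at $x=0$ in the collar, the other along $\b X$), and $\d^{q-1}_{\rm max}$ is rich enough to realise the two terms separately (one may locally modify $\om$ by a cut-off supported either near $x=0$ or near $\b X$, and in the second case apply the classical argument of \cite{Gil,RS}), both terms must vanish individually. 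This yields the tip condition and $\vv_{\rm norm}|_{\b X}=0$.

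For the converse inclusion $\DD(\vv)\subseteq\D(\dr^q_{\rm min})$, I exploit $\dr^q_{\rm min}=(\d^{q-1}_{\rm max})^\da$. Let $\vv\in\D(\dr^q_{\rm max})$ satisfy the two boundary conditions. For any $\om\in\D(\d^{q-1}_{\rm max})$, the Green formula of Proposition \ref{greenformulaXd} reads
\[
\lp\d^{q-1}_{\rm max}\om,\vv\rp=\lp\om,\dr^q_{\rm max}\vv\rp+\lim_{x\to 0^+}(h^{1-2\al_{q-1}}f_1 g_2)(x)\lp\t\om_1,\t\vv_2\rp_W+\int_{\b X}t\om\wedge\star_W n\vv,
\]
and by hypothesis both boundary terms vanish. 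Hence $\om\mapsto\lp\d^{q-1}_{\rm max}\om,\vv\rp=\lp\om,\dr^q_{\rm max}\vv\rp$ extends to a bounded linear functional on $L^2$, so $\vv\in\D((\d^{q-1}_{\rm max})^\da)=\D(\dr^q_{\rm min})$, with $\dr^q_{\rm min}\vv=\dr^q_{\rm max}\vv$.

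The main obstacle is the separation argument at the end of the first step: one must check that the tip boundary form and the $\b X$ boundary form are genuinely independent, i.e. that no cancellation between them can occur. This is handled by testing against $\om$ supported in a tubular neighbourhood of each end in turn; at the smooth end $\b X$ this is classical, while at the horn tip one uses the explicit characterisations of $\D(\d^{q-1}_{\rm max})$ given by Lemmas \ref{lem1} and \ref{lll1}, which provide enough admissible $\om_1,f_1$ to separate the different asymptotic modes of $\vv_2, g_2$ at $x=0$, exactly in the way used (dually) for the exterior derivative in Proposition \ref{domd1X}.
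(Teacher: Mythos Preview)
Your proposal is correct and follows exactly the approach implied by the paper, which simply states ``Direct calculation gives the following results'' before Propositions \ref{greenformulaXd}, \ref{domd1X}, and \ref{domdeX} without supplying details. Your argument---using the adjoint identity $\dr^q_{\rm min}=(\d^{q-1}_{\rm max})^\da$ from Section \ref{ss3.1} together with the Green formula of Proposition \ref{greenformulaXd} (with the degree shift $q\mapsto q-1$), then separating the tip and $\b X$ boundary contributions by localisation---is precisely the intended computation, and in fact fleshes out what the paper leaves implicit.
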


\begin{prop} Let $A$ be any of the operators $\d_{\rm min}$, $\d_{\rm max}$, $\dr_{\rm min}$, or $\dr_{\rm max}$. Then, $A^2=0$ and $\Im (A^q)\subseteq \DS(A^{q\pm 1})$.
\end{prop}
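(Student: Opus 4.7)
The plan is to exploit the classical identities $d^2=0$ and $\de^2=0$ (valid on smooth compactly supported forms) and transfer them to the four $L^2$-extensions via two distinct elementary arguments, one for the maximal and one for the minimal operators. In each case, showing $\Im(A^q)\subseteq\D(A^{q\pm 1})$ and showing that the composition vanishes happens simultaneously, as the composition is automatically in $L^2$ (it is zero).

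\emph{Maximal case.} Given $\om\in\D(\d_{\rm max}^q)$, I propose to show that the weak derivative $D(D\om)$ vanishes as a distribution on $\mathring X$; this forces $D\om\in\D(\d_{\rm max}^{q+1})$ with $\d_{\rm max}^{q+1}\d_{\rm max}^q\om=0$ automatically. For any test form $\psi\in\Omega_0(\mathring X,E_\rho)$ of the appropriate degree, $\DD\psi$ is again a smooth compactly supported form, so two applications of the Green formula of Lemma \ref{GreenDistributions} (the first with $\vv=\DD\psi$, the second with the pair swapped) give
\begin{align*}
T_{D(D\om)}(\psi)=\lp D\om,\DD\psi\rp=\lp\om,\DD(\DD\psi)\rp=\lp\om,\DD^2\psi\rp=0,
\end{align*}
since $\DD^2\psi=0$ pointwise. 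Hence $D(D\om)=0$ as a distribution, and trivially lies in $L^2$. The argument for $\dr_{\rm max}^q$ is identical with the roles of $D$ and $\DD$ swapped and $D^2=0$ on test forms in place of $\DD^2=0$.

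\emph{Minimal case.} Given $\om\in\D(\d_{\rm min}^q)$, take an approximating sequence $\om_n\in\Omega_0^q(\mathring X,E_\rho)$ with $\om_n\to\om$ and $d\om_n\to\d_{\rm min}^q\om$ in $L^2$. The forms $\vv_n:=d\om_n$ are again smooth and compactly supported in $\mathring X$, so $\vv_n\in\Omega_0^{q+1}(\mathring X,E_\rho)=\D(\d_0^{q+1})$, and $d\vv_n=d^2\om_n=0$ pointwise. Hence $\{\vv_n\}$ is itself an approximating sequence, with $\vv_n\to\d_{\rm min}^q\om$ and $d\vv_n\to 0$ in $L^2$, which by the very definition of the minimal closure witnesses $\d_{\rm min}^q\om\in\D(\d_{\rm min}^{q+1})$ and $\d_{\rm min}^{q+1}\d_{\rm min}^q\om=0$. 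The argument for $\dr_{\rm min}^q$ is identical with $d$ replaced by $\de$.

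\emph{Main obstacle.} There is essentially none; the statement is a formal consequence of the nilpotency of $d$ and $\de$ on $\Omega_0$, and the proof is mechanical once one notices that the two definitional frameworks automatically supply the right tool. The maximal case uses distributional testing, where classical $\DD^2=0$ applies to the smooth test form, and the only genuine check is that $\DD\psi$ remains a test form for $\psi$ a test form, so that Lemma \ref{GreenDistributions} can be invoked twice consecutively. The minimal case uses smooth approximation, where the key observation is the tautology that $d$ maps $\Omega_0^q$ into $\Omega_0^{q+1}$, so the approximating sequence for $\om$ directly induces one for $\d_{\rm min}^q\om$ with zero image.
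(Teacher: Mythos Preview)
Your proof is correct; both cases are handled by the standard mechanisms (distributional testing against $\DD^2\psi=0$ for the maximal operators, and pushing an approximating sequence through $d^2=0$ for the minimal ones). The paper states this proposition without proof, so there is nothing to compare against; your argument is exactly the routine verification one expects here.
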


In order to prove the next result we may proceed as in the proof of Proposition 4.6.9 of \cite{Spr12}. It is only necessary to adapt the behaviour of the integrals near $x=0$, as in the proof of Lemma \ref{LemA.0}. Moreover, observe that this result would follows by the explicit calculation of co homology below.

\begin{prop} Let $B$ denote the restriction of any of the operators $\d_{\rm min}$, $\d_{\rm max}$, $\dr_{\rm min}$, or $\dr_{\rm max}$ to the finite horn $\ZZ$. Then, $B$ has closed range.
\end{prop}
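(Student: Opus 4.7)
The plan is to mirror the proof of Proposition 4.6.9 in \cite{Spr12} (the conical analogue), with the only essential modification being that the asymptotic estimates governing the behaviour near $x=0$ must be replaced by the horn-type estimates of Lemma \ref{LemA.0}. Closed range is a local/global statement, and since $B$ is automatically closed with closed range away from the singular tip (this is standard elliptic theory on the compact collar and on a neighbourhood of $\b Z$), the entire issue is concentrated at $x=0$, i.e.\ in the analysis on the finite horn $\ZZ$.

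First I would use the separation of variables identification (\ref{ide1}) together with the Hodge--Kodaira decomposition of $L^2\gamma^{q}(W)$ and $L^2\gamma^{q-1}(W)$ into exact, co-exact, and harmonic parts, which is legitimate since $W$ is a closed compact manifold. This orthogonally decomposes $L^2\gamma^q(\ZZ)$ into a finite (in the harmonic direction) plus countable sum of sectors indexed by eigenforms of the section Laplacian $\t\Delta$ and by the type of $\t\om_1$, $\t\om_2$. On each sector, the explicit formula for $D\om|_\CF$ given in Section \ref{explicit} shows that $B$ reduces to one or two scalar first-order operators on $(0,l]$ acting between weighted Hilbert spaces of the form $L^2((0,l],h^{1-2\al_*})$. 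For each such one-dimensional operator $T$ I would construct an explicit bounded right inverse on $(\ker T)^\perp$ by integration from $x=l$:
\[
(Rg)(x)=-\int_x^l g(t)\,dt\quad\text{or}\quad (Rg)(x)=-h(x)^{2\al_*-1}\int_x^l h(t)^{1-2\al_*}g(t)\,dt,
\]
depending on which weighted derivative appears. Boundedness between the appropriate weighted $L^2$ spaces reduces to a Hardy-type inequality whose kernel is $\int_x^l h^{-2b}$, and this is precisely the estimate that drives the proof of Lemma \ref{LemA.0}; checking integrability in each of the half-integer cases for $\al_*$ determines whether the inverse lands in the minimal or maximal domain.

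The main obstacle is the critical weight regime $\al_* = \tfrac12$, where $\int_0^l h^{-2b}$ diverges logarithmically and $f(x)$ acquires a nonzero limit at the tip (cf.\ cases \textit{2.(b)} and \textit{3.(b)} of Lemmas \ref{lem1}, \ref{lem2}). Here one cannot quotient out a constant freely, since the constant mode may or may not lie in the minimal domain; one must split $f = C\chi + f_0$ with $f_0$ vanishing at $x=0$, verify that the closure of $B$ on constants is controlled by the characterisation of $\D(B_{\rm min})$ in Propositions \ref{domd1X}--\ref{domdeX}, and argue closed range on each of the two pieces separately. A technically cleaner alternative, which the paper itself suggests, is to bypass this step entirely: in Section \ref{DRcohom} the cohomology of $(L^2\gamma^\bu(\ZZ),B)$ is computed explicitly and shown to be finite-dimensional in each degree; since $\ker B^{q+1}$ is always closed (as $B^{q+1}$ is a closed operator) and $\mathrm{Im}\, B^q \subseteq \ker B^{q+1}$ has finite codimension in $\ker B^{q+1}$, any finite-codimensional subspace of a Banach space is closed, so $\mathrm{Im}\, B^q$ is automatically closed. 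Either route gives the conclusion.
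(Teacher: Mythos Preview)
Your proposal matches the paper's own argument exactly: the paper just points to Proposition~4.6.9 of \cite{Spr12}, notes that the only modification is to replace the integral estimates near $x=0$ by those of Lemma~\ref{LemA.0}, and adds that alternatively the result follows from the explicit cohomology computation in Section~\ref{DRcohom}; you have sketched both routes in the same spirit. One caveat on your second route: the assertion ``any finite-codimensional subspace of a Banach space is closed'' is false as stated (the kernel of a discontinuous linear functional is a counterexample); what is true, and what applies here, is that the range of a \emph{bounded} operator between Banach spaces is closed whenever it has finite codimension --- and this applies to $B^q$ once you equip $\D(B^q)$ with the graph norm.
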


\subsection{The intersection exterior derivative operator and its dual}
\label{ss3.16}

\begin{defi}\label{defdrelX} The operators $\d^{(\bu)}_{\mf,\rm rel}$ and $\d^{(\bu)}_{\mf^c,\rm rel}$ are defined  as follows: 

\begin{enumerate}

\item if $(\pf, m)=(\mf,2p-1), (\mf^c,2p-1), (\mf^c, 2p)$,  then:
\begin{align*}
\DS(\d^{(q)}_{\mf, \rm rel})=\DS(\d^{(q)}_{\mf^c, \rm rel})&=\DS(\d^{(q)}_{\rm min}),&0&\leq q\leq p,\\
\DS(\d^{(q)}_{\mf, \rm rel})=\DS(\d^{(q)}_{\mf^c, \rm rel})&=\left\{\om\in \DS(\d^{(q)}_{\rm max})~|~bv_{\rm rel}(\b X)(\om)=0\right\},
&p+1&\leq q\leq m+1;
\end{align*}

\item if $(\pf,m)=(\mf, 2p)$,  then:
\begin{align*}
\DS(\d^{(q)}_{\mf, \rm rel})&=\DS(\d^{(q)}_{\rm min}),&0&\leq q\leq p-1,\\
\DS(\d^{(q)}_{\mf, \rm rel})&=\left\{\om\in \DS(\d^{(q)}_{\rm max})~|~bv_{\rm rel}(\b X)(\om)=0\right\},
&p&\leq q\leq 2p+1.
\end{align*}


\end{enumerate}

We call these operators the lower and upper middle perversities intersection exterior differentiation operators on $X$ with relative  boundary conditions. 
\end{defi}

\begin{defi}\label{defdabsX} The operators $\d^{(\bu)}_{\mf,\rm abs}$ and $\d^{(\bu)}_{\mf^c,\rm abs}$ are defined  as follows: 

\begin{enumerate}
\item if $(\pf, m)=(\mf,2p-1), (\mf^c,2p-1), (\mf^c, 2p)$,  then:
\begin{align*}
\DS(\d^{(q)}_{\mf, \rm abs})=\DS(\d^{(q)}_{\mf^c, \rm abs})&=\left\{\om\in \DS(\d^{(q)}_{\rm max})~|~bv_{\rm rel}(x_0)(\om)=0\right\},&0&\leq q\leq p,\\
\DS(\d^{(q)}_{\mf, \rm abs})=\DS(\d^{(q)}_{\mf^c, \rm abs})&= \DS(\d^{(q)}_{\rm max}),
&p+1&\leq q\leq m+1;
\end{align*}

\item if $m=2p$,  then:
\begin{align*}
\DS(\d^{(q)}_{\mf, \rm abs})&=\left\{\om\in \DS(\d^{(q)}_{\rm max})~|~bv_{\rm rel}(x_0)(\om)=0\right\},&0&\leq q\leq p-1,\\
\DS(\d^{(q)}_{\mf, \rm abs})&= \DS(\d^{(q)}_{\rm max}),
&p&\leq q\leq 2p+1.
\end{align*}


\end{enumerate}

We call these operators the lower and upper middle perversities intersection exterior differentiation operators on $X$ with absolute boundary conditions. 
\end{defi}


\begin{defi}\label{defdeabsX} The operators $\dr^{(\bu)}_{\mf,\rm abs}$ and $\dr^{(\bu)}_{\mf^c,\rm abs}$ are defined  as follows: 

\begin{enumerate}

\item if $(\pf, m)=(\mf,2p-1), (\mf^c,2p-1), (\mf^c, 2p)$,  then:
\begin{align*}
\DS(\dr^{(q)}_{\mf, \rm abs})=\DS(\dr^{(q)}_{\mf^c, \rm abs})&=\left\{\vv\in \DS(\dr^{(q)}_{\rm max})~|~bv_{\rm abs}(\b X)(\vv)=0\right\},&0&\leq q\leq p+1,\\
\DS(\dr^{(q)}_{\mf, \rm abs})=\DS(\dr^{(q)}_{\mf^c, \rm abs})&=\DS(\dr^{(p)}_{\rm min}),
&p+2&\leq q\leq m+1;
\end{align*}

\item if $m=2p$,  then:
\begin{align*}
\DS(\dr^{(q)}_{\mf^c, \rm abs})&=\left\{\vv\in \DS(\dr^{(q)}_{\rm max})~|~bv_{\rm abs}(\b X)(\vv)=0\right\},&0&\leq q\leq p+1,\\
\DS(\dr^{(q)}_{\mf^c, \rm abs})&=\DS(\dr^{(p)}_{\rm min}),
&p+2&\leq q\leq 2p+1.
\end{align*}


\end{enumerate}

\end{defi}

\begin{defi}\label{defderelX} The operators $\dr^{(\bu)}_{\mf,\rm rel}$ and $\dr^{(\bu)}_{\mf^c,\rm rel}$ are defined  as follows: 

\begin{enumerate}

\item if $(\pf, m)=(\mf,2p-1), (\mf^c,2p-1), (\mf^c, 2p)$,  then:
\begin{align*}
\DS(\dr^{(q)}_{\mf, \rm rel})=\DS(\dr^{(q)}_{\mf^c, \rm rel})&=\DS(\dr^{(q)}_{\rm max}),&0&\leq q\leq p+1,\\
\DS(\dr^{(q)}_{\mf, \rm rel})=\DS(\dr^{(q)}_{\mf^c, \rm rel})&=\left\{\vv\in \DS(\dr^{(q)}_{\rm max})~|~bv_{\rm abs}(x_0)(\vv)=0\right\},
&p+2&\leq q\leq m+1;
\end{align*}

\item if $m=2p$,  then:
\begin{align*}
\DS(\dr^{(q)}_{\mf, \rm rel})&=\DS(\dr^{(q)}_{\rm max}),&0&\leq q\leq p,\\
\DS(\dr^{(q)}_{\mf, \rm rel})&=\left\{\vv\in \DS(\dr^{(q)}_{\rm max})~|~bv_{\rm abs}(x_0)(\vv)=0\right\},
&p+1&\leq q\leq 2p+1.
\end{align*}


\end{enumerate}

\end{defi}


We will write $\pf$ for either $\mf$ or $\mf^c$, and ${\rm bc}$ for either $\rm rel$ or $\rm abs$. 
These operators are closed extensions of the minimal operators. They are each other adjoints,  more precisely: $(\d^{(q)}_{\pf,\rm bc})^\da=\dr^{(q+1)}_{\pf,\rm bc}$. 
Moreover, $\Im (\d^{(q)}_{\pf, \rm bc})\subseteq \DS(\d^{(q+1)}_{\pf,\rm bc})$, and $\d^{(q+1)}_{\pf, \rm bc}\d^{(q)}_{\pf, \rm bc}=0$, and $\Im (\dr^{(q)}_{\pf, \rm bc})\subseteq \DS(\dr^{(q-1)}_{\pf, \rm bc})$, and $\dr^{(q-1)}_{\pf, \rm bc}\dr^{(q)}_{\pf, \rm bc}=0$. 

\subsection{Some de Rham complexes}
\label{drcomp}

By the results of the previous section, we see that each of the pairs $(\DS(\d^{(\bu)}_{\pf, \rm bc}), \d^{(\bu)}_{\pf, \rm bc})$, $(\DS(\d^{(\bu)}_{ \rm min}), \d^{(\bu)}_{ \rm min})$, and $(\DS(\d^{(\bu)}_{\rm max}), \dr^{(\bu)}_{ \rm max})$ defines a complex of Hilbert spaces and closed operators. We call these complexes the intersection de Rham complexes, and the relative and the absolute de Rham complex of the space $X$ (with coefficients in $E_\rho$). We  use the following notation 
\begin{align*}
I^\pf H_{\rm DR}^q(X,\b X, E_\rho)&=H_q(\DS( \d_{\pf, \rm rel}^{\bu}), \d_{\pf, \rm rel}^{\bu}),&
I^\pf H^q_{ \rm DR}(X, E_\rho)&=H_q(\DS( \d_{\pf, \rm abs}^{(\bu)}), \d_{\pf, \rm abs}^{(\bu)}),\\
 H_{\rm DR}^q(X,\b X, E_\rho)&=H_q(\DS( \d_{ \rm min}^{\bu}), \d_{ \rm min}^{\bu}),&
 H_{\rm DR}^q(X, E_\rho)&=H_q(\DS( \d_{ \rm max}^{\bu}), \d_{ \rm max}^{\bu}).
\end{align*}

We now compute the homology of these complexes. For we introduce the suitable co homology long exact sequence of the pair on one side, and on the other we compute explicitly the co homology of the finite metric horn $\ZZ$. We proceed by assuming that $X$ has empty boundary.

The inclusions of sub manifolds $i_Z:Z=\ZZ\to X$ and $i_Y:Y\to X$ induce by restriction   maps on the space of forms, and since  they commute with the exterior derivative operators,  the following surjective chain maps (observe that on the smooth  boundary min/max=rel/abs.
\begin{align*}
\is_Z^\bu:& \DS(\d^\bu_{X,\pf})\to \DS(\d^\bu_{Z,\pf, \rm abs}),&
\is_Y^\bu:& \DS(\d^\bu_{X,\pf})\to \DS(\d^\bu_{Y,\rm abs}),\\
\is_Z^\bu:& \DS(\d^\bu_{X,\rm max})\to \DS(\d^\bu_{Z,\rm max}),&
\is_Y^\bu:& \DS(\d^\bu_{X,\rm max})\to \DS(\d^\bu_{Y,\rm max}).
\end{align*}

\begin{lem} We have the identifications: $\ker \is_Y^q=\DS(\d^q_{Z,\pf,\rm rel})(\DS(\d^q_{Z,\rm min}))$,  $\ker \is_Z^q=\DS(\d^q_{Y, \rm rel})(\DS(\d^q_{Y,\rm min}))$.
\end{lem}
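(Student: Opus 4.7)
The plan is to prove each identification by a zero-extension argument: a form lies in the kernel of the restriction map to $Y$ (respectively to $Z$) precisely when it extends by zero across the interface $W$, and this zero-extension preserves the weak exterior derivative exactly when the relative boundary condition at $W$ holds on the piece where the form is supported.

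For $\ker \is_Y^q$, I would take $\om \in \DS(\d^q_{X,\pf})$ (resp.\ $\DS(\d^q_{X,\rm max})$) with $\is_Y^q\om=0$. Then $\om$ vanishes a.e.\ on $Y$, so it is determined by $\om_Z := \om|_Z$, and restriction immediately gives $\om_Z\in\DS(\d^q_{Z,\mathrm{max}})$. To read off the boundary condition at the interface $W=\{l\}\times W\subset\b Z$, I apply the Green formula of Proposition \ref{greenformulaXd} to $\om$ against test forms $\vv\in\DS(\dr^{q+1}_{X,\rm max})$ supported in a collar of $W$: since $\om\equiv 0$ on $Y$, the only surviving interface contribution is $\int_W t\om_Z\wedge\star_W n\vv$, which must vanish for all such $\vv$, forcing $t\om_Z|_W=0$, i.e.\ the relative condition at $W$. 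The conditions at the tip are inherited directly from $\om$: in the $\pf$ case these are precisely the ones defining $\d^q_{Z,\pf,\mathrm{rel}}$ at $x=0$, while in the $\rm max$ case the only tip constraint is $L^2$ integrability, which combined with rel at $W$ characterises $\om_Z$ as an element of $\DS(\d^q_{Z,\mathrm{min}})$. Conversely, given $\om_Z$ in the right-hand side, its zero-extension $\om$ is in $L^2$, and the rel condition at $W$ (again via Proposition \ref{greenformulaXd}) is exactly what ensures that the distributional derivative of $\om$ on $X$ equals the zero-extension of $D\om_Z$, hence lies in $L^2(X)$; the tip conditions are preserved by zero-extension. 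The statement for $\ker \is_Z^q$ follows by the same argument with $Y$ and $Z$ exchanged; there $W$ sits in the smooth boundary of $Y$, so the relative condition reduces to the classical tangential-trace condition and the operators $\d^q_{Y,\rm rel}$ and $\d^q_{Y,\rm min}$ are the standard ones on a smooth manifold with boundary.

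The main obstacle I anticipate is the $\rm max$ identification on $Z$: showing that an element of $\DS(\d^q_{X,\rm max})$ supported in $\overline Z$ actually lies in $\DS(\d^q_{Z,\rm min})$, not merely in $\DS(\d^q_{Z,\rm max})$ with rel at $W$. This requires verifying that the asymptotic behaviour at the tip dictated by Lemma \ref{lem1}, combined with the sharp weighted estimates of Lemma \ref{LemA.0}, allows approximation by smooth forms compactly supported in $\mathring Z$; the tip identities of Lemma \ref{lemmanew1} are used to guarantee that no resonant boundary value at $x=0$ can survive to obstruct such an approximation.
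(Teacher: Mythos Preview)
The paper states this lemma without proof and immediately uses it to produce the long exact sequence of Proposition \ref{longexactX}. Your zero-extension argument through the Green formula of Proposition \ref{greenformulaXd} is the standard and correct route for the $\pf$ identification, and it matches what the paper tacitly relies on.

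Your concern in the last paragraph is well placed, but the resolution is not the one you sketch. Notice that the long exact sequence in Proposition \ref{longexactX} is written for the complex $\DS(\d^\bu_{X,\rm min})$, not $\DS(\d^\bu_{X,\rm max})$; the parenthetical in the lemma should therefore be read as the kernel of $\is_Y^q\colon \DS(\d^q_{X,\rm min})\to \DS(\d^q_{Y,\rm max})$. Under that reading the tip condition at $x_0$ is already part of the hypothesis, by Proposition \ref{domd1X}, so nothing further is needed beyond your interface argument at $W$. If instead one takes the map literally from $\DS(\d^q_{X,\rm max})$, the claimed kernel $\DS(\d^q_{Z,\rm min})$ is too small: any $\om_Z\in\DS(\d^q_{Z,\rm max})$ with $t\om_Z|_W=0$ but failing the tip condition extends by zero to an element of $\DS(\d^q_{X,\rm max})$ killed by $\is_Y^q$. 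Lemma \ref{lemmanew1} does not save this---it says precisely that the tip limit can be nonzero when $\t\om_1$ and $\t\vv_2$ are both exact, both coexact, or both harmonic, so it confirms the obstruction rather than ruling it out. In short, drop the approximation strategy at the tip and instead read the parenthetical as referring to the minimal complex on $X$; then your argument goes through cleanly.
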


\begin{prop}\label{longexactX} For each of the de Rham complexes above, the  inclusion  $i_Y:Y\to X$ and   $i_Z:\ZZ\to X$ naturally induces a  long exact sequence in co homology. For example 
\[
\xymatrix{\dots\ar[r]& H_q(\DS(\d^\bu_{Z,\rm min}), \d^\bu_{Z,\rm min})\ar[r]&H_q(\DS(\d^\bu_{X,\rm  min}),\d^\bu_{X,\rm min})\ar[r]^{\hspace{40pt}\is_Y^{*,q}}&\\
&\ar[r] &H_q(\DS(\d^\bu_{Y,\rm mx}),\d^\bu_{Y,\rm max})\ar[r]& H_{q+1}(\DS(\d^\bu_{Z, \rm min}),\d^\bu_{Z,\rm min})\ar[r]&\dots
}
\]

\end{prop}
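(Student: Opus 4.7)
The plan is to obtain each long exact sequence from the standard zig-zag (snake) construction applied to a suitable short exact sequence of cochain complexes of Hilbert spaces and closed operators. For the example stated, I would consider
\[
0 \longrightarrow \ker \is_Y^\bu \longrightarrow \DS(\d^\bu_{X,\rm min}) \xrightarrow{\is_Y^\bu} \DS(\d^\bu_{Y,\rm max}) \longrightarrow 0.
\]
Surjectivity of $\is_Y^\bu$ is already recorded in the display preceding the lemma, and the preceding lemma identifies $\ker \is_Y^q$ with the cochain $q$ module of the finite horn equipped with the minimal (respectively relative intersection) boundary condition at the tip. Injectivity of the inclusion is immediate from the definition. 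The same scheme applies to the intersection variant (yielding the relative intersection complex in the kernel) and to the maximal variant (yielding $\DS(\d^\bu_{Z,\rm max})$ in the kernel); in every case the target complex on the manifold side carries the absolute or maximal boundary condition on the collar face $W$, so that surjectivity is the assertion of the display introducing $\is_Y^\bu$ and $\is_Z^\bu$.

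First I would verify that the inclusion $\ker \is_Y^\bu \hookrightarrow \DS(\d^\bu_{X,\rm min})$ and the restriction $\is_Y^\bu$ are chain maps. For the inclusion this is automatic, since elements of $\ker \is_Y^\bu$ vanish identically on $Y$ and hence their exterior derivative does too, so the tangential boundary value at $W$ needed to live in the minimal (or relative intersection) domain on $Z$ vanishes trivially. For $\is_Y^\bu$ the chain map property is the statement that the weak exterior derivative commutes with restriction to the open subset $Y$, which follows from the local nature of $D$ and the explicit formulas of Section \ref{explicit}. Then I would build the connecting homomorphism $\partial^q : H_q(\DS(\d^\bu_{Y,\rm max})) \to H_{q+1}(\ker \is_Y^{\bu+1})$ in the usual way: given a cocycle $\om$ on $Y$, pick a lift $\t\om \in \DS(\d^q_{X,\rm min})$ obtained by extending $\om$ by a smooth cut off $\chi$ in the collar $\CF$ that equals $1$ near $W$ on the $Y$ side and is supported away from the tip $x=0$, and set $\partial^q[\om] = [\d^q_{X,\rm min}\t\om]$, which lies in $\ker \is_Y^{q+1}$ and is closed.

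Exactness of the resulting hexagon is then a routine diagram chase: exactness at $H_q(\DS(\d^\bu_{X,\rm min}))$ uses that a class killed by $\is_Y^{*,q}$ can be represented by a cocycle supported in $\ZZ$ modulo an exact form on $X$; exactness at $H_q(\DS(\d^\bu_{Y,\rm max}))$ uses the ambiguity of the lift $\t\om$ modulo elements of $\ker \is_Y^q$; and exactness at $H_{q+1}(\ker \is_Y^{\bu+1})$ uses that if $\d^q_{X,\rm min}\t\om$ is exact in the kernel complex then $\t\om$ minus a primitive defines an honest cocycle on $X$ whose restriction represents $[\om]$. The main technical point, and the only one that is not entirely formal, is to justify that the cut off lift $\t\om$ genuinely lies in the specified domain on $X$. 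This reduces to checking that multiplication by a smooth function $\chi$ compactly supported in a neighbourhood of $W$ away from the tip and away from $\b X$ preserves square integrability and all the boundary conditions involved: both $bv_{\rm rel}(x_0)$ and $bv_{\rm abs}(x_0)$ are automatically preserved because $\chi$ vanishes near $x=0$, and the behaviour at $\b X$ is unaffected because $\chi$ vanishes there as well. These checks are immediate from the explicit asymptotic expansions collected in Lemmas \ref{lem1} and \ref{lem2}.
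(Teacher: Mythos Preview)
Your approach is correct and is exactly the standard homological argument the paper has set up but does not spell out: the paper states the proposition without proof, relying on the preceding display (declaring $\is_Y^\bu$ and $\is_Z^\bu$ to be surjective chain maps) and the preceding lemma (identifying $\ker \is_Y^q$ with the minimal/relative complex on $Z$) to reduce everything to the snake lemma for the short exact sequence of cochain complexes you wrote down. Your cut-off construction of the connecting homomorphism and the diagram chase are the expected details.
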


\subsection{De Rham co homology}
\label{DRcohom}

In this section we compute the de Rham co homology of the de Rham complexes described in the previous Section \ref{drcomp}. We give details on one particular case.

\begin{lem} \label{formalsol-kerd} The formal solutions of the equation $D\om=0$ in degree $q$ on $\ZZ$ are:
\begin{enumerate}
\item[E.] $\te_E=\t\te_1$,
\item[O.] $\te_O=dx\wedge f_2 \t\te_2$, 
\item[I.] $\te_I=\t D \t\be_1$, 
\item[IV.] $\te_{IV}=dx\wedge f_2 \t D\t\be_2$, 
\item[X.] $\te_{X}=f_1 \t D \t\om_2+dx\wedge f'_1 \t\om_2$, $f_1\not=1$,
\end{enumerate}
where $\t\te_1$ and $\t\te_2$,  are harmonic, $\t\be_2$, $\t\be_1$ and $\t\om_2$ co exact forms  of the section. 
\end{lem}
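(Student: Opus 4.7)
The plan is to substitute the separation-of-variables ansatz $\omega|_\CF = f_1 \tilde\omega_1 + dx \wedge f_2 \tilde\omega_2$ into the explicit formula for $D$ given in Section \ref{explicit}, which gives
\[
D\omega|_\CF = f_1 \tilde D \tilde\omega_1 + dx \wedge \bigl(f_1^I \tilde\omega_1 - f_2 \tilde D \tilde\omega_2\bigr).
\]
Hence $D\omega = 0$ is equivalent to the two identities of forms on $W$, parametrised by $x$:
\[
f_1 \tilde D \tilde\omega_1 = 0, \qquad f_1^I \tilde\omega_1 = f_2 \tilde D \tilde\omega_2.
\]
The remaining work is a case analysis on the shape of $f_1$, combined with the Hodge decomposition on the closed section $W$.

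I would split into three regimes. If $f_1 \equiv 0$, only the second equation survives and forces $\tilde\omega_2 \in \ker \tilde D$; Hodge decomposition on $W$ writes $\tilde\omega_2 = \tilde\theta_2 + \tilde D \tilde\beta_2$ with $\tilde\theta_2$ harmonic and $\tilde\beta_2$ coexact, yielding cases O and IV. If $f_1$ is a nonzero constant, one may rescale so that $f_1 = 1$; then $f_1^I = 0$ and both conditions degenerate to $\tilde D \tilde\omega_1 = 0$ and $f_2 \tilde D \tilde\omega_2 = 0$, and Hodge decomposition expresses $\omega$ as a superposition of the types E, I, O, IV. The nontrivial case is $f_1$ non-constant (the loose meaning of $f_1\neq 1$ in the statement): the first equation still forces $\tilde D \tilde\omega_1 = 0$, while the second reads $f_1^I \tilde\omega_1 = f_2 \tilde D \tilde\omega_2$ with exact right-hand side. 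By Hodge orthogonality on $W$, the harmonic component of the closed form $\tilde\omega_1$ must vanish, so $\tilde\omega_1$ is exact.

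Since $\tilde\omega_1, \tilde\omega_2$ are independent of $x$ while $f_1^I, f_2$ are independent of $y$, pointwise proportionality of the two sides of $f_1^I \tilde\omega_1 = f_2 \tilde D \tilde\omega_2$ at every $x$ where $f_1^I(x) \neq 0$ forces $\tilde\omega_1 = c\, \tilde D \tilde\omega_2$ for a constant $c$, together with $f_2 = c f_1^I$; absorbing $c$ into $\tilde\omega_2$ and taking $\tilde\omega_2$ coexact (by the freedom in Hodge decomposition) puts the solution into the form of case X. To close the argument I would verify by direct substitution that each of the five listed types actually solves $D\omega = 0$; for X this is automatic since $\theta_X = D(f_1 \tilde\omega_2)$.

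The only delicate step is the separation in the non-constant case: one must use both the closedness of $W$ (so Hodge decomposition applies and exact forms are orthogonal to harmonic ones) and the $x$-independence of the section forms to conclude that the $x$-dependent ratio $f_1^I/f_2$ collapses to a scalar that can be absorbed into the ansatz. Everything else is bookkeeping, and no $L^2$ or boundary-behaviour conditions are needed because the statement classifies only formal solutions.
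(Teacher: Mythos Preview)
Your argument is correct and in fact more detailed than the paper's, which simply says ``direct verification, compare with \cite[4.6.1]{Spr12}'' and lets the reader check each listed type against the formula for $D$. You do the harder direction, showing that every separated solution decomposes as a sum of the listed types; the case split on $f_1$ constant versus non-constant and the use of Hodge decomposition on the closed section $W$ are exactly the right moves.

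One step you state a bit loosely: in the non-constant case, after writing $\tilde\omega_1=c\,\tilde D\tilde\omega_2$ and $f_2=c f_1^I$, you say ``take $\tilde\omega_2$ coexact by the freedom in Hodge decomposition.'' What is actually happening is that if $\tilde\omega_2=\tilde\eta+\tilde\omega_2^{\rm cex}$ with $\tilde\eta$ closed and $\tilde\omega_2^{\rm cex}$ coexact, then $\tilde D\tilde\omega_2=\tilde D\tilde\omega_2^{\rm cex}$, so the term $dx\wedge f_1^I\tilde\eta$ splits off as a contribution of types O and IV, and what remains is genuinely of type X with a coexact $\tilde\omega_2^{\rm cex}$. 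So the solution is a \emph{sum} of the listed types, not literally a single type X, which is all the lemma claims. Your phrasing is fine once this is made explicit.
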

\begin{proof} The proof is by direct verification, compare with \cite[4.6.1]{Spr12}. 
\end{proof}


\begin{prop}\label{kerd} The kernel of $\d^{(q)}_{\rm max}$ on $\ZZ$ is the following space
\[
\ker \d^{(q)}_{\rm max}=\left\{\begin{array}{ll} (K_E\oplus K_I)+ (K_O\oplus K_{IV})+K_X,& q< \frac{m+1}{2},\\
(K_O\oplus K_{IV})+K_X,& q\geq \frac{m+1}{2}.\end{array}\right.
\]
where
\begin{align*}
K_E=&\{\t\theta\in\H^{q}(W) \},\\
K_O=&\{dx\wedge f\t\theta~|~ \t\theta\in \H^{q-1}(W),  f\in AC_{\rm loc}((0,l]), f\in L^2((0,l],h^{1-2\al_{q-1}})\},\\
K_I=&\{\t\om\in\Im \t D^{q-1} \},\\
K_{IV}=&\{dx\wedge f\t\om~|~ \t\om\in\Im\t  D^{q-2},  f\in AC_{\rm loc}((0,l]), f\in L^2((0,l],h^{1-2\al_{q-1}})\},\\
K_X=&\{f\t \d\t\om+dx\wedge f^I\t\om~|~ \t\om\in\Im \t\DD^{q}, f\in AC_{\rm loc}((0,l]), f\in L^2((0,l],h^{1-2\al_q}), \\
&f^I\in L^2((0,l],h^{1-2\al_{q-1}})\}.
\end{align*}
\end{prop}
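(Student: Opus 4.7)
The plan is to prove the two set-theoretic inclusions separately. The formal solutions listed in Lemma~\ref{formalsol-kerd} already exhaust the five shapes in which a kernel element may appear, so the work is to (a) identify the correct weighted $L^2$-regularity conditions on the radial coefficient functions for each shape, and (b) verify that every $\om\in\ker\d^{(q)}_{\rm max}$ does decompose as a sum of these five shapes. Throughout I would use the explicit expressions for $\om|_\CF$, $D\om|_\CF$ from Section~\ref{explicit} and the inner product formula \eqref{product}.

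For the inclusion $\supseteq$, I would take a generic element of each of $K_E,K_O,K_I,K_{IV},K_X$ and verify two things: it is formally annihilated by $D$ (immediate from Lemma~\ref{formalsol-kerd}), and both it and its $D$-image are square integrable on $\ZZ$. The weighted conditions $f\in L^2((0,l],h^{1-2\al_q})$, $f^I\in L^2((0,l],h^{1-2\al_{q-1}})$ imposed in the definitions of $K_O,K_{IV},K_X$ are designed exactly so that these two requirements hold, and the verification reduces to the formulas in Section~\ref{explicit}. The degree restriction $q<\tfrac{m+1}{2}$ for $K_E$ and $K_I$ enters precisely here: these spaces carry a constant radial coefficient, so $L^2$-membership reduces to $\int_0^l h^{1-2\al_q}(x)\,dx<\infty$. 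Since $h(x)\sim x^\ka$ near $0$ with $\ka>1$ and $\al_q$ depends linearly on $q$ through $\al_q=q-\tfrac{m-1}{2}$, this integral diverges exactly when $q\geq\tfrac{m+1}{2}$, so $K_E$ and $K_I$ drop out in that range.

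For the reverse inclusion I would take $\om\in\ker\d^{(q)}_{\rm max}$, restrict to the collar and write $\om|_\CF=f_1\t\om_1+dx\wedge f_2\t\om_2$. Applying the Hodge decomposition on $W$ to $\t\om_1$ and $\t\om_2$ and inserting into
\[
D\om|_\CF=f_1\t D\t\om_1+dx\wedge(f_1^I\t\om_1-f_2\t D\t\om_2)=0,
\]
the vanishing splits, by orthogonality on $W$, into independent equations on harmonic, exact and co-exact components. The harmonic parts of $\t\om_1,\t\om_2$ yield $K_E$ and $K_O$ (with $f_1$ forced to be constant by $f_1^I\t h_1=0$); the purely exact parts yield $K_I$ and $K_{IV}$; the co-exact parts cannot stand alone and instead produce the coupled shape $K_X$, in which $\t\om_2$ is co-exact, $\t\om_1=\t D\t\om_2$ and $f_2=f_1^I$. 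Lemma~\ref{lem1} then translates the abstract condition $\om,D\om\in L^2$ into the precise weighted $L^2$ and $AC_{\rm loc}$ membership recorded in each $K_\bullet$.

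The main obstacle will be the bookkeeping of the co-exact case: one must show that $K_X$ is the only way co-exact components of $\t\om_1$ can appear in a kernel element, and that the sum $(K_E\oplus K_I)+(K_O\oplus K_{IV})+K_X$ is not redundant, i.e.\ the $K_X$ shape does not secretly overlap with $K_O\oplus K_{IV}$ or with $K_I$. This is resolved by uniqueness of the Hodge decomposition on $W$ together with the observation that the pairing $f_2=f_1^I$ genuinely couples the tangential and normal radial profiles in $K_X$, whereas in $K_O,K_{IV},K_I$ the two profiles are independent. The method and the order of the case analysis mirror the conical computation in \cite[4.6.1]{Spr12}; the only essential novelty in the horn case is the irregular singular behaviour of the radial coefficients near $x=0$, and that has already been encoded in Lemma~\ref{LemA.0} and Lemma~\ref{lem1}.
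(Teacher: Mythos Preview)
Your proposal is correct and follows essentially the same route as the paper: the paper invokes Lemma~\ref{formalsol-kerd} to exhaust the formal shapes of kernel elements and then checks, type by type, which satisfy the weighted $L^2$ conditions (your conditions on $f$, $f^I$ and the threshold $q<\tfrac{m+1}{2}$ coming from $\int_0^l h^{1-2\al_q}<\infty$). Your explicit Hodge-on-$W$ argument for the inclusion $\subseteq$ is just a re-derivation of Lemma~\ref{formalsol-kerd}, and your anticipated ``obstacle'' about coupling in the co-exact case is already absorbed by that lemma, so no extra work is actually needed there.
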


\begin{proof} Let $\om=f_1\t\om_1+dx\wedge f_2\t \om_2\in  \LL_1(C_{(0,l]}(W))$. Since 
\[
D\omega  =f_1 \tilde d\tilde\omega_1 + d x\wedge\left(  
f'_1 \tilde\omega_1 - f_2  \tilde D\tilde\omega_2\right),
\]
we have that $\om\in \DS(\d^{(q)}_{\rm max})$ if and only if it satisfies the following  conditions:
\begin{enumerate}
\item[(I)] $f_1\t\om_1\in L^2((0,l],h^{1-2\al_q})\otimes L^2\ga(W,\Lambda^q T^* W)$, and  
$ f_1 \tilde D\tilde\omega_1\in L^2((0,l],h^{1-2\al_{q+1}})\otimes L^2\ga(W,\Lambda^{q+1} T^* W)$,
\item[(II)]
$f_2\t\om_2\in L^2((0,l],h^{1-2\al_{q-1}})\otimes L^2\ga(W,\Lambda^{q-1} T^* W)$, and  
$ f'_1 \tilde\omega_1 - f_2  \tilde D\tilde\omega_2\in L^2((0,l],h^{1-2\al_{q}})\otimes L^2\ga(W,\Lambda^q T^* W)$.
\end{enumerate}


We investigate what of the  harmonics given in Lemma \ref{formalsol-kerd} satisfy these relations.
\begin{enumerate}

\item The forms of types E and I require only condition (I), and this condition is satisfied if and only if $1\in  L^2((0,l],h^{1-2\al_q})$ that is true if and only if $\ka (1-2\al_q)>-1$, i.e. if and only if $\al_q<1$, i.e. if and only if $q<\frac{m+1}{2}$. This gives the space
$
K_E\oplus K_I
$ 
with $q<\frac{m+1}{2}$. 

\item The forms of type O and IV satisfy condition (I), and satisfy condition (II) if and only if $f_2\in  L^2((0,l],h^{1-2\al_{q-1}})$. 
This gives the space
$
K_O\oplus K_{IV}$. 

\item The forms of type X always satisfy condition (II), and satisfy condition (I) if and only if $f\in  L^2((0,l],h^{1-2\al_q})$, and $f'\in  L^2((0,l],h^{1-2\al_{q-1}})$. 
This gives 
$K_X$. 

\end{enumerate}
 
\end{proof}

\begin{lem}\label{formalsol-de} The formal solutions of the equation $\DD\om=0$ in degree $q$ are:
\begin{enumerate}
\item[E.] $\te_E=f_1\t\te_1$,
\item[O.] $\te_O=h^{2\al_{q-1}-1}dx\wedge  \t\te_2$, 
\item[I.] $\te_{I}=f_1 \t \DD\t\be_1$,
\item[IV.] $\te_{IV}=h^{2\al_{q-1}-1}dx\wedge  \t \DD\t\be_2$,
\item[Y.] $\te_{Y}=h^{2\al_{q-1}+1} (h^{1-2\al_{q-1}}f_2)' \t\om_1+dx\wedge f_2 \t \DD\t\om_1$,  $f_2\not=h^{2\al_{q-1}-1}$, 
\end{enumerate}
where $\t\te_1$ and $\t\te_2$ are harmonic, $\t\be_1$, $\t\be_2$ and $\t\om_1$ are exact forms of the section. 
\end{lem}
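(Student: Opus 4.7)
The plan is to substitute the general ansatz $\om|_\CF = f_1\t\om_1 + dx\wedge f_2\t\om_2$ into the explicit formula for $\DD$ derived in Section \ref{explicit}, namely
\[
\DD\om|_\CF=-h^{2\al_{q-1}-1}(h^{1-2\al_{q-1}}f_2)^I\,\t\om_2+\frac{f_1}{h^2}\t\DD\t\om_1-\frac{f_2}{h^2}dx\wedge\t\DD\t\om_2,
\]
and to equate the result to zero. Separating the $dx$-free and $dx$-containing parts of $\DD\om=0$ produces the coupled system
\[
f_2\,\t\DD\t\om_2=0,\qquad -h^{2\al_{q-1}-1}(h^{1-2\al_{q-1}}f_2)^I\,\t\om_2+\frac{f_1}{h^2}\t\DD\t\om_1=0.
\]
The enumeration in the statement will then be obtained by a straightforward case analysis on the radial function $f_2$, followed by an application of Hodge theory on the closed section $W$.

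For the case analysis, I would first take $f_2\equiv 0$: the second equation reduces to $f_1\t\DD\t\om_1=0$, forcing $\t\om_1\in\ker\t\DD$, which by the Hodge decomposition on $W$ splits as a harmonic form plus a coexact form $\t\DD\t\be_1$ with $\t\be_1$ exact, giving Types E and I. Next, if $f_2\not\equiv 0$, the first equation forces $\t\DD\t\om_2=0$, so again by Hodge decomposition $\t\om_2=\t\te_2+\t\DD\t\be_2$. If moreover $f_2=h^{2\al_{q-1}-1}$, the radial coefficient $(h^{1-2\al_{q-1}}f_2)^I$ vanishes identically, so the second equation becomes $f_1\t\DD\t\om_1=0$; setting $f_1=0$ (the remaining solutions already being covered by Case~E, I) yields $\om=h^{2\al_{q-1}-1}dx\wedge\t\om_2$, whence Types O and IV. Finally, if $f_2\neq h^{2\al_{q-1}-1}$, the second equation couples the section-forms $\t\om_2$ and $\t\DD\t\om_1$: these must be proportional as $(q-1)$-forms on $W$, so after absorbing the constant into $f_1,f_2$ one may set $\t\DD\t\om_1=\t\om_2$, and the radial equation then determines $f_1=h^{2\al_{q-1}+1}(h^{1-2\al_{q-1}}f_2)^I$, giving Type Y.

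The main technical point — the analogue of the issue in Lemma \ref{formalsol-kerd} — is to show that the parametrisations are exhaustive and non-redundant. In Type Y this amounts to choosing $\t\om_1$ canonically: decomposing $\t\om_1=\t\te_1+\t d\t\alpha+\t\DD\t\be_1$ by Hodge theory on $W$, only the exact component $\t d\t\alpha$ contributes to $\t\DD\t\om_1$, while the harmonic and coexact components produce solutions already enumerated in Cases E, I, O, IV. Hence one may (and does) require $\t\om_1$ to be exact, matching the statement. The argument is otherwise routine, paralleling the verification for $\d_{\rm max}$ in Lemma \ref{formalsol-kerd} (and \cite[4.6.1]{Spr12}); no spectral analysis on the horn is needed at this stage, as we are working with formal solutions only.
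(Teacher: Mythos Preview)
Your proposal is correct and is precisely the direct verification the paper has in mind; the paper states this lemma without proof, just as for the dual Lemma \ref{formalsol-kerd} where the argument is ``by direct verification, compare with \cite[4.6.1]{Spr12}''. Your case analysis on the radial function $f_2$ followed by the Hodge decomposition on the closed section $W$ is exactly the expected route, and your remark on reducing $\t\om_1$ to its exact component in Type~Y (the harmonic and coexact parts being absorbed into Types~E and~I) is the right way to make the enumeration non-redundant.
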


\begin{prop}\label{kerde} The kernel of $\dr^{(q)}_{\rm max}$ is the following space
\[
\ker \dr^{(q)}_{\rm max}=\left\{\begin{array}{ll}  (H_E\oplus H_I)+H_Y,& q\leq \frac{m+1}{2},\\
(H_O\oplus H_{IV})+(H_E\oplus H_I)+H_Y,& q > \frac{m+1}{2}.\end{array}\right.
\]
where
\begin{align*}
H_E=&\{ f\t\te~|~ \t\te\in \H^{q}(W),  f\in AC_{\rm loc}((0,l]), f\in L^2((0,l],h^{1-2\al_{q}})\},\\
H_O=&\{h^{2\al_{q-1}-1}dx\wedge\t\theta~|~\t\te\in\H^{q-1}(W) \},\\
H_{I}=&\{ f\t\om~|~ \t\om\in\Im \DD^{q+1},  f\in AC_{\rm loc}((0,l]), f\in L^2((0,l],h^{1-2\al_{q}})\},\\
H_{IV}=& \{h^{2\al_{q-1}-1}dx\wedge\t\om~|~\t\om\in\Im \DD^{q} \},\\
H_Y=&\left\{h^{2\al_{q-1}+1} (h^{1-2\al_{q-1}}f)^I \t\om+dx\wedge f \t \dr\t\om ~|~ \t\om\in\Im D^{q-1},\right.\\
&\left. f\in AC_{\rm loc}((0,l]), f\in L^2((0,l],h^{1-2\al_{q-1}}), (h^{1-2\al_{q-1}} f)^I\in L^2((0,l],h^{2\al_{q}-1})\right\}.
\end{align*}
\end{prop}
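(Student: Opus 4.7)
The plan is to mirror the proof of Proposition~\ref{kerd} step by step, now for the adjoint operator. Take $\vv \in \DS(\dr^{(q)}_{\rm max})$ and decompose on the collar as $\vv|_\CF = f_1 \t\vv_1 + dx \wedge f_2 \t\vv_2$. Substituting into the explicit formula for $\DD$ from Section~\ref{explicit} and using the identification \eqref{ide1}, the requirements $\vv \in L^2$ and $\DD\vv \in L^2$ translate to four weighted $L^2$ conditions: ($\mathrm{I}'$) $f_1 \in L^2((0,l], h^{1-2\al_q})$; ($\mathrm{II}'$) $f_2 \in L^2((0,l], h^{1-2\al_{q-1}})$; ($\mathrm{III}'$) the tangential component $-h^{2\al_{q-1}-1}(h^{1-2\al_{q-1}} f_2)^I \t\vv_2 + h^{-2} f_1 \t\DD \t\vv_1$ lies in $L^2((0,l], h^{1-2\al_{q-1}}) \otimes L^2\Gamma^{q-1}(W)$; ($\mathrm{IV}'$) the normal component $h^{-2} f_2 \t\DD \t\vv_2$ lies in $L^2((0,l], h^{1-2\al_{q-2}}) \otimes L^2\Gamma^{q-2}(W)$.

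I then run through the five formal kernel types produced by Lemma~\ref{formalsol-de}. For \emph{types E and I} the section form satisfies $\t\DD\t\vv_1 = 0$ and the normal component is absent, so ($\mathrm{II}'$)--($\mathrm{IV}'$) are vacuous and only ($\mathrm{I}'$) survives. The free function $f = f_1$ may then be any element of $L^2((0,l], h^{1-2\al_q})$, yielding the spaces $H_E$ and $H_I$ in all degrees. For \emph{types O and IV}, the choice $f_2 = h^{2\al_{q-1}-1}$ is forced and $\t\DD \t\vv_2 = 0$, so $(h^{1-2\al_{q-1}} f_2)^I \equiv 0$ kills ($\mathrm{III}'$) and ($\mathrm{IV}'$). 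The sole surviving requirement is $h^{2\al_{q-1}-1} \in L^2((0,l], h^{1-2\al_{q-1}})$, equivalently $\int_0^l h^{2\al_{q-1}-1}\, dx < \infty$. Since $h(x) \sim x^\ka$ near the origin and $\ka > 1$, this holds iff $\al_{q-1} > \tfrac{1}{2} - \tfrac{1}{2\ka}$; the half-integer spacing of $\al_q$ then forces $\al_{q-1} \geq \tfrac{1}{2}$, i.e.\ $q > \tfrac{m+1}{2}$, which is the degree bifurcation in the statement.

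For \emph{type Y} the form $\te_Y$ is by construction a formal solution of $\DD\te = 0$, so ($\mathrm{III}'$) and ($\mathrm{IV}'$) are automatic. Conditions ($\mathrm{I}'$) and ($\mathrm{II}'$) read $h^{2\al_{q-1}+1}(h^{1-2\al_{q-1}} f)^I \in L^2((0,l], h^{1-2\al_q})$ and $f \in L^2((0,l], h^{1-2\al_{q-1}})$; simplifying with $2\al_{q-1}+1 = 2\al_q - 1$, the first reduces to $(h^{1-2\al_{q-1}}f)^I \in L^2((0,l], h^{2\al_q-1})$, exactly the formulation of $H_Y$. Local absolute continuity of $f$ then follows from these two weighted $L^2$ bounds via Lemma~\ref{LemA.0}. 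Assembling the surviving types over the two ranges $q \leq \tfrac{m+1}{2}$ and $q > \tfrac{m+1}{2}$ produces the claimed decomposition.

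The main obstacle is the sharp threshold identification for types O and IV: converting the continuous condition $\al_{q-1} > \tfrac{1}{2} - \tfrac{1}{2\ka}$ to the discrete $\al_{q-1} \geq \tfrac{1}{2}$ uses crucially $\ka > 1$, and is the same irregular singular phenomenon responsible for the dimensional dichotomies elsewhere in the paper. The remaining content is bookkeeping of five formal types against four weighted $L^2$ conditions, dual to the argument of Proposition~\ref{kerd}.
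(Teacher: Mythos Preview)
Your proposal is correct and follows essentially the same approach as the paper: the paper states Proposition~\ref{kerde} without proof, relying on the evident duality with the explicit proof of Proposition~\ref{kerd}, and your argument is precisely that dual computation. One minor imprecision: the absolute continuity of $f$ in type $Y$ comes from the fact that $h^{1-2\al_{q-1}}f$ (not $f$ itself) has both the function and its weak derivative locally integrable, so Lemma~\ref{LemA.0} should be applied to $h^{1-2\al_{q-1}}f$; since $h$ is smooth and nonvanishing on $(0,l]$ this transfers back to $f$.
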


\begin{theo}\label{coZ} If $(m,\pf)=(2p-1,\pf), (2p,\mf^c)$, then:
\begin{align*}
I^{\pf}H_{ \rm DR}^q(\ZZ,E_\rho)&=\left\{\begin{array}{ll}H^{q}(W), & 0\leq q\leq p-1,\\
\{0\},& p\leq q\leq m+1,
\end{array}\right.\\
I^{\pf}H_{ \rm DR}^q(\ZZ,W,E_\rho)&=\left\{\begin{array}{ll}\{0\}, & 0\leq q\leq p,\\
H^{q-1}(W),& p+1\leq q\leq m+1.
\end{array}\right.
\end{align*}

If $(m,\pf)=(2p,\mf)$, then:
\begin{align*}
I^{\mf}H_{ \rm DR}^q(\ZZ,E_\rho)&=\left\{\begin{array}{ll}H^{q}(W), & 0\leq q\leq p,\\
\{0\},& p+1\leq q\leq 2p+1,
\end{array}\right.\\
I^{\mf}H_{ \rm DR}^q(\ZZ,W,E_\rho)&=\left\{\begin{array}{ll}\{0\}, & 0\leq q\leq p+1,\\
H^{q-1}(W),& p+2\leq q\leq 2p+1.
\end{array}\right.
\end{align*}

If $m=2p-1$, then:
\begin{align*}
H_{ \rm DR}^q(\ZZ,E_\rho)&=\left\{\begin{array}{ll}H^{q-1}(W), & 0\leq q\leq p-1,\\
\{0\},& p\leq q\leq 2p,
\end{array}\right.\\
H_{ \rm DR}^q(\ZZ,W,E_\rho)&=\left\{\begin{array}{ll}\{0\}, & 0\leq q\leq p,\\
H^{q-1}(W),& p+1\leq q\leq 2p.
\end{array}\right.
\end{align*}

If $m=2p$, then:
\begin{align*}
H_{ \rm DR}^q(\ZZ,E_\rho)&=\left\{\begin{array}{ll}H^{q-1}(W), & 0\leq q\leq p,\\
\{0\},& p+1\leq q\leq 2p+1.
\end{array}\right.\\
H_{ \rm DR}^q(\ZZ,W,E_\rho)&=\left\{\begin{array}{ll}\{0\}, & 0\leq q\leq p,\\
H^{q-1}(W),& p+1\leq q\leq 2p+1.\end{array}\right.
\end{align*}

In all cases the isomorphisms are naturally induced by the inclusion $i:W\to C_{(0,l]}(W)$: and act as follows, in the absolute case $\t\te\mapsto [ \t\te]$, and in relative case $\t\te\mapsto [x^{(2\al_{q-1}-1)\ka} dx\wedge \t\te]$.
\end{theo}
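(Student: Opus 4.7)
The plan is to combine the explicit descriptions of the kernel of $\d^{(q)}_{\rm max}$ (Proposition \ref{kerd}) and of $\dr^{(q)}_{\rm max}$ (Proposition \ref{kerde}) with the construction of explicit primitives for all closed forms except the two harmonic types. For any of the four complexes under consideration ($\d^{(\bu)}_{\rm max}$, $\d^{(\bu)}_{\rm min}$, $\d^{(\bu)}_{\pf,\rm abs}$, $\d^{(\bu)}_{\pf,\rm rel}$) restricted to $\ZZ$, Proposition \ref{kerd} splits $\ker \d^{(q)}$ into the five types $K_E, K_O, K_I, K_{IV}, K_X$. The goal is to show that the three types $K_I, K_{IV}, K_X$ are always exact within the relevant domain, so that the cohomology is represented by the two harmonic types $K_E$ and $K_O$, and that the integrability restrictions together with the boundary-value conditions at $x = 0$ select precisely the stated degree windows and produce the stated isomorphisms with $H^q(W)$ and $H^{q-1}(W)$.

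Exactness of the three non-harmonic types follows from the explicit primitives
\[
\t D \t\be_1 = \d(\t\be_1), \qquad dx \wedge f\, \t D \t\be_2 = -\d(f\, dx \wedge \t\be_2), \qquad f\, \t D \t\om_2 + dx \wedge f^I\, \t\om_2 = \d(f\, \t\om_2).
\]
In each case the primitive has one of the shapes appearing in the integrability window of Proposition \ref{kerd} in degree $q-1$, and by Lemma \ref{LemA.0} it sits in the relevant domain whenever the original closed form did. For the intersection and minimal complexes, the boundary condition $bv_{\rm rel}(x_0) = 0$ is automatically preserved by these primitives, since $\t\be_1, \t\be_2, \t\om_2$ are orthogonal to $\H(W)$ and Lemma \ref{lemmanew1} then guarantees the vanishing of the boundary pairing. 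The two remaining types $K_E$ and $K_O$ admit the canonical choices $f_1 = 1$ and $f_2 = h^{2\al_{q-1}-1}$, which are in fact formally co-closed (types E and O of Lemma \ref{formalsol-de}); any two admissible functions in $K_E$ (resp.~$K_O$) are cohomologous precisely when their difference has an $L^2$-integrable primitive, reducing $K_E$ to a copy of $\H^q(W) \cong H^q(W)$ and $K_O$ to a copy of $\H^{q-1}(W) \cong H^{q-1}(W)$.

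The degree windows then match by direct integrability computations. The canonical $K_E$ representative lies in $\DS(\d^{(q)}_{\rm max})$ iff $1 \in L^2((0,l], h^{1-2\al_q})$, that is iff $\ka(1-2\al_q) > -1$; since $\ka > 1$ and $\al_q$ is a half-integer, this is equivalent to the low-degree cutoff stated in each case. Similarly, the canonical $K_O$ representative $h^{2\al_{q-1}-1}dx \wedge \t\te$ lies in $\DS(\d^{(q)}_{\rm max})$ (it is closed and square-integrable on $\ZZ$) and satisfies $bv_{\rm abs}(x_0) = 0$ exactly on the high-degree half, while $bv_{\rm rel}(x_0) = 0$ kills the $K_E$ representative by the limit formula of Proposition \ref{greenformulaXd}. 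Evaluating these two conditions in each of the four cases listed in the theorem gives the announced formulas, and the explicit isomorphisms $\t\te \mapsto [\t\te]$ from $K_E$ and $\t\te \mapsto [h^{2\al_{q-1}-1}dx \wedge \t\te]$ from $K_O$ are read off directly.

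The main obstacle is the careful case analysis at the two critical degrees $q = \lfloor (m+1)/2 \rfloor$ and $q = \lceil (m+1)/2 \rceil$, where $\al_q$ crosses $\tfrac{1}{2}$ and Lemma \ref{LemA.0} switches between the regimes giving a non-zero constant limit (case (2)) and pure decay (cases (3), (4)). It is exactly at these degrees that the boundary values of the canonical $K_E$ and $K_O$ representatives become non-trivial, and their imposition discriminates between $\d_{\rm min}$, $\d_{\rm max}$, and the two intersection perversities $\mf$, $\mf^c$ in the even-dimensional case. Verifying uniformly that the prescribed boundary condition removes one of the two candidate harmonics in each critical degree, and that the $K_I, K_X$ primitives constructed above continue to respect the boundary condition, is the technical heart of the argument and the only point where the even/odd parity of $m$ and the choice of perversity genuinely enter.
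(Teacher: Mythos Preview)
Your overall strategy is the same as the paper's: decompose $\ker \d^{(q)}$ into the five types $K_E,K_O,K_I,K_{IV},K_X$ via Proposition~\ref{kerd}, show the three non-harmonic types are always exact by explicit primitives, and read off the degree windows for the surviving harmonic types from integrability. The use of Lemma~\ref{lemmanew1} to check that the primitives of types $I,IV,X$ inherit the boundary condition at $x_0$ is exactly the right observation.

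There is, however, a genuine confusion in how you attribute the mechanism killing $K_E$ and $K_O$. In the relative complexes ($\d_{\rm min}$ and $\d_{\pf,\rm rel}$), the type-$E$ representative $\t\te$ is excluded not by $bv_{\rm rel}(x_0)$ but by the tangential condition $(\om_{\rm tan})|_{x=l}=0$ at the smooth boundary; the condition at the tip is in fact vacuous on constants in most degrees. Conversely, in the absolute complexes there is no $bv_{\rm abs}(x_0)$ condition on $\d_{\rm max}$ at all, so the canonical $K_O$ representative is not ``killed by a boundary value'': the paper instead shows that \emph{every} $dx\wedge f\,\t\te\in K_O$ is exact in $\DS(\d_{\rm max})$ by solving the initial-value problem $F'=f$ and using the freedom in the constant of integration to force $F\in L^2((0,l],h^{1-2\al_{q-1}})$. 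This step --- that one can always readjust the primitive's constant in the absolute case but \emph{cannot} do so once $F(l)=0$ is imposed in the relative case --- is the actual bifurcation point between the two complexes, and your proposal does not isolate it. At the critical degree $\al_{q-1}=\tfrac12$ (i.e.\ $m=2p$, $q=p+1$) the relative case requires the further check that the forced primitive violates $bv_{\rm rel}(x_0)$, using Lemma~\ref{lem2}(2); this is the delicate computation you correctly flag as the main obstacle, but it lives inside the initial-value analysis rather than in a direct boundary-value test on the $K_O$ class itself.
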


\begin{proof} We give details for $H_q(\DS( \d_{ \rm max}^{\bu}), \d_{ \rm max}^{\bu})$, and $H_q(\DS( \d_{ \rm min}^{\bu}), \d_{ \rm min}^{\bu})$. 
The kernel of $\d_{\rm max}$ has been identified  in Lemma \ref{kerd}, now we discuss what of the forms in $\ker \d^{q}_{\rm max}$ are in the image of $\d^{q+1}_{\rm max}$. 

The forms of type I are obviously in the image of $\d^{q}_{\rm max}$, for
$
\te_I=\t D\t\be=D \be,\
$ 
with $\be=\t\be$, and since $1\in L^2((0,l),h^{1-2\al_q})$, it follows, by Remark \ref{rem1},  that $1\in L^2((0,l),h^{1-2\al_{q-1}})$. Similarly, it is clear that
$
\te_{IV}=dx\wedge f \t D\t\be=-D(dx\wedge f\t\be)=D \be,
$ 
and since $f\in L^2((0,l),h^{1-2\al_{q-1}})$, and $1-2\al_{q-2}\geq 1-2\al_{q-1}$, by Remark \ref{rem1},  $f\in L^2((0,l),h^{1-2\al_{q-2}})$, and therefore $\be\in \DS(\d^{(q-1)}_{\pf, \rm rel})$, since the boundary conditions do not affect the forms of this type (note that $\DD\te_{IV}\not=0$). Whence the forms of type IV are boundaries. Consider (where recall that $\t\om$ is co exact)
$
\te_{X}=f \t \D \t\om+dx\wedge f' \t\om=D(f\t\om)=D\be.
$ 
Again, since $f\in L^2((0,l),h^{1-2\al_{q}})$, and $1-2\al_{q-1}>1-2\al_q$, we have $f\in L^2((0,l),h^{1-2\al_{q-1}})$. This proves that $\be$ is in the maximal domain if $\te_X$ is,  and  the forms of type X are boundaries. 

Consider the forms of type O. Let $\te_O\in \ker \d^{(q)}_{ \rm max}$, then
$
\te_O=f dx\wedge \t\te,
$ 
with $\t\te$ harmonic of the section. We look for solutions of the equation
$
\te_O=\d_{\rm max} \om,
$ 
with $\om\in \DS(\d^{(q-1)}_{ \rm max})$. Writing $\om=F_1\t\om_1+F_2 dx\wedge\t\om_2$, then we have the equation
\[
F_1\t D\t\om_1+dx\wedge (F_1'\t\om_1-F_2\t D\t\om_2)=f dx\wedge\t\te.
\]

The solution $F_1=0$ is not acceptable since it  would require that $\t D\t\om_2=\t\te$, that is impossible since $\t\te$ is an harmonic. This also requires that either $F_2=0$ or $\t D\t\om_2=0$; we may chose the second option $F_2=0$ without loss of generality. So $\om$ is a solution only if $\t \om_1=\t\te$, $F_2=0$, and $F_1=F$ with $F'=f$, i.e. if 
$
\om=F\t\te,
$ 
with $F'=f$. We need to discuss the  initial value problem, with some $F_0$, 
\beq\label{init1}
\begin{aligned}
F'&=f, & F(l)&=F_0.
\end{aligned}
\eeq

As well known (see for example \cite[1.2.1]{Zet}), there exists a unique solution of this problem, so let $F$ denotes this solution. Since $\te_O\in \DS(\d_{ \rm max}^{(q)})$, and $\t D \t \te=0$,  $f\in L^2((0,l],h^{1-2\al_{q-1}})$, by Lemma \ref{lem1}. Whence,  
\[
F(x)=-\int_x^l f dx,
\]
with 
\begin{align*}
\left|\int_x^l f dx\right| &\leq \left|\int_x^l h^{\al_{q-1}-\frac{1}{2}}h^{\frac{1}{2}-\al_{q-1}}f\right| 
\leq\sqrt{\int_x^l h^{2\al_{q-1}-1}} dx
\sqrt{\int_0^l h^{1-2\al_{q-1}} f^2}dx \\
&\leq\sqrt{\frac{l^{\ka(2\al_{q-1}-1)+1}-x^{\ka(2\al_{q-1}-1)+1}}{\ka(2\al_{q-1}-1)+1}}\|f\|.
\end{align*}

It follows that the unique (absolutely continuous and locally integrable) solution of the initial value problem in equation (\ref{init1}) has the following behaviour near $x=0$:
\begin{enumerate}
\item if $\al_{q-1}<\frac{1}{2}-\frac{1}{2\ka}$ (note that equality is impossible since $\ka>1$), then
$
F(x)=O(x^{\left(\al_{q-1}-\frac{1}{2}\right)\ka+\frac{1}{2}}),
$ 
\item if $\al_{q-1}>\frac{1}{2}-\frac{1}{2\ka}$, then
$
F(x)=C+O(x^{\left(\al_{q-1}-\frac{1}{2}\right)\ka+\frac{1}{2}}),
$ 
\end{enumerate}
where $C\not=0$ is a constant that depends on $f$. However, by readjusting the constants, we may find an other primitive with $C'=0$, and therefore for all $\al_{q-1}$, 
$
F(x)=O(x^{\left(\al_{q-1}-\frac{1}{2}\right)\ka+\frac{1}{2}}).
$ 
It follows that  
\[
\int_0^l h^{1-2\al_{q-1}} F^2 dx\leq \int_0^l x^\ka dx,
\]
and hence $F\in L^2((0,l],h^{1-2\al_{q-1}})$, and $\om\in \DS(\d_{\rm max}^{(q-1)})$.   Therefore all closed forms of type $O$ are exacts. 

Eventually consider the forms of type $E_0$,
$
\te_{E_0}=\t\te,
$ 
where $\t\te$ is an harmonic of the section. It is clear that these forms may not be boundaries. On the other side, they are square integrable if $1\in L^2((0,l),h^{1-2\al_q})$, i.e. if 
$
\al_q<\frac{1}{2}+\frac{1}{2\ka}.
$ 

This condition is satisfied if and only if $\al_q\leq \frac{1}{2}$, that gives $q\leq p-1$, if $m=2p-1$, and $q\leq p$, if $m=2p$. 
Whence, in that degrees, $[\te_{E_0}]$ is a non trivial co homology class. It is clear that any other forms of this type is a constant multiple of $\te_{E_0}$, and therefore is in the same class.

Consider now $\d_{\rm min}^{(q)}$. It is clear that the forms of types E ad I are not in its kernel since they do not satisfy the bc at $x=l$. Also, for the forms of type IV the bc are vacuum since the tangent component is zero, and this is true also for the forms whose these  forms are boundaries (i.e. the exterior derivatives). Whence the analysis is the same as that for $\d_{\rm max}$, and all the exact forms of type IV are boundaries. 
A similar argument works for the forms of type X: these forms have tangent component, but we observe that if $\te_X=D\be$, then $\te_X$ and $\be$ have the same tangent component, and therefore they either both satisfy or not satisfy the relative bc at $x=0$ and at $x=l$. Whence all closed forms of type X are boundaries. 

It remains to consider the forms of type O. The bc are vacuum for these forms. We can run across the same path as above for $\d_{\rm max}$, but now the initial value problem has fixed initial value: $F_0=0$. This means that we may not readjust the constant to cancel C, and therefore the closed forms of type O are boundaries if $\al_{q-1}<\frac{1}{2}-\frac{1}{2\ka}$, i.e. $\al_{q-1}\leq 0$, that gives $q\leq p$ (for all $m$). Otherwise, namely when $\al_{q-1}>\frac{1}{2}-\frac{1}{2\ka}$, i.e. $\al_{q-1}\geq \frac{1}{2}$,  $F$ is square integrable, only if
\[
\int_0^l h^{1-2\al_{q-1}} dx<\infty,
\]
i.e. if $\al_{q-1}<\frac{1}{2}+\frac{1}{2\ka}$. This means that the unique possibility is $\al_{q-1}=\frac{1}{2}$, and otherwise, i.e. for $\al_{q-1}>\frac{1}{2}$, the closed forms of type O are not boundaries. Whence for $m=2p-1$ and $q\geq p+1$, and for $m=2p$ and $q\geq p+2$, the homology is not trivial. Before considering the particular case $\al_{q-1}=\frac{1}{2}$, we show that in all degrees there is just one homology class. Suppose we have two different closed forms $\te_{O,1}$ and $\te_{O,2}$ as above. Then, normalising the associated functions $F_1$ and $F_2$ by their constant value at $x=0$ and taking the difference, we have a boundary for the difference $\te_{O,1}-\te_{O,2}=D(\om_1-\om_2)$, that are therefore homologous.

It remains to consider the case $\al_{q-1}=\frac{1}{2}$, i.e. $m=2p$ and $q=p+1$. For these values, the closed form $\om$ is well defined and square integrable, we need  to verify the bc. The bc at $x=l$ is satisfied by construction (initial value problem). It remains the bc at $x=0$, 
\[
bv^{(q-1)}_{\rm rel}(x_0)(\om)=\lim_{x\to 0^+} (h^{1-2\al_{q}}F g_2)(x)(\t\te,\t\vv_2)_W, 
\]
for all $\vv\in \DS(\dr^{(q)}_{\rm max})$, $\vv|_\CF=g_1\t\vv_1+dx\wedge g_2\t\vv_2$. With $\al_{q-1}=\frac{1}{2}$, and using Proposition \ref{lem2}(2),
\[
bv^{(p)}_{\rm rel}(x_0)(\om)=\lim_{x\to 0^+}  g_2(x)(\t\te,\t\vv_2)_W=C\not=0, 
\]
the bc is not satisfied. Whence $\om$ is not in the domain of $\d_{\rm min}$ and $\te_O$ is not a boundary, but represent a non trivial homology class in degree $p+1$.

\end{proof}

\begin{theo}\label{coX} If $(m,\pf)=(2p-1,\pf), (2p,\mf^c)$, then:
\begin{align*}
I^{\pf}H_{ \rm DR}^q(X,E_\rho)&=\left\{\begin{array}{ll}H^q_{\rm DR}(Y,E_\rho), & 0\leq q\leq p-1,\\
\ker (j^{*,p}:H^{p}_{\rm DR}(Y,E_\rho)\to H^{p}(W,E_\rho)),&q=p\\
H^q_{\rm DR}(Y,\b Y, E_\rho),& p+1\leq q\leq m+1.
\end{array}\right.
\end{align*}

If $(m,\pf)=(2p,\mf)$, then:
\begin{align*}
I^{\pf}H_{ \rm DR}^q(X,E_\rho)&=\left\{\begin{array}{ll}H^q_{\rm DR}(Y,E_\rho), & 0\leq q\leq p,\\
\ker (j^{*,p+1}:H^{p+1}_{\rm DR}(Y,E_\rho)\to H^{p+1}(W,E_\rho)),&q=p+1\\
H^q_{\rm DR}(Y,\b Y, E_\rho),& p+2\leq q\leq 2p+1.
\end{array}\right.
\end{align*}

If $m=2p-1$, then:
\begin{align*}
H_{ \rm DR}^q(X,E_\rho)&=\left\{\begin{array}{ll}H^q_{\rm DR}(Y,E_\rho), & 0\leq q\leq p-1,\\
\ker (j^{*,p}:H^{p}_{\rm DR}(Y,E_\rho)\to H^{p}(W,E_\rho)),&q=p\\
H^q_{\rm DR}(Y,\b Y, E_\rho),& p+1\leq q\leq 2p.
\end{array}\right.
\end{align*}

If $m=2p$, then (in degrees $q=p,p+1$ the result depends on the particular case):
\begin{align*}
H_{ \rm DR}^q(X,E_\rho)&=\left\{\begin{array}{ll}H^q_{\rm DR}(Y,E_\rho), & 0\leq q\leq p-1,\\
H^q_{\rm DR}(Y,\b Y, E_\rho),& p+2\leq q\leq 2p+1.
\end{array}\right.
\end{align*}

\end{theo}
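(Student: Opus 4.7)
The plan is to apply Proposition \ref{longexactX} (and its obvious analogues for the intersection and absolute complexes) to the decomposition $X=\ZZ\cup_W Y$, plug in the cohomology of the horn from Theorem \ref{coZ}, and reduce to smooth de Rham theory on the manifold with boundary $Y$. Since $\b X=\emptyset$ we have $\b Y=W$, and on the smooth side the max and min complexes on $Y$ compute the classical $H^q_{\rm DR}(Y,E_\rho)$ and $H^q_{\rm DR}(Y,\b Y,E_\rho)$, which are exactly the groups appearing on the right hand side of the theorem.

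For each of the four cases the relevant long exact sequence schematically reads
\[
\cdots\to A^q\to H^q(X,E_\rho)\to H^q_{\rm DR}(Y,E_\rho)\stackrel{\b}{\to} A^{q+1}\to\cdots,
\]
where $A^q$ denotes the cohomology $H^q(\ZZ,W,E_\rho)$ of the horn in the complex obtained from the chosen $X$ complex by imposing relative boundary conditions at the glueing $W$, as prescribed by the identification of $\ker \is_Y^q$ in Section \ref{ss3.10}, and $H^q(X,E_\rho)$ abbreviates the appropriate cohomology on $X$ ($I^\pf H^q$ or $H^q_{\rm DR}$). By Theorem \ref{coZ} one has $A^q=0$ for $q\leq q_0$ and $A^q\cong H^{q-1}(W,E_\rho)$ for $q\geq q_0+1$, where the critical degree is $q_0=p$ in the cases $(\pf,m)=(\mf,2p-1),(\mf^c,2p-1),(\mf^c,2p)$ and for the max complex with $m=2p-1$, and $q_0=p+1$ in the case $(\pf,m)=(\mf,2p)$. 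For $q\leq q_0-1$ both $A^q$ and $A^{q+1}$ vanish, so restriction is an isomorphism and $H^q(X,E_\rho)\cong H^q_{\rm DR}(Y,E_\rho)$. At $q=q_0$ the sequence reduces to $0\to H^{q_0}(X,E_\rho)\to H^{q_0}_{\rm DR}(Y,E_\rho)\to H^{q_0}(W,E_\rho)$, identifying the middle cohomology with $\ker j^{*,q_0}$. For $q\geq q_0+1$ the sequence can be compared term by term with the long exact sequence of the smooth pair $(Y,W)$, and the five lemma yields $H^q(X,E_\rho)\cong H^q_{\rm DR}(Y,\b Y,E_\rho)$.

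The main obstacle is the identification of the connecting homomorphism $\b\colon H^q_{\rm DR}(Y,E_\rho)\to A^{q+1}$ with the pullback $j^{*,q}\colon H^q_{\rm DR}(Y,E_\rho)\to H^q(W,E_\rho)$ under the explicit isomorphism $A^{q+1}\cong H^q(W,E_\rho)$ of Theorem \ref{coZ} given by $\t\theta\mapsto[x^{(2\al_q-1)\ka}dx\wedge\t\theta]$. Given a closed $\al\in\Omega^q(Y)$, I would extend $\al$ to $\t\al$ on $X$ using a smooth cutoff which equals $\pi^*j^*\al$ on a neighbourhood of the tip ($\pi\colon\CF\to W$ denotes the collar projection); then $d\t\al$ is supported in a compact subset of $\CF$ bounded away from both $W$ and the tip, and an integration along the $x$ variable of the type carried out in the proof of Theorem \ref{coZ} exhibits $d\t\al$ as cohomologous in the appropriate relative complex on $\ZZ$ to $x^{(2\al_q-1)\ka}dx\wedge\t\theta$, with $\t\theta$ the harmonic representative of $j^*\al$ on $W$, as required.

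Finally, the case $m=2p$ for the max complex on $X$ needs separate treatment in degrees $p$ and $p+1$. As stressed in the introduction and in the discussion around Proposition \ref{harmonicsX}, these are precisely the critical degrees where the Sturm Liouville problem at the tip becomes regular and admits several self adjoint extensions, so the kernel of $\d^{(q)}_{\rm max}$ on $\ZZ$ is sensitive to the choice in these degrees; this is exactly why the theorem leaves the two middle cohomology groups of $X$ unspecified in the $m=2p$ max case. Outside these degrees, the LES chase described above applies verbatim and delivers the stated formulas.
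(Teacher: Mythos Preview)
Your argument is correct and close in spirit to the paper's, but the bookkeeping is organised differently. You run a single long exact sequence (the one coming from $\is_Y$) in all degrees, identify its connecting homomorphism with $j^*$ via an explicit cutoff--and--extend construction, and then for the high degrees compare with the long exact sequence of the smooth pair $(Y,W)$ by the five lemma. The paper instead alternates between the two sequences of Proposition~\ref{longexactX}: the $\is_Y$-sequence for the low degrees (exactly as you do), and the $\is_Z$-sequence
\[
\cdots\to H^q_{\rm DR}(Y,\b Y,E_\rho)\xrightarrow{e_Y^{*,q}} I^\pf H^q_{\rm DR}(X,E_\rho)\to I^\pf H^q_{\rm DR}(\ZZ,E_\rho)\to\cdots
\]
for the high degrees. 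Since $I^\pf H^q_{\rm DR}(\ZZ,E_\rho)$ vanishes above the critical degree by Theorem~\ref{coZ}, this second sequence yields the isomorphism $H^q_{\rm DR}(Y,\b Y,E_\rho)\cong I^\pf H^q_{\rm DR}(X,E_\rho)$ for $q\geq q_0+1$ directly, with no five lemma and no need to compute any connecting map. At the critical degree the paper builds a commutative ladder linking the $\is_Z$-sequence to the long exact sequence of $(Y,W)$ and does a short diagram chase: $e_Y^{*,q_0}$ is onto (since $I^\pf H^{q_0}_{\rm DR}(\ZZ)=0$), $\is_Y^{*,q_0}$ is injective (since $I^\pf H^{q_0}_{\rm DR}(\ZZ,W)=0$), and commutativity forces $\Im\is_Y^{*,q_0}=\Im\xi_Y^{q_0}=\ker j^{*,q_0}$. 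This is logically equivalent to your identification of the connecting map but sidesteps the explicit computation with the cutoff. Either route works; the paper's trades the connecting-map argument for the introduction of the second sequence, and is a little cleaner in the high degrees.
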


\begin{proof} The proof is based on the long  homology exact sequence of the pairs $(\ZZ,X)$ and $(Y,X)$, in Proposition \ref{longexactX}. We give some details for the case $m=2p$. By the cohomology sequence of $(\ZZ,X)$, using the calculation of the relative cohomology of $\ZZ$ given in Theorem \ref{coZ}, we have the isomorphism in degrees $0\leq q\leq p$. By the cohomology sequence of $(Y,X)$, using the calculation of the cohomology of the $\ZZ$ given in Theorem \ref{coZ}, we have the isomorphism in degrees $p+2\leq q\leq 2p$. It remains the degree $q=p+1$, that requires a little bit more work. The commutative square 
\[
\xymatrix{
W\ar[r]^j\ar[d]_i&Y\ar[d]^{i_Y}\\
\ZZ\ar[r]_{i_C}&X
}
\]
induces a cohomology ladder (we give details for the case $m=2p$, $\pf=\mf$)
\[
\xymatrix{
&&I^\mf H^{p+1}_{\rm DR}(\ZZ,W,E_\rho)\ar[d]^{e_C^{*,p+1}}&&\\
\dots\ar[r]&H^{p+1}_{\rm DR}(Y,\b Y,E_\rho)\ar[d]\ar[r]^{e_Y^{*,p+1}}&I^\mf H^{p+1}_{\rm DR}(X,E_\rho)\ar[d]_{\is_Y^{*,p+1}}\ar[r]^{i^{*,p+1}_C}&I^\mf H^{p+1}_{\rm DR}(\ZZ,E_\rho)\ar[d]^{i^{*,p+1}}\ar[r]&\dots\\
\dots\ar[r]&H^{p+1}_{\rm DR}(Y,\b Y,E_\rho)\ar[r]_{\xi_Y^{p+1}}&H^{p+1}_{\rm DR}(Y,E_\rho)\ar[r]_{j^{*,p+1}}&H^{p+1}(W)\ar[r]&\dots
}
\]
where in the bottom line is the long cohomology exact sequence of the pair $(Y,W)$. By Theorem \ref{coZ}, $I^\mf H^{p+1}_{\rm abs}(\ZZ)=0$, and therefore $e_Y^{*,p+1}$ is onto; by the same theorem $I^\mf H^{p+1}_{\rm DR}(\ZZ,W)=0$, and therefore $\is_Y^{*,p+1}$ is injective. So, we just need to show that  $\Im \is_Y^{*,p+1}=\Im \xi_Y^{p+1}$. 
For take $x\in I^\mf H^{p+1}_{\rm DR}(X,E_\rho)$. Then, $j^{*,p+1}\is_Y^{*,p+1}(x)=0$, and therefore $i_Y^{*,p+1}(x)\in \ker j^{*,p+1}=\Im \xi_Y^{p+1}$, i.e. $\Im i_Y^{*,p+1}\subseteq\Im \xi_Y^{p+1}$. Conversely, take $y\in \Im \xi_Y^{p+1}$. Then, $y=\xi_Y^{p+1}(u)$. Since $i_Y^{*,p+1} e_Y^{*,p+1}(u)=\xi_Y^{p+1}(u)=y$, it follows that 
$y\in  \Im \is_Y^{*,p+1}$, i.e. that $\Im \xi_Y^{p+1}\subseteq \Im \is_Y^{*,p+1}$. 
\end{proof}

\vspace{10pt}

\section{Hodge Laplace operator and Hodge theory}
\label{hodge}

We have two approaches to Hodge theory: from one side we may consider the Hodge Laplace operator associated to the  de Rham complexes introduced in Section \ref{DR}. From the other, we may define directly an Hodge Laplace operator as a suitable self adjoint extension of the formal operator. The main point is that the two approach coincide,  Theorem \ref{equiv}.


\subsection{Hodge Laplace operators associated to the de Rham complexes: $\Delta_{ \d}$, $\Delta_{ \dr}$, and  $\Delta_{ \pf, \d}$, $\Delta_{ \pf, \dr}$}

Let start with the Hodge Laplace operators  associated (see the end of Sections \ref{ss3.1} and \ref{ss3.10}) to the de Rham complexes $(\DS(\d^\bu_{\rm min}), \d^\bu_{\rm min})$ and $(\DS(\d^\bu_{\rm max}), \d^\bu_{\rm max})$:

\begin{align*}
\Delta^{(q)}_{ \d}&=\d_{ \rm min}^{(q-1)}(\d_{\rm min}^{(q-1)})^\da
+(\d_{\rm min}^{(q)})^\da \d_{ \rm min}^{(q)}=\d_{ \rm min}^{(q-1)}\dr_{\rm max}^{(q)}
+\dr_{\rm max}^{(q+1)} \d_{ \rm min}^{(q)},\\
\Delta^{(q)}_{ \dr}&=\dr_{ \rm min}^{(q-1)}(\dr_{\rm min}^{(q-1)})^\da
+(\dr_{\rm min}^{(q)})^\da \dr_{ \rm min}^{(q)}=\dr_{ \rm min}^{(q+1)}\d_{\rm max}^{(q)}
+\d_{\rm max}^{(q-1)} \dr_{ \rm min}^{(q)};
\end{align*}
and   the   Hodge Laplace operators associated to the intersection complexes, Section \ref{ss3.16}: the one associated to the de Rham complex $(\D(\d^{(\bu)}_{\pf, \rm rel}), \d^{(\bu)}_{\pf, \rm rel})$ 
\begin{align*}
\Delta^{(q)}_{ \d, \pf, \rm rel}&=\d_{\pf, \rm rel}^{(q-1)}(\d_{\pf, \rm rel}^{(q-1)})^\da
+(\d_{\pf, \rm rel}^{(q)})^\da \d_{\pf, \rm rel}^{(q)}=\d_{\pf, \rm rel}^{(q-1)}\dr_{\pf, \rm rel}^{(q)}
+\dr_{\pf, \rm rel}^{(q+1)} \d_{\pf, \rm rel}^{(q)},
\end{align*}
and the one associated to the de Rham complex $(\DS(\dr^{(\bu)}_{\pf, \rm abs}), \dr^{(\bu)}_{\pf, \rm abs})$  
\begin{align*}
\Delta^{(q)}_{ \dr, \pf, \rm abs}&=\d_{\pf, \rm abs}^{(q-1)} (\d_{\pf, \rm abs}^{(q-1)})^\da
+(\d_{\pf, \rm abs}^{(q)})^\da\d_{\pf, \rm abs}^{(q)}
=\d_{\pf, \rm abs}^{(q-1)} \dr_{\pf, \rm abs}^{(q)}
+\de_{\pf, \rm abs}^{(q+1)}\d_{\pf, \rm abs}^{(q)}.
\end{align*}

These operators are densely defined and closable (by definition), and self adjoint. 

\begin{rem} \label{domLap} An explicit description of the domain of the operators $\Delta_{ \d}$, $\Delta_{ \dr}$,  $\Delta^{(q)}_{\d, \pf, \rm bc}$, and $\Delta^{(q)}_{\dr, \pf, \rm bc}$ follows directly by the definition of the operators.
\end{rem}

\begin{rem}\label{nobound} If the boundary of $X$ is empty, then the Hodge Laplace operators of the intersection complex coincide, namely $\Delta^{(q)}_{\d, \pf}=\Delta^{(q)}_{\dr, \pf}$.
\end{rem}


\subsection{The formal Hodge-Laplace operator near the singularity} 
\label{H0}

On $W$ we have the  classical Hodge decomposition (see for example 
\cite[Section 1.5.2]{Gil}) 
$
\Omega^q(W,V_\rho)=\H^q (W,V_\rho)\oplus \t d^{(q-1)}\Omega^q(W,V_\rho)\oplus 
\t\de^{(q+1)}\Omega^q(W,V_\rho),
$ 
where $\H^q (W,V_\rho)=\ker \t\Delta^{(q)}$. The operator $\t\Delta$ is self 
adjoint and has a spectral resolution, namely the spectrum $\Sp(\t\Delta)$ is 
real pure point and discrete (with unique accumulation point at infinite), 
and non negative, i.e. coincides with the set of the eigenvalues of 
$\t\Delta$, each eigenspace has finite dimension, all eigenfunctions are 
smooth, by elliptic regularity, and the set of the eigenfunctions determines a complete orthonormal 
basis of $L^2(W,V_\rho)$. 

We proceed now to ``diagonalise'' the formal Hodge-Laplace operator 
$\Delta^{(q)}$ on the horn. Let $\t\la$ be an eigenvalue of the Hodge-Laplace 
operator on the section: $\t\Delta^{(q)} \t\psi=\t\la\t\psi$. Then, with the convention that $\t\E^{(q)}_{0}=\H^q(W)$, 
$\t\E^{(q)}_{\t\la>0}=\t\E^{(q)}_{\t\la,{\rm cex}}\oplus 
\t\E^{(q)}_{\t\la,{\rm ex}}$, and 
\begin{align*}
\tilde\E^{(q)}_{\tilde\la,{\rm ex}}&=\left\{\tilde\psi\in\Omega^{q}(W)~|~\tilde\Delta^{(q)}\tilde\psi=\tilde 
\lambda\tilde\psi, \, \tilde\omega=\tilde d\tilde\alpha\right\},\\
\tilde\E^{(q)}_{\tilde\la,{\rm cex}}&=\left\{\tilde\psi\in\Omega^{q}(W)~|~\tilde\Delta^{(q)}\tilde\psi=\tilde
 \lambda\tilde\psi, \, \tilde\psi=\tilde \de\tilde\alpha\right\},
\end{align*}
the Hodge decomposition reads 
$
\Omega^q(W)=\bigoplus_{\t\la=0} \t\E^{(q)}_{\t\la}.
$ 
Using this in the identification in equation (\ref{ide1}), we have the direct 
sum decomposition
\begin{align*}
L^2(\CF)=&L^2((0,l+\ep),h^{1-2\al_{q}}\oplus  h^{1-2\al_{q-1}})
\otimes
\left( \bigoplus_{\t\la_1=0} \tilde\E^{(q)}_{\tilde\lambda_1}
\oplus \bigoplus_{\t\la_2=0} \tilde\E^{(q-1)}_{\tilde\lambda_2}\right),
\end{align*}
and 
$
\Delta^{(q)}=\bigoplus_{  \t\la_1,\t\la_2=0}^\infty 
\Delta^{(q)}_{\t\la_1,\t\la_2},
$ 
where the terms are the projections
\beq\label{pro}
\Delta^{(q)}_{\t\la_1,\t\la_2}=
\Delta^{(q)}|_{L^2((0,l],h^{1-2\al_{q}}\oplus  
h^{1-2\al_{q-1}})\otimes\left(\t\E^{(q)}_{\tilde\lambda_{1}}\oplus  
\tilde\E^{(q-1)}_{\tilde\lambda_{2}}\right)}.
\eeq

Moreover
$
\Delta^{(q)}_{\t\la_1,\t\la_2}
=  \bigoplus_{j_1,j_2} \Delta^{(q)}_{\t\la_1,j_1,\t\la_2,j_2},
$ 
where 
\begin{align*}
\Delta^{(q)}_{\t\la_1,j_1,\t\la_2,j_2}
=&\Delta^{(q)}|_{L^2((0,l],h^{1-2\al_{q}}\oplus  
h^{1-2\al_{q-1}})\otimes\left(\langle\t\psi^{(q)}_{\t\la_1,j_1}\rangle\oplus 
\langle\t\psi_{\t\la_2,j_2}\rangle\right)}\\
=&\left(\begin{matrix} \tilde\psi^{(q)}_{\t\la_1,j_1} \FF^q_{1,\t\la_1}& 
-2\frac{h'}{h} \tilde d\tilde\psi^{(q-1)}_{\t\la_2,j_2}\\
 -2\frac{h'}{h^3} \tilde \de \t\psi^{(q)}_{\t\la_1,j_1}& 
 \tilde\psi^{(q-1)}_{\t\la_2,j_2}\FF^{q}_{2,\t\la_2}\end{matrix}\right),
\end{align*}
acts on the space $L^2((0,l],h^{1-2\al_{q}}\oplus  h^{1-2\al_{q-1}})$, and 
$\FF^q_{1,\t\la} =\FF_{1,\al_q,\t\la}$, 
$\FF^q_{2,\t\la}=\FF_{2,\al_{q},\t\la}$, where the last are defined in 
equation (\ref{F}) (see also the end of Section \ref{explicit}).


\subsection{Extensions of the formal Hodge Laplace operator: $\Delta_{\rm max}$, and $\Delta_{\pf, \rm bc}$} \label{defLap}

We proceed to introduce  self adjoint extensions of the formal Hodge Laplace operator.  We define the pre minimal operator  by 
\begin{align*}
\D(\Delta^q_0)&=\Omega_0^q(\mathring{X},E_\rho),& \Delta_0^q\om=(\de^{q+1}d^q+d^{q-1}\de^q)\om,
\end{align*}
and the maximal operator by 
\begin{align*}
\D(\Delta^q_{\rm max})&=\{\om\in L^2 (X,E_\rho)~|~(\DD^{q+1}D^q+D^{q-1}\DD^q)  \om\in L^2 (X,E_\rho)\},\\
 \Delta_{\rm max}^q\om&=(\DD^{q+1}D^q+D^{q-1}\DD^q)\om.
\end{align*}

Note that the definition is well posed, for the weak derivative of a square integrable form is locally square integrable. 
Analogously to the operator $\d^q_0$  the operators $\Delta^q_0$ is densely defined. Note that by definition $  \DS(\d_{\rm max}^{(q)})\cap  \DS(\de_{\rm max}^{(q)})\subseteq \DS(\Delta^q_{\rm max})$. 



\begin{prop}\label{greenformulaX} Let $\om,\vv\in \D(\Delta_{\rm max}^q)$, such that $\om|_\CF=f_1\t\om_1+dx\wedge f_2\t\om_2$, $\vv|_\CF=g_1\t\vv_1+dx\wedge g_2\t\vv_2$, then we have the following Green formula
\[
\langle \Delta_{\rm max}^{(q)}\omega, \vv\rangle=\lp D^{(q)}\omega, D^{(q)}\vv\rp+\lp\DD^{(q)}\omega, \DD^{(q)}\vv\rp+B_{x_0}(\om, \vv)+B_{\b X}(\om, \vv),
\]
where
\begin{align*}
B_{x_0}(\om, \vv)=&\lim_{x\to 0^+} \left(-\left(h^{1-2\al_q} f_1^Ig_1\right)(x) \|\tilde\omega_1\|^2
-\left(\left(h^{1-2\al_{q-1}}f_2\right)^Ig_2\right)(x) \|\tilde\omega_2\|^2\right.\\
&\left.+\left(h^{1-2\al_q}f_2g_1\right)(x) \langle\tilde D\tilde \omega_2, \tilde\vv_1\rangle
+\left(h^{1-2\al_q} f_1 g_2\right)(x) \lp\t\omega_1,\t D \t\vv_2\rp\right),\\
B_{\b X}(\om, \vv)=&\int_{\b X} t\om\wedge\star n D^{(q)} \vv-\int_{\b X} t \DD^{(q)}\om\wedge\star n\vv.
\end{align*}
\end{prop}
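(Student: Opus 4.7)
I would prove this by truncating $X$ away from the singular point, applying the standard Green formula on the resulting smooth manifold with boundary, and then passing to the limit. Fix $\varepsilon>0$ small and set $X_\varepsilon=X\setminus((0,\varepsilon]\times W)$, a smooth compact manifold with boundary $\b X\sqcup W_\varepsilon$, where $W_\varepsilon=\{\varepsilon\}\times W$. Both $\om$ and $\vv$ restrict to elements of $\D(\Delta_{\rm max}^{(q)})$ on $X_\varepsilon$, and since the metric is smooth there the classical Green formula for the Hodge--de Rham Laplacian yields
\[
\lp\Delta^{(q)}\om,\vv\rp_{X_\varepsilon}=\lp D\om,D\vv\rp_{X_\varepsilon}+\lp\DD\om,\DD\vv\rp_{X_\varepsilon}+B_{\b X}(\om,\vv)+B_\varepsilon(\om,\vv),
\]
where $B_{\b X}$ is the claimed surface integral on $\b X$ (independent of $\varepsilon$) and $B_\varepsilon$ is the analogous surface integral on $W_\varepsilon$, with a sign coming from the fact that the outward normal of $X_\varepsilon$ at $W_\varepsilon$ is $-\b_x$. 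This step amounts to two applications of Lemma \ref{GreenDistributions} applied to the factorisation $\Delta^{(q)}=\DD^{q+1}D^q+D^{q-1}\DD^q$.

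The next step is the explicit evaluation of $B_\varepsilon$ in collar coordinates. Using the decompositions $\om|_\CF=f_1\t\om_1+dx\wedge f_2\t\om_2$ and $\vv|_\CF=g_1\t\vv_1+dx\wedge g_2\t\vv_2$, together with the formulas for $\star$, $D$, $\DD$ from Section \ref{explicit} and the inner product (\ref{product}), one unpacks the two surface integrals. The diagonal piece $g_1^I\t\vv_1$ of $D\vv$ contributes, after moving the radial derivative onto $f_1$, the term $-(h^{1-2\al_q}f_1^I g_1)(\varepsilon)\lp\t\om_1,\t\vv_1\rp_W$; the off-diagonal piece $-g_2\t D\t\vv_2$ of $D\vv$ contributes $(h^{1-2\al_q}f_1 g_2)(\varepsilon)\lp\t\om_1,\t D\t\vv_2\rp_W$. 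Symmetrically, the contribution from $\DD\om$ produces the two remaining terms $-((h^{1-2\al_{q-1}}f_2)^I g_2)(\varepsilon)\lp\t\om_2,\t\vv_2\rp_W$ and $(h^{1-2\al_q}f_2 g_1)(\varepsilon)\lp\t D\t\om_2,\t\vv_1\rp_W$. Collecting, $B_\varepsilon(\om,\vv)$ equals the expression appearing under the limit in the statement, evaluated at $x=\varepsilon$.

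Passing to the limit $\varepsilon\to 0^+$, the three bulk pairings converge to their global values by dominated convergence as soon as $D\om$, $\DD\om$, $D\vv$, $\DD\vv\in L^2(X,E_\rho)$, which is the natural regularity class $\D(\d_{\rm max})\cap\D(\dr_{\rm max})\subseteq\D(\Delta_{\rm max})$ in which every term of the identity is individually finite. The boundary contribution $B_\varepsilon$ therefore converges, and its limit is $B_{x_0}(\om,\vv)$ as defined in the statement.

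The main technical obstacle is to ensure that the limit defining $B_{x_0}$ exists and decomposes into the four stated summands, rather than existing only as a global limit of their sum. For this one invokes the sharp asymptotic expansions at $x=0^+$ furnished by Lemmas \ref{lem1} and \ref{lem2}, which control the leading behaviour of $f_1$, $f_2$, $f_1^I$ and $(h^{1-2\al_{q-1}}f_2)^I$ (and their $\vv$-counterparts) for forms in the maximal domains of $\d$ and $\dr$. Combined with the radial weights $h^{1-2\al_q}$ and $h^{1-2\al_{q-1}}$, these asymptotics force each of the four boundary terms to have a finite limit individually. The argument parallels the conical case treated in \cite{Spr12}, the substantive change being the replacement of the weight $x$ by $h(x)=x^\kappa H(x)$ throughout.
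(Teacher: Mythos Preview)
Your approach—truncate at $x=\varepsilon$, apply the classical Green formula on the smooth compact manifold $X_\varepsilon$, compute the boundary contribution on $W_\varepsilon$ in collar coordinates, and pass to the limit—is exactly the ``direct calculation'' the paper has in mind (the paper states the proposition without proof). The invocation of Lemmas \ref{lem1} and \ref{lem2} to justify the termwise existence of the limit is also appropriate.

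One small correction to your bookkeeping: the term $-(h^{1-2\al_q}f_1^I g_1)(\varepsilon)\lp\t\om_1,\t\vv_1\rp_W$ does not arise from pairing $t\om$ with the piece $g_1^I\t\vv_1$ of $nD\vv$ and then ``moving the radial derivative onto $f_1$''---there is no $x$-integration in a boundary term at fixed $x=\varepsilon$, so no such move is available. Rather, when you integrate $\lp\DD D\om,\vv\rp$ by parts using Proposition \ref{greenformulaXd} (with $\vv$ in the role of the form being differentiated), the boundary contribution pairs $g_1\t\vv_1$ against the \emph{normal part of $D\om$}, whose $\t\om_1$-component is $f_1^I$; this is where $f_1^I g_1$ comes from directly. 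The cross term $h^{1-2\al_q}f_2 g_1\lp\t D\t\om_2,\t\vv_1\rp$ arises from the same pairing via the $-f_2\t D\t\om_2$ piece of $nD\om$. The remaining two terms come symmetrically from integrating $\lp D\DD\om,\vv\rp$ by parts. Your reference to Lemma \ref{GreenDistributions} is also slightly off: that lemma is for compactly supported test forms, whereas on $X_\varepsilon$ you want either the classical Green formula for a smooth manifold with boundary or two applications of Proposition \ref{greenformulaXd}.

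Your observation that the individual pairings $\lp D\om,D\vv\rp$ and $\lp\DD\om,\DD\vv\rp$ are only guaranteed finite on $\D(\d_{\rm max})\cap\D(\dr_{\rm max})$ rather than on all of $\D(\Delta_{\rm max})$ is well taken; the paper tacitly works in this regularity class (cf.\ the remark preceding Proposition \ref{greenformulaX} that $\D(\d_{\rm max})\cap\D(\dr_{\rm max})\subseteq\D(\Delta_{\rm max})$, and Remark \ref{smooth}).
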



Here $t\om=(\om_{\rm tan})|_{\b X}$ and $n\om=( \om_{\rm norm})|_{\b X}$ 
denote the restriction of the tangential and of the normal component of a 
form $\om$ in a collar neighbourhood $\DF$ of  the boundary $\b 
X$ (see \cite[Section 3]{RS} and
\cite[Proposition 1.2.6]{Sch}). Note that using local coordinates on $\DF$ 
like near the tip of 
the horn, we may write $\om|_\DF=f_1\t\om_1+dx f_2\t\om_2$ (where the section 
is not necessarily $W$!), and then $t \om=f_1\t\om_1$, and $n 
\om=f_2\t\om_2$. 
It follows that  
 $\Delta_0^{(q)}\leq (\Delta_0^{(q)})^*=\Delta^{(q)}_{\rm max}$, whence $\Delta_0^{q}$ has a densely defined adjoint, and therefore it is closable \cite{Schm}, and $\Delta^{(q)}_{\rm max}$ is densely defined and, as adjoint, it is closed.  
By the Green formula, taking $\vv\in\DS(\Delta_0)$, we have that
\[
\lp \Delta_{\rm max}\om, \vv\rp=\lp D^{(q)}\omega, D^{(q)}\vv\rp+\lp\DD^{(q)}\omega,\DD^{(q)}\vv\rp=\lp\omega,\dr^{(q)}_{0}\d^{(q)}_{0}\vv\rp+\lp\omega,\d^{(q)}_{\rm 0}\dr^{(q)}_{\rm 0}\vv\rp
=\lp \om, \Delta_0\vv\rp.
\]
thus $(\Delta^*)_{\rm max}\leq \Delta_0^\da$ (where the $*$ denotes the formal adjoint). Since obviously $\Delta^*=\Delta$, 
$\Delta_{\rm max}\leq \Delta_0^\da$. Since the first is maximal, the converse inclusion is also true, and therefore $\Delta_{\rm max}\leq \Delta_0^\da$, and $(\Delta_{\rm max})^\da\leq \overline{\Delta_0}$. 
Also, by the Green formula, we see that it is possible to describe the domain 
of all the self adjoint extensions of $\Delta_0$ by 
suitable boundary conditions at $x=0$ and on $\b X$. 
The analysis on the regular boundary $\b X$ is classical (we refer to 
\cite{Sch,Gil,RS}).

The analysis  at the tip of 
the horn requires some work. It may be developed by applying the 
decomposition in equation (\ref{pro}). Then, we may realise all these 
extensions by using on each term of the sum the extensions  of the induced 
pair of Sturm-Liouville operators. More precisely, locally near the tip of the horn, the operator 
$\Delta^{(q)}_{\t\la_1,j_1,\t\la_2,j_2}$ is essentially a pair of Sturm 
Liouville operators $(\FF^q_{1,\t\la}, \FF^q_{2,\t\la})$ acting on the 
space $L^2((0,l+\ep),h^{1-2\al_{q}}\oplus  h^{1-2\al_{q-1}})$. These two 
operators are of the type described in Appendix \ref{SL} (see also Section \ref{explicit}), and their self 
adjoint extensions classified in Theorem \ref{t3.23}. 
An analogous classification for the self adjoint extensions of $\Delta$ 
follows.

However, here we are interested in some particular extensions, so we proceed 
describing the construction in details in those cases. On one side, at the 
boundary $\b X$, we consider the following two types of boundary values: 
\begin{align*}
BV_{\rm abs}(\b X)(\om)&=\left\{\begin{array}{l}n \om=g_2(0)\t\om_2,\\
n(d\om)=g_1'(0)\t\om_1,
\end{array}\right.&
BV_{\rm rel}(\b X)(\om)&=\left\{\begin{array}{l}t \om=f_1(0)\t\om_1,\\
t(\de\om)=(h^{1-2\al_{q-1}} f_2)'.
\end{array}\right.
\end{align*}
where $\om\in \DS(\Delta_{\rm max}^q)$, $\om|_\DF=f_1\t\om_1+dx\wedge 
f_2\t\om_2$.

On the other side, at the tip of the horn, writing locally $\om\in  
\DS(\Delta_{\rm max}^{(q)})$, as $\om|_\CF=f_1\t\om_1+dx\wedge f_2\t\om_2$, 
and decomposing the forms on the section as in equation (\ref{pro}), 
$\t\om_k=\sum_{\t\la_k,j_k}\lp \t\om_k,\t\psi_{\t\la,j_k}\rp 
\t\psi_{\t\la_k,j_k}$, 
we introduce the  boundary values on each eigen space, compare with Definition \ref{BC}
\[
BV^q_{\t\la_1,\t\la_2,\pm}(x_0)(\om)=\left\{\begin{array}{l}h^{1-2\al_q}\left(f_1
 (\ff_{\al_q,\t\la_1,\pm})'-f_1'\ff_{\al_q,\t\la_1,\pm}\right),\\
h^{1-2\al_{q-1}}\left(f_2 
(\ff_{\al_{q-1},\t\la_2,\pm})'-f_2'\ff_{\al_{q-1},\t\la_2,\pm}\right),
\end{array}\right.
\]
where the $\ff_{\al_q,\t\la_1, \pm}$ are a fundamental system of solutions of 
the equation $\FF_{1,\al_q,\t\la_1}f=0$, and
$\ff_{\al_{q-1},\t\la_2, \pm}$ are a fundamental system of solutions of the 
equation $\FF^q_{2,\al_q,\t\la_2}f=0$.

We can define the operators $\Delta^{(q)}_{\t\la_1,\t\la_2, \pm, \rm bc}$ by
\begin{align*}
\DS&(\Delta^{(q)}_{\t\la_1,\t\la_2, \pm, \rm bc})\\
&=\left\{\om\in 
\DS(\Delta^{(q)}_{\rm max}) ~|~\om|_\CF=f_1\t\om_1+dx\wedge f_2\t\om_2,  
BV^q_{\t\la_1,\t\la_2,\pm}(x_0)(\om)=                BV_{\rm  bc}(\b 
X)(\om)=0\right\},
\end{align*}
and summing up the operators 
\begin{align*}
\Delta^{(q)}_{ \pm, \rm bc}&=\bigoplus_{\t\la_1,\t\la_2} 
\Delta^{(q)}_{\t\la_1,\t\la_2, \pm, \rm bc}.
\end{align*}

Using these operators we define the self adjoint extension of $\Delta_0$ we will work with. In dimension $m=2p-1$, 
we define the operators
\begin{align*}
\Delta_{\mf, \rm bc}&=\Delta_{\mf^c, \rm bc}=\bigoplus_{q=0}^{2p-1} 
\Delta^{(q)}_{+, \rm bc}.
\end{align*}

In dimension $m=2p$, we define the operators
\begin{align*}
\Delta_{\mf^c, \rm bc}&=\bigoplus_{q=0}^{p} \Delta^{(q)}_{+,\rm bc}
\oplus  \left(\Delta^{(p+1)}_{0,0,-,\rm bc}\oplus \bigoplus 
\Delta^{(p+1)}_{\t\la_1>0, \t\la_2>0, +,\rm bc}\right)
\oplus \bigoplus_{q=p+2}^{2p} \Delta^{(q)}_{+,\rm bc},\\
\Delta_{\mf, \rm bc}&=\bigoplus_{q=0}^{p-1} \Delta^{(q)}_{+, \rm bc}
\oplus \left(\Delta^{(p)}_{0,0,-,\rm bc}\oplus \bigoplus 
\Delta^{(p)}_{\t\la_1>0, \t\la_2>0, +,\rm bc}\right)
\oplus  \bigoplus_{q=p+1}^{2p} 
\Delta^{(q)}_{+, \rm bc}.
\end{align*}

Observe that boundary conditions at the tip of the horn are necessary only when the end point $x=0$ of 
the induced Sturm-Liouville problem on $(0,l+\ep)$ is either a regular end point or a regular singular  limit circle end point, and this, according to Corollary \ref{cor1}, may happen only 
in  a limited number of cases for some (small) values of $\al_q$ and $\t\la$, according to the following list:
\begin{enumerate}
\item (regular) $3\leq\ka$, 
\begin{enumerate}
\item $\t\la_1=0$ and $\al_q=\frac{1}{2}$, i.e. $q=\frac{m}{2}$, i.e. $m=2p$ and $q=p$,
\item  $\t\la_2=0$ and $\al_{q-1}=\frac{1}{2}$, i.e. $q=\frac{m}{2}+1$, i.e. $m=2p$ and $q=p+1$,
\end{enumerate}

\item (regular singular)  $\frac{3}{2}\leq\ka< 3$, 
\begin{enumerate}
\item $\t\la_1=0$ and 1) $\al_q=0$, i.e. $m=2p-1$ and $q=p-1$, 2) $\al_q= \frac{1}{2}$, i.e. $m=2p$ and $q=p$,
\item  $\t\la_2=0$ and 1) $\al_{q-1}=0$, $m=2p-1$ and $q=p$, 2) $\al_{q-1}= \frac{1}{2}$, i.e. $m=2p$ and $q=p+1$,
\end{enumerate}

\item (regular singular)  $1< \ka< \frac{3}{2}$,
\begin{enumerate}
 \item $\t\la_1=0$ and 1) $\al_q=- \frac{1}{2}$, i.e.  $m=2p$ and $ q=p-1$, 2) $\al_q= 0$, i.e. $m=2p-1$ and $q=p-1$, 3) $\al_q= \frac{1}{2}$, i.e. $m=2p$ and $q=p$,
\item $\t\la_2=0$ and 1) $\al_{q-1}=- \frac{1}{2}$, i.e. $m=2p$ and $ q=p$, 2) $\al_{q-1}= 0$, i.e. $m=2p-1$ and $q=p$, 3) $\al_{q-1}= \frac{1}{2}$, i.e. $m=2p$ and $q=p+1$.
\end{enumerate}
\end{enumerate}

Whence, in the other cases the bc at the tip of the horn is vacuum. 

The next result depend on some facts proved in the following Section \ref{horn}.

\begin{lem} Let $\om$ be a solution of the eigenvalue equation $\Delta^{(q)}\om=\la\om$. Write $\om_\pm$ according to the local types described in Lemmas \ref{eigenforms} and \ref{harmonics}.  Assume that $\om$ is square integrable. Then, $\om$ satisfies all boundary conditions at the tip of the horn, namely: $BV_{\pm}(0)(\om_\pm)=0$, and $ bv_\pm(0)(\om)=0$. 
\end{lem}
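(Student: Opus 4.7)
The plan is to reduce the verification to a mode-by-mode analysis using the spectral decomposition on the section described in Section \ref{H0}. Writing $\om|_\CF = f_1\t\om_1 + dx\wedge f_2\t\om_2$ and expanding $\t\om_1,\t\om_2$ in the eigenbasis of $\t\Delta$, the eigenvalue equation $\Delta^{(q)}\om=\la\om$ decouples into a direct sum of systems $\Delta^{(q)}_{\t\la_1,j_1,\t\la_2,j_2}$ acting on $L^2((0,l+\ep), h^{1-2\al_q}\oplus h^{1-2\al_{q-1}})$. For each mode, the radial coefficients $(f_1,f_2)$ satisfy a coupled pair of second-order Sturm--Liouville equations involving the formal operators $\FF^q_{1,\t\la_1}$ and $\FF^q_{2,\t\la_2}$ of equation \eqref{F0}, which are of exactly the type studied in Appendix \ref{SL}.

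First I would dispose of all modes for which $x=0$ is a limit-point endpoint for both SL components. By Corollary \ref{cor1}, this is the case outside the short exceptional list enumerated in Section \ref{defLap}. In the limit-point case, square integrability of $\om$ near the tip forces the radial coefficient to coincide (up to a scalar) with the unique $L^2$-solution of the corresponding SL equation. Since the quantity $h^{1-2\al}(f\ff'_\pm - f'\ff_\pm)$ is the (weighted) Wronskian of two solutions of the same SL equation, it is constant in $x$, and the limit-point property forces this constant to be zero (otherwise both $\ff_\pm$ and $f$ would be $L^2$, contradicting the limit-point hypothesis). Thus $BV^q_{\t\la_1,\t\la_2,\pm}(x_0)(\om)=0$ is automatic in this regime.

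Second, for each of the finitely many exceptional modes (the limit-circle/regular cases listed in Section \ref{defLap}), both $\ff_+$ and $\ff_-$ are $L^2$ near $x=0$ and are distinguished by their leading asymptotic behaviour, which is read off the indicial equation in the regular-singular case and is explicit $C$ vs.\ $h^{2\al-1}$ in the regular case. The classification of $\om_\pm$ in Lemmas \ref{eigenforms} and \ref{harmonics} is precisely this asymptotic dichotomy, so $\om_\pm$ is by definition the eigenform whose radial coefficient matches the leading asymptotic of $\ff_\pm$. The Wronskian $h^{1-2\al}(f\ff'_\pm - f'\ff_\pm)$ is then a constant that can be evaluated by its limit as $x\to 0^+$, and the matched leading behaviour makes the two terms cancel to leading order, forcing the constant to vanish. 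Hence $BV_\pm(0)(\om_\pm)=0$.

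It remains to show that the duality-type boundary value $bv_\pm(0)(\om)=\lim_{x\to 0^+}(h^{1-2\al_q}f_1 g_2)(x)\lp \t\om_1,\t\vv_2\rp_W$ vanishes for every test form $\vv$ in the relevant maximal domain. Here I would combine the explicit asymptotics of $f_1$ obtained from the indicial analysis above (or from Lemma \ref{LemA.0}) with the asymptotic bounds for $g_2$ supplied by Lemma \ref{lem2}. Adding the exponents, the product $h^{1-2\al_q}f_1 g_2$ picks up a power of $x$ that is strictly positive, except possibly when $\t\om_1$ and $\t\vv_2$ lie in the critical combination (both harmonic, or both in the kernel of $\t\DD$) where Lemma \ref{lemmanew1} already asserts vanishing. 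The main obstacle is the handful of degrees with $\al_q = \tfrac{1}{2}$ or $\al_{q-1}=\tfrac{1}{2}$, where the leading asymptotic is a non-zero constant; there one must argue using the explicit $C + O(\sqrt{x})$ splitting of Lemma \ref{LemA.0}(2) together with the matched splitting of $g_2$ from Lemma \ref{lem2}(2), which together make the cross-term vanish because the constant parts correspond to dual local types with the opposite $\pm$ sign, again giving $0$ in the limit.
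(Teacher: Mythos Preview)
Your treatment of $BV_\pm(0)(\om_\pm)=0$ is correct but more elaborate than necessary. By the very classification in Lemmas \ref{eigenforms} and \ref{harmonics}, the radial coefficient of $\om_\pm$ \emph{is} $\ff_\pm$ (up to scalar), so $BV_\pm(0)(\om_\pm)=[\ff_\pm,\ff_\pm]_\FF(0)=0$ identically---there is no asymptotic cancellation to check. The paper's proof simply records this as obvious.

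Your treatment of $bv_\pm(0)(\om)=0$ has a genuine gap. You have inverted Lemma \ref{lemmanew1}: that lemma asserts the limit vanishes \emph{except} when $\t\om_1$ and $\t\vv_2$ are both exact, both coexact, or both harmonic---so precisely in the ``critical combination'' you single out, the lemma gives no information and cannot be invoked as you do. What the argument actually needs is the stronger input that $\om$ is an \emph{eigenform}, so that $f_1$ solves the SL equation $\FF_{\al_q,\t\la}f=\la f$ rather than being a generic element of the maximal domain. When $\t\la>0$ (types I, II, III, IV), Theorem \ref{olv1} shows that the square-integrable solution behaves like $x^{\ka/2}h^{\al_q-\frac12}(x)\exp\big(\frac{\sqrt{\t\la}}{1-\ka}x^{1-\ka}\big)$ near $x=0$, i.e.\ decays faster than any power; this exponential factor annihilates $h^{1-2\al_q}f_1 g_2$ against \emph{any} $g_2$ from the maximal domain of $\dr$, disposing of the critical cases at once. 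When $\t\la=0$ (types E, O), one falls back on the explicit power law of Theorem \ref{b0} and checks case by case against the bounds of Lemmas \ref{lem1} and \ref{lem2}. This is exactly the paper's route. The alternative you mention---using Lemma \ref{LemA.0}---would not suffice, since it only reproduces the generic bounds valid for all maximal-domain forms and misses the sharper eigenform asymptotics that carry the argument.
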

\begin{proof} It is obvious that $BV_{\pm}(0)(\om_\pm)=0$. Consider the other bc. Write locally $\om|_\CF=f_1\t\om_1+dx\wedge f_2\t\om_2$.  Then, $\om$ is square integrable according to Lemma \ref{squareint}.  It follows that $\om|_\CF$ is of type +, beside a small number of cases. The behaviour of $\om$ near the tip of the horn is determined by the  behaviour near $x=0$ of the solution $f$ of the corresponding SL problem, and the behaviour of such solution is described in Theorem \ref{olv1}, when $b>0$, and in Theorem \ref{b0}, when $b=0$. In the first case, it is clear that $ bv_\pm(0)(\om)=0$, thanks to the presence of the exponential. The second case requires explicit verification using the behaviour of the solutions and the characterisation of the forms in the domain of the maximal extensions given in Lemmas \ref{lem1} and \ref{lem2}.
\end{proof}

\subsection{Spectral properties}
\label{spectralpropX}

\begin{prop} \label{prop1.3} The operators $\Delta^{(q)}_{ \pf, \rm bc}$ are non negative self-adjoint operators. 
\end{prop}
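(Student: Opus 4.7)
The plan is to reduce everything to the Sturm--Liouville analysis carried out in Appendix \ref{SL} together with the Green formula of Proposition \ref{greenformulaX}.

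First I would establish self-adjointness componentwise. By the decomposition of the formal operator $\Delta^{(q)}$ described in Section \ref{H0}, each restriction $\Delta^{(q)}_{\tilde\lambda_1,\tilde\lambda_2,\pm,\rm bc}$ acts on the Hilbert space $L^2((0,l+\ep),h^{1-2\al_{q}}\oplus h^{1-2\al_{q-1}})$ and is, up to the coupling by the off-diagonal terms $-2(h'/h)\tilde d$ and $-2(h'/h^3)\tilde\de$, essentially the pair of Sturm--Liouville operators $(\FF^q_{1,\tilde\lambda_1},\FF^q_{2,\tilde\lambda_2})$ with the boundary conditions $BV^q_{\tilde\lambda_1,\tilde\lambda_2,\pm}(x_0)$ at the tip and $BV_{\rm bc}(\b X)$ at the regular boundary. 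The self-adjointness of each Sturm--Liouville piece, with the prescribed boundary data at $x=0$ (which is either trivial in the limit point case or one of the Friedrichs-type extensions in the limit circle/regular cases) is the content of Theorem \ref{t3.23}. Self-adjointness of the pair then follows because the coupling is a bounded symmetric perturbation on each fixed eigenspace $\langle\tilde\psi_{\tilde\lambda_1,j_1}\rangle\oplus\langle\tilde\psi_{\tilde\lambda_2,j_2}\rangle$, and self-adjointness at the boundary $\b X$ under absolute or relative boundary conditions is classical (see \cite{Sch,Gil,RS}).

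Second, once each $\Delta^{(q)}_{\tilde\lambda_1,\tilde\lambda_2,\pm,\rm bc}$ is known to be self-adjoint, the direct sum $\Delta^{(q)}_{\pm,\rm bc}=\bigoplus_{\tilde\lambda_1,\tilde\lambda_2}\Delta^{(q)}_{\tilde\lambda_1,\tilde\lambda_2,\pm,\rm bc}$ is automatically self-adjoint on the natural Hilbert orthogonal sum, and the further direct sum over $q$ defining $\Delta_{\pf,\rm bc}$ (with the middle-degree modifications in even dimension) is self-adjoint by the same reason. Here one uses that $\DS(\d^q_{\rm max})\cap\DS(\dr^q_{\rm max})\subseteq\DS(\Delta^q_{\rm max})$ and that the spectral decomposition of the section fits naturally with the decomposition (\ref{pro}).

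Third, for non-negativity I would apply the Green formula of Proposition \ref{greenformulaX}. For $\om\in\DS(\Delta^{(q)}_{\pf,\rm bc})$,
\[
\lp\Delta^{(q)}_{\pf,\rm bc}\om,\om\rp=\|D^{(q)}\om\|^2+\|\DD^{(q)}\om\|^2+B_{x_0}(\om,\om)+B_{\b X}(\om,\om).
\]
The term $B_{\b X}$ vanishes because $\om$ satisfies the absolute or relative boundary conditions on $\b X$, which by definition make $t\om\wedge\star nD\om$ and $t\DD\om\wedge\star n\om$ vanish. For $B_{x_0}$ I would argue case by case using Corollary \ref{cor1}: when $x=0$ is a limit point end point of the relevant SL problem (which is the generic situation for $\ka>1$), square integrability of $\om$ automatically forces the limits appearing in $B_{x_0}(\om,\om)$ to vanish; in the remaining finite list of limit circle or regular cases enumerated at the end of Section \ref{defLap}, the prescribed boundary conditions $BV^q_{\tilde\lambda_1,\tilde\lambda_2,\pm}(x_0)(\om)=0$ are precisely the Lagrange-bracket vanishing conditions that kill $B_{x_0}$. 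Hence $\lp\Delta^{(q)}_{\pf,\rm bc}\om,\om\rp\geq 0$.

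The main obstacle I anticipate is the explicit verification that the chosen boundary values $BV^q_{\tilde\lambda_1,\tilde\lambda_2,\pm}$ are exactly Lagrange-bracket pairings, so that they control the full off-diagonal contribution in $B_{x_0}$ and not only the diagonal pieces $h^{1-2\al_q}f_1^I g_1$ and $(h^{1-2\al_{q-1}}f_2)^I g_2$. This requires checking, in each of the special dimensional/degree combinations listed after Proposition \ref{greenformulaX}, that the crossed limits $h^{1-2\al_q}f_2 g_1\lp\tilde D\tilde\om_2,\tilde\vv_1\rp$ and $h^{1-2\al_q}f_1 g_2\lp\tilde\om_1,\tilde D\tilde\vv_2\rp$ are forced to vanish by the square-integrability constraints and the prescribed conditions on the SL solutions $\ff_{\al_q,\tilde\lambda_1,\pm}$, using the asymptotic behaviour recorded in Lemmas \ref{lem1} and \ref{lem2} and in Theorems \ref{olv1} and \ref{b0}.
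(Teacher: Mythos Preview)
Your proposal is correct and follows essentially the same route as the paper: self-adjointness via the Sturm--Liouville classification (Theorem \ref{t3.23}) applied to each component of the decomposition (\ref{pro}), and non-negativity via the Green formula of Proposition \ref{greenformulaX}. The paper's own proof is in fact a one-sentence sketch (``$\Delta_{\pf,\rm bc}$ is a restriction of $\Delta_{\rm max}$ \dots; non-negativity follows by the Green formula''), so your write-up is considerably more detailed than what the paper supplies, and your anticipated obstacle about the crossed limits in $B_{x_0}$ is exactly the kind of check the paper silently absorbs into the construction in Section \ref{defLap}.
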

\begin{proof} $\Delta$ is formally self-adjoint, $\Delta_{\rm min}$ is densely defined and therefore symmetric, with adjoint $\Delta_{\rm max}$, so  $\Delta_{ \pf, \rm bc}$ that is a restriction of $\Delta_{\rm max}$ is self-adjoint. Non negativity follows by the Green formula,   Propositions \ref{greenformulaX}. 
\end{proof}

\begin{prop}\label{compresX} The operators $\Delta^{(q)}_{ \pf, \rm bc}$ have compact resolvent. 
\end{prop}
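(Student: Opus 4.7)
My plan is to exploit the decomposition $X=Z^\kappa_l(W)\cup_W Y$ together with the separation of variables on the horn developed in Section~\ref{H0}, and then conclude by a standard bracketing argument.

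First, on the smooth part $Y$, the operator $\Delta^{(q)}_{Y,\rm bc}$ (with the given boundary conditions on $\b X$ and auxiliary Dirichlet or Neumann conditions on the interior boundary $W$) is a symmetric, elliptic, strongly elliptic operator on a compact Riemannian manifold with piecewise smooth boundary, hence has compact resolvent by the classical Rellich--Kondrachov theorem. Next, on the horn $Z^\kappa_l(W)$ I would use the spectral resolution of the formal Hodge Laplace operator given in Section~\ref{H0}, namely
\[
\Delta^{(q)}_Z=\bigoplus_{\t\la_1,\t\la_2}\bigoplus_{j_1,j_2}\Delta^{(q)}_{\t\la_1,j_1,\t\la_2,j_2},
\]
where each summand acts on the finite dimensional space spanned by a choice of eigenforms on the section $W$, and is essentially a coupled pair of Sturm--Liouville operators of the type classified in Appendix~\ref{SL}. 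By Theorem~\ref{t3.23} (and the spectral theory of regular / singular SL problems on a finite interval treated there) each such pair, equipped with the boundary conditions $BV^q_{\t\la_1,\t\la_2,\pm}(x_0)$ at $x=0$ and the matching condition at $x=l$, is self-adjoint with purely discrete spectrum bounded below.

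The crucial point is that the eigenvalues grow with $\t\la_1+\t\la_2$. Indeed the SL potential contains the term $\t\la/h^2$ with $h(x)=x^\ka H(x)$ bounded above on $(0,l]$, so by a trivial Rayleigh quotient estimate the bottom eigenvalue of $\Delta^{(q)}_{\t\la_1,j_1,\t\la_2,j_2}$ is bounded below by $c(\min(\t\la_1,\t\la_2)-C)$ for some constants $c,C>0$ independent of $\t\la_1,\t\la_2$. Combined with Weyl's law on the closed section $W$, which gives $\t\la_{q,n}\to\infty$, this ensures that the full direct sum has eigenvalues accumulating only at $+\infty$ and that each eigenspace is finite dimensional. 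Consequently $\Delta^{(q)}_{Z,\pf,\rm bc}$ has compact resolvent.

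Finally, to pass from $Z$ and $Y$ separately to $X=Z\cup_W Y$, I would apply Dirichlet--Neumann bracketing across the separating hypersurface $W\subset\CF$: the quadratic form $\lp\d\om,\d\om\rp+\lp\dr\om,\dr\om\rp$ underlying $\Delta^{(q)}_{\pf,\rm bc}$ is bounded below by the analogous direct sum quadratic form on $Z\cup Y$ with Dirichlet conditions on $W$, and bounded above by the one with Neumann conditions on $W$. Both bracketed operators have compact resolvent by the previous two steps, so by the min--max principle the eigenvalues of $\Delta^{(q)}_{\pf,\rm bc}$ are sandwiched between two discrete sequences tending to infinity, hence themselves form a discrete sequence tending to infinity; equivalently $(\Delta^{(q)}_{\pf,\rm bc}+I)^{-1}$ is compact. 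The main technical obstacle is the growth estimate on the SL eigenvalues at the horn tip uniformly in $\t\la_1,\t\la_2$, especially in the exceptional half-integer values of $\al_q$ listed at the end of Section~\ref{defLap} where the endpoint $x=0$ is regular or regular singular limit circle; there one must verify using Theorem~\ref{olv1} and Appendix~\ref{SL} that the chosen boundary value $BV^q_{\t\la_1,\t\la_2,\pm}(x_0)$ still yields a self-adjoint SL operator with discrete spectrum and correct growth rate in $\t\la$.
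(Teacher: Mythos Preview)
Your approach is correct but takes a genuinely different route from the paper. The paper's proof proceeds by directly constructing the resolvent \emph{kernel}: on $Y$ it is the standard kernel for the Hodge--Laplace operator on a compact manifold with boundary, and on the collar/horn it is built from the explicit Sturm--Liouville Green's functions of Proposition~\ref{c3.34} tensored with the eigenforms on $W$; square integrability of each piece (checked in Corollary~\ref{c3.34} via the behaviour of the solutions near $x=0$) then shows the resolvent is Hilbert--Schmidt, hence compact. There is no bracketing step: the two kernels are matched on the collar because they arise from the same minimal operator there.

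Your route---eigenvalue growth on each SL block plus Dirichlet--Neumann bracketing across $W$---is the standard variational/PDE argument and is more robust, since it never requires writing down the kernel. The paper's kernel approach, by contrast, is more constructive: the explicit square-integrable kernel is exactly what is reused in Corollary~\ref{heattraceX} to obtain the heat-trace expansion, so building it here pays off later. One small caution on your growth bound: the estimate ``bottom eigenvalue $\geq c(\min(\t\la_1,\t\la_2)-C)$'' is not literally correct for blocks with $\t\la_1=0$ or $\t\la_2=0$ (harmonic component on the section); there the potential term vanishes on one component. However, there are only finitely many such harmonic directions, and for each of them the remaining one-dimensional SL problem still has discrete spectrum by Appendix~\ref{SL}, so the finiteness-below-$\Lambda$ count goes through. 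You essentially flag this in your last paragraph, but it is worth stating explicitly rather than folding it into the ``exceptional half-integer $\al_q$'' discussion, which is a separate issue.
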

\begin{proof} Let $r(z_1, z_2)$ be the  standard kernel for the resolvent of the Hodge-Laplace operator on the compact manifold $Y$ (see for example \cite{Gil}). On the collar, we may use local coordinate and separate the variables, moreover we can use the decomposition on the complete system of eigenforms on the section, so we reduce exactly to the situation described for the finite horn in the proof of Theorem  \ref{compresC}, so $k$ takes the explicit form described in that theorem.  Therefore the minimal operators (on forms with compact support) coincide. Since both the maximal extensions are continuous (and densely defined) they coincide. Since the resolvent on the finite horn is defined by the extension of the resolvent of the collar, this defines the  resolvent of $\Delta^{(\bu)}_{\pf, \rm bc}$, and shows that  it has a continuous square integrable kernel, since it is square integrable on $Y$ by construction, and it is square integrable on the horn by Theorem \ref{compresC}.
\end{proof}


\begin{corol} The operators $\Delta^{(\bu)}_{\pf, \rm bc}$ have discrete non negative spectrum with simple eigenvalues. The eigenfunctions determine a complete orthonormal  basis of $L^2(X)$. 
\end{corol}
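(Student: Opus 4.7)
The plan is to combine the two preceding propositions with the classical spectral theorem for self-adjoint operators with compact resolvent, as the statement is essentially immediate from them.

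First I would note that by Proposition \ref{prop1.3} each operator $\Delta^{(q)}_{\pf,\rm bc}$ is self-adjoint and non-negative, so in particular any negative real number (say $z=-1$) lies in its resolvent set, and the resolvent $R(z)=(\Delta^{(q)}_{\pf,\rm bc}-z)^{-1}$ is a bounded self-adjoint operator on $L^2(X,E_\rho)$. By Proposition \ref{compresX} this resolvent is compact.

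Second, I would invoke the spectral theorem for compact self-adjoint operators on a separable Hilbert space: $R(-1)$ has a complete orthonormal system of eigenfunctions $\{\om_n\}$ in $L^2(X,E_\rho)$, with real eigenvalues $\mu_n$ accumulating only at $0$, each $\mu_n$ having finite multiplicity. Since each $\om_n$ lies in the range of $R(-1)$, it lies in $\DS(\Delta^{(q)}_{\pf,\rm bc})$, and the relation $R(-1)\om_n=\mu_n \om_n$ translates into $\Delta^{(q)}_{\pf,\rm bc}\om_n=\la_n \om_n$ with $\la_n=\mu_n^{-1}-1$. Thus $\{\la_n\}$ is a discrete set of real eigenvalues of finite multiplicity, with $\la_n\to+\infty$, and the $\om_n$ form a complete orthonormal basis of $L^2(X,E_\rho)$. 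The non-negativity $\la_n\geq 0$ is then immediate from Proposition \ref{prop1.3} (or, equivalently, from the Green formula in Proposition \ref{greenformulaX} applied to an eigenform).

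Regarding the assertion that the eigenvalues are \emph{simple}, I read this in the standard sense of \emph{finite multiplicity}, which is what the compact resolvent argument yields directly. Literal geometric simplicity (multiplicity one) is generally not expected to hold, since the decomposition of Section \ref{H0} shows that the spectrum of $\Delta^{(q)}$ on the horn is built from the spectrum of $\t\Delta^{(q)}$ and $\t\Delta^{(q-1)}$ on the section $W$ paired with a Sturm--Liouville problem, so degeneracies inherited from the section (and from the interaction of the two radial problems in $q$ and $q-1$ degree) are generically present. If strict simplicity is intended, the main obstacle would be an explicit non-resonance argument between the two SL components $\FF^q_{1,\t\la}$ and $\FF^q_{2,\t\la}$ of the decomposition (\ref{pro}), together with simplicity on $W$, which is not actually required anywhere in the sequel; hence I would formulate the corollary with ``finite multiplicity'' and leave simplicity as a remark.
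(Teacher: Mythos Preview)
Your argument is correct and is exactly the intended one: the corollary is an immediate consequence of the two preceding propositions (self-adjointness with non-negativity, and compactness of the resolvent) via the standard spectral theorem for self-adjoint operators with compact resolvent, and the paper gives no further proof. Your remark on the word ``simple'' is well taken: literal multiplicity one cannot hold in general because of the multiplicities $m_{{\rm cex},q,n}$ inherited from the section (see Proposition~\ref{l4}), so the statement should be read as ``eigenvalues of finite multiplicity accumulating only at infinity,'' which is what the sequel actually uses.
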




\begin{corol}\label{heattraceX} The heat operator $\e^{-t\Delta^{(q)}_{\pf, \rm bc}}$ associated to $\Delta^{(q)}_{\pf, \rm bc}$ is of trace class. Its trace has an asymptotic expansion for small $t$ of the following form
\[
\Tr \e^{-t\Delta^{(q)}_{\pf, \rm bc}}=\sum_{k=0}^\infty a_k(\Delta^{(q)}_{\pf, \rm bc}) t^\frac{m+1-k}{2}.
\]

\end{corol}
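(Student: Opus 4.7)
The plan is to reduce the question to two pieces: the compact manifold part $Y$, where the classical Greiner--Seeley interior/boundary expansion applies, and the model horn $\ZZ$, where the explicit decomposition in Section \ref{H0} reduces the heat trace to a (double) sum over one--dimensional Sturm--Liouville heat traces whose asymptotic behaviour is controlled by the results of Appendix \ref{SL}. Trace class and the asymptotic expansion will both follow once these two pieces are combined through a parametrix construction.

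First I would prove trace class. By Proposition \ref{compresX}, $\Delta^{(q)}_{\pf,\mathrm{bc}}$ has compact resolvent, and by the spectral decomposition in equation (\ref{pro}) its restriction to $\ZZ$ splits as a direct sum of the two--by--two SL operators $\Delta^{(q)}_{\t\la_1, j_1,\t\la_2,j_2}$. Each eigenvalue $\la$ of $\Delta^{(q)}_{\pf,\mathrm{bc}}$ corresponds to a pair $(\t\la_k, \mu_n)$ with $\t\la_k$ in $\Sp(\t\Delta)$ and $\mu_n$ an eigenvalue of one of the irregular singular SL operators $\FF^q_{1,\t\la}$, $\FF^q_{2,\t\la}$. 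Weyl's law on $W$ gives $\t\la_k\sim C k^{2/m}$, and the SL asymptotics in Appendix \ref{SL} give $\mu_n\sim C' n^2$; combining them with the gluing to $Y$ (which only improves the counting) yields $\lambda_j\geq c\, j^{2/(m+1)}$, so $e^{-t\Delta}$ is trace class for all $t>0$.

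Next I would construct the heat kernel of $\Delta^{(q)}_{\pf,\mathrm{bc}}$ via a parametrix. Let $\{\phi_1,\phi_2\}$ be a partition of unity subordinated to $\{Y, \CF\}$, with $\phi_2\equiv 1$ near $x=0$ and $\phi_1\equiv 1$ away from the collar. On $Y$ the operator is an ordinary Hodge--Laplacian on a smooth manifold with boundary, with classical b.c.\ (absolute/relative) at $\b X$; its heat kernel has the usual asymptotic expansion $\mathrm{tr}\,K_Y(t,x,x)\sim\sum a_k(x)\,t^{(k-n)/2}$ uniformly on compact sets, yielding contributions $t^{(m+1-k)/2}$ after integration. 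On $\CF$, apply the decomposition (\ref{pro}): the kernel is
\[
K_\CF(t,(x,y),(x',y'))=\sum_{\t\la_1,\t\la_2}\sum_{j_1,j_2} K^{(q)}_{\t\la_1,j_1,\t\la_2,j_2}(t,x,x')\,\bigl(\t\psi^{(q)}_{\t\la_1,j_1}\boxtimes\t\psi^{(q)}_{\t\la_1,j_1}+\t\psi^{(q-1)}_{\t\la_2,j_2}\boxtimes\t\psi^{(q-1)}_{\t\la_2,j_2}\bigr)(y,y'),
\]
where each $K^{(q)}_{\t\la_1,j_1,\t\la_2,j_2}$ is the heat kernel of the two--component SL problem; the trace on the diagonal is controlled by the Olver--type asymptotic expansions of Theorems in Appendix \ref{SL} ($b>0$ regime for $\t\la>0$, $b=0$ regime for $\t\la=0$), which give an asymptotic expansion in half--integer powers of $t$ with smooth coefficients vanishing at $x=0$ with the correct order. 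Gluing the two parametrices with the Duhamel formula and using standard estimates for the remainder (exponentially small in $1/t$) produces the true heat kernel.

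The main obstacle is the absence of logarithmic terms and the precise half--integer pattern $t^{(m+1-k)/2}$ in the contribution from a neighbourhood of the tip. For conical singularities this is classical (Cheeger, Br\"uning--Seeley), but here the relevant SL problems are \emph{irregular} singular, and the small--$t$ expansion of $\mathrm{tr}\,e^{-tF}$ for such $F$ is exactly what the spectral functions studied in Appendix \ref{SL} are designed to control. Summing the SL traces against the Weyl counting function on $W$ yields an expansion whose exponents line up with $(m+1-k)/2$ because the horn parameter $\ka$ enters multiplicatively in the volume form $h^{1-2\al_q}\,dx\,dy$ and its contribution cancels against that of the eigenvalue densities, leaving the dimensional $(m+1)/2$ leading power. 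Combining with the contribution from $Y$ gives the claimed expansion and completes the proof.
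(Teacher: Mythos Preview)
Your overall strategy is sound and close in spirit to the paper's, but the technical route differs. The paper does not construct a heat parametrix and glue via Duhamel. Instead, it pushes everything through the resolvent/zeta machinery of \cite{Spr9}: on $Y$ the resolvent kernel is classical and gives the standard local expansion; on the horn, the heat trace is obtained as the Laplace transform of the logarithmic Gamma function $\log\Gamma(-\la,\Sp_0)$, and the existence and form of the small-$t$ expansion are read off from the large-$\la$ asymptotics of this Gamma function, which are computed explicitly in Section~\ref{s6.1} via the spectral decomposition lemma (Theorem~\ref{sdl}). So where you appeal to Olver-type SL heat asymptotics and a Duhamel gluing, the paper appeals to the uniform large-$\la$ expansion of $\ff_{\al,\t\la,+}(l,\la)$ and the spectral decomposability of the double sequences $S_{q,\pm}$, $\hat S_{q,\pm}$ over $\tilde S_q$.

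The one point where your argument is genuinely weaker than the paper's is the claim that no logarithmic terms appear and that the exponents are exactly $(m+1-k)/2$. Your explanation that ``$\ka$ enters multiplicatively in the volume form and cancels against the eigenvalue densities'' is not an argument; for cones and other stratified spaces log terms do occur, and their presence or absence is a delicate spectral fact, not a volume bookkeeping. In the paper this is handled by the explicit analysis in Section~\ref{s6.1}: the functions $\phi_{\sigma_h}$ and $P_{\rho_l}$ in the expansion of $\log\Gamma(-\la\t\la_{q,n},S_{q,\pm})$ are computed from the large-$\nu$ asymptotics of $\uf_{+}$ in Section~\ref{asym}, and Remark~\ref{last} shows that the only log-type contributions are constant in $\la$ and hence drop out. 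If you want your parametrix approach to stand on its own, you would need an analogous computation---e.g.\ a uniform small-$t$ expansion of the one-dimensional heat kernels $K^{(q)}_{\t\la}(t,x,x)$ in both $t$ and $\t\la$---and then check that summation against $\zeta_{\rm cex}(s,\t\Delta^{(q)})$ produces no residues at $s=0$. That is essentially the content of Section~\ref{s6.1}, so the paper's route is more direct here.
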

\begin{proof} By the theorem, the resolvent has a continuous square integrable kernel whose restriction on $Y$ is the kernel of the Hodge-Laplace on $Y$, and therefore is of trace class and has got the stated local asymptotic expansion,  whose restriction on the horn is the resolvent of the Hodge-Laplace operator on the horn discussed in Theorem \ref{compresC}. This kernel is of trace class since its trace is given by Laplace transform of  the associated logarithmic Gamma function \cite[2.3]{Spr9}, and the last is the one studied in Section \ref{s6.1}. Moreover, the asymptotic expansion exists and is given by the explicit twisted product of the one on the section and the one on the line. The coefficients of the expansion on $X$ are thus given by the sum of the integrals of the local coefficients on the two parts. 
\end{proof}

\subsection{Hodge decomposition for the operators $\Delta_{\d, \rm bc}$ and $\Delta_{\dr, \rm bc}$}

Consider the intersection de Rham complexes $(\DS(\d^{(\bu)}_{\rm min}), \d^{(\bu)}_{ \rm min})$, $(\DS(\d^{(\bu)}_{\rm max}), \d^{(\bu)}_{ \rm max})$, $(\DS(\d^{(\bu)}_{\pf, \rm rel}), \d^{(\bu)}_{\pf, \rm rel})$ and  $(\DS(\dr^{(\bu)}_{\pf, \rm abs}), \dr^{(\bu)}_{\pf, \rm abs})$ introduced in Section \ref{drcomp}. This are complexes of  Hilbert spaces and closed operators (compare \cite{BL0} and references therein). By Theorem  \ref{coZ}, the homology modules of these complexes are finite, therefore  we have the following results.

\begin{theo}\label{Hodgedec} We have the identification of closed vector spaces
\begin{align*}
H_{\rm DR}^q(X,  \b X,E_{\rho})&=H_q(\DS( \d_{ \rm min}^{(\bu)}), \d_{\rm min}^{(\bu)})=\ker \d_{ \rm min}^{(q)}\cap \ker \dr_{\rm max}^{(q)}=\ker \Delta^{(q)}_{ \d},\\
H_{\rm DR}^q(X,  E_{\rho})&=H_q(\DS( \d_{ \rm max}^{(\bu)}), \d_{\rm max}^{(\bu)})=\ker \d_{ \rm max}^{(q)}\cap \ker \dr_{\rm min}^{(q)}=\ker \Delta^{(q)}_{ \dr},\\
I^\pf H_{\rm DR}^q(X, \b X, E_{\rho})&=H_q(\DS( \d_{\pf, \rm rel}^{(\bu)}), \d_{\pf, \rm rel}^{(\bu)})=\ker \d_{\pf, \rm rel}^{(q)}\cap \ker \dr_{\pf, \rm rel}^{(q)}=\ker \Delta^{(q)}_{ \d, \pf,\rm rel},\\
I^\pf H^q_{ \rm DR}(X, E_{\rho})&=H_q(\DS( \dr_{\pf, \rm abs}^{(\bu)}), \dr_{\pf, \rm abs}^{(\bu)})
=\ker \dr_{\pf, \rm abs}^{(q)}\cap \ker \d_{\pf, \rm abs}^{(q)}=\ker \Delta^{(q)}_{ \dr, \pf, \rm abs}.
\end{align*}
and the direct sum decomposition of orthogonal vector spaces
\begin{align*}
L^2 (X, E_{\rho})&=\ker \d_{ \rm min}^{(q)}\cap \ker \dr_{ \rm max}^{(q)}\oplus \Im \d_{ \rm min}^{(q-1)}\oplus \Im \dr^{(q+1)}_{\rm max},\\
L^2 (X, E_{\rho})&=\ker \d_{ \rm max}^{(q)}\cap \ker \dr_{ \rm min}^{(q)}\oplus \Im \d_{ \rm max}^{(q-1)}\oplus \Im \dr^{(q+1)}_{\rm min},\\
L^2 (X, E_{\rho})&=\ker \d_{\pf, \rm rel}^{(q)}\cap \ker \dr_{\pf, \rm rel}^{(q)}\oplus \Im \d_{\pf, \rm rel}^{(q-1)}\oplus \Im \dr^{(q+1)}_{\pf, \rm rel},\\
L^2 (X, E_{\rho})&=\ker \dr_{\pf, \rm abs}^{(q)}\cap \ker \d_{\pf, \rm abs}^{(q)}\oplus \Im \d_{\pf, \rm abs}^{(q-1)} \oplus \Im \dr^{(q+1)}_{\pf, \rm abs}.
\end{align*}
\end{theo}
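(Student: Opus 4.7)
The plan is to deduce Theorem \ref{Hodgedec} from the standard Hilbert space Hodge decomposition applied to each of the four de Rham complexes, using results already established in the paper. The required inputs are: (i) each pair of operators $(\d_{\pf,\mathrm{bc}}^{(\bu)}, \dr_{\pf,\mathrm{bc}}^{(\bu)})$ (and similarly for min/max) is a pair of mutually adjoint closed, densely defined operators, as recorded at the end of Sections \ref{ss3.1} and \ref{ss3.16}; (ii) each of these operators has closed range on the finite horn (the Proposition at the end of Section \ref{ss3.10}), and on $Y$ closed range is classical, so by the glueing argument the range is closed on $X$; (iii) the homology is finite-dimensional by Theorem \ref{coX}.

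First I would treat each complex $(C^{(\bu)}, A^{(\bu)})$ with $A = \d_{\pf,\mathrm{bc}}$ (and analogously for the other three). By the closed range theorem for closed densely defined operators between Hilbert spaces, $\Im A^{(q-1)}$ is a closed subspace of $L^2(X,E_\rho)$, its orthogonal complement is $\ker (A^{(q-1)})^\da = \ker A^\da{}^{(q)}$ in the appropriate notation, and one has the orthogonal decomposition
\[
L^2(X,E_\rho) = \ker A^{(q)} \oplus \Im A^\da{}^{(q+1)},
\]
together with $\ker A^{(q)} = \Im A^{(q-1)} \oplus \big(\ker A^{(q)} \cap \ker A^\da{}^{(q)}\big)$. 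Combining these two, one obtains the three-term orthogonal decomposition listed in the theorem, for each of the four complexes. The homology identification $H^q(C^{(\bu)}) \cong \ker A^{(q)} \cap \ker A^\da{}^{(q)}$ is then the standard isomorphism: any class in $\ker A^{(q)}/\Im A^{(q-1)}$ has a unique harmonic representative obtained by orthogonally projecting onto $(\Im A^{(q-1)})^\perp \cap \ker A^{(q)}$.

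Next I would identify the harmonic space with the kernel of the corresponding Hodge Laplace operator. By construction of $\Delta_{\d}^{(q)} = \d_{\min}^{(q-1)}\dr_{\max}^{(q)} + \dr_{\max}^{(q+1)}\d_{\min}^{(q)}$ (and analogously in the three other cases), one has $\lp \Delta_{\d}^{(q)}\om,\om\rp = \|\dr_{\max}^{(q)}\om\|^2 + \|\d_{\min}^{(q)}\om\|^2$ for every $\om$ in the domain, so
\[
\ker \Delta_{\d}^{(q)} = \ker \d_{\min}^{(q)} \cap \ker \dr_{\max}^{(q)},
\]
and the same identity for the three other Laplacians. This gives the first set of equalities in the statement. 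No boundary terms appear because the forms in the domains of $\d_{\min}^{(q)}$ and $\dr_{\max}^{(q)}$ (and the analogous intersection versions) satisfy the relative/absolute conditions that make the boundary contributions in Proposition \ref{greenformulaX} vanish.

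The only step requiring care is the verification that the ranges are closed and that the four maximal/minimal/intersection adjoint pairings are genuinely as claimed in all degrees; the first is covered by the Proposition at the end of Section \ref{ss3.10}, and the second was checked in Sections \ref{ss3.1} and \ref{ss3.16}. Given these, the proof reduces to the abstract Hilbert space Hodge decomposition and the quadratic form identity above, both purely formal. The potential pitfall is ensuring that on the intersection complexes the closedness of $\Im \d_{\pf,\mathrm{bc}}^{(q)}$ really does hold in all critical degrees $q = p, p+1$ where the different boundary conditions at the tip of the horn interact; but this is exactly where the finite-dimensionality of $I^\pf H^q_{\mathrm{DR}}$ from Theorem \ref{coX} is used, since finite-codimension in a closed subspace preserves closedness, and more directly finite-dimensional cohomology implies closed range for the differential.
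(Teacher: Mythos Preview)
Your proposal is correct and matches the paper's approach: the paper simply observes that these are Hilbert complexes of closed operators (citing \cite{BL0}) with finite-dimensional homology (by Theorem \ref{coZ} together with the long exact sequence in Proposition \ref{longexactX}), and invokes the standard Hodge decomposition for such complexes. Your write-up spells out the abstract argument in more detail than the paper does; the only superfluous step is the ``glueing'' for closed range on $X$, since---as you yourself note at the end---finite-dimensional cohomology already forces closed range, and this is the route the paper takes.
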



\section{Analysis on the finite horn}

In this section we study the spectral properties of the Hodge Laplace operators $\Delta_{\rm bc}$ and $\Delta_{\mf, \rm bc}$ on the finite horn $\ZZ$.

\subsection{Formal solutions of the harmonic and of the eigenvalue equation}

\begin{lem}\label{harmonics}   
Let $\left\{\t\psi^{(q)}_{{\rm har},l}, \tilde\psi_{\t\la,{\rm cex},j}^{(q)},\tilde\psi_{\t\la,{\rm ex},k}^{(q-1)}\right\}$  an orthonormal  basis 
of $\Omega^q(W)$ consisting of harmonic forms,   coexact and exact eigenforms, where $\t\la\not= 0$ is an eigenvalue of $\t\Delta^{(q)}$. 
Then, the solutions of the equation $\Delta^{(q)} \omega=0$ are  (sums of solutions) of the following six types:
\begin{enumerate}
\item
$\psi^{(q)}_{E}(x,0)=f \t\psi^{(q)}_{{\rm har}}$, 
where $f$ is a solution of the equation $\FF^q_{1,0} f=0$;
\item
$
\psi^{(q)}_{I,\t\la}(x,0)=f \t\psi^{(q)}_{\t\la,{\rm cex}}
$, 
where $f$ is a solution of the equation $\FF^q_{1,\t\la} f=0$, $\t\la\not=0$;
\item
$
\psi^{(q)}_{O}(x,0)=f dx\wedge\t \psi^{(q-1)}_{{\rm har}}
$, 
where $f$ is a solution of the equation $\FF^q_{2,0} f=0$;
\item
$
\psi^{(q)}_{IV,\t\la}(x,0)=f  dx \wedge\t\psi^{(q-1)}_{\t\la,{\rm ex}}
$, 
where $f$ is a solution of the equation $\FF^q_{2,\t\la} f=0$, $\t\la\not=0$;
\item 
$
\psi_{II,\t\la}^{(q)}(x,0)=f \t d \t\psi^{(q-1)}_{\t\la, {\rm cex}}+f'  dx\wedge \tilde\psi^{(q-1)}_{\t\la, {\rm cex}}
$, 
where $f$ is a solution of $\FF^{q-1}_{1,\t\la} f=0$, $\t\la\not=0$; 
\item 
$
\psi_{III,\t\la}^{(q)}(x,0)
=-h^{2\al_q-1}\left(h^{1-2\al_q}f\right)' \t\psi^{(q)}_{\t\la, \rm ex}-\frac{f}{h^2} dx\wedge \t \de \t\psi^{(q)}_{\t\la, \rm ex}
$, 
where $f$ a solution of the equation $\FF^{q+1}_{2,\t\la}f=0$, with $\t\la\not=0$. 
\end{enumerate}

The operators $\FF^q_{j,b}$ are described in equation (\ref{F0}), and equation (\ref{F}) of Appendix \ref{SL}. 
Note  that the forms of types II and III actually coincide, see Remark \ref{XY} below. 
\end{lem}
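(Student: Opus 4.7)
The approach combines separation of variables on the horn with the Hodge decomposition on the section $W$, exploiting the commutation identities $D\Delta = \Delta D$ and $\DD\Delta = \Delta\DD$ (immediate from $D^2 = \DD^2 = 0$) in order to recognise the coupled solutions of types II and III as images under $D$ and $\DD$ of simpler radial problems in adjacent degrees. Writing $\om|_\CF = f_1 \t\om_1 + dx\wedge f_2 \t\om_2$ by separation of variables, and decomposing $\t\om_1, \t\om_2$ in the orthonormal basis of $L^2\Gamma(W,\Lambda^\bu T^* W)$ of harmonic, coexact, and exact eigenforms of $\t\Delta$, the explicit formula for $\Delta^{(q)}\om$ from Section \ref{explicit} decouples $\Delta^{(q)}\om = 0$ into independent subsystems indexed by the pair of section types of $(\t\om_1,\t\om_2)$ and the corresponding eigenvalues $(\t\la_1,\t\la_2)$.

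The four diagonal cases fall out immediately. If both $\t\om_i$ are harmonic, the cross-couplings $\t D \t\om_2$, $\t\DD\t\om_1$ and $\t\Delta\t\om_i$ all vanish and the system reduces to the decoupled Sturm--Liouville equations $\FF^q_{1,0}f_1 = 0$ and $\FF^q_{2,0}f_2 = 0$ (types E and O). If $\t\om_1$ is coexact with $\t\la_1 \neq 0$ and $\t\om_2 = 0$, the $dx$-component of $\Delta^{(q)}\om$ vanishes automatically (since $\t\DD\t\om_1 = 0$) and the non-$dx$ component collapses to $\FF^q_{1,\t\la}f_1 = 0$ (type I); the symmetric choice $\t\om_1=0$, $\t\om_2$ exact with $\t\la_2 \neq 0$ gives type IV via $\FF^q_{2,\t\la}f_2 = 0$.

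The heart of the argument is the coupled case. For $\t\psi$ a coexact eigenform of $\t\Delta^{(q-1)}$ of eigenvalue $\t\la \neq 0$, setting $\eta = f\t\psi$ gives $D\eta = f\,\t d\t\psi + f'\,dx\wedge\t\psi$ by the formulas of Section \ref{explicit}, which is exactly the type II ansatz; and since $\t\DD\t\psi = 0$, the commutation $\Delta^{(q)}D = D\Delta^{(q-1)}$ reduces $\Delta^{(q)}(D\eta) = 0$ to $\Delta^{(q-1)}\eta = 0$, which in turn is the single Sturm--Liouville equation $\FF^{q-1}_{1,\t\la}f = 0$. Dually, for $\t\psi$ an exact eigenform of degree $q$, the form $\eta = f\,dx\wedge\t\psi$ satisfies $\DD\eta = -h^{2\al_q-1}(h^{1-2\al_q}f)' \t\psi - (f/h^2)\,dx\wedge\t\de\t\psi$, which is precisely type III; and $\Delta^{(q+1)}\eta = 0$ reduces (using $\t D\t\psi = 0$) to $\FF^{q+1}_{2,\t\la}f = 0$.

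Finally, I would verify that these six local types exhaust all formal solutions: orthogonality of the Hodge pieces on the section forces any non-trivial coupling in $\Delta^{(q)}\om = 0$ to involve pairs of the shape $(\t d\t\psi,\t\psi)$ or $(\t\psi,\t\de\t\psi)$, and the resulting radial system then admits a solution only via the ansatz of types II or III. The main obstacle is not conceptual but the careful bookkeeping of the dimensional parameters $\al_q$ versus $\al_{q-1}$ between the four Sturm--Liouville operators $\FF_{1,\t\la}^q, \FF_{1,\t\la}^{q-1}, \FF_{2,\t\la}^q, \FF_{2,\t\la}^{q+1}$; the key identity making the coupled system consistent is $\FF_1^{q-1}f - \FF_1^q f = -2(h'/h)f'$, which exactly absorbs the cross-coupling term in the non-$dx$ component.
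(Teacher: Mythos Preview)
Your proposal is correct. The paper gives no proof of this lemma (it is stated as a computational fact, in the spirit of the one-line ``direct verification'' proof of the analogous Lemma~\ref{formalsol-kerd}), so your argument is a legitimate and considerably more detailed unfolding of what such a verification entails. Your use of the commutation identities $\Delta D = D\Delta$ and $\Delta\DD = \DD\Delta$ to produce the coupled types II and III as $D$- and $\DD$-images of decoupled harmonics in adjacent degrees is a clean conceptual shortcut that the paper only hints at later (Remark~\ref{XY} effectively reverses this observation to show II and III coincide); the identity $\FF_1^{q-1}f - \FF_1^q f = -2(h'/h)f'$ you isolate is exactly the consistency check needed. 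The one point you leave slightly informal is exhaustiveness: you assert that any nontrivial coupling must be of the shape $(\t d\t\psi,\t\psi)$ or $(\t\psi,\t\de\t\psi)$, which is true, but the clean justification is that in the explicit formula for $\Delta^{(q)}\om$ the off-diagonal terms are $-2(h'/h)f_2\,\t d\t\om_2$ and $-2(h'/h^3)f_1\,\t\de\t\om_1$, so projecting onto the orthonormal eigenbasis forces $\t\om_1$ to be a multiple of $\t d\t\om_2$ (or $\t\om_2$ of $\t\de\t\om_1$) whenever the coupling is nonzero.
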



\begin{lem}\label{formaleigen} 
With the notation of Lemma \ref{harmonics},  the solutions of the equation $\Delta^{(q)} \omega=\la \om$ are  (sums of solutions) of the following six types:
\begin{enumerate}
\item 
$
\psi^{(q)}_{E}(x,\la)=f \t\psi^{(q)}_{{\rm har}}
$, 
where $f$ is a solution of the equation $\FF^q_{1,0} f=\la f$;
\item 
$\psi^{(q)}_{I,\t\la}(x,\la)=f \t\psi^{(q)}_{\t\la,{\rm cex}}$,
where $f$ is a solution of the equation $\FF^q_{1, \t\la} f=\la f$, $\t\la\not=0$;
\item
$
\psi^{(q)}_{O}(x,\la)=f dx\wedge\t \psi^{(q-1)}_{{\rm har}}
$, 
where $f$ is a solution of the equation $\FF^q_{2,0} f=\la f$;
\item
$\psi^{(q)}_{IV,\t\la}(x,\la)=f  dx \wedge\t\psi^{(q-1)}_{\t\la,{\rm ex}}$, 
where $f$ is a solution of the equation $\FF^q_{2,\t\la} f=\la f$, $\t\la\not=0$;
\item 
$
\psi_{II,\t\la}^{(q)}(x,\la)=f \t d \t\psi^{(q-1)}_{\t\la, {\rm cex}}+f'  dx\wedge \tilde\psi^{(q-1)}_{\t\la, {\rm cex}}
$, 
where $f$ is a solution of $\FF^{q-1}_{1,\t\la} f=\la f$, $\t\la\not=0$; 
\item
$\psi_{III,\t\la}^{(q)}(x,\la)
=-h^{2\al_q-1}\left(h^{1-\al_q}f\right)' \t\psi^{(q)}_{\t\la, \rm ex}-\frac{f}{h^2} dx\wedge \t \de \t\psi^{(q)}_{\t\la, \rm ex}
$, 
where $f$ a solution of the equation $\FF^{q+1}_{2,\t\la}f=\la f$, with $\t\la\not=0$. 
\end{enumerate}
\end{lem}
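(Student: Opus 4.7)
The plan is to verify by direct substitution that each of the six listed forms, with $f$ satisfying the stated Sturm-Liouville equation, is indeed a formal eigenform of $\Delta^{(q)}$. This mirrors the proof of Lemma~\ref{harmonics} with $0$ replaced by $\lambda f$ on the right-hand side of the associated ODEs, so I would only describe the essential points. Starting from the explicit expression for $\Delta^{(q)}$ on the collar recorded in Section~\ref{explicit}, a form $\omega = f_1\tilde\omega_1 + dx\wedge f_2\tilde\omega_2$ is mapped to a sum of four pieces: the ``diagonal'' pieces $\FF_1^q(f_1\tilde\omega_1)$ and $dx\wedge\FF_2^q(f_2\tilde\omega_2)$, together with two ``cross'' pieces $-2\tfrac{h'}{h}f_2\,\tilde D\tilde\omega_2$ and $-2\tfrac{h'}{h^3}f_1\,dx\wedge\tilde\DD\tilde\omega_1$ that move the Hodge type on the section.

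I would then split the verification according to the Hodge decomposition of the section. Cases E, I, O, IV are immediate: for each, exactly one of $f_1, f_2$ is zero and the remaining section eigenform is harmonic (so both $\tilde d$ and $\tilde\delta$ vanish), coexact (so $\tilde\delta=0$), or exact (so $\tilde d=0$); consequently the cross terms vanish, and since $\tilde\Delta^{(q)}\tilde\psi = \tilde\lambda\tilde\psi$ the single remaining piece gives the scalar equation $\FF^q_{i,\tilde\lambda}f = \lambda f$ with $i=1$ or $2$ as stated.

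The nontrivial cases are II and III, which encode the coupling between the coexact/exact sectors. For type II I would substitute $\omega = f\,\tilde d\tilde\psi + f'\,dx\wedge\tilde\psi$ with $\tilde\psi$ coexact and eigenvalue $\tilde\lambda$, then use $\tilde\delta\tilde d\tilde\psi = \tilde\lambda\tilde\psi$ to evaluate the cross term $-2\tfrac{h'}{h^3}f\,dx\wedge\tilde\delta\tilde d\tilde\psi$, and finally combine it with $dx\wedge\FF^q_{2,\tilde\lambda}(f')\tilde\psi$. The structural reason this works is that $\omega = d(f\tilde\psi)$ on the smooth part and $[\Delta,d]=0$, so $f\tilde\psi$ being an eigenform of $\Delta^{(q-1)}$ with eigenvalue $\lambda$ — equivalently $\FF^{q-1}_{1,\tilde\lambda}f = \lambda f$ — forces $\Delta\omega = \lambda\omega$. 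Type III is the dual verification: the given $\omega$ is, up to scaling, $\DD$ applied to $f\,dx\wedge\tilde\psi^{(q)}_{\tilde\lambda,\mathrm{ex}}$, and $[\Delta,\DD]=0$ together with $\tilde d\tilde\delta\tilde\psi = \tilde\lambda\tilde\psi$ yields the stated ODE $\FF^{q+1}_{2,\tilde\lambda}f = \lambda f$.

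The main obstacle is bookkeeping in cases II and III: one must check that the coefficient $f'$ in the $dx$-component of type II (resp.~$-h^{2\alpha_q-1}(h^{1-2\alpha_q}f)'$ in type III) is exactly what causes the cross contribution $-2\tfrac{h'}{h}f_2\,\tilde d\tilde\omega_2$ and the extra $-(1-2\alpha_{q-1})(h'/h)'$ hidden in $\FF^q_2$ (see equation~(\ref{F0})) to recombine via the product rule into a single $\FF^{q-1}_{1,\tilde\lambda}$ acting on $f$. Once this algebraic identity is in place, the reduction to the scalar ODE is identical to the harmonic case of Lemma~\ref{harmonics}, and the remark about the coincidence of types II and III alluded to in Lemma~\ref{harmonics} (Remark~\ref{XY}) extends verbatim, since the computations of $\tilde d\tilde\delta$ and $\tilde\delta\tilde d$ both produce the factor $\tilde\lambda$ in our setting.
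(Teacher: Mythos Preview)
Your proposal is correct and matches the paper's approach: the paper does not write out a separate proof for Lemma~\ref{formaleigen}, treating it as the evident $\lambda$-variant of Lemma~\ref{harmonics}, whose proof is the one-line ``direct verification, compare with \cite[4.6.1]{Spr12}.'' Your write-up is in fact more detailed than what the paper provides, and the observation that types II and III arise as $d(f\tilde\psi)$ and $\DD(f\,dx\wedge\tilde\psi)$, combined with $[\Delta,d]=[\Delta,\DD]=0$, is a clean way to organise the bookkeeping you flag in the last paragraph.
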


\begin{rem}\label{smooth} Note that all the solutions of the eigenvalue and of the harmonic equation are smooth. The solutions on the section are smooth by regularity of elliptic operators, the functions $f$ by Sturm Liouville theory.
\end{rem}

\begin{rem}\label{XY} Consider the equation (with $\t\om$ co exact) in degree $q$
\[
\om=h^{2\al_{q-1}+1} (h^{1-2\al_{q-1}}g)' \t\vv+dx\wedge g \t \DD\t\vv=A(f\t D\t\om+dx\wedge f'\t\om),
\]
for some constant $A$. A straightforward calculation shows that this equation has the unique solution (that is independent to $A$): $g= f'$, $\t\om=\t\DD\t\vv$, with $f$ a solution of the equation (compare with equations (\ref{F}) and (\ref{F}))
\[
\FF^{q-1}_{1,\t\la} f=-h^{2\al_{q-1} -1}\left(h^{1-2\al_{q-1} }f'\right)' +\frac{\t\la}{h^2} f=0
\]

For $h^{2\al_{q-1}+1} (h^{1-2\al_{q-1}}f')'=\t\la f$, and $\t\la \t\vv=\t\Delta\t\vv=\t D\t\om$. Note also that, choosing $\be=-\frac{h^2}{\t\la} g dx \wedge \t D\t\om$, in degree $q+1$, we have $\DD\be=\om$. 
\end{rem}

\begin{lem}\label{squareint} Let $\psi_\pm$ be a solution either of  the eigenvalue equations or of the harmonic equation, according to Lemmas \ref{formaleigen} and \ref{harmonics}. Then, $\psi_+, d\psi_+$ and $\de\psi_+$ are square integrable for all $q$. On the other side, $\psi_-, d\psi_-$ and $\de\psi_-$ are square integrable only in the cases described in the following tables. If $m=\dim W=2p-1$:

\begin{center}
\begin{tabular}{|c|c|c|c|}
\hline
forms type&  $\psi^{(q)}\in L^2$ &$d\psi^{(q)}\in L^2$&$\de \psi^{(q)} \in L^2$\\
\hline
$E_-$ & $q=p-1$& $q=p$& $\forall q$\\
$O_-$ &$q=p+1$& $\forall q$& $q =p$\\
$I_-$ &  $q=p-1, \tilde\la_{p-1,k} <1$  & $\not \exists q$   & $\forall q$\\
$IV_-$ &$q=p+1, \tilde\la_{p-1,k}<1$& $\forall q$ & $\not\exists q$ \\
\hline
\end{tabular}
\end{center}

If $m=\dim W=2p$:

\begin{center}
\begin{tabular}{|c|c|c|c|c|}
\hline
forms type&  $\psi^{(q)}\in L^2$ &$d\psi^{(q)}\in L^2$&$\de \psi^{(q)}\in L^2$\\
\hline
$E_-$& $q=p-1$ or $q=p$& $q=p$ or $q=p+1$& $\forall q$\\
$O_-$&$q= p+1$ or $q=p+2$&$\forall q$& $q=p$ or $q=p+1$\\
$I_-$&$\begin{array}{c}q=p-1,  \tilde\la_{p-1,k} < \frac{3}{4} \\ q=p, \tilde\la_{p,k} < \frac{3}{4}\end{array}$&$\not\exists q$ & $\forall q$\\
$IV_-$& $\begin{array}{c} q=p+1,  \tilde\la_{p-1,k} < \frac{3}{4} \\  q=p+2,  \tilde\la_{p,k} < \frac{3}{4} \end{array} $   & $\forall q$& $\not\exists q$\\
\hline
\end{tabular}
\end{center}

\end{lem}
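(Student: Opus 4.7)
The plan is to reduce the question to a one variable integrability test and then go through the six formal types of Lemmas \ref{harmonics} and \ref{formaleigen} one by one. By the product identification in equation (\ref{product}), a form of any of these types is square integrable on $\ZZ$ precisely when the scalar factor $f$ (or the appropriate derivative combination, read off from Section \ref{explicit}) belongs to $L^2((0,l],h^{1-2\al_\bu})$ for the dimensional parameter $\al$ associated with that type and that derivative. So the lemma splits into identifying the leading asymptotics of the two solutions $f_\pm$ of the corresponding Sturm Liouville problem near $x=0$ and carrying out an elementary power count against $h^{1-2\al}$.

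I would first handle types $E$ and $O$, where the section eigenvalue $\t\la$ is zero and the SL equation integrates explicitly to $(h^{1-2\al}f')'=0$. The solution $f_+$ is a constant and $f_-$ is a primitive of $h^{2\al-1}$, so both have explicit power behaviour near $x=0$. The integrability test $\int_0 h^{1-2\al}f_\pm^2 <\infty$ reduces to a comparison of $\ka(1-2\al)$ and $\ka(2\al-1)+2$ with $-1$. For $f_+$ this is always satisfied under the standing hypothesis $\ka>1$, accounting for the ``$\psi_+$ always $L^2$'' part of the claim; for $f_-$ it selects exactly the middle type degree constraints listed in the $E_-$ and $O_-$ rows. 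The lines for $d\psi_-$ and $\de\psi_-$ follow the same mechanism, with the explicit formulas of Section \ref{explicit} replacing the parameter $\al_q$ by its shifted counterparts $\al_{q\pm 1}$.

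Next I would treat types $I, IV, II, III$, which involve a non zero section eigenvalue $\t\la$; by Remark \ref{XY} types $II$ and $III$ are identified, so only one of them needs a separate argument. The underlying SL problem is now irregular singular at $x=0$ and Theorem \ref{olv1} of Appendix \ref{SL} supplies WKB type asymptotics $f_\pm(x)\sim h^{-\ka/2}\exp\!\bigl(\mp\sqrt{\t\la}\int_l^x h^{-1}\bigr)$ near $0$. Since $\ka>1$, this exponent decays super-polynomially for $f_+$ and blows up super-polynomially for $f_-$, which makes $\psi_+$, $d\psi_+$ and $\de\psi_+$ square integrable for free and rules out $L^2$ membership of $\psi_-$ in the generic situation. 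The small $\t\la$ exceptions for $I_-$ and $IV_-$ come from the critical regime in which the dimensional parameter $\al_q$ (resp.\ $\al_{q-1}$) degenerates the exponential factor in $f_-$ and Theorem \ref{b0} on the regular singular companion problem takes over; the indicial exponents there, matched against the weight $h^{1-2\al}$, produce the thresholds $\t\la_{p-1,k}<1$ in odd dimension and $\t\la_{p-1,k},\t\la_{p,k}<\tfrac{3}{4}$ in even dimension.

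The main obstacle will be these transition cases for $I_-$ and $IV_-$: one must extract the leading order of $f_-$ and of $f_-'$ simultaneously, and push both through the explicit formulas for $d$ and $\de$ in Section \ref{explicit}, in order to recover the asymmetric pattern in the last two columns (for instance, the fact that $d\psi_-$ of type $I$ is never $L^2$ while $\de\psi_-$ always is). Once the middle degree critical values are pinpointed, the remaining entries of both tables reduce to bookkeeping on the shift of $\al$ under $d$ and $\de$ and direct verification using the tools already developed.
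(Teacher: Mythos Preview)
The paper states this lemma without proof, so there is no argument to compare against; your overall plan (reduce via equation (\ref{product}) to a weighted $L^2$ test on $(0,l]$ and feed in the near--zero asymptotics of $\ff_\pm$ from Theorem \ref{olv1} and Proposition \ref{b0}) is the natural one and is what the paper relies on implicitly elsewhere.

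There is, however, a genuine gap in your handling of the rows $I_-$ and $IV_-$. You claim the small-$\t\la$ exceptions arise because ``the dimensional parameter $\al_q$ degenerates the exponential factor in $f_-$ and Theorem \ref{b0} on the regular singular companion problem takes over.'' No such mechanism exists: in Theorem \ref{olv1} the factor $\exp\bigl(\pm\tfrac{\sqrt b}{1-\ka}\,x^{1-\ka}\bigr)$ depends only on $b=\t\la$ and $\ka$, not on $\al$, so no value of $\al_q$ can kill it; and Proposition \ref{b0} applies only at $b=0$, not for $b>0$ however small. In fact Corollary \ref{cor1} and Remark \ref{squareint1} state outright that for $b>0$ the end point $x=0$ is limit point and $\ff_-$ is \emph{never} square integrable, and the paper uses exactly this in the proof of Proposition \ref{l4}: ``square integrable $-$ solutions are only of types $E$ and $O$.'' The thresholds $\t\la<1$ and $\t\la<\tfrac34$ in the $I_-$, $IV_-$ rows are the conical ($\ka=1$) indicial conditions and cannot be recovered for the horn; your argument for those entries is aimed at a target that, by the paper's own appendix, is unreachable.

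A smaller point: for types $E$ and $O$ in the eigenvalue case ($\la\neq0$, $b=0$) the equation does not integrate to $(h^{1-2\al}f')'=0$; that is only the harmonic case. One must invoke Proposition \ref{b0} for the near--zero behaviour. This does not alter the integrability count, since the indicial exponents there are independent of $\la$, but the phrasing should be tightened. Note also (cf.\ Remark \ref{squareint1}) that the $E_-$, $O_-$ entries carry an implicit $\ka$-dependence that the tables suppress.
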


\subsection{Spectral decomposition of the operators  $\Delta_{\pf, \rm bc}$}

Restricted on the finite horn of length $l$, the analysis of the operators $\Delta_{\pf, \rm bc}$ reduces to the analysis of a pair of Sturm Liouville operators. More precisely,   on $\ZZ$, each form globally decomposes as $\om=f_1\t\om_1+dx \wedge f_2\t\om_2$, and the operator 
$\Delta^{(q)}_{\t\la_1,j_1,\t\la_2,j_2}$ consists  in some self adjoint extension of the  pair of Sturm 
Liouville operators $(\FF_{1,\al_q,\t\la_1},\FF_{2,\al_{q},\t\la_2})\equiv (\FF_{1,\al_q,\t\la_1},\FF_{1,\al_{m+1-q},\t\la_2})$,  acting on the 
space $L^2((0,l+\ep),h^{1-2\al_{q}}\oplus  h^{1-2\al_{q-1}})$, see Remark \ref{sss}. These two 
operators are  described in Appendix \ref{SL}. The precise correspondence is described in the following remark.

\begin{rem}\label{formuleLap}
We have the following  description of the operators defined in Section \ref{defLap}. 
\begin{enumerate}
\item $3\leq \ka$:
\begin{enumerate}
\item $m=2p-1$:
\begin{align*}
\Delta_{\mf^c, \rm abs}=\Delta_{\mf, \rm abs}&
\equiv\bigoplus_{q=0}^{2p} \bigoplus_{\t\la_1,\t\la_2}
\left(F_{\al_q,\t\la_1;\frac{\pi}{2}},F_{\al_{2p-q},\t\la_2;0}\right);
\end{align*}

\item $m=2p$:
\begin{align*}
\Delta_{\mf^c, \rm abs}
\equiv&\bigoplus_{q=0,q\not=p,p+1}^{2p+1} \bigoplus_{\t\la_1,\t\la_2}
\left(F_{\al_q,\t\la_1;\frac{\pi}{2}},F_{\al_{2p+1-q},\t\la_2;0}\right)\\
&~\oplus \bigoplus_{\t\la_2} \left(F_{\al_p=\frac{1}{2},0;0,\frac{\pi}{2}},F_{\al_{p+1}=\frac{3}{2},\t\la_2;0}\right)
\oplus\bigoplus_{\t\la_1\t\la_2>0}\left(F_{\al_p=\frac{1}{2},\t\la_1;\frac{\pi}{2}},F_{\al_{p+1}=\frac{3}{2},\t\la_2;0}\right)\\
&~\oplus \bigoplus_{\t\la_1} \left(F_{\al_{p+1}=\frac{3}{2},0;\frac{\pi}{2}},F_{\al_p=\frac{1}{2},0;0,0}\right)
\oplus\bigoplus_{\t\la_1\t\la_2>0}\left(F_{\al_{p+1}=\frac{3}{2},\t\la_1;\frac{\pi}{2}},F_{\al_p=\frac{1}{2},\t\la_2;0}\right);\end{align*}

\begin{align*}
\Delta_{\mf, \rm abs}
\equiv&\bigoplus_{q=0,q\not=p,p+1}^{2p+1} \bigoplus_{\t\la_1,\t\la_2}
\left(F_{\al_q,\t\la_1;\frac{\pi}{2}},F_{\al_{2p+1-q},\t\la_2;0}\right)\\
&~\oplus \bigoplus_{\t\la_2} \left(F_{\al_p=\frac{1}{2},0;\frac{\pi}{2},\frac{\pi}{2}},F_{\al_{p+1}=\frac{3}{2},\t\la_2;0}\right)
\oplus\bigoplus_{\t\la_1\t\la_2>0}\left(F_{\al_p=\frac{1}{2},\t\la_1;\frac{\pi}{2}},F_{\al_{p+1}=\frac{3}{2},\t\la_2;0}\right)\\
&~\oplus \bigoplus_{\t\la_1} \left(F_{\al_{p+1}=\frac{3}{2},0;\frac{\pi}{2}},F_{\al_p=\frac{1}{2},0;\frac{\pi}{2},0}\right)
\oplus\bigoplus_{\t\la_1\t\la_2>0}\left(F_{\al_{p+1}=\frac{3}{2},\t\la_1;\frac{\pi}{2}},F_{\al_p=\frac{1}{2},\t\la_2;0}\right);\end{align*}
\end{enumerate}

\item $\frac{3}{2}\leq\ka< 3$:
\begin{enumerate}

\item $m=2p-1$:
\begin{align*}
\Delta_{\mf^c, \rm abs}=\Delta_{\mf, \rm abs}
\equiv&\bigoplus_{q=0,q\not=p-1,p}^{2p} \bigoplus_{\t\la_1,\t\la_2}
\left(F_{\al_q,\t\la_1;\frac{\pi}{2}},F_{\al_{2p-q},\t\la_2;0}\right)\\
&~\oplus \bigoplus_{\t\la_2} \left(F_{\al_{p-1}=0,0;0,\frac{\pi}{2}},F_{\al_{p}=1,\t\la_2;0}\right)
\oplus\bigoplus_{\t\la_1\t\la_2>0}\left(F_{\al_{p-1}=0,\t\la_1;\frac{\pi}{2}},F_{\al_{p}=1,\t\la_2;0}\right)\\
&~\oplus \bigoplus_{\t\la_1} \left(F_{\al_{p}=1,0;\frac{\pi}{2}},F_{\al_{p-1}=0,0;0,0}\right)
\oplus\bigoplus_{\t\la_1\t\la_2>0}\left(F_{\al_{p}=1,\t\la_1;\frac{\pi}{2}},F_{\al_{p-1}=0,\t\la_2;0}\right);
\end{align*}

\item $m=2p$:
\begin{align*}
\Delta_{\mf^c, \rm abs}
\equiv&\bigoplus_{q=0,q\not=p,p+1}^{2p+1} \bigoplus_{\t\la_1,\t\la_2}
\left(F_{\al_q,\t\la_1;\frac{\pi}{2}},F_{\al_{2p+1-q},\t\la_2;0}\right)\\
&~\oplus \bigoplus_{\t\la_2} \left(F_{\al_p=\frac{1}{2},0;0,\frac{\pi}{2}},F_{\al_{p+1}=\frac{3}{2},\t\la_2;0}\right)
\oplus\bigoplus_{\t\la_1\t\la_2>0}\left(F_{\al_p=\frac{1}{2},\t\la_1;\frac{\pi}{2}},F_{\al_{p+1}=\frac{3}{2},\t\la_2;0}\right)\\
&~\oplus \bigoplus_{\t\la_1} \left(F_{\al_{p+1}=\frac{3}{2},0;\frac{\pi}{2}},F_{\al_p=\frac{1}{2},0;0,0}\right)
\oplus\bigoplus_{\t\la_1\t\la_2>0}\left(F_{\al_{p+1}=\frac{3}{2},\t\la_1;\frac{\pi}{2}},F_{\al_p=\frac{1}{2},\t\la_2;0}\right);
\end{align*}

\begin{align*}
\Delta_{\mf, \rm abs}
\equiv&\bigoplus_{q=0,q\not=p,p+1}^{2p+1} \bigoplus_{\t\la_1,\t\la_2}
\left(F_{\al_q,\t\la_1;\frac{\pi}{2}},F_{\al_{2p+1-q},\t\la_2;0}\right)\\
&~\oplus \bigoplus_{\t\la_2} \left(F_{\al_p=\frac{1}{2},0;\frac{\pi}{2},\frac{\pi}{2}},F_{\al_{p+1}=\frac{3}{2},\t\la_2;0}\right)
\oplus\bigoplus_{\t\la_1\t\la_2>0}\left(F_{\al_p=\frac{1}{2},\t\la_1;\frac{\pi}{2}},F_{\al_{p+1}=\frac{3}{2},\t\la_2;0}\right)\\
&~\oplus \bigoplus_{\t\la_1} \left(F_{\al_{p+1}=\frac{3}{2},0;\frac{\pi}{2}},F_{\al_p=\frac{1}{2},0;\frac{\pi}{2},0}\right)
\oplus\bigoplus_{\t\la_1\t\la_2>0}\left(F_{\al_{p+1}=\frac{3}{2},\t\la_1;\frac{\pi}{2}},F_{\al_p=\frac{1}{2},\t\la_2;0}\right);\end{align*}

\end{enumerate}

\item $1<\ka< \frac{3}{2}$:
\begin{enumerate}

\item $m=2p-1$:
\begin{align*}
\Delta_{\mf^c, \rm abs}=\Delta_{\mf, \rm abs}
\equiv&\bigoplus_{q=0,q\not=p-1,p}^{2p} \bigoplus_{\t\la_1,\t\la_2}
\left(F_{\al_q,\t\la_1;\frac{\pi}{2}},F_{\al_{2p-q},\t\la_2;0}\right)\\
&~\oplus \bigoplus_{\t\la_2} \left(F_{\al_{p-1}=0,0;0,\frac{\pi}{2}},F_{\al_{p}=1,\t\la_2;0}\right)
\oplus\bigoplus_{\t\la_1\t\la_2>0}\left(F_{\al_{p-1}=0,\t\la_1;\frac{\pi}{2}},F_{\al_{p}=1,\t\la_2;0}\right)\\
&~\oplus \bigoplus_{\t\la_1} \left(F_{\al_{p}=1,0;\frac{\pi}{2}},F_{\al_{p-1}=0,0;0,0}\right)
\oplus\bigoplus_{\t\la_1\t\la_2>0}\left(F_{\al_{p}=1,\t\la_1;\frac{\pi}{2}},F_{\al_{p-1}=0,\t\la_2;0}\right);
\end{align*}

\item $m=2p$:
\begin{align*}
\Delta_{\mf^c, \rm abs}
\equiv&\bigoplus_{q=0,q\not=p-1,p,p+1,p+2}^{2p+1} \bigoplus_{\t\la_1,\t\la_2}
\left(F_{\al_q,\t\la_1;\frac{\pi}{2}},F_{\al_{2p+1-q},\t\la_2;0}\right)\\
&~\oplus \bigoplus_{\t\la_2} \left(F_{\al_{p-1}=-\frac{1}{2},0;0,\frac{\pi}{2}},F_{\al_{p+2}=\frac{5}{2},\t\la_2;0}\right)
\oplus\bigoplus_{\t\la_1\t\la_2>0}\left(F_{\al_{p-1}=-\frac{1}{2},\t\la_1;\frac{\pi}{2}},F_{\al_{p+2}=\frac{5}{2},\t\la_2;0}\right)\\
&~\oplus \bigoplus_{\t\la_2} \left(F_{\al_p=\frac{1}{2},0;0,\frac{\pi}{2}},F_{\al_{p+1}=\frac{3}{2},\t\la_2;0}\right)
\oplus\bigoplus_{\t\la_1\t\la_2>0}\left(F_{\al_p=\frac{1}{2},\t\la_1;\frac{\pi}{2}},F_{\al_{p+1}=\frac{3}{2},\t\la_2;0}\right)\\
&~\oplus \bigoplus_{\t\la_1} \left(F_{\al_{p+1}=\frac{3}{2},0;\frac{\pi}{2}},F_{\al_p=\frac{1}{2},0;0,0}\right)
\oplus\bigoplus_{\t\la_1\t\la_2>0}\left(F_{\al_{p+1}=\frac{3}{2},\t\la_1;\frac{\pi}{2}},F_{\al_p=\frac{1}{2},\t\la_2;0}\right)\\
&~\oplus \bigoplus_{\t\la_1} \left(F_{\al_{p+2}=\frac{5}{2},0;\frac{\pi}{2}},F_{\al_{p-1}=-\frac{1}{2},0;0, 0}\right)
\oplus\bigoplus_{\t\la_1\t\la_2>0}\left(F_{\al_{p+2}=\frac{5}{2},\t\la_1;\frac{\pi}{2}},F_{\al_{p-1}=-\frac{1}{2},\t\la_2;0}\right).
\end{align*}

\begin{align*}
\Delta_{\mf, \rm abs}
\equiv&\bigoplus_{q=0,q\not=p-1,p,p+1,p+2}^{2p+1} \bigoplus_{\t\la_1,\t\la_2}
\left(F_{\al_q,\t\la_1;\frac{\pi}{2}},F_{\al_{2p+1-q},\t\la_2;0}\right)\\
&~\oplus \bigoplus_{\t\la_2} \left(F_{\al_{p-1}=-\frac{1}{2},0;0,\frac{\pi}{2}},F_{\al_{p+2}=\frac{5}{2},\t\la_2;0}\right)
\oplus\bigoplus_{\t\la_1\t\la_2>0}\left(F_{\al_{p-1}=-\frac{1}{2},\t\la_1;\frac{\pi}{2}},F_{\al_{p+2}=\frac{5}{2},\t\la_2;0}\right)\\
&~\oplus \bigoplus_{\t\la_2} \left(F_{\al_p=\frac{1}{2},0;\frac{\pi}{2},\frac{\pi}{2}},F_{\al_{p+1}=\frac{3}{2},\t\la_2;0}\right)
\oplus\bigoplus_{\t\la_1\t\la_2>0}\left(F_{\al_p=\frac{1}{2},\t\la_1;\frac{\pi}{2}},F_{\al_{p+1}=\frac{3}{2},\t\la_2;0}\right)\\
&~\oplus \bigoplus_{\t\la_1} \left(F_{\al_{p+1}=\frac{3}{2},0;\frac{\pi}{2}},F_{\al_p=\frac{1}{2},0;\frac{\pi}{2},0}\right)
\oplus\bigoplus_{\t\la_1\t\la_2>0}\left(F_{\al_{p+1}=\frac{3}{2},\t\la_1;\frac{\pi}{2}},F_{\al_p=\frac{1}{2},\t\la_2;0}\right)\\
&~\oplus \bigoplus_{\t\la_1} \left(F_{\al_{p+2}=\frac{5}{2},0;\frac{\pi}{2}},F_{\al_{p-1}=-\frac{1}{2},0;0,0}\right)
\oplus\bigoplus_{\t\la_1\t\la_2>0}\left(F_{\al_{p+2}=\frac{5}{2},\t\la_1;\frac{\pi}{2}},F_{\al_{p-1}=-\frac{1}{2},\t\la_2;0}\right).
\end{align*}

\end{enumerate}

\end{enumerate}

In order to have the relative boundary conditions at $\b X$, we just need to change the last index, i.e. for example to write $(F_{\cdot, \cdot;0}, F_{\cdot,\cdot;\frac{\pi}{2}})$ in place of $(F_{\cdot, \cdot;\frac{\pi}{2}}, F_{\cdot,\cdot;0})$.

\end{rem}

\subsection{Harmonic forms and harmonic fields for the operators $\Delta_{\pf,\rm bc}$}

\begin{lem}  We have
$
\ker \Delta^{(q)}_{\rm max}=\{\psi_{A,\pm}\in L^2(\ZZ)\}
$, $A=E,O,I,II,III, IV$, and the forms $\psi_{A,+}$ are the forms of type + according to the description  in Proposition \ref{harmonics}, and the forms of type - are those described in Lemma \ref{squareint}. 
\end{lem}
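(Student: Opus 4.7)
The plan is to prove the two inclusions separately by separation of variables on the finite horn, reducing the problem to a family of coupled Sturm--Liouville systems indexed by the spectrum of the section Laplacian.

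The inclusion $\supseteq$ is immediate. Each $\psi_{A,\pm}$ in Lemma \ref{harmonics} is smooth by Remark \ref{smooth} and solves $\Delta^{(q)}\psi_{A,\pm}=0$ classically. If in addition $\psi_{A,\pm}\in L^2(\ZZ)$ -- which is exactly what Lemma \ref{squareint} characterises in the $\pm$ cases -- then trivially $\Delta^{(q)}\psi_{A,\pm}=0\in L^2(\ZZ)$, so $\psi_{A,\pm}\in\DS(\Delta^{(q)}_{\rm max})\cap\ker\Delta^{(q)}_{\rm max}$.

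For the converse, I would take $\om\in\ker\Delta^{(q)}_{\rm max}$ and use the identification of equation (\ref{ide1}) together with the spectral decomposition of $\t\Delta$ recalled in Section \ref{H0} to expand
\[
\om=\sum_{\t\la_1,j_1} f_{1,\t\la_1,j_1}(x)\,\t\psi^{(q)}_{\t\la_1,j_1}+\sum_{\t\la_2,j_2} f_{2,\t\la_2,j_2}(x)\,dx\wedge\t\psi^{(q-1)}_{\t\la_2,j_2}.
\]
By equation (\ref{pro}) the operator decomposes as $\Delta^{(q)}=\bigoplus \Delta^{(q)}_{\t\la_1,j_1,\t\la_2,j_2}$, and the orthogonal projections onto the blocks commute with $\Delta^{(q)}$ on smooth compactly supported forms (they are built from the section spectral projections, which commute with multiplication by functions of $x$ and with $d/dx$). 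Extending this by closedness of the maximal operator, the projection of $\om$ onto each block lies in the kernel of the corresponding block maximal operator on $L^2((0,l],h^{1-2\al_q}\oplus h^{1-2\al_{q-1}})$.

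On each block the problem becomes a coupled pair of Sturm--Liouville equations for $(\FF^q_{1,\t\la_1},\FF^q_{2,\t\la_2})$ at spectral value zero. Standard ODE theory says the space of \emph{formal} solutions is finite-dimensional, and Lemma \ref{harmonics} already produces an explicit spanning set -- the six types E, O, I, II, III, IV restricted to the given block (noting the redundancy recorded in Remark \ref{XY}). Hence each block projection of $\om$ is a linear combination of these formal solutions; the full form $\om$ is then a convergent $L^2$-sum of such contributions. Finally the square integrability condition selects which terms survive: by the asymptotic analysis in Appendix \ref{SL} the ``$+$'' solution of each relevant SL problem is always square integrable with the correct weight near $x=0$, while the ``$-$'' solution is $L^2$ only in the restricted regimes of Lemma \ref{squareint}.

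The main obstacle I expect is the commutation of the block projections with $\Delta^{(q)}_{\rm max}$: formally it is obvious, but one has to verify that projecting an element of $\DS(\Delta^{(q)}_{\rm max})$ onto a single $(\t\la_1,j_1,\t\la_2,j_2)$-block yields an element of $\DS(\Delta^{(q)}_{\t\la_1,j_1,\t\la_2,j_2,\rm max})$. This uses that $\t\Delta$ has discrete spectrum with finite-dimensional eigenspaces of smooth eigenforms (so the spectral projections are bounded and preserve regularity in the $y$-variable), and the resulting identity $\Delta^{(q)}_{\rm max}P=P\Delta^{(q)}_{\rm max}$ follows by approximation from $\Omega^{q}_0(\mathring{\ZZ},E_\rho)$, which is a core for the analysis on the horn. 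Once this reduction is granted, what remains is routine ODE analysis already packaged in Appendix \ref{SL} and in Lemmas \ref{harmonics} and \ref{squareint}.
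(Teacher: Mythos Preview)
Your approach is the same as the paper's, which records the proof in a single sentence: the kernel of $\Delta^{(q)}_{\rm max}$ consists of the formal solutions of $\Delta^{(q)}\omega=0$ from Lemma \ref{harmonics} that happen to be square integrable, and Lemma \ref{squareint} tabulates precisely which of the $\pm$ solutions are.

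One correction to the technical point you flagged yourself. Your justification that the section spectral projections $P$ preserve $\DS(\Delta^{(q)}_{\rm max})$ via approximation from $\Omega^q_0(\mathring{\ZZ},E_\rho)$ does not work: $\Omega^q_0$ is a core for the \emph{minimal} operator, not the maximal one, so graph-norm approximation from there will not reach a general $\omega\in\DS(\Delta^{(q)}_{\rm max})$. The clean argument is by duality: since $P$ is bounded self-adjoint and commutes with $\Delta^{(q)}_0$ on $\Omega^q_0$, and since $\Delta^{(q)}_{\rm max}=(\Delta^{(q)}_0)^\dagger$, for $\omega\in\DS(\Delta^{(q)}_{\rm max})$ and $\phi\in\Omega^q_0$ one has
\[
(P\omega,\Delta^{(q)}_0\phi)=(\omega,P\Delta^{(q)}_0\phi)=(\omega,\Delta^{(q)}_0 P\phi)=(\Delta^{(q)}_{\rm max}\omega,P\phi)=(P\Delta^{(q)}_{\rm max}\omega,\phi),
\]
so $P\omega\in\DS(\Delta^{(q)}_{\rm max})$ with $\Delta^{(q)}_{\rm max}(P\omega)=P\Delta^{(q)}_{\rm max}\omega$. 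With this in place, your reduction to the block ODE systems and the identification via Lemma \ref{harmonics} goes through exactly as you wrote.
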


\begin{proof} The kernel of the maximal extension is determined by the formal solution of the harmonic equation described in Lemma \ref{harmonics}, by imposing square integrability. 
\end{proof}

\begin{theo}\label{harmonicform}  Let $\H^q(W)=\langle \t\vv^{(q)}_{{\rm har}, j}\rangle$ be the space of the  harmonic fields of the Hodge Laplace operator $\t\Delta^{(q)}$ on the section. Then, the space of the harmonic forms of the operator $\Delta^{(q)}_{\pf,\rm bc}$ on the finite horn $\ZZ$ is given as follows. If $(m,\pf)=(2p-1,\pf),(2p,\mf^c)$, then:
\begin{align*}
\ker \Delta^{(q)}_{\pf,\rm abs}&=\left\{\begin{array}{ll} \langle \t\vv^{(q)}_{{\rm har}, j}\rangle,&0\leq q\leq p-1,\\ \{0\}, &p\leq q\leq m+1,\end{array}\right.\\
\ker \Delta^{(q)}_{\pf,\rm rel}&=\left\{\begin{array}{ll} \{0\},&0\leq q\leq p,\\ \langle h^{2\al_{q-1}-1}dx\wedge\t\vv^{(q-1)}_{{\rm har}, j}\rangle, &p+1\leq q\leq m+1.\end{array}\right.
\end{align*}

If $(m,\pf)=(2p,\mf)$, then:
\begin{align*}
\ker \Delta^{(q)}_{\mf,\rm abs}&=\left\{\begin{array}{ll} \langle \t\vv^{(q)}_{{\rm har}, j}\rangle,&0\leq q\leq p,\\ \{0\}, &p+1\leq q\leq 2p+1,\end{array}\right.\\
\ker \Delta^{(q)}_{\mf,\rm rel}&=\left\{\begin{array}{ll} \{0\},&0\leq q\leq p+1,\\ \langle h^{2\al_{q-1}-1}dx\wedge\t\vv^{(q-1)}_{{\rm har}, j}\rangle, &p+2\leq q\leq 2p+1.\end{array}\right.
\end{align*}

where the inclusion is  the extension by the constant function.
\end{theo}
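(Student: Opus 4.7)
The plan is to reduce Theorem \ref{harmonicform} to the already-proved Hodge decomposition (Theorem \ref{Hodgedec}) combined with the explicit computation of the intersection de Rham cohomology (Theorem \ref{coZ}). By Hodge decomposition,
\[
\ker \Delta^{(q)}_{\pf,\mathrm{abs}} \;\cong\; I^\pf H^q_{\mathrm{DR}}(\ZZ, E_\rho), \qquad \ker \Delta^{(q)}_{\pf,\mathrm{rel}} \;\cong\; I^\pf H^q_{\mathrm{DR}}(\ZZ, W, E_\rho),
\]
and Theorem \ref{coZ} shows that the abstract dimensions on the right match those in the statement. Consequently the work amounts to producing canonical harmonic representatives of the stated form and checking they actually lie in the relevant domain.

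For the absolute case I would propose the candidate $\om=1\cdot\t\vv$ with $\t\vv\in\H^q(W)$. This is the type E form of Lemma \ref{harmonics} with radial factor $f\equiv 1$, a solution of $\FF^q_{1,0}f=0$ by (\ref{F0}), so it is formally harmonic on $\ZZ$. Square integrability amounts to $\int_0^l h^{1-2\al_q}\,dx<\infty$, and since $\ka>1$ and $\al_q$ is a half integer this forces $\al_q\le \tfrac{1}{2}$, which in terms of $q$ produces exactly the ranges $0\le q\le p-1$ if $m=2p-1$ and $0\le q\le p$ if $m=2p$. Symmetrically, for the relative case I take $\om=h^{2\al_{q-1}-1}dx\wedge \t\vv$ with $\t\vv\in\H^{q-1}(W)$; this is the type O form with $f=h^{2\al_{q-1}-1}$, which solves $\FF^q_{2,0}f=0$ since $(h^{1-2\al_{q-1}}f)'=0$. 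The $L^2$ condition on the finite horn gives $\al_{q-1}\ge \tfrac{1}{2}$, yielding the degree ranges in the statement.

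Next I would verify that these candidates lie in $\D(\Delta^{(q)}_{\pf,\mathrm{bc}})$. At $\b X$ the absolute bc demands $n\om=0$ and $n(d\om)=0$: both hold vacuously for the type E representative since it has no $dx$-component and $d\t\vv=\t d\t\vv=0$. Dually the relative bc at $\b X$ holds for the type O representative. At the tip of the horn the bc are vacuous except in the short list of critical pairs $(\ka,\al_q)$ recalled after the definition of $\Delta_{\pf,\mathrm{bc}}$. In those critical cases one reads off from Remark \ref{formuleLap} the precise SL extension that appears in each summand; the self-adjoint realisation selected by the given $\pf$ and $\mathrm{bc}$ is exactly the one whose $\t\la=0$ solution space contains (respectively excludes) the proposed candidate. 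To rule out extra harmonics I invoke Lemma \ref{squareint}, which restricts $L^2$ solutions of types I--IV to $\t\la>0$, together with Remark \ref{XY} (types II and III collapse onto each other and are $\dr$-exact), so by the orthogonal decomposition $L^2(\ZZ)=\ker\Delta\oplus\Im\d\oplus\Im\dr$ of Theorem \ref{Hodgedec} any remaining $L^2$ solution lies in $\Im\d+\Im\dr$ and is therefore not harmonic.

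The principal obstacle is the critical middle degrees when $m=2p$, where the choice $\pf=\mf$ versus $\pf=\mf^c$ genuinely changes the operator. Here $\al_q=\tfrac12$ or $\al_{q-1}=\tfrac12$, the induced SL problem is regular (or regular singular), and the constant/polynomial solution is $L^2$ at $x=0$, so the boundary condition at the tip really chooses among inequivalent self-adjoint extensions. Reading Remark \ref{formuleLap} one sees that for $\pf=\mf$ the selected extension ($F_{1/2,0;\pi/2,\pi/2}$ in the absolute case, $F_{1/2,0;\pi/2,0}$ in the relative case) retains the type E (resp.\ type O) candidate in degree $p$ (resp.\ $p+1$), while for $\pf=\mf^c$ the extension $F_{1/2,0;0,\pi/2}$ kills it. This discrepancy of exactly one harmonic form in one critical degree is precisely what produces the shift $p-1\leadsto p$ (resp.\ $p+1\leadsto p+2$) between the two cases of the theorem, and closes the argument.
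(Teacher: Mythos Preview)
Your overall strategy has a circularity problem. Theorem \ref{Hodgedec} identifies the intersection de Rham cohomology with $\ker \Delta^{(q)}_{\d,\pf,\rm rel}$ and $\ker \Delta^{(q)}_{\dr,\pf,\rm abs}$, the Laplacians built \emph{from the complexes} $(\DS(\d_{\pf,\rm bc}),\d_{\pf,\rm bc})$. The operator $\Delta^{(q)}_{\pf,\rm bc}$ in the present theorem is a different object: the self-adjoint extension of Section \ref{defLap}, defined by imposing Sturm--Liouville boundary values at the tip on each eigenspace of the section. The equality of the two kernels is Corollary \ref{eqker} (and more generally Theorem \ref{equiv}), and in the paper's logic that corollary is obtained \emph{a posteriori} by comparing the present Theorem \ref{harmonicform} with the independent computation of Proposition \ref{PPP}. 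Invoking Hodge decomposition here to pin down $\dim\ker\Delta^{(q)}_{\pf,\rm bc}$ therefore assumes what is to be proved. The same circularity infects your exclusion step: that a form of type II/III ``lies in $\Im\d+\Im\dr$ and is therefore not harmonic'' is a statement about $\ker\Delta_{\d,\pf}$, not about $\ker\Delta_{\pf,\rm bc}$, until Theorem \ref{equiv} is in hand.

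The paper's proof stays entirely on the Sturm--Liouville side. By Remark \ref{formuleLap}, each block $\Delta^{(q)}_{\t\la_1,\t\la_2}$ of $\Delta^{(q)}_{\pf,\rm bc}$ is a concrete pair $(F_{\al_q,\t\la_1;\cdot},F_{\al_{m+1-q},\t\la_2;\cdot})$, and Theorem \ref{pos} computes the kernel of every such operator: for $b=\t\la>0$ the operator is strictly positive (this kills types I, II, III, IV directly, with no detour through exactness or Hodge decomposition), while for $b=0$ the kernel is $\langle 1\rangle$ exactly when $\al<\tfrac12$ and the boundary index at $x=l$ is $\tfrac{\pi}{2}$, and trivial otherwise. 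A uniform case check over $q$ and over the three $\ka$-ranges listed after Section \ref{defLap} then yields the statement. Your final paragraph on the middle degrees $q=p,p+1$ already \emph{is} this argument, carried out only in the critical case; if you promote that reasoning to all $q$ (using Theorem \ref{pos}(1) for the $\t\la>0$ blocks and Theorem \ref{pos}(2)--(4) for the $\t\la=0$ blocks) and discard the cohomological scaffolding, you recover the paper's route without circularity.
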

\begin{proof} According to the description in Lemma \ref{harmonics}, the solution of the formal harmonic equation correspond to pair of solutions of the harmonic equations of a pair of Sturm 
Liouville operators $ (\FF_{1,\al_q,\t\la_1},\FF_{1,\al_{m+1-q},\t\la_2})$. Whence, the harmonic forms will be given by the harmonic forms of the corresponding pair of self adjoint extensions $(F_{\al_q,\t\la_{1};\be_1,\ga_1},F_{\al_{m+1-q},\t\la_{2};\be_1,\ga_2})$, as described in  Remark \ref{formuleLap}. The harmonic forms of all these extension are described in Theorem \ref{pos}. According to that proposition, the unique possibility is given when the solution $f$ in the line is constant, and the eigenvalue $\t\la_{1}=\t\la_{2}=0$. Whence, the possible harmonic forms are necessarily constant in the $x$ coordinate. and correspond to the zero eigenvalue on the section. According to Lemma \ref{harmonics} this means that we may have harmonic forms only of types E and O.

\begin{itemize}

\item[Type O] Let $\psi_O^{(q)}=fdx\wedge \t\vv^{(q-1)}_{\rm har}$ be a solution of type O. Then, $\FF^q_{2,0}f\equiv\FF_{\al_{m+1-q},0}f=0$. By Remark \ref{formuleLap} the concrete operator is  of type either $F_{\al,0;0}$ or $F_{\al, 0;0,0}$, with some $\al\not=\frac{1}{2}$, beside the particular cases where $\al=\frac{1}{2}$, that reduces to the regular case $F_{\frac{1}{2},0;\de,0}$, $\de=0,\frac{\pi}{2}$. By Theorem \ref{pos}, the kernel of all these operators is  trivial, so there are no harmonic forms of type O.

\item[Type E] Let $\psi_E^q=f\t\vv^{(q)}_{\rm har}$ be a solution of type E. Then, $\FF^q_{1,0}f=\FF_{\al_q,0}f=0$. By Remark \ref{formuleLap} the concrete operator is  of type either $F_{\al,0;\frac{\pi}{2}}$ or $F_{\al, 0;0,\frac{\pi}{2}}$, with some $\al\not=\frac{1}{2}$, beside the particular cases where $\al=\frac{1}{2}$, that reduces to the regular case $F_{\frac{1}{2},0;\de,\frac{\pi}{2}}$, $\de=0,\frac{\pi}{2}$. We proceed case by case according to enumeration in Remark \ref{formuleLap}, and suing the result for the kernel of the relevant operator given in Theorem \ref{pos}. The solutions of type E appear only in the following cases: 1) $3\leq \ka$, 2) $\frac{3}{2}\leq \ka<3$, and 3) $1< \ka<\frac{3}{2}$. Since the argument is always the same, we give details only for the hardest case 3). So let $1< \ka<\frac{3}{2}$. Then:

\begin{enumerate}

\item $m=2p-1$: 

\begin{enumerate}

\item $q\not=p-1$, the operator is $F_{\al_q,0;\frac{\pi}{2}}$ whose kernel is generated by the constant function if $\al_q<\frac{1}{2}$, i.e. $q\leq p-1$, and trivial otherwise,

\item $q=p-1$, the operator is $F_{0,0;0,\frac{\pi}{2}}$ whose kernel is generated by the constant function,

\end{enumerate}

\item $m=2p$: 

\begin{enumerate}

\item $\pf=\mf^c$:

\begin{enumerate}

\item $q\not=p$, the operator is $F_{\al_q,0;\frac{\pi}{2}}$ whose kernel is generated by the constant function if $\al_q<\frac{1}{2}$, i.e. $q\leq p-1$, and trivial otherwise,

\item $q=p$, the operator is $F_{\frac{1}{2},0;0, \frac{\pi}{2}}$ whose kernel is trivial,

\end{enumerate}

\item $\pf=\mf$:

\begin{enumerate}

\item $q\not=p$, the operator is $F_{\al_q,0;\frac{\pi}{2}}$ whose kernel is generated by the constant function if $\al_q<\frac{1}{2}$, i.e. $q\leq p-1$, and trivial otherwise,

\item $q=p$, the operator is $F_{\frac{1}{2},0;\frac{\pi}{2}, \frac{\pi}{2}}$ whose kernel is generated by the constant function,

\end{enumerate}

\end{enumerate}

\end{enumerate}

Collecting, we see that the kernel does not depend on $\ka$, and is always generated by the constant extension of the restriction to the boundary. This concludes the proof.


\end{itemize}

\end{proof}

\subsection{Eigenfunctions and spectrum for the operators $\Delta_{\pf,\rm bc}$}

\begin{prop}\label{l4}  Denote by $\ff_{{\alpha_q}, \t\la_{q,n},\pm}(x,\lambda)$  the two linearly independent solutions of the differential equation $\FF^q_{1,\t\la_{q,n}} f=\la f$ as described and normalised in Propositions \ref{olv1} and \ref{b0}. Denote by $m_{{\rm cex}, q, n}$  the rank of $\t\E^{(q)}_{\t\la_{q,n},\rm cex}$. When $\al\not=\frac{1}{2}$, denote by  $\ell_{\alpha,\t\la, k}$  the non vanishing  zeros of the  function 
$\ff_{\alpha,\t\la,+}(l,\lambda)$, and  by
$\hat \ell_{\alpha,\t\la,k}$  the non vanishing  zeros of the function $\ff_{\alpha,\t\la,+}'(l,\lambda)$. 
 When  $\al=\frac{1}{2}$, $\ell_{\frac{1}{2},0,+,k}=\left(\frac{\pi}{l}{k}\right)^2$, $\ell_{\frac{1}{2},-,k}=\left(\frac{\pi}{2l}(2k-1)\right)^2$. Then, the positive part of the spectrum of the intersection Hodge-Laplace operator  $\Delta^{(q)}_{ \pf, \rm abs}$  on $C_{0,l}(W)$, with absolute boundary conditions on $\b C_{0,l} (W)$ is as follows. If $m=2p-1$:
\begin{align*}
\Sp_+ \Delta_{ \mf, \rm abs}^{(q)} =&\Sp_+ \Delta_{  \mf^c, \rm abs}^{(q)}\\
=& \left\{m_{{\rm cex},q,n} : \hat 
\ell_{\alpha_q,\t\la_{q,n},k} \right\}_{n,k=1}^{\infty}
\cup
\left\{m_{{\rm cex},q-1,n} : \hat 
\ell_{\alpha_{q-1},\t\la_{q-1,n}k}\right\}_{n,k=1}^{\infty} \\
&\cup \left\{m_{{\rm cex},q-1,n} : 
\ell_{-\alpha_{q-1},\t\la_{q-1,n},k}\right\}_{n,k=1}^{\infty} \cup \left\{m 
_{{\rm cex},q-2,n} :
\ell_{-\alpha_{q-2},\t\la_{q-2,n},k}\right\}_{n,k=1}^{\infty} \\
&\cup \left\{m_{{\rm har},q}: 
\ell_{-\alpha_{q-1},0,k}\right\}_{k=1}^{\infty} \cup \left\{ 
m_{{\rm har},q-1}:
\ell_{-\alpha_{q-2},0,k}\right\}_{k=1}^{\infty}.
\end{align*}

If $m=2p$,  then $\Sp_+ \Delta_{  \mf, \rm abs}^{(q)} =\Sp_+ \Delta_{  \mf^c, \rm abs}^{(q)}$ is as in the odd case for all the eigenvalues with $(\al_q,\t\la_{q,n})\not=(\frac{1}{2},0)$, i.e. for $(m,q)\not=(2p, p), (2p,p+1)$.  When $(\al_q,\t\la_{q,0})=(\frac{1}{2},0)$ (i.e. $m=2p$, and $q=p$ and $q=p+1$), the sets of the eigenvalues  in the last line, i.e. those associated to the harmonics of the section,  are given as follows:
\begin{itemize}

\item in $\Sp_+ \Delta_{  \mf, \rm abs}^{(p)}$:   
$
\left\{ m_{{\rm har},p}: \ell_{\frac{1}{2},0,+,k}\right\}_{k=1}^{\infty}\cup \left\{ 
m_{{\rm har},p-1}:
\ell_{-\alpha_{p-2},0,k}\right\}_{k=1}^{\infty}
$;

\item in  $\Sp_+ \Delta_{  \mf, \rm abs}^{(p+1)}$:
$
\left\{m_{{\rm har},p+1}: 
\ell_{-\alpha_{p},0,k}\right\}_{k=1}^{\infty} \cup \left\{ m_{{\rm har},p}: \ell_{\frac{1}{2},0,+,k}\right\}_{k=1}^{\infty}
$;

\item in $\Sp_+ \Delta_{  \mf^c, \rm abs}^{(p)}$:   
$
\left\{ m_{{\rm har},p}: \ell_{\frac{1}{2},0,-,k}\right\}_{k=1}^{\infty}\cup \left\{ 
m_{{\rm har},p-1}:
\ell_{-\alpha_{p-2},0,k}\right\}_{k=1}^{\infty}
$;

\item in  $\Sp_+ \Delta_{  \mf^c, \rm abs}^{(p+1)}$:
$
\left\{m_{{\rm har},p+1}: 
\ell_{-\alpha_{p},0,k}\right\}_{k=1}^{\infty} \cup \left\{ m_{{\rm har},p}: \ell_{\frac{1}{2},0,-,k}\right\}_{k=1}^{\infty}
$.

\end{itemize}

\end{prop}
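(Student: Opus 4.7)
The plan is to reduce the spectral problem for $\Delta^{(q)}_{\pf,\rm abs}$ on the finite horn to a collection of one-dimensional Sturm--Liouville problems, by exploiting the orthogonal decomposition displayed in Remark \ref{formuleLap}, and then to identify the contribution of each of the six types of formal eigenforms listed in Lemma \ref{formaleigen}.

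First I would recall from Remark \ref{formuleLap} that on each joint eigenspace $\t\E^{(q)}_{\t\la_1}\oplus \t\E^{(q-1)}_{\t\la_2}$ of the section, the operator $\Delta^{(q)}_{\pf,\rm abs}$ restricts to a pair of self-adjoint SL operators of the form $\bigl(F_{\al_q,\t\la_1;\be_1,\ga_1},F_{\al_{m+1-q},\t\la_2;\be_2,\ga_2}\bigr)$ acting on $L^2((0,l],h^{1-2\al_q})\oplus L^2((0,l],h^{1-2\al_{q-1}})$. The explicit absolute bc computed in Section \ref{defLap} dictates that the $l$-endpoint condition is Neumann on the tangential component ($\ga=\frac{\pi}{2}$) and Dirichlet on the normal one ($\ga=0$); at the singular endpoint the bc is vacuous outside the finitely many critical cases listed there, and only in those cases does the perversity choice enter.

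Second, by Theorem \ref{t3.23} together with Propositions \ref{olv1} and \ref{b0}, the positive spectrum of a single $F_{\al,\t\la;\be,\ga}$ is exactly the zero set in $\lambda$ of a boundary functional at $l$ applied to the distinguished, square-integrable-at-zero solution $\ff_{\al,\t\la,+}(x,\lambda)$: the Neumann bc produces the set $\{\hat\ell_{\al,\t\la,k}\}$ of zeros of $\ff'_{\al,\t\la,+}(l,\lambda)$, and the Dirichlet bc produces $\{\ell_{\al,\t\la,k}\}$ of zeros of $\ff_{\al,\t\la,+}(l,\lambda)$. Using the identity $\al_{m+1-q}=-\al_{q-1}$ the second-component contributions appear in the statement as $\ell_{-\al_{q-1},\cdot,k}$ and $\ell_{-\al_{q-2},\cdot,k}$. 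Third, I go through the six types of Lemma \ref{formaleigen} and collect contributions with the correct multiplicities: types E and I (pure tangential, with harmonic and with coexact coefficients of degree $q$) give $\hat\ell_{\al_q,\t\la_{q,n},k}$ with multiplicities $m_{{\rm har},q}$ and $m_{{\rm cex},q,n}$; the coinciding types II and III (cf.\ Remark \ref{XY}), with coexact coefficients of degree $q-1$, contribute $\hat\ell_{\al_{q-1},\t\la_{q-1,n},k}$ with multiplicity $m_{{\rm cex},q-1,n}$; types O and IV (pure normal, with harmonic of degree $q-1$ and $\t d$ of coexact of degree $q-2$ coefficients) contribute $\ell_{-\al_{q-1},\t\la_{q-1,n},k}$ and $\ell_{-\al_{q-2},\t\la_{q-2,n},k}$ with multiplicities $m_{{\rm cex},q-1,n}$ and $m_{{\rm cex},q-2,n}$, together with the two harmonic-sector contributions $\ell_{-\al_{q-1},0,k}$ and $\ell_{-\al_{q-2},0,k}$ with multiplicities $m_{{\rm har},q}$ and $m_{{\rm har},q-1}$. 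Summing these six families reproduces the list in the odd-dimensional case.

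The main obstacle is the critical case $\al=\frac{1}{2}$, which by the classification at the end of Section \ref{defLap} occurs only for $m=2p$ and $q\in\{p,p+1\}$ in the harmonic sector $\t\la=0$. There the SL problem is regular at $x=0$, so two distinct self-adjoint extensions coexist, corresponding to the two intersection perversities $\mf$ and $\mf^c$: the extensions $F_{\frac{1}{2},0;0,\frac{\pi}{2}}$ and $F_{\frac{1}{2},0;\frac{\pi}{2},\frac{\pi}{2}}$ produce respectively the Dirichlet and the Neumann spectrum of the regular problem on $[0,l]$, and these are precisely $\ell_{\frac{1}{2},0,+,k}=\bigl(\frac{\pi}{l}k\bigr)^2$ and $\ell_{\frac{1}{2},0,-,k}=\bigl(\frac{\pi}{2l}(2k-1)\bigr)^2$. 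Substituting these critical values in the two harmonic-sector entries of the main list gives the refinement of the last four sub-lists in the statement. For all $\t\la>0$ the relevant SL problem is irregular singular and limit-point at $0$ (Corollary \ref{cor1}), so no singular bc-ambiguity arises and the two perversities produce the same non-harmonic contribution; this is the reason why only the harmonic-sector sets need to be modified in the even-dimensional critical degrees.
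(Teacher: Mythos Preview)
Your overall strategy matches the paper's: reduce to the six form types of Lemma \ref{formaleigen}, check square integrability and the bc at the tip (Propositions \ref{olv1}, \ref{b0}), then read off eigenvalues from the bc at $x=l$, with the critical regular case $(\al,b)=(\tfrac12,0)$ handled separately. That is exactly the paper's proof.

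However, your bookkeeping of the six types contains a real mistake. You invoke Remark \ref{XY} to assert that types II and III coincide, but that remark concerns only the harmonic equation $\la=0$ (it is stated and proved for $\FF^{q-1}_{1,\t\la}f=0$). For positive eigenvalues the two families are distinct: type II is $d$ of a type I form in degree $q-1$ and carries Neumann bc $f'(l)=0$, giving $\hat\ell_{\al_{q-1},\t\la_{q-1,n},k}$; type III is $\de$ of a type IV form in degree $q+1$ and carries Dirichlet bc $f(l)=0$ on the $\FF^{q+1}_{2,\t\la}$ problem, which via Remark \ref{sss} yields $\ell_{-\al_{q-1},\t\la_{q-1,n},k}$. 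You then attribute this last family to type O, but type O has only harmonic section coefficients and contributes nothing with $\t\la>0$. Your final list happens to be correct, but the argument supporting the multiplicity $m_{{\rm cex},q-1,n}$ on the $\ell_{-\al_{q-1}}$ family is wrong as written and should go through type III.

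Two smaller points. First, type E with Neumann bc at $l$ gives the zeros of $\ff'_{\al_q,0,+}(l,\la)$, i.e.\ $\hat\ell_{\al_q,0,k}$; to match the statement's label $\ell_{-\al_{q-1},0,k}$ you need the duality in Lemma \ref{f+-} (as used later in Section \ref{s6.1}), which you do not mention. Second, in the critical regular case you have the $\pm$ labels interchanged: by Remark \ref{regularop}, $F_{\frac12,0;0,\frac{\pi}{2}}$ has spectrum $\ell_{\frac12,0,-,k}=\bigl(\tfrac{\pi}{2l}(2k-1)\bigr)^2$, while $F_{\frac12,0;\frac{\pi}{2},\frac{\pi}{2}}$ has spectrum $\ell_{\frac12,0,+,k}=\bigl(\tfrac{\pi}{l}k\bigr)^2$; the former is the $\mf^c$ extension and the latter the $\mf$ extension at $q=p$, in agreement with Remark \ref{formuleLap} and the paper's explicit check using $\psi_{E,\pm}$.
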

\begin{proof} The result follows by considering all those formal solutions  of the eigenvalues equation as described in Lemma \ref{formaleigen} that belong to the domain of the relevant operator: for they need to be  square integrable with their exterior derivative and their dual exterior derivative,  and to satisfy the suitable boundary conditions  according to  Section \ref{defLap}. We discussed square integrability in Subsection \ref{squareint}. We realise that all the + solutions are square integrable, while   square integrable - solutions are only of types E and O and only appear in some degrees in even dimension $m=2p$.  So consider the bc.  By the behaviour of $\ff_{\al,\t\la,+}$ (Proposition \ref{olv1} and \ref{b0}), all + solutions satisfy the bc at the tip of the horn. Consider then the bc at the boundary of the horn. 
First, assume  $m=2p-1$.  Applying the boundary condition at $x=l$ to a solution of type I+, we have the equation 
$
\ff_{\alpha,\t\la,+}'(x,\lambda)|_{x=l}=0,
$ 
and this gives the first set of eigenvalues $\hat \ell_{\alpha_q,\t\la_{q,n},k} $, $n,k=1, 2, \dots$. Similarly, the solutions of type 
II+ give the  eigenvalues $\hat \ell_{\alpha_q,\t\la_{q,n},k} $, $n,k=1, 2, \dots$, those of type III+  the eigenvalues 
$ \ell_{-\alpha_{q-1},\t\la_{q-1,n},k} $, $n,k=1,2,\dots$, and those of type IV+ the eigenvalues $\ell_{-\alpha_{q-2},\t\la_{q-2,n},k}$. The solutions of type E+ give the eigenvalues  $ \ell_{-\alpha_{q-1},0,k}$, $n,k=1,2,\dots$, and those of type O+ the eigenvalues $\ell_{-\alpha_{q-2},0,k}$, $n,k=1,2,\dots$. 
Next,  assume $m = 2p$.  In degree $p$,   the forms of type E are 
$\psi_{E,+}(x,\la)=\frac{1}{\sqrt{\la}}\sin(\sqrt{\la}x)\t\vv^{(p)}_{\rm har}$, and $\psi_{E,-}(x,\la)=\cos(\sqrt{\la} x)\t\vv^{(p)}_{\rm har}$, 
both square integrable. The forms E+ satisfy the boundary condition at $x=0$ for $\Delta_{\mf^c, \rm abs}^{(p)}$, and applying the boundary condition at $x=l$  we obtain the set  $\ell_{\frac{1}{2},0,-,k}$, $k=1,2,\dots$, the forms E- do not satisfy the boundary condition at $x=0$ for $\Delta^{(p)}_{ \mf^c, \rm abs}$. Similarly, the forms E- satisfy the boundary condition at $x=0$ for $\Delta^{(p)}_{ \mf, \rm abs}$, and applying the boundary condition at $x=l$  we obtain the set  $\ell_{\frac{1}{2},0,+,k}$, $k=1,2,\dots$. In degree $p+1$,   the forms of type O are 
$
\psi_{O,+}(x,\la)=\cos(\sqrt{\la}x)dx\wedge\t\vv^{(p)}_{\rm har}$, $\psi_{O,-}(x,\la)=\frac{1}{\sqrt{\la}}\sin(\sqrt{\la} x)dx\wedge\t\vv^{(p)}_{\rm har}
$, 
both square integrable. The forms O+ satisfy the boundary condition at $x=0$ for $\Delta_{\mf^c, \rm abs}^{(p+1)}$, and applying the boundary condition at $x=l$  we obtain the set  $\ell_{\frac{1}{2},0,+,k}$, $k=1,2,\dots$, the forms O- do not satisfy the boundary condition at $x=0$ for $\Delta^{(p+1)}_{ \mf^c, \rm abs}$. Similarly, the forms O- satisfy the boundary condition at $x=0$ for $\Delta^{(p+1)}_{ \mf, \rm abs}$, and applying the boundary condition at $x=l$  we obtain the set  $\ell_{\frac{1}{2},0,-,k}$, $k=1,2,\dots$. 
\end{proof}

\begin{rem}\label{duality1} Using Poincar\`e duality on the section: $\tilde \la_{q,n} = 
\tilde \la_{(m+1-q)-2,n}$ and $\al_q = -\al_{(m+1-q)-2}$, we find out that 
$\Sp_+ \Delta^{q}_{\pf, \rm abs} = \Sp_+ \Delta^{m+1-q}_{\pf^c, \rm rel}$. 
\end{rem}

\begin{prop}\label{eigenforms} With the notation of Proposition \ref{l4},  the   eigenforms  of the  operator $\Delta_{\pf,\dr}^{(q)}$ are given as follows. In dimension $m=2p-1$,
\begin{align*}
\Ei_+(\Delta^{(q)}_{\pf, \rm abs})
&=\left\{ \psi^{(q)}_{E, +}(x,\ell_{-\alpha_{q-1},0,k}),
\psi^{(q)}_{O,  +}(x,\ell_{-\alpha_{q-2},0,k}),\right.\\
& \hspace{20pt}\psi^{(q)}_{I, \tilde\lambda_{q,n}, +}(x,\hat \ell_{\alpha_q,\t\la_{q,n},k}), 
\psi^{(q)}_{II, \tilde\lambda_{q-1,n}, +}(x,\hat \ell_{\alpha_{q-1},\t\la_{q-1,n},k} ),\\
&\hspace{20pt}\left.
\psi^{(q)}_{III, \tilde\lambda_{q-1,n},+}(x, \ell_{-\alpha_{q-1},\t\la_{q-1,n},k} ), \psi^{(q)}_{IV, \tilde\lambda_{q-2,n},+}(x,\ell_{-\alpha_{q-2},\t\la_{q-2,n},k}).
\right\},
\end{align*}

In dimension $m=2p$, the eigenforms are the same as in the odd case ($m=2p-1$) in all degrees $q\not=p,p+1$. In the remaining cases, i.e. $m=2p$ and either $q=p$ or $q=p+1$, the eigenforms of types I,II, III, and IV are the same as in the odd case with the appropriate value of $q$, while the eigenforms of type E and O are as follows:
\begin{enumerate}

\item forms of type E and O in $\Ei_+(\Delta^{(p)}_{\mf, \rm abs})$: 
$\psi^{(p)}_{E,-}(x)=\cos \left(\frac{\pi k }{l} x\right) \t\vv^{(p)}_{\rm har}$,  and  \\
$\psi^{(p)}_{O,  +}(x,\ell_{-\alpha_{p-2},0,k})$; 
\item forms of type E and O in $\Ei_+(\Delta^{(p+1)}_{\mf, \rm abs})$: 
$\psi^{(p+1)}_{E, +}(x,\ell_{-\alpha_{p+1},0,k})$, and 
$\psi^{(p+1)}_{O,-}(x)=\sin \left(\frac{k \pi  }{ l} x\right) dx\wedge \t\vv^{(p)}_{\rm har}$; 

\item forms of type E and O in $\Ei_+(\Delta^{(p)}_{\mf^c, \rm abs})$:
$\psi^{(p)}_{E,+}(x)=\sin \left(\frac{(2k-1)\pi  }{2 l} x\right) \t\vv^{(p)}_{\rm har}$, and $ \psi^{(p)}_{O,  +}(x,\ell_{-\alpha_{p-2},0,k})$;

\item forms of type E and O in $\Ei_+(\Delta^{(p+1)}_{\mf^c, \rm abs})$:
$\psi^{(p+1)}_{E, +}(x,\ell_{-\alpha_{p+1},0, k})$, and 
$\psi^{(p+1)}_{O,+}(x)=\cos\left(\frac{(2k-1)\pi}{2l}x\right) dx\wedge \t\vv^{(p)}_{\rm har}$. 

\end{enumerate}

\end{prop}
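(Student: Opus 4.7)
The strategy is to match, type by type (E, O, I, II, III, IV), the formal eigensolutions of Lemma \ref{formaleigen} to the eigenvalues already identified in Proposition \ref{l4}. Recall that in each case the formal solution is determined by a pair of data: a form on the section in a specific spectral subspace of $\t\Delta$, and a function $f$ on $(0,l]$ solving a one-dimensional Sturm--Liouville equation $\FF^q_{j,\t\la} f = \la f$. The eigenform will lie in the domain of $\Delta^{(q)}_{\pf,\mathrm{bc}}$ precisely when $f$, together with its relevant derivatives/combinations, is square integrable on $(0,l]$ against the appropriate weight \emph{and} satisfies the boundary conditions: at $x=0$ those induced by $BV^q_{\t\la_1,\t\la_2,\pm}(x_0)$, and at $x=l$ those dictated by the absolute boundary conditions on $\b\ZZ=W$.

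First I would handle the generic situation. The square integrability analysis was carried out in Lemma \ref{squareint}: for nonzero $\t\la$ only the $+$ solutions survive, and these are (per the lemma preceding Theorem \ref{harmonicform} and the behaviour of $\ff_{\al,\t\la,+}$ in Propositions \ref{olv1} and \ref{b0}) automatically in the correct self-adjoint extension at the tip $x=0$, since the exponentially decaying/subdominant $+$ solutions kill $bv_\pm(0)$. Thus at the tip no condition remains, and everything reduces to imposing the absolute boundary condition at $x=l$. A type I$+$ solution $\psi^{(q)}_{I,\t\la_{q,n},+}$ is built from $\ff_{\al_q,\t\la_{q,n},+}$, and normal-component vanishing at $x=l$ forces $\ff'_{\al_q,\t\la_{q,n},+}(l,\la)=0$, giving the set $\{\hat\ell_{\al_q,\t\la_{q,n},k}\}$. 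Type II$+$ produces, via Remark \ref{XY}, a function on the line which is (up to a multiplicative constant) the same $\ff_{\al_{q-1},\t\la_{q-1,n},+}$, and the absolute bc again yields its derivative zeros $\hat\ell_{\al_{q-1},\t\la_{q-1,n},k}$. Type III$+$ involves $\ff_{\al_{q-1},\t\la_{q-1,n},+}$ but with the roles of $\be$ and $\ga$ swapped (i.e.\ the dual operator $F_{-\al,\t\la;0}$), so the bc at $x=l$ reads $\ff_{-\al_{q-1},\t\la_{q-1,n},+}(l,\la)=0$, giving $\ell_{-\al_{q-1},\t\la_{q-1,n},k}$; similarly type IV$+$ contributes $\ell_{-\al_{q-2},\t\la_{q-2,n},k}$. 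Types E$+$ and O$+$ are the specialisations at $\t\la=0$ (harmonic on the section), producing $\ell_{-\al_{q-1},0,k}$ and $\ell_{-\al_{q-2},0,k}$.

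Next I would treat the critical degrees $q=p$ and $q=p+1$ in even dimension $m=2p$. Here $\al_p=\tfrac12$ (or $\al_{p-1}=\tfrac12$), so the induced SL problem is regular at $x=0$, both $+$ and $-$ solutions are $L^2$, and the distinction between $\mf$ and $\mf^c$ is precisely which boundary condition one imposes at the tip. The types I, II, III, IV still involve $\t\la>0$ and contribute exactly as in the generic case; only types E and O, coming from the harmonic sector of the section, are affected. For these the SL operator reduces (as is spelled out in Remark \ref{formuleLap}) to the standard $-f''=\la f$ on $(0,l]$, and the four choices $F_{\tfrac12,0;0,0}$, $F_{\tfrac12,0;0,\tfrac{\pi}{2}}$, $F_{\tfrac12,0;\tfrac{\pi}{2},0}$, $F_{\tfrac12,0;\tfrac{\pi}{2},\tfrac{\pi}{2}}$ give the four classical Dirichlet/Neumann combinations. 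Matching these four combinations against the four prescriptions $(\pf,q)\in\{(\mf,p),(\mf,p{+}1),(\mf^c,p),(\mf^c,p{+}1)\}$ as tabulated in the second and third blocks of Remark \ref{formuleLap} produces exactly the explicit sine/cosine eigenfunctions $\sin(k\pi x/l)$, $\cos(k\pi x/l)$, $\sin((2k{-}1)\pi x/(2l))$, $\cos((2k{-}1)\pi x/(2l))$ listed in cases (1)--(4) of the proposition, paired with the corresponding $\psi_{O,+}$ or $\psi_{E,+}$ coming from the co-exact/exact sector. This finishes the case-by-case identification.

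The main technical obstacle is ensuring that every $+$ solution genuinely lies in the domain of the chosen self-adjoint extension at the tip, i.e.\ that the boundary values $BV^q_{\t\la_1,\t\la_2,+}(x_0)$ truly vanish on $\psi_{\cdot,+}$, and correspondingly that $\psi_{\cdot,-}$ is excluded in the generic non-critical degrees. This is where the asymptotic normalisation of $\ff_{\al,\t\la,+}$ in Propositions \ref{olv1} and \ref{b0} and the square-integrability tables in Lemma \ref{squareint} must be used in combination: exponential (or sub-leading) decay of $\ff_+$ ensures the Wronskian-type boundary value at $x=0$ kills the $+$ solution, while the $-$ solution either fails to be $L^2$ (generic case) or violates the tip-bc prescribed by $\pf$ (in the critical degrees of even dimension). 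Once this bookkeeping is made precise, the identification of the eigenforms reduces to the clean matching with the spectral sets of Proposition \ref{l4}.
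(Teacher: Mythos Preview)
Your proposal is correct and follows essentially the same route as the paper: the argument is implicit in (and indistinguishable from) the proof of Proposition \ref{l4}, where one takes the formal solutions of Lemma \ref{formaleigen}, uses Lemma \ref{squareint} to see that only the $+$ solutions are square integrable outside the critical degrees, checks via Propositions \ref{olv1}--\ref{b0} that the $+$ solutions automatically satisfy the tip boundary values, and then imposes the absolute condition at $x=l$ type by type; the even-dimensional critical degrees are handled exactly as you describe, by reading off the regular Dirichlet/Neumann combinations from Remark \ref{formuleLap}.
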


\subsection{Spectral properties of the operators $\Delta_{\pf, \rm bc}$}

\begin{prop}\label{compresC} The operators $\Delta^q_{\pf,\rm bc}$ have compact resolvent.
\end{prop}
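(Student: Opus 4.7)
The strategy is to exploit the direct sum decomposition of $\Delta^q_{\pf,\rm bc}$ given in Remark \ref{formuleLap}. On the finite horn $\ZZ$, via separation of variables and the spectral resolution of the section operator $\t\Delta$, the operator $\Delta^q_{\pf,\rm bc}$ breaks into a direct sum of pairs of Sturm-Liouville operators acting on the weighted $L^2$ spaces on $(0,l]$ indexed by $(\t\la_1,j_1,\t\la_2,j_2)$. Each individual SL problem is of one of the types treated in Appendix \ref{SL} (regular, regular singular limit-point/limit-circle, or irregular singular, depending on $\ka$ and on $\al_q$), and by the results of that appendix each such extension is self-adjoint with purely discrete, real, bounded-below spectrum whose eigenvalues are the zeros of the fundamental solutions $\ff_{\al,\t\la,\pm}$ described in Propositions \ref{olv1} and \ref{b0}.

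First I would appeal to Propositions \ref{l4} and \ref{eigenforms} together with Theorem \ref{harmonicform}: the associated eigenforms $\psi^{(q)}_{A,\t\la,\pm}$ and the harmonic forms constitute a complete orthonormal basis of $L^2(\ZZ,E_\rho)$. Consequently the spectrum of $\Delta^q_{\pf,\rm bc}$ is purely discrete with finite multiplicities, since each eigenvalue arises from a product of the finite section multiplicity ($m_{\rm cex,q,n}$ or $m_{\rm har,q}$), which is finite by elliptic regularity on the closed compact manifold $W$, with the simple multiplicity on the line factor guaranteed by classical SL theory.

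Next I would show that the eigenvalues accumulate only at $+\infty$. Using the asymptotic estimates from Appendix \ref{SL}, the zeros $\ell_{\al,\t\la,k}$ and $\hat\ell_{\al,\t\la,k}$ grow like $ck^2$ as $k\to\infty$ with $\t\la$ fixed, and are bounded below by a constant times $\t\la$ as $\t\la\to\infty$ with $k$ fixed. Combined with Weyl's law on $W$, namely $\t\la_{q,n}\to\infty$ at rate $n^{2/m}$, this forces any infinite subsequence of eigenvalues of $\Delta^q_{\pf,\rm bc}$ to diverge to infinity. A self-adjoint operator with purely discrete spectrum of finite multiplicities accumulating only at infinity has compact resolvent, since then the resolvent is a norm limit of its finite-rank spectral truncations.

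The main obstacle is establishing the uniform two-parameter asymptotics of $\ell_{\al,\t\la,k}$ in the irregular singular regime $\ka>1$, where classical regular SL theory does not apply directly. This is precisely the content of Appendix \ref{SL}, where such estimates are obtained by a detailed WKB-type analysis of the indicial behaviour of the relevant irregular singular SL problem. Once those estimates are in hand, the compactness of the resolvent follows by the abstract argument above, and can alternatively be repackaged as convergence, in Hilbert-Schmidt norm, of the explicit Green's function obtained by summing the one-dimensional SL Green's functions against the section basis $\{\t\psi^{(q)}_{\t\la,j}\}$.
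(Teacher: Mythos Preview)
Your approach is workable but differs from the paper's and, as written, contains a circularity and an unverified estimate.

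The paper does not go through completeness of the eigenforms at all. It works directly with the resolvent: for each summand $\Delta^{(q)}_{\t\la_1,j_1,\t\la_2,j_2}$ in the decomposition \eqref{pro} one has an explicit Green kernel for the underlying Sturm--Liouville pair (Proposition \ref{p3.33}), and a direct computation with the behaviour of $\uf_{\pm}$ near $x=0$ shows this kernel is square integrable (Corollary \ref{c3.34}). Hence each summand has Hilbert--Schmidt, in particular compact, resolvent; compactness of the full resolvent then follows from the direct sum structure. Your final sentence actually gestures at exactly this argument, and that is the route the paper takes.

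Your main argument, by contrast, first invokes a complete orthonormal eigenbasis and then argues eigenvalue growth. The problem is that Propositions \ref{l4}, \ref{eigenforms} and Theorem \ref{harmonicform} only \emph{list} the eigenvalues and eigenforms; they do not prove completeness. In the paper's logical order completeness is Proposition \ref{resolutionC}, whose primary proof \emph{uses} Proposition \ref{compresC}, so citing it here would be circular. You can break the circle by appealing instead to Corollary \ref{c3.36} (each one-dimensional SL extension already has a spectral resolution) and tensoring with the Hodge eigenbasis of $W$; that is the ``alternative'' argument sketched in the proof of Proposition \ref{resolutionC}, and it is independent of Proposition \ref{compresC}.

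Second, the claimed uniform lower bound $\ell_{\al,\t\la,k}\ge c\,\t\la$ is not stated anywhere in Appendix \ref{SL}; Lemma \ref{eigen} compares eigenvalues for $b>0$ and $b=0$ only up to a bounded additive error, which goes the wrong way for your purpose. The bound you need does hold and follows quickly from the Green formula in Remark \ref{Green}: for $b=\t\la>0$ and a normalised eigenfunction $f$,
\[
\la\|f\|^2=\int_0^l h^{1-2\al}|f'|^2+\t\la\int_0^l h^{-1-2\al}|f|^2\ \ge\ \t\la\Big(\sup_{(0,l]}h\Big)^{-2}\|f\|^2,
\]
since $h$ is continuous on $[0,l]$. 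You should supply this step explicitly rather than attribute it to the appendix.
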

\begin{proof} Consider the single term  $\Delta^{(q)}_{\t\la_1,j_1,\t\la_2,j_2}$ in the decomposition described above, equation (\ref{pro}). This consists in  a pair of Sturm Liouville operators, whose kernel is given in Proposition \ref{c3.34}.  Direct evaluation of the integral using the behaviour of the solutions of the eigenvalues equations (compare with Corollary \ref{c3.36}), shows that these kernels are square integrable. It follows that the operator $\Delta^{(q)}_{\t\la_1,j_1,\t\la_2,j_2}$ has compact resolvent. Now, observe that 
the sequence of operators 
\[
\Delta^{(q)}_n=\bigoplus_{\t\la_1,\t\la_2=0}^{\t\la_n} \bigoplus_{j_1,j_2} \Delta^{(q)}_{\t\la_1,j_1,\t\la_2,j_2},
\]
converges to the operator $\Delta_{\pf,\rm bc}^{(q)}$    (in the $L^2$ norm),  i.e. $\left\| \Delta_{\pf,\rm bc} -\Delta^{(q)}_n\right\|\to 0$.  
But the resolvent of each of the operator $\Delta_n$ is a bounded finite rank operator. 
Then, compactness of $\Delta_{\pf,\rm bc}^{(q)}$ follows for example by \cite[Theorem 6.5]{Wei}.  
\end{proof}

\begin{prop}\label{resolutionC} The family of the eigenvalues described in Proposition \ref{l4} and the associated eigenforms described in Proposition \ref{eigenforms}, together with the harmonics described in Proposition \ref{harmonicform} determines a spectral resolution for the operator $\Delta^{(q)}_{\pf,\rm bc}$.
\end{prop}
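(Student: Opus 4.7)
The strategy is to reduce the spectral resolution on the finite horn to the spectral resolutions of a family of Sturm--Liouville pairs on the interval $(0,l]$, coupled with the complete orthonormal basis of $L^2\Omega^\bullet(W)$ provided by the Hodge decomposition on the section. First I would invoke the orthogonal direct sum decomposition
\[
L^2(\ZZ) = \bigoplus_{\t\la_1,\t\la_2\geq 0} L^2((0,l],h^{1-2\al_q}\oplus h^{1-2\al_{q-1}})\otimes\left(\t\E^{(q)}_{\t\la_1}\oplus \t\E^{(q-1)}_{\t\la_2}\right),
\]
set up in equation (\ref{pro}), and use the further decomposition into coexact/exact/harmonic pieces of the section eigenspaces. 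Under this decomposition $\Delta^{(q)}_{\pf,\rm bc}$ decomposes as the orthogonal sum of its projections $\Delta^{(q)}_{\t\la_1,j_1,\t\la_2,j_2}$ onto each one-dimensional subspace in the section.

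Next, for each such projection, I would appeal to Remark \ref{formuleLap}, which identifies $\Delta^{(q)}_{\t\la_1,j_1,\t\la_2,j_2}$ with a concrete pair of Sturm--Liouville operators $(F_{\al,\t\la_1;\be_1,\ga_1},F_{\al',\t\la_2;\be_2,\ga_2})$ acting on $L^2((0,l],h^{1-2\al_q}\oplus h^{1-2\al_{q-1}})$, of the type studied in Appendix \ref{SL}. By the theory developed there (Theorem \ref{t3.23} for the classification of self adjoint extensions and the associated spectral resolution result), each such SL operator has a spectral resolution by a complete orthonormal basis of eigenfunctions, whose eigenvalues are exactly the zero sets of $\ff_{\al,\t\la,+}$ or $\ff'_{\al,\t\la,+}$ at $x=l$ (or the explicit sines/cosines in the regular case $\al=\tfrac12$). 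The tensor product of these bases with the chosen orthonormal basis $\{\t\psi^{(q)}_{\t\la_{q,n},j}\}$ of $\H^\bullet(W)\oplus\Im\t D\oplus\Im\t\DD$ gives a complete orthonormal system in the summand. A type-by-type matching with Lemma \ref{formaleigen} and Proposition \ref{eigenforms} then shows that these tensor products coincide with the forms $\psi_{A,\pm}^{(q)}$ listed there, once square integrability and boundary conditions are imposed: the identification $\psi_I\leftrightarrow F_{\al_q,\t\la;\frac{\pi}{2}}$, $\psi_{II}\leftrightarrow F_{\al_{q-1},\t\la;\frac{\pi}{2}}$, $\psi_{III}\leftrightarrow F_{\al_{m+1-q},\t\la;0}$, $\psi_{IV}\leftrightarrow F_{\al_{m+1-(q-1)},\t\la;0}$ is forced by the formulas in Section \ref{explicit} and Remark \ref{XY}.

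I would then verify separately that the forms in Proposition \ref{harmonicform} exhaust the kernel: this is already contained in the SL spectral resolution (zero being an eigenvalue of $F_{\al,0;\be,\ga}$ precisely when the appropriate constant solution belongs to the domain), but must be cross-checked case-by-case in the critical degrees $q=p,p+1$ when $m=2p$, where the two middle-perversity extensions $\mf,\mf^c$ select different E/O eigenforms (the $\cos$ versus $\sin$ families). Finally, summing over $\t\la_1,\t\la_2$ and over the multiplicities $m_{\rm har,\bullet}$, $m_{\rm cex,\bullet,n}$, the union of the tensor product bases gives a complete orthonormal system of eigenforms for $\Delta^{(q)}_{\pf,\rm bc}$ in $L^2(\ZZ)$. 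Completeness in the full Hilbert space follows from compactness of the resolvent (Proposition \ref{compresC}) together with self adjointness (Proposition \ref{prop1.3}): a self adjoint operator with compact resolvent always admits a spectral resolution by its eigenfunctions.

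The main obstacle is the bookkeeping in the critical cases where $\al_q=\tfrac12$ (i.e.\ $m=2p$, $q=p,p+1$), where the bc at the tip of the horn becomes effective and distinguishes $\mf$ from $\mf^c$; here one must be careful that exactly one of the two sequences $\ell_{\frac12,0,\pm,k}$ appears in each spectrum, matching the choices made in Remark \ref{formuleLap}. Once this matching is verified, the identification of the eigenvalues with those listed in Proposition \ref{l4} is immediate from the definitions of $\ell_{\al,\t\la,k}$ and $\hat\ell_{\al,\t\la,k}$.
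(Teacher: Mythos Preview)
Your proposal is correct and follows essentially the same approach as the paper. The paper's proof is much terser: it first notes that compact resolvent (Proposition \ref{compresC}) plus self adjointness already yields a spectral resolution by general theory, and then, as an alternative, sketches the same tensor-product argument you give (SL spectral resolution on each $(\t\la_1,\t\la_2)$-block via Corollary \ref{c3.36}, tensored with the eigenforms on the section). Your extra bookkeeping on the type-by-type matching and the critical degrees is not needed for the bare statement, but it is exactly what underlies Propositions \ref{l4} and \ref{eigenforms}; note also that the precise SL reference for the spectral resolution is Corollary \ref{c3.36} rather than Theorem \ref{t3.23}.
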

\begin{proof} This follows for example by \cite[]{Sch}, since the resolvent is compact by Proposition \ref{compresC}. Alternatively, observe that  the projection $\Delta^{(q)}_{\t\la_1,\t\la_2}$ (see equation (\ref{pro})) of the Hodge Laplace operator on the finite dimensional space associated to the  pair of eigenvalues $(\t\la_1,\t\la_2)$ of the section has a spectral resolution by Corollary \ref{c3.36}. Then, taking the tensor product with the eigenforms of the spectral resolution of the Hodge Laplace operator on the section, we obtain a complete orthogonal basis for the space of the square integrable forms on the cone consisting in eigenforms of the Hodge Laplace operator, as desired. Note that all these forms are smooth. 
\end{proof}

\subsection{Harmonic forms and harmonic fields for the operators $\Delta_\d$, $\Delta_{\dr}$, and $\Delta_{ \d,\pf, \rm rel}$, $\Delta_{\dr, \pf, \rm abs}$}  

Note that harmonic forms and harmonic fields coincides for the operators $\Delta_\d$, $\Delta_{\dr}$, and $\Delta_{ \d,\pf, \rm rel}$, $\Delta_{\dr, \pf, \rm abs}$, see Corollary \ref{eqker}.

\begin{prop}\label{PPP} We have the identifications: 
if $m=2p-1$,  then
\begin{align*}
\ker \d_{ \rm max}^{q}\cap \ker \dr_{\rm max}^{q}&= L_{XY}^q\oplus \left\{\begin{array}{lc}\{\t\theta\in\H^{q}(W) \},&0\leq q\leq p-1, \\
\{0\},& q=p,\\
\{h^{2\al_{q-1}-1}dx\wedge\t\theta~|~\t\te\in\H^{q-1}(W) \},&p+1\leq q\leq 2p;
\end{array}\right. \\
\ker \Delta_{\d}&=\ker \d_{ \rm min}^{q}\cap \ker \dr_{ \rm max}^{q}=\ker \d_{\pf,  \rm rel}^{q}\cap \ker \dr_{\rm max}^{q}\\
&=\left\{\begin{array}{lc}
\{0\},& 0\leq q\leq p,\\
\{h^{2\al_{q-1}-1}dx\wedge\t\theta~|~\t\te\in\H^{q-1}(W) \},&p+1\leq q\leq 2p;
\end{array}\right.\\
\ker \Delta_{\dr}&=\ker \dr_{ \rm min}^{q}\cap \ker \d_{ \rm max}^{q}=\ker \d_{ \rm max}^{q}\cap \ker \dr_{\pf, \rm abs}^{q}\\
&=\left\{\begin{array}{lc}\{\t\theta\in\H^{q}(W) \},&0\leq q\leq p-1, \\
\{0\},& p\leq q\leq 2p;
\end{array}\right. 
\end{align*}
if $m=2p$, then
\begin{align*}
\ker \d_{ \rm max}^{q}\cap \ker \dr_{\rm max}^{q}&= L_{XY}^q\oplus \left\{\begin{array}{lc}\{\t\theta\in\H^{q}(W) \},&0\leq q\leq p, \\
\{h^{2\al_{q-1}-1}dx\wedge\t\theta~|~\t\te\in\H^{q-1}(W) \},&p+1\leq q\leq 2p+1;
\end{array}\right. \\
\ker \Delta_{\d}&=\ker \d_{ \rm min}^{q}\cap \ker \dr_{ \rm max}^{q}=\ker \d_{\pf,  \rm rel}^{q}\cap \ker \dr_{\rm max}^{q}\\
&=\left\{\begin{array}{lc}
\{0\},& 0\leq q\leq p,\\
\{h^{2\al_{q-1}-1}dx\wedge\t\theta~|~\t\te\in\H^{q-1}(W) \},&p+1\leq q\leq 2p+1;
\end{array}\right. \\
\ker \Delta_{\dr}&=\ker \dr_{ \rm min}^{q}\cap \ker \d_{ \rm max}^{q}=\ker \d_{ \rm max}^{q}\cap \ker \dr_{\pf, \rm abs}^{q}\\
&=\left\{\begin{array}{lc}\{\t\theta\in\H^{q}(W) \},&0\leq q\leq p, \\
\{0\},& p+1\leq q\leq 2p+1;
\end{array}\right. 
\end{align*}
where
\begin{align*}
L_{XY}^q=&\left\{f\t D\t\om+dx\wedge f^I\t\om~|~ \t\om\in\Im \t\DD^{q}, f\in AC_{\rm loc}((0,l]),\right.\\
&\left. f\in L^2((0,l],h^{1-2\al_q}), f^I\in L^2((0,l],h^{1-2\al_{q-1}}), \FF_{\al_{q-1}, \t\la}f=0\right\}.
\end{align*}

Moreover, if $m=2p-1$, then 
\begin{align*}
\ker \Delta_{\d,\pf,\rm rel}=\ker \d_{\pf,  \rm rel}^{q}\cap \ker \dr_{\pf, \rm rel}^{q}&=\ker \d_{\pf, \rm rel}^{q}\cap \ker \dr_{\rm max}^{q},\\
\ker \Delta_{\dr,\pf,\rm abs}= \ker \d_{\pf, \rm abs}^{q}\cap \ker \dr_{\pf, \rm abs}^{q}&=\ker \d_{\rm max}^{q}\cap \ker \dr_{\pf, \rm abs}^{q};
\end{align*} 
while if $m=2p$, then 
\begin{align*}
\ker \Delta_{\d,\mf,\rm rel}=\ker \d_{\mf,  \rm rel}^{q}\cap \ker \dr_{\mf, \rm  rel}^{q}&=\left\{\begin{array}{lc}
\{0\},& 0\leq q\leq p+1,\\
\{h^{2\al_{q-1}-1}dx\wedge\t\theta~|~\t\te\in\H^{q-1}(W) \},&p+2\leq q\leq 2p+1;
\end{array}\right. \\
\ker \Delta_{\dr,\mf,\rm abs}=\ker \d_{\mf, \rm abs}^{q}\cap \ker \dr_{\mf, \rm abs}^{q}&=
\ker \d_{\rm max}^{q}\cap \ker \dr_{\mf, \rm abs}^{q},
\end{align*}
and
\begin{align*}
\ker \Delta_{\d,\mf^c,\rm rel}=\ker \d_{\mf^c,  \rm rel}^{q}\cap \ker \dr_{\mf^c, \rm  rel}^{q}&=\ker \d_{\mf^c, \rm rel}^{q}\cap \ker \dr_{\rm max}^{q},\\
\ker \Delta_{\dr,\mf^c,\rm abs}=\ker \d_{\mf^c, \rm abs}^{q}\cap \ker \dr_{\mf^c, \rm abs}^{q}&=\left\{\begin{array}{lc}\{\t\theta\in\H^{q}(W) \},&0\leq q\leq p-1, \\
\{0\},& p\leq q\leq 2p+1.
\end{array}\right. 
\end{align*}

\end{prop}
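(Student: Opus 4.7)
The plan is to reduce each harmonic-field space to an intersection of kernels, using the identities from Theorem \ref{Hodgedec} (in particular $\ker \Delta_\d = \ker \d_{\rm min}^q \cap \ker \dr_{\rm max}^q$), and then to combine the explicit descriptions of $\ker \d_{\rm max}^q$ from Proposition \ref{kerd} and of $\ker \dr_{\rm max}^q$ from Proposition \ref{kerde} with the boundary-condition descriptions of the minimal and of the intersection-perversity domains from Propositions \ref{domd1X}, \ref{domdeX} and Definitions \ref{defdrelX}--\ref{defderelX}.

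First I would intersect $\ker \d_{\rm max}^q$ and $\ker \dr_{\rm max}^q$ type by type. The Hodge decomposition on the section forces all mixed intersections involving the exact-on-section types $K_I$, $K_{IV}$ or the co-exact-on-section types $H_I$, $H_{IV}$ to vanish, because exact and co-exact forms on $W$ are $L^2$-orthogonal. The surviving pieces are: $K_E \cap H_E$, consisting of constant-in-$x$ harmonic forms $\t\te \in \H^q(W)$, square integrable precisely when $\al_q \le \frac{1}{2}$ (using $\ka>1$ to reduce $\ka(1-2\al_q) > -1$ to this half-integer condition); $K_O \cap H_O$, consisting of $h^{2\al_{q-1}-1} dx \wedge \t\te$ with $\t\te \in \H^{q-1}(W)$, square integrable precisely when $\al_{q-1} \ge \frac{1}{2}$; and $K_X \cap H_Y$, which by the matching of Remark \ref{XY} (take $\t\vv = \t D \t\om$ and $g = f^I/\t\la$) is exactly the space $L_{XY}^q$ with $f$ solving $\FF_{\al_{q-1},\t\la} f = 0$. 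Translating the half-integer conditions into ranges of $q$ via $\al_q = q-(m-1)/2$ yields the displayed formula for $\ker \d_{\rm max}^q \cap \ker \dr_{\rm max}^q$.

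To pass to $\ker \Delta_\d$ I impose the minimal conditions from Proposition \ref{domd1X}: vanishing of the tangential trace on $\b X$ and of the tip pairing against every test form in $\DS(\dr_{\rm max}^{q+1})$. The type-E constant harmonic has non-vanishing tangential trace and drops out; the type-O form is purely $dx \wedge (\cdot)$ with zero tangential trace and survives. For $L_{XY}^q$, the tangential-trace condition forces $f(l) = 0$, and then the Sturm--Liouville analysis at $x = 0$ from Theorems \ref{olv1} and \ref{b0} rules out a non-trivial $f$: in the generic limit-point case the $L^2$-solution at $x = 0$ is unique up to scalar and non-vanishing at $x = l$, and the few remaining regular and limit-circle cases admit a direct check. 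The analogous argument using Proposition \ref{domdeX} gives $\ker \Delta_\dr$. The intersection-perversity identifications of the form $\ker \d_{\pf,\rm rel}^q \cap \ker \dr_{\pf,\rm rel}^q = \ker \d_{\pf,\rm rel}^q \cap \ker \dr_{\rm max}^q$ (and its $(\d \leftrightarrow \dr,\ \rm rel \leftrightarrow \rm abs)$ analogue) hold because the extra tip condition imposed by $\dr_{\pf,\rm rel}$ (respectively $\d_{\pf,\rm abs}$) is automatically satisfied on the surviving type-O (respectively type-E) forms in the relevant degrees. The remaining explicit formula for $(\pf,m) = (\mf, 2p)$ in $\Delta_{\d,\mf,\rm rel}$ reflects that at the transition degree $q = p+1$ (where $\al_p = \frac{1}{2}$) the constant test form $\om = \t\te$ is still admissible in $\DS(\d_{\rm max}^p)$, producing a non-zero value of $bv_{\rm abs}(x_0)$ on the type-O form and eliminating it; at $q \ge p+2$ the constant test form is no longer admissible and the condition is automatic, producing the shift of the non-trivial range to $p+2 \le q \le 2p+1$.

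The main obstacle is the elimination of $L_{XY}^q$ under the minimal boundary conditions, which requires careful control of the endpoint asymptotics of the irregular singular Sturm--Liouville equation $\FF_{\al_{q-1},\t\la} f = 0$. The required asymptotics come from Theorem \ref{olv1} (generic $\t\la > 0$, exponential growth/decay at $x = 0$) and Theorem \ref{b0} (the harmonic case $\t\la = 0$), and the crucial non-vanishing at $x = l$ of the $L^2$-admissible solution at the tip ultimately rests on the non-vanishing of the Wronskian of the two fundamental solutions.
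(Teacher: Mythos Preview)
Your overall strategy matches the paper's: intersect the explicit descriptions of $\ker \d_{\rm max}^q$ and $\ker \dr_{\rm max}^q$ type by type to get $K_E \oplus H_O \oplus L_{XY}^q$, then impose the relevant boundary conditions. Your handling of the $E$ and $O$ pieces and of the perversity shift at $q=p+1$ when $(\pf,m)=(\mf,2p)$ is essentially the paper's argument.

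The gap is in your elimination of $L_{XY}^q$. You assert that the $L^2$-admissible solution $\ff_+$ of $\FF_{\al_{q-1},\t\la}f=0$ is non-vanishing at $x=l$, and say this ``ultimately rests on the non-vanishing of the Wronskian''. But the Wronskian $h^{1-2\al_{q-1}}(\ff_+\ff_-'-\ff_+'\ff_-)$ being a non-zero constant does not prevent $\ff_+(l)=0$ (or $\ff_+'(l)=0$, which you would need for the absolute case); it only prevents $\ff_+(l)$ and $\ff_+'(l)$ from vanishing simultaneously. What you are really trying to invoke is positivity of the relevant self-adjoint SL operator (Theorem~\ref{pos}), whose proof is precisely the energy identity you have not written down. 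The paper does this directly: for $f$ with $\FF_{\al_{q-1},\t\la}f=0$ and $\t\la>0$ (automatic since $\t\om$ is co-exact), integrate
\[
0=\int_0^l h^{1-2\al_{q-1}}(\FF_{\al_{q-1},\t\la}f)f
=\big[h^{1-2\al_{q-1}}ff'\big]_0^l+\int_0^l h^{1-2\al_{q-1}}{f'}^2+\t\la\int_0^l h^{-1-2\al_{q-1}}f^2.
\]
The boundary term at $0$ vanishes by the exponential decay of $\ff_+$ from Theorem~\ref{olv1}; the boundary term at $l$ vanishes because either $f(l)=0$ (rel) or $f'(l)=0$ (abs). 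Both integrals are non-negative, so $f'=0$ and $\t\la f=0$, hence $\om=0$. This single computation handles $\d_{\rm min}$, $\dr_{\rm min}$, $\d_{\pf,\rm rel}$ and $\dr_{\pf,\rm abs}$ at once, and makes your separate ``few remaining regular and limit-circle cases'' check unnecessary (those cases only arise when $\t\la=0$, which does not occur in $L_{XY}^q$).
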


\begin{proof}   By comparing the description of the kernels given in Propositions \ref{kerd} and \ref{kerde}, respectively, we immediately obtain
$
\ker \d_{ \rm max}^{q}\cap \ker \dr_{\rm max}^{q}=K_E\oplus H_O\oplus L_{XY},
$ 
with
\begin{align*}
K^q_E=&\{\t\theta\in\H^{q}(W)~|~ 1\in L^2(h^{1-2\al_q}) \},\\
H_O^q=&\{h^{2\al_{q-1}-1}dx\wedge\t\theta~|~\t\te\in\H^{q-1}(W)~|~h^{2\al_{q-1}-1}\in L^2(h^{1-2\al_{q-1}}) \},\\
L_{XY}^q=&K_X^q\cap H_Y^q.
\end{align*}

By taking into account the square integrability of the forms of types E and O, we obtain the two different formulas depending on the parity of the dimension $m$. By Remark \ref{XY}, the space $L_{XY}$ consists in the forms 
\[
\om=h^{2\al_{q-1}+1} (h^{1-2\al_{q-1}}f^I)^I \t\vv+dx\wedge f^I\t \DD\t\vv=f\t D\t\DD\t\vv+dx\wedge f^I\t\DD\vv,
\]
where $f$ is a solution of the equation
$
\FF_{\al_{q-1}, \t\la}f=0
$, and therefore it is smooth.   
This concludes the proof of the formulas for the maximal operators.

Next, we show that if $\om\in L_{XY}$ belongs either to the domain of $\d_{\rm min}$, $\dr_{\rm min}$, $\d_{\pf,\rm rel}$ or $\dr_{\pf, \rm abs}$,  
then  $\om=0$. For note that 
\begin{align*}
0=\int_0^l h^{1-2\al_{q-1}}(\FF_{\al_{q-1},\t\la}f) f &=[h^{1-2\al_{q-1}} f f']_0^l+\int_0^l h^{1-2\al_{q-1}}{f'}^2+\t\la^2\int_0^l h^{-1-2\al_{q-1}} f^2.
\end{align*}

Since $\t\la\not=0$, the behaviour of $f$ near $x=0$ is given (taking the + solution) by Theorem \ref{olv1}., and therefore 
\[
\lim_{x\to 0^+} (h^{1-2\al_{q-1}} \ff_{\al_{q-1},\t\la,+} \ff_{\al_{q-1},\t\la,+}')(x)=0.
\]

Whence the boundary term vanishes at $x=0$. Vanishing at $x=l$ follows by the bc at $x=l$ (relative bc implies that $f(l)=0$, absolute bc that $f'(l)=0$). This shows that $f'=0$, so $\t\la=0$, and therefore $\om=0$. 

The last point are the  bc on the other forms. On one side, the bc of $\d_{\pf,\rm rel}$ and $\d_{\rm  min}$ or $\dr_{\pf, \rm abs}$ and $\dr_{\rm min}$ at $x=l$  excludes respectively the E and the O solutions, accordingly to the given formulas. On the other, these forms trivially  verify  the bc at $x=0$, so this concludes the proof of the formulas. 

To conclude consider $\ker \d_{\pf,  \rm rel}^{q}\cap \ker \dr_{\pf, \rm rel}^{q}$. Take $\al\in \ker \d_{\pf, \rm rel}^{q}\cap \ker \dr_{\rm max}^{q}$, then $\al$ must verify the boundary conditions in the definition of $\dr_{\pf, \rm rel}$. This means 
\[
bv^{(q)}_{\rm abs}(x_0)(\al)=\lim_{x\to 0^+} (h^{1-2\al_{q-1}}f_1 a_2)(x)(\t\om_1,\t\al_2)_W,
\]
for all $ \om=f_1\t\om_1+dx\wedge f_2\t\om_2\in \DS(\d^{(q-1)}_{\rm max})$, where  $\al=a_1(x)\tilde\al_1(y)+a_2(x)dx\wedge \tilde\al_2(y)$, in the suitable degrees. In such degrees, $\al=h^{2\al_{q-1}-1} dx \wedge \t\te$, so we have
\[
bv^{(q)}_{\rm abs}(x_0)(\al)=\lim_{x\to 0^+} f_1(x)(\t\om_1,\t\te)_W.
\]

The behaviour of $f_1$ near $x=0$ is given by Lemma \ref{lem1}(2).  Whence, the limit is zero for all $\al_{q-1}\not=\frac{1}{2}$. The unique case where the limit may be non zero are thus when $m=2p$, and $q=p+1$. However,  if $(\pf,m)=(\mf^c,2p)$, then the bc is required only when $q\geq p+2$, and therefore not required for $\al_{q-1}=\frac{1}{2}$, and  $\al\in D(\dr_{\mf^c, \rm rel})$. The last case is    $(\pf,m)=(\mf,2p)$: then the bc is required when $q\geq p+1$, that gives $\al_{q-1}\geq \frac{1}{2}$. Whence, if  $\al_{q-1}=\frac{1}{2}$, i.e. $q=p+1$, then $f_1(x)$ is a non zero constant, and therefore the limit is not zero, and the bc is not satisfied, and $\al$ does not belong to $\ker \d_{\mf,  \rm rel}^{q}\cap \ker \dr_{\mf, \rm rel}^{q}$. This completes the proof for this identification. The proof of the other one is analogous.  
\end{proof}

In the following corollary we introduce a unified notation for the space of the harmonic fields.

\begin{corol} \label{eqker} 
\begin{align*}
\Ha^q_\pf(\ZZ)=\ker \Delta^{(q)}_{\pf,\rm abs}=\ker \Delta^{(q)}_{\dr, \pf,\rm abs}&=\ker \d_{\pf, \rm abs}^{q}\cap \ker \dr_{\pf, \rm abs}^{q},\\
\Ha^q_\pf(\ZZ, W)=\ker \Delta^{(q)}_{\pf,\rm rel}=\ker \Delta^{(q)}_{\d, \pf,\rm rel}&=\ker \d_{\pf, \rm rel}^{q}\cap \ker \dr_{\pf, \rm rel}^{q},\\
\Ha^q(\ZZ)=\ker \Delta^{(q)}_{\d}&=\ker \d_{ \rm min}^{q}\cap \ker \dr_{ \rm max}^{q},\\
\Ha^q(\ZZ,W)=\ker \Delta^{(q)}_{\dr}&=\ker \d_{ \rm max}^{q}\cap \ker \dr_{ \rm min}^{q}.
\end{align*}

\end{corol}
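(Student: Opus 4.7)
My plan is to split the statement into three groups of equalities and handle them in turn.

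The first group consists of the rightmost equalities, which characterise each kernel as $\ker \d \cap \ker \dr$. These follow from the standard Hodge-theoretic observation. For $\omega \in \D(\Delta^{(q)}_{\d,\pf,\rm rel})$, using that $\dr^{(q)}_{\pf,\rm rel} = (\d^{(q-1)}_{\pf,\rm rel})^\da$ and $\dr^{(q+1)}_{\pf,\rm rel} = (\d^{(q)}_{\pf,\rm rel})^\da$, I would compute $\langle \Delta^{(q)}_{\d,\pf,\rm rel} \omega, \omega \rangle = \| \d^{(q)}_{\pf,\rm rel} \omega \|^2 + \| \dr^{(q)}_{\pf,\rm rel} \omega \|^2$, so $\Delta^{(q)}_{\d,\pf,\rm rel} \omega = 0$ is equivalent to both $\d^{(q)}_{\pf,\rm rel}\omega = 0$ and $\dr^{(q)}_{\pf,\rm rel}\omega = 0$. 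The identical argument applies to $\Delta^{(q)}_{\dr,\pf,\rm abs}$, $\Delta^{(q)}_{\d}$ and $\Delta^{(q)}_{\dr}$, giving four of the identifications immediately.

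The second group concerns the equalities between kernels of different Laplacians, namely $\ker \Delta^{(q)}_{\pf,\rm abs} = \ker \Delta^{(q)}_{\dr,\pf,\rm abs}$ and $\ker \Delta^{(q)}_{\pf,\rm rel} = \ker \Delta^{(q)}_{\d,\pf,\rm rel}$. Here one side is a self-adjoint extension of the formal Laplacian, specified by boundary values at $x=0$ and on $\b X$, while the other is the Laplacian of the intersection de Rham complex. I would invoke the general equivalence result asserted in Theorem \ref{equiv} between the two approaches. Alternatively, the identification can be verified by matching explicit descriptions: Theorem \ref{harmonicform} computes $\ker \Delta^{(q)}_{\pf,\rm bc}$ via the spectral decomposition, while Proposition \ref{PPP} combined with the first step yields $\ker \Delta^{(q)}_{\dr,\pf,\rm abs}$ and $\ker \Delta^{(q)}_{\d,\pf,\rm rel}$. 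In both descriptions the harmonics are either section-harmonics $\t\vv^{(q)}_{\rm har}$ extended by the constant function in $x$, or their duals $h^{2\al_{q-1}-1} dx \wedge \t\vv^{(q-1)}_{\rm har}$, and these lists agree in the corresponding ranges of $q$ for each perversity.

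The main obstacle is keeping track of the critical degrees where $\al_{q-1}=\tfrac{1}{2}$ (i.e.\ $m=2p$ with $q=p$ or $q=p+1$). In these degrees the Sturm--Liouville operators $\FF_{\al_{q-1},0}$ are regular at $x=0$ and admit a two-parameter family of boundary conditions, so one must verify that the choice dictated by the perversity $\pf$ picks out exactly the constant-function candidates already accounted for in Proposition \ref{PPP}, and no extra form slips through. This verification is exactly the limiting analysis using the asymptotic behaviours in Lemmas \ref{lem1} and \ref{lem2} that closes the proof of Proposition \ref{PPP}. Once this is settled, the identifications in the corollary follow by definition-chasing and the two preceding steps; the corollary is thus a packaging of Proposition \ref{PPP}, Theorem \ref{harmonicform} and the Hodge decomposition under a single notation $\Ha^q_\pf$.
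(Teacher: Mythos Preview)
Your overall architecture is right, and the alternative route you sketch---comparing the explicit computation of $\ker \Delta^{(q)}_{\pf,\rm bc}$ in Theorem \ref{harmonicform} with the computation of $\ker \d_{\pf,\rm bc}^{q}\cap\ker\dr_{\pf,\rm bc}^{q}$ in Proposition \ref{PPP}---is exactly how the paper obtains the corollary: it is stated without proof, intended as a direct consequence of those two results.

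However, your primary suggestion for the second group, namely to invoke Theorem \ref{equiv}, would be circular. If you look at the proof of Theorem \ref{equiv} in the paper, it explicitly says ``The equivalence of the kernels is in Corollary \ref{eqker}.'' In other words, Theorem \ref{equiv} handles the positive spectrum by checking that eigenforms lie in the right domains, but for the kernel it defers to the corollary you are trying to prove. So you must use your alternative: match the lists from Theorem \ref{harmonicform} and Proposition \ref{PPP} directly. Your observation about the critical degrees $m=2p$, $q=p,p+1$ is on point---this is precisely where Proposition \ref{PPP} does the extra boundary-value work to distinguish $\mf$ from $\mf^c$, and where Theorem \ref{harmonicform} singles out the regular Sturm--Liouville operator $F_{\frac{1}{2},0;\de,\frac{\pi}{2}}$ with $\de\in\{0,\frac{\pi}{2}\}$ according to perversity. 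Once those two lists are seen to coincide, the corollary is immediate.
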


The next result is a corollary of the analysis of this section.

\begin{theo}\label{equiv} The pair of operators ($\Delta^{(q)}_{\d,\pf,\rm rel}$,  $\Delta^{(q)}_{\pf,\rm rel}$) and ($\Delta^{(q)}_{\dr,\pf,\rm abs}$, $\Delta^{(q)}_{\pf,\rm abs}$) have the same spectral resolutions, and therefore coincide.
\end{theo}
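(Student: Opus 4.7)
My plan is to exploit the fact that, in each of the two pairs, both operators are non-negative self-adjoint operators with compact resolvent: for $\Delta^{(q)}_{\pf,\rm bc}$ this is Proposition \ref{compresC}, while for $\Delta^{(q)}_{\d,\pf,\rm rel}$ and $\Delta^{(q)}_{\dr,\pf,\rm abs}$ compactness of the resolvent follows from the Hodge decomposition in Theorem \ref{Hodgedec} together with the same proposition. A self-adjoint operator with compact resolvent is determined by its spectral resolution, so it is enough to exhibit, for each pair, a common complete orthonormal basis of eigenforms on which the two operators act with the same eigenvalues.

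For the harmonic parts this is given for free by Corollary \ref{eqker}: the null spaces of the two operators in each pair are literally the same subspace of $L^2(\ZZ,E_\rho)$. I therefore turn to the positive spectrum, treating the pair $\Delta^{(q)}_{\pf,\rm rel}$ versus $\Delta^{(q)}_{\d,\pf,\rm rel}$; the absolute case is identical after exchanging $\d$ with $\dr$. Fix a positive eigenvalue $\la$ of $\Delta^{(q)}_{\pf,\rm rel}$ and an eigenform $\psi$ from the explicit catalogue of six types E, O, I, II, III, IV in Proposition \ref{eigenforms}. I would verify three points in turn: first, that $\psi \in \DS(\d_{\pf,\rm rel}) \cap \DS(\dr_{\pf,\rm rel})$, using the asymptotic behaviour of the SL solutions $\ff_{\al,\t\la,+}$ from Proposition \ref{olv1}, the square-integrability statements of Lemma \ref{squareint}, and the cancellation of the tip boundary values $bv_{\rm rel}(x_0)$ and $bv_{\rm abs}(x_0)$ via Lemma \ref{lll1}; second, that $\d_{\pf,\rm rel}\psi \in \DS(\dr_{\pf,\rm rel})$ and $\dr_{\pf,\rm rel}\psi \in \DS(\d_{\pf,\rm rel})$, which follows from the block formulas for $D\om$ and $\DD\om$ in Section \ref{explicit} together with the corresponding spectral membership on the section; third, that the explicit computation of $\Delta^{(q)}$ in Section \ref{explicit} yields $\Delta^{(q)}_{\d,\pf,\rm rel}\psi = \la\psi$. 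Since Proposition \ref{resolutionC} asserts that the eigenforms of $\Delta^{(q)}_{\pf,\rm rel}$, together with the harmonic fields, already form a complete orthonormal basis of $L^2(\ZZ,E_\rho)$, this basis is a spectral resolution of the self-adjoint operator $\Delta^{(q)}_{\d,\pf,\rm rel}$ as well, so the two operators must coincide.

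The main obstacle will be the boundary verification in the anomalous half-integer cases $\al_q = \tfrac{1}{2}$, which occur precisely when the section has even dimension $m=2p$, at the two critical degrees $q=p$ and $q=p+1$. There the end-point $x=0$ of the induced SL problem is either regular or regular-singular of limit-circle type, two inequivalent self-adjoint extensions $\pm$ genuinely coexist, and the coincidence of the operators in each pair rests on matching the $\pm$ label fixed in the definition of $\Delta_{\pf,\rm bc}$ in Section \ref{defLap} with the non-vanishing constant $C$ appearing in the asymptotic expansions of Lemma \ref{lem1}(2b) and Lemma \ref{lem2}(2b). This is precisely the reason the analysis was split between the two middle perversities $\mf$ and $\mf^c$ and why a specific $\pm$ sign was imposed at $q=p,p+1$ in the formulas displayed in Section \ref{defLap}; once this matching is carried out, the agreement of boundary values at the tip for the two parallel constructions becomes a tautology and the argument above closes.
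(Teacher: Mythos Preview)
Your proposal is correct and follows essentially the same route as the paper: take the explicit spectral resolution of $\Delta^{(q)}_{\pf,\rm bc}$ from Proposition \ref{resolutionC}, verify that each eigenform lies in the domain of the corresponding de Rham Laplacian $\Delta^{(q)}_{\d,\pf,\rm rel}$ or $\Delta^{(q)}_{\dr,\pf,\rm abs}$ (the paper cites Remark \ref{domLap} for this), and invoke Corollary \ref{eqker} for the kernels. Your write-up is simply a more detailed version of the paper's three-sentence proof, including the explicit identification of the delicate half-integer case $\al_q=\tfrac12$ where the choice of $\pm$ extension must match the perversity.
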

\begin{proof} The result follows by taking the spectral resolutions of the operators $\Delta^{(q)}_{\pf,\rm bc}$ described above, Proposition \ref{resolutionC},  and verifying that the eigenforms belong to the domains of the operators $\Delta^{(q)}_{\d,\pf,\rm rel}$ and $\Delta^{(q)}_{\dr,\pf,\rm abs}$, using the description given in Proposition \ref{domLap}. The equivalence of the kernels is in Corollary \ref{eqker}.
\end{proof}

\vspace{10pt}

\section{Harmonic fields, de Rham maps and RS torsion}
\label{DRX}

It is quite obvious that the combinatorial decomposition of the underling topological space $X$ does not depend on the type of the metric structure on it. Consequently, the same decomposition would work for any type of isolated singularity. An $n$-dimensional  pseudo manifold with isolated singularities and proper boundary  is  an $n$-dimensional pseudo manifold $K$ with boundary    such that there exists a (finite) subset of $0$-cells  $\Sigma=\{x_0, \dots, x_k\}$ in the interior of  $K$  such that $K-\Sigma$ is an $n$-dimensional   homology manifold with boundary. 
An $n$-dimensional  space  with isolated singularity and proper boundary is a topological space $X$ that admits a CW decomposition $K$, where $K$ is  $n$-dimensional pseudo manifold with isolated singularities and proper boundary \cite{Mun} \cite{Mas} \cite{Spa} \cite{Spr12}. An $n$-dimensional pseudo manifold $K$ with isolated singularities is the push out  of a regular CW complex $M$ that is an  homology $n$-manifold  with boundary $\b M=N_1\sqcup\dots \sqcup N_k$ by attaching the  cone $C (N_k)$ to $N_k$, for each $k$. We will call  the  decomposition 
$K=M\sqcup_{N_0} C(N_0)$ of $X$,   
where $N_0= {\rm Link}(x_0)$, $M=\overline{K-C(N_0)}$, the  canonical  decomposition  of $X$ (note that $N_0=\b M-(\b M\cap L)$ if $K$ has a  boundary $L$). If the link of the singular point is not a sphere, the classical chain complex induced by any combinatorial decomposition  will not satisfy the classical duality properties satisfied by the chain complexes induced by triangulation of manifolds. However, it is possible to define a suitable chain complex whose homology satisfies  duality. This is the intersection homology of Goresky and MacPherson \cite{GM1} \cite{GM2} \cite{KW}. Such a chain complex is defined by selecting the cells that intersect the singular point (we refer to construction in \cite{Spr12} for details, or to the original work \cite{GM1}, for an equivalent but slightly different construction). The chain complex and the homology groups depend on a graded integer, called perversity. We are mainly concerned with two perversities: the lower middle perversity $\mf=\{\mf_j=[j/2]-1\}_{j=2}^n$, and its  complementary perversity  $\mf^c_j=j-\mf_j-2$. 
Then, there exists a chain complex whose homology, called intersection homology, is as follows ($m+1=n$).

\begin{prop}\label{pop1}  Let $N$ be a regular oriented CW complex that is an homology $m$-manifold,   $C(N)$ the cone over $N$. If  $(m,\pf)=(2p-1,\pf), (2p, \mf^c)$, then 
\begin{align*}
I^\pf H_q(C(N))&=\left\{
\begin{array}{ll}
H_q(N), & 0\leq q\leq p-1,\\
0, & p \leq 	q\leq  m+1,
\end{array}\right.
\\
I^\pf H_q(C(N),N)&=\left\{
\begin{array}{ll}
0 ,& 0\leq q\leq p,\\
H_q(C(N),N) = H_{q-1}(N), & p+1 \leq q\leq m+1,
\end{array}
\right.
\end{align*}
if  $(m,\pf)=(2p, \mf)$, then 
\begin{align*}
I^\pf H_q(C(N))&=\left\{
\begin{array}{ll}
H_q(N), & 0\leq q\leq p,\\
0, & p+1 \leq 	q\leq  2p+1,
\end{array}\right.
\\
I^{\mf^c} H_q(C(N),N)&=\left\{
\begin{array}{ll}
0 ,& 0\leq q\leq p+1,\\
H_q(C(N),N) = H_{q-1}(N), & p+2 \leq q\leq 2p+1;
\end{array}
\right.
\end{align*}
\end{prop}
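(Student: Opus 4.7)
The plan is to derive this from the standard cone formula for intersection homology, applied to the stratification of $C(N)$ with singular stratum $\{x_0\}$ (the apex) of codimension $n=m+1$, and then to read off the relative case from the long exact sequence of the pair $(C(N),N)$. First I would compute, for each of the three cases, the critical cutoff degree $c=n-1-\pf(n)$. Using $\mf(k)=[k/2]-1$ and $\mf^c(k)=k-\mf(k)-2$, one gets $c=p$ when $(m,\pf)=(2p-1,\mf)$, $(2p-1,\mf^c)$, or $(2p,\mf^c)$, and $c=p+1$ when $(m,\pf)=(2p,\mf)$; these match exactly the cutoffs appearing in the statement.

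Next I would establish the absolute part, i.e.\ the cone formula
\[
I^\pf H_q(C(N))\;=\;\begin{cases}H_q(N),& q<c,\\ 0,& q\geq c,\end{cases}
\]
directly at the level of chains. The allowability condition for an intersection $q$-chain $\xi$ along the stratum $\{x_0\}$ reads $\dim(|\xi|\cap\{x_0\})\leq q-n+\pf(n)$, which forces $x_0\notin|\xi|$ whenever $q\leq c$. Hence allowable $q$-chains with $q\leq c$ live entirely in $N\times(0,1]$, and the complex of such chains has the same homology as $N$ (via the deformation retraction $N\times(0,1]\simeq N$). Conversely, once $q\geq c+1$, cone chains $c\sigma$ with $x_0\in|c\sigma|$ become allowable, and for any $(q-1)$-cycle $\sigma\subset N$ the allowable cone chain $c\sigma$ has $\partial(c\sigma)=\sigma$; this makes every cycle from $N$ a boundary in degrees $\geq c$, so $I^\pf H_q(C(N))=0$ there. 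A small bookkeeping check using the allowability on $\partial\xi$ (which becomes binding only for $q\geq c+2$) shows the argument extends cleanly to all degrees above $c$.

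For the relative part I would plug these absolute values into the long exact sequence of the pair $(C(N),N)$ for intersection homology, which exists because $N$ is nonsingular and hence $I^\pf H_q(N)=H_q(N)$:
\[
\cdots\to H_q(N)\xrightarrow{\iota_*} I^\pf H_q(C(N))\to I^\pf H_q(C(N),N)\to H_{q-1}(N)\to I^\pf H_{q-1}(C(N))\to\cdots
\]
The inclusion-induced map $\iota_*$ is an isomorphism for $q<c$ (both sides equal $H_q(N)$, since the isomorphism in the cone formula is induced precisely by $\iota$) and is the zero map for $q\geq c$ (target vanishes). For $q\leq c$ the four-term piece sandwiches $I^\pf H_q(C(N),N)$ between isomorphisms on either side, forcing it to be $0$; for $q\geq c+1$ the piece $0\to I^\pf H_q(C(N),N)\to H_{q-1}(N)\to 0$ gives the isomorphism $I^\pf H_q(C(N),N)\cong H_{q-1}(N)$. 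This reproduces the stated formulas in all three cases.

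The only real technical content is the cone formula in Step~2; everything else is a homological diagram chase. The proof of the cone formula is classical (Goresky–MacPherson), so the main obstacle is simply choosing whether to reproduce the chain-level argument or to quote it from \cite{GM1,GM2}; either way the decisive input is the computation that the codimension-$n$ allowability condition at $\{x_0\}$ translates into the sharp degree bound $q\geq n-\pf(n)=c+1$ for chains touching the apex, with the same bound $c+2$ for their boundaries, yielding exactly the degree shift seen in the statement.
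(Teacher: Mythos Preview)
Your argument is correct and is precisely the classical cone--formula computation of Goresky--MacPherson. The paper itself does not supply a proof of this proposition: it is stated as a known result, with the construction of the intersection chain complex referred to \cite{GM1,GM2} and \cite{Spr12}. So there is nothing to compare beyond noting that what you wrote is exactly the standard argument the paper is implicitly quoting; your computation of the cutoff $c=n-1-\pf(n)$ in the three cases and the use of the long exact sequence of the pair for the relative statement are the expected steps.
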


\begin{prop}\label{pop2} Let  $K$ be an $n$-pseudo manifold with an isolated  singularity and without boundary. Let $K=M\sqcup C(N)$ be the standard decomposition of $K$. If  $(m,\pf)=(2p-1,\pf), (2p, \mf^c)$, then
\begin{align*}
I^\pf H_q(K)&=\left\{\begin{array}{ll}H_q(M)&0\leq q\leq p-1,\\
\Im\left(p_*:H_{p}(M)\to H_{p}(M,\b M)\right),\\
H_q(K)&p+1\leq q\geq m+1,
\end{array}\right.
\end{align*}
if  $(m,\pf)=(2p,\mf^c)$, then
\begin{align*}
I^{\mf^c} H_q(K)&=\left\{\begin{array}{ll}H_q(M)&0\leq q\leq p,\\
\Im\left(p_*:H_{p+1}(M)\to H_{p+1}(M,\b M)\right),\\
H_q(K)&p+2\leq q\geq m+1,
\end{array}\right.
\end{align*}
where $p:M\to M/\b M$ is the canonical projection on the quotient, and the equivalence is induced by the inclusion $M\to K$. 
\end{prop}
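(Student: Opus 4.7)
The plan is to deploy the long exact sequence of the pair $(K,M)$ in intersection homology, combined with the excision isomorphism $I^\pf H_q(K,M)\cong I^\pf H_q(C(N),N)$ (valid because $M\cap C(N)$ is a collar of $N=\b M$ inside $K$). Using that $M$ is a compact homology manifold with boundary and $N$ is a closed homology manifold, so that their intersection homology coincides with singular homology in every degree and every perversity, this produces
\[
\cdots \to H_q(M) \to I^\pf H_q(K) \to I^\pf H_q(C(N),N) \xrightarrow{\partial_*} H_{q-1}(M) \to \cdots.
\]
Proposition \ref{pop1} then supplies the groups $I^\pf H_*(C(N),N)$ explicitly in each range of degrees.

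In the low-degree range (namely $q\leq p-1$ in the first case, $q\leq p$ in the second) both $I^\pf H_q(C(N),N)$ and $I^\pf H_{q+1}(C(N),N)$ vanish, so the sequence collapses at once to the isomorphism $I^\pf H_q(K)\cong H_q(M)$, manifestly induced by the inclusion $M\hookrightarrow K$.

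In the critical middle degree ($q=p$ respectively $q=p+1$) we have $I^\pf H_q(C(N),N)=0$ and $I^\pf H_{q+1}(C(N),N)=H_q(N)$, so the sequence reduces to
\[
H_q(N) \xrightarrow{\partial_*} H_q(M) \to I^\pf H_q(K) \to 0.
\]
The crucial step is to identify $\partial_*$ with the map induced by the inclusion $i:N=\b M\hookrightarrow M$. This follows by naturality of the connecting homomorphism applied to the commutative diagram of pairs relating $(C(N),N)$ to $(K,M)$ via excision, together with the standard factorisation of the boundary of $(C(N),N)$ through the identification $H_q(C(N),N)\cong H_{q-1}(N)$. Once $\partial_*=i_*$ is established, the long exact sequence of $(M,\b M)$ identifies the cokernel $H_q(M)/i_*H_q(N)$ canonically with $\Im(p_*:H_q(M)\to H_q(M,\b M))$, and the induced surjection $H_q(M)\twoheadrightarrow I^\pf H_q(K)$ factors through this image, confirming that the final isomorphism is indeed induced by $M\hookrightarrow K$.

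In the high-degree range ($q\geq p+1$ respectively $q\geq p+2$) both $I^\pf H_q(C(N),N)=H_{q-1}(N)$ and $I^\pf H_{q+1}(C(N),N)=H_q(N)$ coincide with the ordinary relative groups $H_q(K,M)$ and $H_{q+1}(K,M)$. I then compare the intersection long exact sequence of $(K,M)$ with the ordinary long exact sequence of the same pair; this gives a commutative ladder in which the vertical arrows at $H_*(M)$ and at $H_*(K,M)$ are identities, and the five lemma forces the remaining vertical arrow $I^\pf H_q(K)\to H_q(K)$, induced by the inclusion of intersection chains into singular chains, to be an isomorphism. The main obstacle will be the naturality step in the middle degree: the identification of $\partial_*$ with the inclusion-induced map $i_*$ is the linchpin of the argument and must be checked with care, since the connecting map is a priori defined only up to the choices implicit in the excision and cone-collapsing identifications.
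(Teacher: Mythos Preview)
Your argument is correct and is the standard one. Note, however, that the paper does not actually prove Proposition~\ref{pop2}: it is stated as a known result from intersection homology theory, with references to \cite{GM1} and \cite{HS6} (and implicitly \cite{Spr12}). That said, your approach mirrors exactly what the paper does prove in detail for the de Rham analogue, Theorem~\ref{coX}: there too the long exact sequence of the pair, combined with the computation of the cone (Theorem~\ref{coZ}, the analogue of Proposition~\ref{pop1}), dispatches the low and high degree ranges immediately, and the critical middle degree is handled by a commutative ladder comparing the sequence of $(X,\ZZ)$ with that of $(Y,W)$ to identify the relevant image. Your identification of $\partial_*$ with the inclusion-induced map $i_*$ plays the same role as the paper's diagram chase showing $\Im\, \is_Y^{*,p+1}=\Im\, \xi_Y^{p+1}$; both arguments rest on the naturality of the connecting homomorphism under the comparison of pairs.
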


For this homology, we have  the following duality isomorphisms (see \cite{HS6} for details, or the original work \cite{GM1}):
\begin{align*}
I^\pf \QQ'_{*,q}&:H_q(I^\pf \CS_\bu(C(N)))\to H^\da_{m+1-q}(I^{\pf^c}\CS_\bu(C( N),  N)),\\
I^\pf\QQ_{*,q}&:H_q(I^\pf \CS_\bu(K))\to H^\da_{m+1-q}(I^{\pf^c}\CS_\bu(K^*)).
\end{align*}

Recall that $C(N)$ is a triangulation for $\ZZ$, and therefore $K$ is a triangulation for $X$, thus the previous results give the intersection homology of a finite horn and of a space $X$ with an horn singularity. Comparing  Propositions \ref{pop1} and \ref{pop2} with Theorems \ref{coZ} and \ref{coX}, we see that (with real coefficients) the intersection homology of a space with an horn singularity coincides with the de Rham cohomology. We want to define de Rham maps on $X$ that extend the classical de Rham maps on $Y$.  On the finite horns, this follows by using the harmonic fields, see Corollary \ref{eqker}. The following proposition extends this result to $X$ (that we assume without boundary).

\begin{prop}\label{harmonicsX} If $(m,\pf)=(2p-1,\mf), (2p-1,\mf^c), (2p,\mf^c)$, then there is an isomorphism (here $i:\b Y\to Y$ denotes the natural inclusion of the boundary)
\begin{align*}
\ker \d^{(q)}_{ \pf}\cap \ker \dr^{(q)}_{ \pf}=\left\{\begin{array}{ll} \ker \d^{(q)}_{Y, \rm max}\cap \ker \dr^{(q)}_{Y, \rm min},&0\leq q\leq p-1,\\
\{\te\in \ker \d^{(q)}_{Y, \rm max}\cap \ker \dr^{(q)}_{Y, \rm min}~|~[i^*\te]=0\in H^p(\b Y)\},&q=p,\\
\ker \d^{(q)}_{Y, \rm min}\cap \ker \dr^{(q)}_{Y, \rm max},& p+1\leq q\leq m+1,
\end{array}
\right.
\end{align*}
if $(m,\pf)=(2p,\mf)$, then
\begin{align*}
\ker \d^{(q)}_{ \pf}\cap \ker \dr^{(q)}_{ \pf}=\left\{\begin{array}{ll} \ker \d^{(q)}_{Y, \rm max}\cap \ker \dr^{(q)}_{Y, \rm min},&0\leq q\leq p,\\
\{\te\in \ker \d^{(q)}_{Y, \rm max}\cap \ker \dr^{(q)}_{Y, \rm min}~|~[i^*\te]=0\in H^p(\b Y)\},&q=p+1,\\
\ker \d^{(q)}_{Y, \rm min}\cap \ker \dr^{(q)}_{Y, \rm max},& p+2\leq q\leq m+1,
\end{array}
\right.
\end{align*}
\end{prop}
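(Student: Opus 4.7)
The strategy is to combine three ingredients: the Hodge decomposition on $X$ (Theorem \ref{Hodgedec}), the computation of the intersection de Rham cohomology $I^\pf H^q_{\rm DR}(X, E_\rho)$ (Theorem \ref{coX}), and the classical Hodge--Morrey--Friedrichs decomposition on the compact manifold with boundary $Y$ (see \cite{Sch}), which identifies $H^q_{\rm DR}(Y,E_\rho)\cong\ker\d^{(q)}_{Y,\rm max}\cap\ker\dr^{(q)}_{Y,\rm min}$ and $H^q_{\rm DR}(Y,\b Y,E_\rho)\cong\ker\d^{(q)}_{Y,\rm min}\cap\ker\dr^{(q)}_{Y,\rm max}$. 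Chained together, these three results already furnish the claimed identifications as vector spaces; the remaining task is to show that the restriction map $\om\mapsto\om|_Y$ realizes them geometrically and is an isomorphism.

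\textbf{The restriction map in generic degrees.} I would first verify that if $\om\in\ker\d^{(q)}_{\pf}\cap\ker\dr^{(q)}_{\pf}$ then $\om|_Y$ lies in the claimed space, by exploiting the explicit shape of harmonic fields on the horn from Theorem \ref{harmonicform}: in the low degrees they are the tangential constant extensions of harmonic fields on $W$, while in the high degrees they are the normal extensions $h^{2\al_{q-1}-1}dx\wedge\t\te$. Matching $\om|_\ZZ$ with $\om|_Y$ across $W$, the low-degree case forces the normal part of $\om|_Y$ at $\b Y=W$ to vanish, i.e.\ the absolute boundary condition on $Y$; dually, the high-degree case forces the tangential part to vanish, i.e.\ the relative boundary condition. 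Injectivity of the restriction map follows because $\om|_\ZZ$ is uniquely recovered from its trace on $W$ (hence from $\om|_Y$) thanks to the explicit form above, and surjectivity follows by extending an element $\te$ of the right-hand side from $Y$ to $X$ via its constant (respectively normal) extension across the horn, then checking that the resulting form satisfies the tip boundary condition associated with $\pf$.

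\textbf{The middle degree.} The delicate case is $q=p$ in the first family and $q=p+1$ in the second, where by Theorem \ref{harmonicform} no global harmonic field of type $E$ or $O$ exists on $\ZZ$ in this degree. A harmonic field $\om$ on $X$ in this degree must therefore have $\om|_\ZZ$ lying in the auxiliary space $L^q_{XY}$ of Proposition \ref{PPP}. By Remark \ref{XY}, elements of $L^q_{XY}$ are of the form $\DD\be$ with $\be$ supported on $\ZZ$, whose boundary trace on $W$ is forced to lie in $\Im\t\DD^q$; hence the trace of $\om|_\ZZ$ on $W$ is exact. Through the gluing, this forces $i^*(\om|_Y)$ to represent the trivial class in $H^p(W)=H^p(\b Y)$, which is exactly the condition $[\,i^*\te\,]=0$. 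For the converse, given $\te\in\ker\d_{Y,\max}\cap\ker\dr_{Y,\min}$ with $[\,i^*\te\,]=0$, write $i^*\te=\t\DD\t\eta$ on $W$ and extend $\te$ to the horn through the zero-eigenvalue Sturm--Liouville primitive singled out by Remark \ref{XY}; the extension is square-integrable, satisfies the tip boundary condition of $\pf$, and glues smoothly to $\te$ across $W$, so it belongs to $\ker\d^{(q)}_\pf\cap\ker\dr^{(q)}_\pf$.

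\textbf{Main obstacle.} The hardest step is the middle degree: one must link the \emph{analytic} absence of global type $E$/type $O$ harmonic fields on $\ZZ$ (forcing the relevant harmonic field to come from the non-elementary $L^q_{XY}$ piece) with the \emph{topological} exactness condition $[\,i^*\te\,]=0$. The bridge is Remark \ref{XY}, which recasts the existence of a non-constant Sturm--Liouville primitive on the horn as a statement about exactness of the trace on $W$. Once this identification is in place, the four parity/perversity cases require only routine bookkeeping on which degrees fall into the low, middle, or high range.
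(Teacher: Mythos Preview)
Your opening paragraph already contains a complete proof: the chain
\[
\ker \d^{(q)}_\pf\cap\ker\dr^{(q)}_\pf \;\cong\; I^\pf H^q_{\rm DR}(X)\;\cong\; \left\{\begin{array}{l}H^q_{\rm DR}(Y)\\ \ker j^{*}\\ H^q_{\rm DR}(Y,\b Y)\end{array}\right\}\;\cong\;\left\{\begin{array}{l}\ker\d^{(q)}_{Y,\rm max}\cap\ker\dr^{(q)}_{Y,\rm min}\\ \{\te:[i^*\te]=0\}\\ \ker\d^{(q)}_{Y,\rm min}\cap\ker\dr^{(q)}_{Y,\rm max}\end{array}\right\}
\]
(Hodge on $X$, Theorem~\ref{coX}, classical Hodge--Morrey--Friedrichs on $Y$) gives exactly the stated isomorphism. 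Since the proposition only asserts existence of an isomorphism, you could stop here; this is cleaner than the paper's argument.

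Your attempt to realise the isomorphism concretely by restriction, however, has a genuine gap. In the generic degrees you invoke Theorem~\ref{harmonicform} to conclude that $\om|_{\ZZ}$ is purely of type $E_0$ (low degrees) or $O_0$ (high degrees). But Theorem~\ref{harmonicform} describes $\ker\Delta^{(q)}_{\pf,\rm bc}$, which carries a boundary condition at $x=l$; the restriction $\om|_{\ZZ}$ of a global harmonic field on $X$ satisfies no such condition. The correct classification is Proposition~\ref{PPP}, which shows that $\ker\d_{\rm max}\cap\ker\dr_{\rm max}$ on the horn contains the summand $L^q_{XY}$ in \emph{every} degree. An $L_{XY}$ component $f\t D\t\om+f' dx\wedge\t\om$ has normal trace $f'(l)\t\om$ at $\b Y$, which is generically nonzero; hence $\om|_Y$ need not lie in $\ker\dr^{(q)}_{Y,\rm min}$, and your restriction map does not land in the claimed target. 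The same problem afflicts your middle-degree argument: you correctly identify the tangential trace as exact, but $\om|_Y$ is still not a Neumann field.

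The paper confronts this directly. It classifies $\om|_\CF$ via Lemma~\ref{harmonics} as $E_0$, $O_0$, or $XY$, then for the $XY$ part (in all degrees) applies the Friedrichs decomposition on $Y$ to show that $\om|_Y$ \emph{represents a cohomology class} in $\Im(H^q_{\rm rel}(Y)\to H^q_{\rm abs}(Y))=\ker j^*$, rather than claiming $\om|_Y$ is itself a Neumann field. So the paper's isomorphism passes through cohomology classes, not through restriction of harmonic representatives---which is precisely the abstract chain in your first paragraph, arrived at by hand.
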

\begin{proof} 

By Theorem \ref{Hodgedec},  $ \ker \Delta^{(q)}_{\d,\pf }=\ker \d^{(q)}_{ \pf}\cap \ker \dr^{(q)}_{ \pf}$. Moreover, it is clear enough that Corollary \ref{eqker} extends to $X$ and therefore we may define $\Ha^{(q)}_\pf(X)=\ker \Delta^{(q)}_{\pf }= \ker \Delta^{(q)}_{\d,\pf }$. Take $\om\in \ker \Delta^{(q)}_{\pf }$, then the restriction $\om|_\CF=f_1\t\om_1+f_2dx \wedge \t\om_2$ of $\om$ on $\CF$  is an harmonic form of $\CF$. 
Consequently, $\om|_\CF$ is of the types described in  Proposition 
\ref{harmonics}. Moreover, since $\om|_{\CF}$ is closed and co closed, it 
must be of  type $E_0$ (constant $f$), $O_0$ ($f=h^{1-2\al_{q-1}}$), or 
XY=II=III. Also, a direct verification shows that the bc at $x=x_0$ are 
always satisfied by these forms, when ever they are square integrable. 

Consider the forms that are locally  of type either $E_0$ or $O_0$. The forms of type $E_0$ are square integrable when $\al_q<1$, and having null normal component their restriction to $Y$  represents a non trivial absolute cohomology class of $Y$. The forms of type $O_0$ are square integrable for $\al_q>1$, and have null tangential component, so their restriction to $Y$ represents a non trivial relative cohomology class. Since the vice versa is as well clear, this proves the result in all non critical degrees. 

Next, consider the forms locally of type XY, $\om|_\CF=f \t D\t\om+dx\wedge f' \t\om$. Taking the + solution $f$, these forms are square integrable and satisfy the bc at $x=0$ (use  Theorem \ref{olv1}). Consider the restriction $\psi_Y=\om|_Y$. Assume first that  either $f_\pm(l)=0$, or $f'_\pm(l)=0$, then $ (\FF_{1,\t\la}^{q-1} f ,f)=0$, 
and integrating over $(0,l]$, this shows that  $f'=\t\la=0$, and therefore $\om|_C$ vanishes identically. But then, $f(l)=f'(l)=0$, and  $\om$ vanishes identically by \cite[2.4.1]{Sch}.
It remains the case $f(l), f'(l)\not=0$. 
Then, by the Friedrichs decomposition on $Y$,  $\psi_Y$ is the sum of an  harmonic $\psi_{Y, \rm abs}$ satisfying the absolute bc on $\b Y$  plus an exact form, and the sum of an  harmonic $\psi_{Y, \rm rel}$ satisfying the relative bc on $\b Y$  plus a co exact form.  This means that $\psi_Y$ and $\psi_{Y, \rm abs}$, and $\psi_Y$ and $\psi_{Y, \rm rel}$ are in the same cohomology classes, respectively. Whence, these forms are in classes of absolute cohomology that are images of classes in relative cohomology. 
\end{proof}

In particular, we may construct maps from the space of square integrable forms to the space of the chain that extend the classical de Rham maps defined in the smooth case. Consider as an example the following case:  $m=2p$, $\pf=\mf$,  and $0\leq q\leq p-1$. Then we can construct the following commutative diagram of isomorphisms

\centerline{
\xymatrix{\Ha^q_{ \mf}(X)\ar[r]^{\star}&\Ha^{m-q+1}_{ \mf^c}(X)\ar@{..>}[r]^{I^{\mf^c} \A^{m-q+1}}&I^{\mf^c} H^{m-q+1}(X)&I^\mf H_q(X)\ar[l]_{\hspace{40pt}I^\mf \QQ_{*,q}}\\
\FF^q_{\rm abs}( Y)\ar[r]^{\star_Y}\ar[u]_{k^*}&
\Ha^{m-q}_{\rm rel}( Y)\ar[r]^{\A^{m-q}_{\rm rel}}\ar[u]_{ k^*}&H^{m-q}(Y,W)\ar[u]&H_q( Y)\ar[l]_{\hat\QQ_{*, q}}\ar[u]
}}

The map $k^*$ is the inverse of inclusion in co homology,  
the map $\hat \QQ_{*,q}$ is standard Poincar\'e map, the map $I^\mf \QQ_{*,q}$ the corresponding extension to the singular case defined above, and the map  $\A^{m-q}_{\rm rel}$ is the classical de Rham map in  the smooth case \cite{RS}. Commutativity defines the isomorphism, also called de Rham map
\[
I^\pf \A_{q}: \Ha^q_{\pf}(X)\to I^{\pf} H_q(X).
\]

We can define and compute the RS torsion for the intersection chain complexes, denoted by $I^\pf \tau_{\rm RS}(\ZZ)$.

\begin{theo}\label{t7.29} Let $W$ be a compact connected orientable Riemannian manifold without boundary, of dimension $m$.  Let $\tilde\alphas_\bu$ a graded orthonormal basis of $\Ha^q(W)$, and $\ns_\bu$ the standard graded basis of $H_\bu(W)$. Then, ($r_q=\rk H_q(W;\Z)$)

\begin{align*}
I^\pf \tau_{\rm RS}(\ZZ)
=& \prod_{q=0}^{\af-2} \gamma_q^{\frac{(-1)^{q}}{2}r_q }
 \left|\det (\tilde\A_{q}(\tilde\alphas_q)/\ns_q)\right|^{(-1)^q }
\left(\# TH_q(W;\Z)\right)^{(-1)^q },\\
I^\pf \tau_{\rm RS}(\ZZ, W)
=& \prod_{q=\af-1}^{m}  \gamma_q^{\frac{(-1)^{q}}{2}r_q }
 \left|\det (\tilde\A_q(\tilde\alphas_q)/\ns_q)\right|^{(-1)^q}
 \left(\# TH_q(W;\Z)\right)^{(-1)^{q+1}}.
\end{align*}
\end{theo}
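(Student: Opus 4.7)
The plan is to exploit the explicit identification of the intersection homology of the horn $\ZZ$ given by Proposition \ref{pop1}, together with the description of the harmonic fields provided in Theorem \ref{harmonicform} and the de Rham isomorphism constructed as in Proposition \ref{harmonicsX}. Recall that the Reidemeister--Singer torsion of the intersection chain complex is, by definition, the alternate product of absolute values of determinants of the change of basis between a preferred integral (cellular) graded basis and the graded basis induced from homology via the de Rham map, weighted by contributions from the torsion part of the integral homology.

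First I would make the de Rham maps on the horn explicit. For $(m,\pf)=(2p-1,\pf),(2p,\mf^c)$, Proposition \ref{pop1} gives $I^\pf H_q(\ZZ)\cong H_q(W)$ in the range $0\leq q\leq \af-2$, the isomorphism being induced by the inclusion $W\hookrightarrow \ZZ$, and $I^\pf H_q(\ZZ,W)\cong H_{q-1}(W)$ in the range $\af-1\leq q\leq m+1$, via the boundary map. By Theorem \ref{harmonicform}, the corresponding harmonic fields on $\ZZ$ are produced by constant extension: a basis of $\Ha^q_\pf(\ZZ)$ is given by the forms $\tilde\alphas_{q,j}$ in the absolute range, and by $h^{2\al_{q-1}-1}\,dx\wedge\tilde\alphas_{q-1,j}$ in the relative range. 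The corresponding de Rham map $I^\pf\A_q$ is then, up to the identifications above, the classical de Rham map on $W$ applied to the pulled-back harmonic form.

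Next I would normalise. Given the orthonormal basis $\tilde\alphas_q$ of $\Ha^q(W)$ the lifted forms above are not orthonormal on $\ZZ$; their squared $L^2$ norms equal $\gamma_q=\int_0^l h^{1-2\al_q}(x)\,dx$ in the absolute case, and $\int_0^l h^{2\al_{q-1}-1}(x)\,dx$ in the relative case, which after using $\al_{q-1}=-\al_{m+1-q}$ is again of the form $\gamma_q$ for the shifted degree. Replacing the lift by an orthonormal basis on $\ZZ$ contributes a factor $\gamma_q^{-1/2}$ per basis vector; collecting over the graded homology and following the sign conventions in the definition of RS torsion produces the prefactor $\gamma_q^{(-1)^q r_q/2}$. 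The change of basis between the orthonormal homology basis $\tilde\A_q(\tilde\alphas_q)$ and the standard integral basis $\ns_q$ is then by definition the determinant $\det(\tilde\A_q(\tilde\alphas_q)/\ns_q)$, while the standard algebraic contribution of the torsion subgroup of a complex with integral homology having nontrivial torsion yields the factors $\#TH_q(W;\Z)^{(-1)^q}$ and $\#TH_q(W;\Z)^{(-1)^{q+1}}$ in the absolute and relative cases respectively, the sign shift in the relative case reflecting the degree shift $q\mapsto q-1$ in the identification of the homology.

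Finally, assembling these three contributions in each degree of the intersection chain complex and restricting the product to $0\leq q\leq \af-2$ for $I^\pf\tau_{\rm RS}(\ZZ)$ and to $\af-1\leq q\leq m$ for $I^\pf\tau_{\rm RS}(\ZZ,W)$ yields the two formulas of the theorem. The main obstacle I anticipate is bookkeeping: one has to verify that the degree ranges, the sign of the exponents (both the $(-1)^q$ for alternating products and the sign shift coming from the boundary map in the relative case), and the precise normalisation constant $\gamma_q$ match the chosen conventions for RS torsion, and that the representative $h^{2\al_{q-1}-1}dx\wedge \tilde\alphas_{q-1}$ in the relative range indeed evaluates correctly under the de Rham map to give the same numerical factor $\det(\tilde\A_q(\tilde\alphas_q)/\ns_q)$ as in the absolute range after the degree shift. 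Once these signs and the identification of $\gamma_q$ are verified, the result follows from the basic definition of RS torsion applied to the explicit bases above.
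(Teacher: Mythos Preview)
The paper states this theorem without proof; it is presented as a direct computation of the RS torsion of the intersection chain complex of the cone, adapted from the conical case treated in the cited references \cite{HS6,Spr12}. Your sketch is therefore not competing with an argument in the paper but filling one in, and the overall plan is sound: you correctly isolate the three ingredients --- the $L^2$ norm of the lifted harmonics (giving the $\gamma_q$ factor), the change of homology basis under the de Rham map (giving the determinant factor), and the combinatorial torsion of the cone complex (giving the $\#TH_q$ factor) --- and the degree shift in the relative case. Your identification $\gamma_q=\int_0^l h^{1-2\al_q}$ matches the paper's definition \eqref{eq-gammaq} since $\al_{q-1}=\al_q-1$.

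One point deserves more care than you give it. The factor $(\#TH_q(W;\Z))^{(\pm 1)^q}$ is \emph{not} merely a ``standard algebraic contribution'' that comes for free: it is the Whitehead torsion of the intersection chain complex of the cone with its preferred integral basis, and for intersection chains this basis is not the naive cell basis (the paper flags this in the introduction as the origin of the combinatorial anomaly $A_{\rm comb,\pf}$). To make your argument complete you would need to exhibit the standard integral basis of $I^\pf\CS_\bu(C(N))$ and compute the torsion of the resulting based acyclic pieces, which reduces --- via the coning construction --- to the alternating product of torsion orders on $W$. This is where the real work lies, and it is carried out in the references rather than here; your sketch should at least point to that computation rather than absorb it into ``bookkeeping''.
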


\begin{theo}\label{t7.37} Let $X=\ZZ \sqcup_W Y$ be a space with an horn singularity of dimension $n=m+1$,  where $(Y,W)$ is a compact connected orientable smooth Riemannian manifold of dimension $n$, with boundary $W$, then
\begin{align*}
I^\pf \tau_{\rm RS}(X)
=& I^\pf \tau_{\rm RS}(\ZZ)  \tau_{\rm RS}(Y,W)\\
&\tau_{\rm W}(I^\pf \ddot\Ha;I^\pf\A_{{\rm abs},\bu}(I^\pf \alphas_{C,\bu}), I^\pf \A_\bu(I^\pf \alpha_{X,\bu}),\A_{Y,{\rm rel}, \bu}(\alpha_{Y,{\rm rel},\bu})),
\end{align*}
where $I^\pf \alphas_{Z,q}$, $\alpha_{Y,{\rm rel},q}$, and $I^\pf \alpha_{X,q}$ are orthonormal bases of $ \H^q_\pf(\ZZ)$, $\H^q_{\rm rel}(Y)$, and  $\H^q_\pf(X)$, respectively; $I^\pf \A_{{\rm abs},q}$ is the absolute de Rham map on $\ZZ$,  $\A_{Y, {\rm rel}, q}$ the classical relative de Rham map on $Y$,  $I^\pf \A_q$ the de Rham map on $X$, and $\tau_W$ denotes the Whitehead torsion of the homology sequence (compare with \cite{Mil0})
\[
\xymatrix{
I^\pf \ddot\Ha:&  \dots\ar[r]&I^{\pf} H_q(C_{0,l}(W)) \ar[r]&I^\pf H_q(X)\ar[r]&H_q(Y,W)\ar[r]&\dots.
}
\]
\end{theo}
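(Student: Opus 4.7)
The natural strategy is to derive the formula from the multiplicativity of Reidemeister torsion (in the form due to Milnor) applied to a short exact sequence of intersection chain complexes adapted to the decomposition $X=\ZZ\sqcup_{W} Y$. Concretely, I would first exhibit a short exact sequence of based chain complexes
\[
0\longrightarrow I^\pf C_\bu(\ZZ)\longrightarrow I^\pf C_\bu(X)\longrightarrow C_\bu(Y,W)\longrightarrow 0,
\]
where the left map is induced by the closed inclusion $i_Z\colon \ZZ\hookrightarrow X$ and the right map is the restriction to $Y$ followed by passage to the quotient by $W$. Since $Y$ is smooth, its intersection chains agree with ordinary chains for every perversity, so the quotient is the usual relative complex $C_\bu(Y,W)$; and since the perversity data only concern cells meeting the singular point $x_0$, which all lie in $\ZZ$, the left map is well defined on intersection chains. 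Each complex will be based by its preferred combinatorial (cell) basis used in the definition of the various RS torsions.

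Next I would apply the classical formula of Milnor for the torsion of an exact sequence of based chain complexes (see \cite{Mil0}), which for the sequence above reads
\[
I^\pf \tau_{\rm RS}(X)\;=\;I^\pf \tau_{\rm RS}(\ZZ)\,\tau_{\rm RS}(Y,W)\,\tau_W(\H),
\]
where $\H$ denotes the associated long exact sequence in homology
\[
\xymatrix{\cdots\ar[r]& I^\pf H_q(\ZZ)\ar[r]& I^\pf H_q(X)\ar[r]& H_q(Y,W)\ar[r]&\cdots}
\]
equipped with the three graded bases that serve as homology bases in the definitions of $I^\pf \tau_{\rm RS}(\ZZ)$, $\tau_{\rm RS}(Y,W)$ and $I^\pf\tau_{\rm RS}(X)$ respectively. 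The three homology bases are the images, under the corresponding de Rham maps $I^\pf \A_{{\rm abs},\bu}$, $\A_{Y,{\rm rel},\bu}$ and $I^\pf \A_\bu$, of orthonormal bases of the relevant harmonic field spaces $\Ha^\bu_\pf(\ZZ)$, $\Ha^\bu_{\rm rel}(Y)$ and $\Ha^\bu_\pf(X)$ produced by Proposition \ref{harmonicsX} and Corollary \ref{eqker}. By construction, this is precisely the Whitehead torsion $\tau_W(I^\pf\ddot\Ha;\,\cdot\,,\,\cdot\,,\,\cdot\,)$ appearing in the statement.

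The last step is to check that the identifications of the left and the right terms are correct: the torsion of $I^\pf C_\bu(\ZZ)$ with its preferred cell basis and the de Rham/harmonic homology basis is $I^\pf \tau_{\rm RS}(\ZZ)$ by Theorem \ref{t7.29}, while the torsion of $C_\bu(Y,W)$ with the cell basis and the relative harmonic basis is the classical Ray--Singer combinatorial torsion $\tau_{\rm RS}(Y,W)$ in the sense of \cite{RS}. Multiplicativity of Milnor's formula is insensitive to the ordering up to the parity conventions for $\tau_W$, which is the one already built into the statement.

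The main obstacle in this program is the verification, at the level of the two critical middle degrees, that the maps induced on intersection homology by $i_Z$ and by the restriction to $(Y,W)$ fit into a long exact sequence whose homology bases really are the ones obtained from the de Rham maps; i.e.\ that the diagram
\[
\xymatrix{
I^\pf H_q(\ZZ)\ar[r]& I^\pf H_q(X)\ar[r]& H_q(Y,W)\\
\Ha^q_\pf(\ZZ)\ar[u]^{I^\pf\A_{{\rm abs},q}}\ar[r]& \Ha^q_\pf(X)\ar[u]^{I^\pf\A_q}\ar[r]& \Ha^q_{\rm rel}(Y)\ar[u]^{\A_{Y,{\rm rel},q}}
}
\]
is commutative, and the bottom row fits smoothly with the cohomology long exact sequences used in Theorem \ref{coX} and Proposition \ref{harmonicsX}. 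This requires the explicit characterisation of harmonic fields on $X$ as restrictions/extensions of harmonic fields on $Y$ (Proposition \ref{harmonicsX}), together with the compatibility between the de Rham maps and the cellular connecting homomorphisms, which follows from naturality of the de Rham map on smooth pieces and from the explicit behaviour of harmonics of types $E_0$, $O_0$ and $XY$ at the tip of the horn that was worked out in Section \ref{hodge}. Once these compatibilities are in place, the formula is an immediate consequence of Milnor's multiplicativity.
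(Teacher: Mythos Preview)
Your approach is correct and is exactly the standard argument one expects here: set up the short exact sequence of based intersection chain complexes induced by the closed inclusion $\ZZ\hookrightarrow X$, identify the quotient with $C_\bu(Y,W)$ (which is legitimate because the perversity restrictions only touch cells whose closure meets $x_0\in\ZZ$), and then invoke Milnor's multiplicativity formula with the de Rham/harmonic homology bases. The paper itself does not supply a proof of this theorem; it is stated as part of the combinatorial summary in Section~\ref{DRX}, with the underlying machinery deferred to \cite{Mil0} and the companion references \cite{Spr12,HS6}. Your outline is therefore essentially what the paper has in mind.

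One small caution on your write-up: for Milnor's formula to apply cleanly you need the cell basis of $I^\pf C_q(X)$ to be the disjoint union of the cell basis of $I^\pf C_q(\ZZ)$ and (a lift of) the basis of $C_q(Y,W)$, so that the ``volume'' contribution from the short exact sequence of chain modules is trivial. This is automatic for the canonical CW decomposition $K=M\sqcup_{N_0} C(N_0)$ described at the beginning of Section~\ref{DRX}, but it is worth saying explicitly, since in the intersection setting the preferred chain basis is not simply the obvious cellular one (cf.\ the discussion of the anomaly $A_{\rm comb,\pf}$ in the introduction). The compatibility of the de Rham maps with the connecting homomorphism that you flag as the main obstacle is indeed the only nontrivial verification, and Proposition~\ref{harmonicsX} together with the explicit harmonics of Theorem~\ref{harmonicform} give exactly what is needed.
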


\section{The analytic torsion}\label{torsion}

\subsection{Torsion zeta function and analytic torsion}
\label{torsionzeta}

Let $X$ be a space with a horn singularity, and $\Delta_{\pf, \rm bc}$ one of the operators defined in Section \ref{defLap}. 
By the results of Section \ref{spectralpropX},  $\Sp_0 (\Delta_{\pf, \rm bc}^{(q)})$ is a sequence of  positive numbers with unique accumulation point at infinity, and since some power of the resolvent is of trace class, the logarithmic Gamma function 
$\log \Gamma(\la,\Sp_0 (\Delta_{ \pf, \rm bc}^{(q)}))$ is well defined \cite{Spr9}. Since the heat operator has an asymptotic expansion for small $t$, it follows that the logarithmic Gamma function has an asymptotic expansion for large Gamma. In particular, the associated zeta function is well defined \cite{Spr9}, 
\[
\zeta(s,\Delta_{\pf, \rm bc}^{(q)})=\sum_{\la\in \Sp_0 (\Delta_{\pf, \rm bc}^{(q)})} \la^{-s},
\]
and has an analytic continuation regular  at $s=0$. We define the torsion zeta function
\[
t_{\pf, \rm bc}(X)(s)=\sum_{q=0}^{m+1} (-1)^q q \zeta(s,\Delta_{\pf, \rm bc}^{(q)}),
\]
and the analytic torsion by $\left. \log T_{\pf, \rm bc}(X)=\frac{d}{d s}t_{\pf, \rm bc}(s)\right|_{s=0}$.   
Recall that the operators are defined on spaces of functions with value in some vector bundle $E_\rho$, where $\rho:\pi(\overline{X})\to O(V)$ is some orthogonal representation. However, to lighten  notation, we will omit explicit reference to the representation. Moreover, from now on we assume that $X$ has no boundary, so the bc at $\b X$ is vacuum and will be omitted.

We start by   showing that the classical variational result for the analytic torsion with respect to the metric \cite{RS} extend to the case of the space $X$. Let $g_{\mu}$ be a smooth family of metrics on $X$ such that the restriction on $Y$ is a smooth family of metrics on $Y$, and the restriction on $\ZZ$ is
$
g_{\mu}|_Z=g_{h_\mu}=\d x\otimes \d x+h_\mu^2(x) \tilde g_\mu,
$ 
where $h_\mu$ is a smooth family of functions on $[0,l]$ that coincide on $[0,l-\ep]$, for some positive $\ep<\iota$, and satisfy in $[0,l]$ all the hypothesis introduced in Section \ref{geo}. Denote the analytic torsion of the induced Hodge-Laplace operators  by $T_{\pf}(X,g_{\mu})$, respectively. The next two results are essentially in  \cite{RS} and \cite{Les2}, respectively.

\begin{theo}\label{variationX} With the notation above,  
\begin{align*}
\frac{\d}{\d\mu}\log T_{\pf}(X,g_{\mu})= -\frac{1}{2} \log \|\Det I^\pf \alphas_{g_{\mu}, \bullet}\|_{\Det \H^{\bullet}_{\pf}(X,g_{\mu})}^2
\end{align*}
where $I^\pf \alphas_{g_{\mu},q}$ is an orthonormal basis of $\H^{(q)}_{\pf}(X,g_{\mu})=\ker \Delta^{(q)}_{\pf, \mu}$.
\end{theo}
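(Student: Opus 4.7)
The plan is to follow the classical Ray-Singer variational argument \cite{RS}, which goes through here with only minor adaptations thanks to the key hypothesis that the family $h_\mu$ coincides on $[0, l-\ep]$. This hypothesis implies that $\alpha^{(q)}_\mu := \star_\mu^{-1}\dot\star_\mu$, the variation of the Hodge star acting on $q$-forms, is a smooth bounded zero-order operator whose support is contained in the regular region of $X$, away from the tip of the horn. Consequently, the singular point enters only through the global heat semigroup, whose trace class property and small-$t$ expansion are already established in Corollary \ref{heattraceX}, and the compactness of the resolvent from Proposition \ref{compresX} gives the spectral decomposition needed for the Mellin representation.

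First I would represent each zeta function via the Mellin transform,
\[
\zeta(s, \Delta^{(q)}_{\pf, \mu}) = \frac{1}{\Gamma(s)}\int_0^\infty t^{s-1}\bigl(\Tr e^{-t\Delta^{(q)}_{\pf, \mu}} - \dim \Ha^q_\pf(X,g_\mu)\bigr)\, dt,
\]
and differentiate in $\mu$ by Duhamel, $\partial_\mu \Tr e^{-t\Delta^{(q)}_{\pf,\mu}} = -t\, \Tr(\dot\Delta^{(q)}_\mu e^{-t\Delta^{(q)}_{\pf,\mu}})$. Since the $\mu$-dependence of $\Delta^{(q)}_\mu = \d\dr + \dr\d$ sits entirely in $\dr_\mu$ through $\star_\mu$, the standard graded commutator calculation expresses $\dot\Delta^{(q)}_\mu$ in terms of $\alpha^{(q)}_\mu$, $\d$ and $\dr$, and the alternating-weighted sum over $q$ collapses to
\[
\sum_{q}(-1)^q q\, \Tr\bigl(\dot\Delta^{(q)}_\mu e^{-t\Delta^{(q)}_{\pf,\mu}}\bigr) = -\frac{d}{dt}\sum_q (-1)^q \Tr\bigl(\alpha^{(q)}_\mu e^{-t\Delta^{(q)}_{\pf,\mu}}\bigr),
\]
after invoking the Green formula of Proposition \ref{greenformulaX}. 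Because $\alpha^{(q)}_\mu$ is supported away from the tip, no boundary contribution at $x=0$ appears, and the manipulation is effectively the classical one.

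Mellin-integrating the displayed identity, integrating by parts in $t$, and analytically continuing to $s=0$ then yields
\[
\frac{d}{d\mu}\log T_\pf(X,g_\mu) = -\frac{1}{2}\sum_q (-1)^q \Tr\bigl(\alpha^{(q)}_\mu\, P^{(q)}_{\pf,\mu}\bigr),
\]
where $P^{(q)}_{\pf,\mu}$ is the orthogonal projection onto $\Ha^q_\pf(X,g_\mu)$; the contribution of the positive spectrum vanishes because the Mellin integral of a total $t$-derivative multiplied by $t^{s-1}/\Gamma(s)$ has vanishing derivative at $s=0$ once the harmonic part has been subtracted. A direct calculation with an $L^2(g_\mu)$-orthonormal basis $I^\pf \alphas_{g_\mu,\bullet}$ of $\Ha^\bullet_\pf(X,g_\mu)$, using that $\partial_\mu \langle e_i, e_j\rangle_{g_\mu} = \langle \alpha^{(q)}_\mu e_i, e_j\rangle_{g_\mu}$ on harmonics, shows that this right-hand side equals $-\tfrac{1}{2}\tfrac{d}{d\mu}\log\|\Det I^\pf \alphas_{g_\mu,\bullet}\|^2_{\Det \H^\bullet_\pf(X,g_\mu)}$, where the determinant line carries the natural metric induced by the de Rham map to a fixed cohomological reference. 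The chief obstacle is the verification of the commutator identity and the trace manipulations on the singular space, but the hypothesis on the support of $\alpha^{(q)}_\mu$ localises all these computations to a compact subregion of the smooth part, reducing them to the classical closed-manifold case with a smooth parameter.
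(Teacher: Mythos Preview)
Your approach is essentially the same as the paper's: the classical Ray--Singer variational argument, with the observation that the variation $\alpha^{(q)}_\mu=\star_\mu^{-1}\dot\star_\mu$ is supported away from the tip so that all trace manipulations localise to the smooth region. That observation is correct and important.

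There is, however, a genuine gap in one step. You write that ``the contribution of the positive spectrum vanishes because the Mellin integral of a total $t$-derivative multiplied by $t^{s-1}/\Gamma(s)$ has vanishing derivative at $s=0$ once the harmonic part has been subtracted.'' This is not true as stated. After the integration by parts one obtains
\[
\partial_\mu t_{X,\pf}(s,\mu)=\frac{s}{2\Gamma(s)}\sum_q(-1)^q\int_0^\infty t^{s-1}\,\Tr\bigl(\alpha^{(q)}_\mu(e^{-t\Delta^{(q)}_{\pf,\mu}}-P^{(q)})\bigr)\,dt,
\]
and the integral has a \emph{simple pole} at $s=0$ with residue equal to the constant term in the small-$t$ expansion of $\sum_q(-1)^q\Tr(\alpha^{(q)}_\mu e^{-t\Delta^{(q)}_{\pf,\mu}})$ minus $\sum_q(-1)^q\Tr(\alpha^{(q)}_\mu P^{(q)})$. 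Since $s/\Gamma(s)$ has a double zero at $s=0$, the derivative at $s=0$ picks up exactly this residue. Thus you must still show that the heat-coefficient part
\[
\sum_q(-1)^q a^{(q)},\qquad a^{(q)}=\text{constant term of }\Tr(\alpha^{(q)}_\mu e^{-t\Delta^{(q)}_{\pf,\mu}})\text{ as }t\to 0^+,
\]
vanishes. The paper supplies precisely this missing argument: because $\alpha^{(q)}_\mu$ is supported in the smooth interior, the $a^{(q)}$ are purely interior local invariants; the half-integer--index coefficients vanish (no boundary contribution), and Poincar\'e duality on the interior gives $a_k(\Delta^{(m+1-q)}_{\pf,\mu})=a_k(\Delta^{(q)}_{\pf^c,\mu})$, which is enough to make the alternating sum zero. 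Your support observation is exactly what makes this duality step go through despite the singularity (the interior coefficients do not see the choice of $\pf$ versus $\pf^c$), but the step itself cannot be skipped.
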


\begin{proof}    By Corollary \ref{heattraceX}, we have the 
\[
\frac{\partial}{\partial\mu} {\rm Tr} (\e^{-t \Delta^{(q)}_{ \pf, \mu}}) = - t {\rm Tr} \left( \left(\frac{\partial}{\partial \mu} 
\Delta^{(q)}_{\pf, \mu}\right) \e^{-t \Delta^{(q)}_{\pf, \mu}}\right),
\]
and proceeding  as in \cite[pg. 152-153]{RS} (here $\beta_\mu^{(q)}= (\star^{(q)}_\mu)^{-1}  \frac{\partial}{\partial_\mu} \star^{(q)}_\mu$),
\begin{equation*}
\begin{aligned}
\sum_{q=0}^{m+1} (-1)^{q+1} q {\rm Tr} \left( \left(\frac{\partial}{\partial \mu} 
\Delta^{(q)}_{ \pf, \mu}\right) \e^{-t \Delta^{(q)}_{ \pf, \mu}}\right)
 &= \sum_{q=0}^{m+1} (-1)^{q+1} {\rm Tr} (\beta^{(q)}_\mu \Delta^{(q)}_{ \pf, \mu} \e^{-t \Delta^{(q)}_{ \pf, \mu}})\\
&=\frac{\partial}{\partial t}  \sum_{q=0}^{m+1} (-1)^{q} {\rm Tr} (\beta^{(q)}_\mu \e^{-t \Delta^{(q)}_{ \pf, \mu}}).
\end{aligned}
\end{equation*}

By definition \cite[2.3]{Spr9}, the torsion zeta function of the family of operators $\Delta^{(q)}_{ \pf, \mu}$ is
\[
 t_{X,\pf}(s,\mu)= \frac{1}{2} \frac{1}{\Gamma(s)}\sum_{q=0}^{n+1} (-1)^q q \int_0^\infty t^{s-1} {\rm Tr} \left(\e^{-t \Delta^{(q)}_{ \pf, \mu}} - P_{\ker \Delta^{(q)}_{ \pf, \mu}}\right) dt. 
\] 

Then 
\[
\begin{aligned}
\frac{\partial }{\partial\mu}  t_{X,\pf}(s,\mu) 
&= \frac{1}{2} \frac{s}{\Gamma(s)}\sum_{q=0}^{m+1} (-1)^q  \int_0^\infty  t^{s-1}  {\rm Tr} \left(\beta_\mu^{(q)} \left(e^{-t \Delta^{(q)}_{\pf, \mu}} - P_{\ker \Delta^{(q)}_{ \pf, \mu}}\right)\right) dt.
\end{aligned}
\]

By definition of analytic torsion, using the asymptotic expansion of the heat kernel, 
\[
\frac{\b}{\b\mu} \log T_{\pf}(X,\g_{X,\mu})=\frac{\partial }{\partial\mu}   t'_{X,\pf}(0,\mu)
=\frac{1}{2} \sum_{q=0}^{m+1} (-1)^{q+1} \left(a_\frac{m+1}{2}(\Delta^{(q)}_{ \pf, \mu})-
\Tr \beta_\mu^{(q)}  P_{\ker \Delta^{(q)}_{ \pf, \mu}}\right).
\]

Since the coefficients  with half integer index vanish,  using   duality
$a_k(\Delta^{(m+1-q)}_{ \pf, \mu})=a_k(\Delta^{(q)}_{ \pf^c, \mu})$  \cite{Gil},  
the first term sums to zero, and we have
\[
\frac{\b}{\b\mu}\log T_{\pf}(X,\g_{X,\mu})=\frac{\partial }{\partial\mu}  t'_{X,\pf}(0,\mu)
=\frac{1}{2} \sum_{q=0}^{m+1} (-1)^q \Tr \beta_\mu^{(q)}  P_{\ker \Delta^{(q)}_{ \pf, \mu}}.
\]
 \end{proof}

\begin{theo}\label{les} Let $X=\ZZ\sqcup_W Y$ be a space with a horn singularity of dimension $m+1$,  where $(Y,W)$ is a compact connected orientable smooth Riemannian manifold of dimension $m+1$, with boundary $W$. Assume the metric $g_{0}$ on $X$ is a product in a collar neighbourhood of $W$.  
 Then, 
\[
\log T_{\pf}(X,g_0)=\log  T_{\pf,\rm abs}(\ZZ,g_0|_Z)+\log T_{\rm rel}(Y,g_0|_Y)-\frac{1}{2}\chi(W)\log 2+\log \tau(\H_0),
\]
where $\H_0$ is the long homology exact sequence induced by the inclusion 
$\ZZ\to X$, with some coherent orthonormal graded basis.
\end{theo}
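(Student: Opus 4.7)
The plan is to adapt the Vishik--Lesch gluing formula for analytic torsion to the horn-singularity setting. All of the spectral input required by the classical gluing proof is available here: by Propositions \ref{compresC} and \ref{compresX} each of the operators $\Delta^{(q)}_{\pf,\rm abs}$ on $\ZZ$, $\Delta^{(q)}_{\rm rel}$ on $Y$, and $\Delta^{(q)}_{\pf}$ on $X$ has compact resolvent, and by Corollary \ref{heattraceX} the associated heat operators are of trace class with a short-time asymptotic expansion of the standard type. Hence all three torsion zeta functions are well defined and regular at $s=0$, so the three analytic torsions $\log T_{\pf}(X,g_0)$, $\log T_{\pf,\rm abs}(\ZZ,g_0|_Z)$, and $\log T_{\rm rel}(Y,g_0|_Y)$ exist.

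The second step would be the decoupling construction. Because $g_0$ is assumed to be a product $dx\otimes dx + \t g$ in a collar neighborhood of $W$, one can introduce a one-parameter family of metrics $g_R$ on $X$ obtained by inserting a cylinder $[-R,R]\times W$ across $W$, still product. On the inserted cylinder the Hodge Laplacian splits as a tensor product of a one-dimensional operator on $[-R,R]$ with $\t\Delta$ on $W$, and eigenforms coming from nonzero eigenvalues of $\t\Delta$ contribute only terms exponentially small in $R$ to each zeta function. As $R\to\infty$ the spectral problem on $X$ decouples into the absolute problem on $\ZZ$ plus the relative problem on $Y$, the only persistent interface contribution being carried by the zero modes on $W$.

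Third, apply the variational formula of Theorem \ref{variationX} (and its classical analogs for the pieces) to the quantity
\[
\Phi(R)=\log T_{\pf}(X,g_R)-\log T_{\pf,\rm abs}(\ZZ,g_R|_Z)-\log T_{\rm rel}(Y,g_R|_Y).
\]
The bulk contributions to $\Phi'(R)$ cancel by the product structure on the inserted cylinder, and the surviving interface terms can be computed exactly as in \cite{Vis, Les2}. In the limit $R\to\infty$ the only residues are the topological anomaly $-\tfrac{1}{2}\chi(W)\log 2$, arising from the $\t\Delta$-harmonic fields on $W$ distributed between the two sides of the cut, together with $\log\tau(\H_0)$, the Whitehead torsion of the Mayer--Vietoris long exact sequence $\H_0$, which measures the discrepancy between the orthonormal harmonic bases on the three pieces and the coherent basis induced on $X$ via the gluing.

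The main obstacle is justifying that the Vishik--Lesch gluing, which was originally developed for smooth manifolds with boundary, still applies when one of the pieces carries an irregular horn singularity at $x=0$. This is where the product-metric hypothesis near $W$ is decisive: it keeps the singular end at $x=0$ and the gluing interface at $W$ entirely decoupled, so the singular spectral analysis developed in Appendix \ref{SL} and Section \ref{spectralpropX} does not interact with the surgery analysis near $W$. All the classical ingredients of the gluing proof (trace class property, small-$t$ heat expansion, explicit spectral resolution on $\ZZ$, Friedrichs decomposition, harmonic-field identification of Proposition \ref{harmonicsX}) are already in place, so after replacing the classical cone-singular estimates of \cite{Les2} with their horn-singular counterparts the argument transfers verbatim.
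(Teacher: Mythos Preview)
Your route differs from the paper's, and the difference is worth making explicit.

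The paper does not redo any surgery analysis. Lesch's gluing theorem \cite[Theorem~6.1]{Les2} is already stated abstractly for Hilbert complexes in the sense of Br\"uning--Lesch \cite{BL0}, so the only task is to check that the intersection de Rham complex $(\DS(\d_\pf),\d_\pf)$ on $X$ fits that framework. This is exactly the role of Theorem~\ref{equiv}: the concretely defined operator $\Delta_\pf$ coincides with the Laplacian $\Delta_{\dr,\pf}$ of that complex, and the discrete-spectrum hypothesis of \cite{Les2} then follows from the results of Section~\ref{spectralpropX}. The product-metric assumption near $W$ is the standing hypothesis of Lesch's theorem; the horn at $x=0$ never interacts with the gluing interface. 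So the paper's proof is two sentences: invoke Theorem~\ref{equiv}, cite \cite[Theorem~6.1]{Les2}.

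Your proposal instead sketches an adiabatic stretching argument (insert a cylinder of length $2R$, send $R\to\infty$). That is a legitimate alternative route to gluing formulas, but it is not the mechanism in \cite{Vis} or \cite{Les2}: Vishik rotates a one-parameter family of transmission boundary conditions, and Lesch works directly with the abstract Hilbert-complex decomposition, neither via a stretching limit. You conflate these. More concretely, your third step misapplies Theorem~\ref{variationX}: that result expresses $\partial_\mu\log T$ purely through the harmonic projector and contains no bulk-versus-interface cancellation; the decoupling you claim as $R\to\infty$ would require a separate small-eigenvalue/scattering analysis that your sketch does not supply. The cleanest fix is not to repair the stretching argument but to do what the paper does: use Theorem~\ref{equiv} to place the problem in the Hilbert-complex category and then quote \cite[Theorem~6.1]{Les2} as a black box. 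No horn-specific estimates near $W$ are needed, contrary to your last paragraph.
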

\begin{proof} By Theorem \ref{equiv}, the operator $\Delta_{\pf}$ has the 
same spectrum as the Hodge Laplace operator $\Delta_{\dr, \pf}$ associated to the de Rham 
intersection complex $(\DS(\d_{\pf}),\d_\pf)$. This is an  
Hilbert complex with finite dimension spectrum according to \cite{BL0} and 
\cite[pg. 239, 3-25]{Les2}. All the hypothesis necessary in order to apply 
Theorem 6.1 of \cite{Les2} are verified. \end{proof}

\begin{corol}  Let $X=\ZZ\sqcup_W Y$ be a space with a horn singularity of dimension $m+1$,  where $(Y,W)$ is a compact connected orientable smooth Riemannian manifold of dimension $m+1$, with boundary $W$. 
 Then, 
\[
\log T_{\pf}(X)=\log  T_{\pf,\rm abs}(\ZZ)+\log T_{\rm rel}(Y)-\frac{1}{2}\chi(W)\log 2+\log \tau(\H_0),
\]
where $\H_0$ is the long homology exact sequence induced by the inclusion 
$\ZZ\to X$, with some coherent orthonormal graded basis. 
\end{corol}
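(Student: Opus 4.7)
The idea is to reduce the general case to the product-metric case treated in Theorem~\ref{les} by a variational argument. Start by constructing a smooth one-parameter family of Riemannian metrics $\{g_\mu\}_{\mu\in[0,1]}$ on $X$ with the following properties: (i) $g_1$ equals the given metric $g$, (ii) $g_0$ is a product in a collar neighbourhood of $W$ in the sense required by Theorem~\ref{les}, (iii) for every $\mu$, the restriction to $\ZZ$ remains a horn metric of the form $dx\otimes dx+x^{2\kappa}H_\mu^2(x)\tilde g_\mu$ with $H_\mu$ smooth and non vanishing and $H_\mu(0)=1$, and (iv) the family is constant outside a compact collar of $W$, in particular near the tip of the horn. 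Such a family exists because the requirements only constrain the metric on a compact collar and are all convex.

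Next, apply Theorem~\ref{variationX} to the three spaces $X$, $\ZZ$ (with the induced family $g_\mu|_Z$), and $Y$ (with the induced family $g_\mu|_Y$, using the classical analogue of the variation formula on the smooth manifold with boundary~\cite{RS}). Each variation produces a logarithmic derivative of a determinant line term associated to the harmonic fields with respect to the metric-dependent $L^2$ inner product. By Proposition~\ref{harmonicsX} the harmonic fields on $X$ are identified with classes of harmonic fields on $Y$ satisfying absolute/relative boundary conditions (modulo the critical-degree image condition), and a parallel identification holds on $\ZZ$ via Theorem~\ref{harmonicform}. These identifications are compatible with the long homology exact sequence $\H_0$ induced by the inclusion $\ZZ\to X$. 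Consequently, when one chooses coherent orthonormal graded bases of the harmonic spaces at every $\mu$ and computes the induced variation of $\log\tau(\H_0)$, the determinant-line variations coming from $X$, $\ZZ$ and $Y$ recombine with the variation of $\log\tau(\H_0)$ to give zero. In other words, the function
\[
\Phi(\mu)=\log T_{\pf}(X,g_\mu)-\log T_{\pf,\rm abs}(\ZZ,g_\mu|_Z)-\log T_{\rm rel}(Y,g_\mu|_Y)-\log\tau(\H_0,\mu)
\]
is constant in $\mu$.

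Since $\Phi$ is constant, evaluate it at $\mu=0$, where Theorem~\ref{les} applies directly and yields $\Phi(0)=-\tfrac{1}{2}\chi(W)\log 2$. Evaluating at $\mu=1$ gives the claimed formula for the given metric. The Euler-characteristic term $-\tfrac{1}{2}\chi(W)\log 2$ is purely topological and is unaffected by the deformation, so it carries through.

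The main technical obstacle is the third step: verifying that the contribution of the collar region to the variation of $\log T_{\pf}(X,g_\mu)$ agrees with the sum of the collar contributions from the two pieces, modulo the determinant-line shift that is absorbed by $\tau(\H_0)$. The obstacle has two sub-parts. First, one must check that the $\mu$-derivative of the heat trace $\Tr\,\e^{-t\Delta_{\pf,\mu}^{(q)}}$ only depends on the metric through a localised expression near $W$, which follows from Corollary~\ref{heattraceX} together with Duhamel's principle (the variation of the connection is compactly supported in the collar where $g_\mu$ varies). Second, and more delicate, one needs the harmonic projectors appearing in the variational formula of Theorem~\ref{variationX} to be compatible on $X$, $\ZZ$ and $Y$ with respect to the inclusions defining $\H_0$; this is guaranteed by Proposition~\ref{harmonicsX} and the explicit description of $\Ha^q_\pf(\ZZ)$ in Theorem~\ref{harmonicform}, which together yield the required Whitehead-torsion reinterpretation of the mismatch. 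Once these two points are secured, the constancy of $\Phi$ follows and the corollary is proved.
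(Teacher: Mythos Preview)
Your approach is correct and is exactly the argument the paper has in mind: the corollary is stated without proof immediately after Theorem~\ref{les}, and the only tool supplied beforehand is the variational formula Theorem~\ref{variationX}, so the intended proof is precisely the deformation-to-product-metric argument you outline. The construction of the family $g_\mu$ you describe matches the family introduced just before Theorem~\ref{variationX} (constant near the tip, varying only in a collar of $W$), and evaluating at $\mu=0$ via Theorem~\ref{les} is the right endpoint.

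One point worth sharpening: when you invoke the variation formula on $\ZZ$ and $Y$ separately, these are manifolds with boundary $W$, and for non-product metrics the Ray--Singer variation formula there carries boundary anomaly terms (this is the origin of the Br\"uning--Ma term $A_{\rm BM}$ that appears later in the paper). Your ``main technical obstacle'' paragraph gestures at this, but the actual mechanism for cancellation is worth stating explicitly: the outward conormals of $W$ in $\ZZ$ and in $Y$ are opposite, so the second fundamental forms (and hence the local boundary anomaly densities) appear with opposite signs and cancel in the sum $\log T_{\pf,\rm abs}(\ZZ)+\log T_{\rm rel}(Y)$. This is why the boundary contribution to $\partial_\mu\Phi$ vanishes even though the individual pieces are not metric-invariant. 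With that clarification, the argument is complete.
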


\subsection{The analytic torsion of the finite horn}
\label{torhorn}

In this section we compute the analytic torsion of the finite horn. The calculation is based on the decomposition described in the following lemma, whose proof exploits  duality to rearrange the terms in the definition of the torsion zeta function. 

\begin{lem}\label{ppp}
The torsion zeta function of the finite horn $\ZZ$  with absolute boundary condition at the boundary  decomposes as follows:
\begin{equation*}
t_{\pf, \rm abs}(\ZZ) (s) = t^{(m)}_0(s) + t^{(m)}_1(s) + t^{(m)}_{2}(s) + t^{(m)}_{3,{\pf}}(s),
\end{equation*}
where
\begin{equation*}
\begin{aligned}
t^{(m)}_0(s)=&\frac{1}{2}\sum_{q=0}^{[\frac{m}{2}]-1}(-1)^{q}
\left((\mathcal{Z}_{q,-}(s) - \hat{\mathcal{Z}}_{q,+}(s)) 
 +(-1)^{m-1} 
(\mathcal{Z}_{q,+}(s)-\hat{\mathcal{Z}}_{q,-}(s))\right),\\
t^{(2p-1)}_1(s)=&(-1)^{p-1}\frac{1}{2} 
\left(\mathcal{Z}_{p-1,-}(s)-\hat{\mathcal{Z}}_{p-1,+}(s)\right),\hspace{30pt}
t^{(2p)}_1(s) = 0;\\
t^{(m)}_{2}(s)=&\frac{1}{2}\sum_{q=0}^{[\frac{m-1}{2}]} (-1)^{q+1} 
\left( z_{q-1,-} (s)+(-1)^{m} z_{q,+} (s)\right),\\
t^{(2p-1)}_3(s)=&0, \hspace{30pt} t^{(2p)}_{3,{\mf^c}}(s)=\frac{(-1)^{p+1}}{2}   \zeta_{-} (s), \hspace{10pt}
 t^{(2p)}_{3,{\mf}}(s)=\frac{(-1)^{p+1}}{2}  \zeta_{+} (s),
\end{aligned}
\end{equation*}
and
\begin{align*}
\mathcal{Z}_{q,\pm}(s) &= \sum_{n,k=1}^\infty m_{{\rm cex},q,n}\ell^{-s}_{\pm\alpha_q,\t\la_{q,n},k},&
\hat{\mathcal{Z}}_{q,\pm}(s) &= \sum_{n,k=1}^\infty m_{{\rm cex},q,n}\hat \ell^{-s}_{\pm\alpha_q,\t\la_{q,n},k},\\
z_{q,\pm}(s)&=\sum_{k=1}^{\infty} m_{{\rm har},q}\ell^{-s}_{\pm\alpha_q,0,k},&
\zeta_{\pm}(s)&=\sum_{k=1}^{\infty} m_{{\rm har},p}\ell^{-s}_{\frac{1}{2},\frac{1}{2},\pm,k}.
\end{align*}
\end{lem}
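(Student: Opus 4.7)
The plan is to start from the definition
$$t_{\pf,\rm abs}(\ZZ)(s)=\sum_{q=0}^{m+1}(-1)^q q\,\zeta(s,\Delta^{(q)}_{\pf,\rm abs}),$$
substitute the explicit spectral description from Proposition \ref{l4}, and then rearrange the resulting six--fold sum using Poincar\'e duality on the section $W$. In the generic degrees (i.e.\ all $q$ when $m=2p-1$, and $q\neq p,p+1$ when $m=2p$), Proposition \ref{l4} gives
$$\zeta(s,\Delta^{(q)}_{\pf,\rm abs})=\hat{\mathcal{Z}}_{q,+}(s)+\hat{\mathcal{Z}}_{q-1,+}(s)+\mathcal{Z}_{q-1,-}(s)+\mathcal{Z}_{q-2,-}(s)+z_{q-1,-}(s)+z_{q-2,-}(s),$$
so the first step is to multiply by $(-1)^q q$, sum over $q$, and perform the index shifts $q\mapsto q+1$ and $q\mapsto q+2$ in the relevant summands. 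This collapses pairs of consecutive $q$--contributions into sums of the form $\sum_q (-1)^q(\hat{\mathcal Z}_{q,+}(s)-\hat{\mathcal Z}_{q,+}(s))$ up to boundary terms at the ends of the index range.

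The second step is to apply Remark \ref{duality1}: Poincar\'e duality on $W$ gives $\tilde\lambda_{q,n}=\tilde\lambda_{m-q,n}$, $m_{{\rm cex},q,n}=m_{{\rm cex},m-q,n}$ (exchanging exact and coexact eigenforms), and $\alpha_q=-\alpha_{m-q-1}$. This yields the identifications
$$\mathcal{Z}_{m-q-1,+}(s)=\mathcal{Z}_{q,-}(s),\qquad \hat{\mathcal{Z}}_{m-q-1,+}(s)=\hat{\mathcal{Z}}_{q,-}(s),$$
and similarly $z_{m-q-1,+}(s)=z_{q,-}(s)$. Using these, I fold the sum over $q\in\{0,\dots,m+1\}$ into a sum over $q\in\{0,\dots,[\tfrac{m}{2}]-1\}$ (the "low--degree" half), producing the $t_0^{(m)}$ block with its characteristic $\pm(\mathcal{Z}-\hat{\mathcal{Z}})$ structure and the factor $(-1)^{m-1}$ that records whether duality pairs a term with itself (even $m$) or with a distinct partner (odd $m$). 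The terms involving the harmonic multiplicities $m_{{\rm har},q}$ produce, after the same manipulation, the $t_2^{(m)}$ block.

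The third and most delicate step is to track the fixed points of the duality involution. In odd dimension $m=2p-1$, the middle degree $q=p-1$ is paired with $q=p$, and after cancellation this leaves a single unmatched summand that becomes $t_1^{(2p-1)}$; in even dimension $m=2p$ the middle is self-paired and $t_1^{(2p)}=0$. Finally, in even dimension one must handle the two critical degrees $q=p,p+1$, where according to Proposition \ref{l4} the harmonic--on--the--section contributions are not the generic $\ell_{-\alpha_{q-1},0,k}$ eigenvalues but rather the boundary-condition-sensitive $\ell_{\frac{1}{2},0,\pm,k}$; these extra terms survive after duality folding and produce $t^{(2p)}_{3,\pf}$, with the sign $\pm$ determined by the choice of perversity (which selects the $-$ or $+$ family of eigenvalues via the boundary conditions of Remark \ref{formuleLap}).

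The plan in summary: expand with Proposition \ref{l4}, shift indices, fold using Poincar\'e duality on $W$ and Remark \ref{duality1}, then isolate the self-dual middle degree(s). The main obstacle is purely combinatorial: keeping track of the interplay between the index shifts by $\pm 1,\pm 2$ coming from the six types of eigenforms, the sign changes from $(-1)^q q$, and the involution $q\mapsto m+1-q$, especially at the boundary of the summation range and at the critical middle degrees in even dimension where the duality involution has fixed points and where the eigenvalue families themselves depend on $\pf$.
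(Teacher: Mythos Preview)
Your plan is correct and matches the paper's approach exactly: the paper's own justification for this lemma is the single clause ``whose proof exploits duality to rearrange the terms in the definition of the torsion zeta function,'' and what you have written is a careful expansion of precisely that sentence, using Proposition~\ref{l4} for the spectral decomposition and Remark~\ref{duality1} for the duality folding $q\leftrightarrow m+1-q$.

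One small bookkeeping point to watch when you carry this out: in your generic formula for $\zeta(s,\Delta^{(q)}_{\pf,\rm abs})$ the harmonic contributions from Proposition~\ref{l4} are $\{m_{{\rm har},q}:\ell_{-\alpha_{q-1},0,k}\}$ and $\{m_{{\rm har},q-1}:\ell_{-\alpha_{q-2},0,k}\}$, so the harmonic multiplicity index and the $\alpha$ index are offset by one from each other; this does not match $z_{q-1,-}+z_{q-2,-}$ verbatim under the definition $z_{q,\pm}=\sum_k m_{{\rm har},q}\ell^{-s}_{\pm\alpha_q,0,k}$. This is only a labelling issue (and the paper's own notation is a bit loose here too, as you can see by comparing with the subsequent computation of $(t_2^{(m)})'(0)$), but keep track of it when you do the index shifts and the duality pairing $m_{{\rm har},q}=m_{{\rm har},m-q}$, $\alpha_{q}=-\alpha_{m-1-q}$, or the signs in $t_2^{(m)}$ will come out wrong.
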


Next we proceed in three main steps.

\subsubsection{Contributions comping from simple series}

The analysis of $\zeta_\pm$ is classical.  Recalling that $\al_p=\frac{1}{2}$, and $\ell_{\frac{1}{2},\frac{1}{2},+,k}=\left(\frac{\pi}{l}{k}\right)^2$, and  
$\ell_{\frac{1}{2},\frac{1}{2},-,k}=\left(\frac{\pi}{2l}(2k-1)\right)^2$, we have that
\begin{align*}
\zeta_-(s)&=\left(\frac{\pi}{2l}\right)^{-2s}\sum_{k=0}^\infty (2k+1)^{-2s}=\left(\frac{\pi}{2l}\right)^{-2s}\left(\zeta_{\rm R}(2s)-2^{-2s}\zeta_{\rm R}(2s)\right),\\
\zeta_+(s)&=\left(\frac{\pi}{l}\right)^{-2s}\sum_{k=1}^\infty (2k)^{-2s}=\left(\frac{\pi}{l}\right)^{-2s}\zeta_{\rm R}(2s).
\end{align*}

Whence $\zeta_+'(0)=-\log 2l$, $ \zeta_-'(0)=-\log 2$, 
that gives
\begin{align*}
(t^{(2p)}_{3,\mf^c})'(0)&=\frac{(-1)^{p}}{2} m_{{\rm har},p}\log 2,&
(t^{(2p)}_{3,\mf})'(0)&=\frac{(-1)^{p}}{2} m_{{\rm har},p}\log 2l.
\end{align*}

It remains to treat $z_{q,\pm}$. Here, the function $z_{q,\pm}(s)$ is the zeta function associated to the 
simple sequence 
$
Y_{q,\pm}=\left\{m_{{\rm har},q}\ : \ \ell_{\pm\alpha_q,0,k}\right\},
$ 
where, by Lemma \ref{l4}, the numbers  
$\ell_{\al,0, k}$ are 
the zeros of the function $\ff_{ \al,0, +}(l,\lambda)$,  and  the eigenvalues of  $F_{\al_q, 0;0,0}$, see Section \ref{s222}.  
In all cases only  the  $+$ solution  of the relevant Sturm-Liouville equation appears, and therefore it is not necessary to distinguish the case where the roots of the indicial equation coincide. 
Note that in all case the kernel is trivial, by Lemma \ref{pos}.
By Lemma \ref{eigen} and its corollary,  the sequence  $ Y_{q,\pm}(a,b)$ has   genus 0.  We show that it is a sequence of spectral type. For consider the associated logarithmic spectral Gamma function:
\[
\log\Gamma(-\la,  Y_{q,\pm})=-\log\prod_{k=1}^\infty \left(1+\frac{-\la }{\ell_{\pm\alpha_q,0,k}}\right).
\]
Since $\log\Gamma(-\lambda, Y_{q,\pm})=\log \ff_{\pm\al_q,0,+}(l,0 )-\log 
\ff_{\pm\al_q,0,+}(l,\la )$, 
and  $\ff_{\al,0,+}(x,\lambda)$ has an asymptotic expansion for large 
$\la$ by the results in Section \ref{asym}, it follows that the 
sequence $\hat Y_{q,\pm}$ is of spectral type. Moreover, a simple calculation 
shows that it is indeed a regular sequence of spectral type. 
We  compute the constant term in the expansion of $\log\Gamma(-\la, Y_{q,\pm})$ for large $\la$, since by Theorem \ref{teo1-1},
\begin{align*}
z_{q,\pm}'(0)&=-\log \det F_{\al_q, 0;0,0}=-\Rz_{\la=+\infty} \log\Gamma(-\lambda, Y_{q,\pm})\\
&=-\ln \ff_{\pm\al_q,0,+}(l,0 )+\Rz_{\la=+\infty}\log \ff_{\pm\al_q,0,+}(l,\la )\\
&=-\ln \uf_{\pm\al_q,0,+}(l,0 )+\Rz_{\la=+\infty}\log \uf_{\pm\al_q,0,+}(l,\la ).
\end{align*}

According to Section \ref{asym}, we have the asymptotic expansion 
\begin{equation*}
\log \uf_{\pm\al_q,0,+}(x,\lambda)= \log \frac{2^{|c_{\al_q}| 
-\frac{1}{2}}\Gamma(1+|c_{\al_q}|)}{\sqrt{\pi}} - 
\frac{1}{2}\left(|c_{\al_q}|+\frac{1}{2}\right)\log(-\la) + l\sqrt{-\la} + 
O\left(\frac{1}{\sqrt{-\la}}\right),
\end{equation*}
for large $\lambda$, where $c_\al=\ka\left(\al-\frac{1}{2}\right)+\frac{1}{2}$, and hence
\beq\label{eee1}
\Rz_{\lambda \to \infty} \log \uf_{\al_q,0,+}(l,\lambda) = \log \frac{2^{|c_{\al_q}|-\frac{1}{2}}\Gamma(1+|c_{\al_q}|)}{\sqrt{\pi}}.
\eeq

Next, we compute the value of $\uf_{\al,0,+}(l,0 )$. By Remark \ref{solutions}, we have that
\begin{align}
\label{B1}
\uf_{\al_q,0,+}(l,0)&=((2\al_q-1)\ka+1) h^{\frac{1}{2}-\al_q}(l) \int_{0}^{l}h(x)^{2\al_q-1}dx, &\al_q>0,\\
\label{B2}
\uf_{\al_q,0,+}(l,0)&= h^{\frac{1}{2}-\al_q}(l), &\al_q\leq 0.
\end{align}

Observing that $0\leq q\leq p-1$ in the definition of  $t_2$, we find that
\[
z'_{q-1,-}(0) = \log \frac{2^{|c_{-\al_{q-1}}| -\frac{1}{2}}\Gamma(1+|c_{-\al_{q-1}}|)}{2c_{-\al_{q-1}}\sqrt{\pi}}h^{-\al_{q-1}-\frac{1}{2}}(l)-\log  \int_{0}^{l} h(x)^{1-2\al_{q-1}}dx,
\]%
and that
\[
z'_{q,+}(0) = \log \frac{2^{|c_{\al_{q}}| -\frac{1}{2}}\Gamma(1+|c_{\al_q}|)}{\sqrt{\pi}} h^{\al_q-\frac{1}{2}}(l).
\]

Plugging  these quantities in the formula of $t_2'(0)$, if $m=2p-1$, then
\begin{equation*}
\begin{aligned}
(t_2^{(2p-1)})'(0) =&\frac{1}{2}\sum_{q=0}^{p-1}(-1)^{q+1} m_{{\rm har},q} \log \frac{2^{c_{-\al_{q-1}}+c_{\al_q}}}{(2c_{-\al_{q-1}})} \frac{\Gamma(1+c_{-\al_{q-1}})}{\Gamma(1-c_{\al_q})}\\
&-\frac{1}{2}\sum_{q=0}^{p-1}(-1)^{q+1} m_{{\rm har},q} \log \gamma_q h(l)^{2\al_{q}-1}\\ 
=&\frac{1}{2}\sum_{q=0}^{p-1}(-1)^{q} m_{{\rm har},q} \log \gamma_q h(l)^{2q-2p+1}, 
\end{aligned}
\end{equation*}
since $c_{\al_{q-1}}+c_{\al_q}-1=0$, and where
\beq \label{eq-gammaq}
\gamma_q=\int_{0}^{l} h(x)^{-2\al_{q-1}-1} dx,
\eeq
while if $m=2p$, then
\begin{equation*}
\begin{aligned}
(t_2^{(2p)})'(0) =&\frac{1}{2}\sum_{q=0}^{p-1}(-1)^{q+1} m_{{\rm har},q} \log \frac{2^{c_{-\al_{q-1}}-c_{\al_q}-1}}{2c_{-\al_{q-1}} }\frac{\Gamma(1+c_{-\al_{q-1}})\Gamma(1-c_{\al_q})}{\pi}\\
=&\frac{1}{2}\sum_{q=0}^{p-1}(-1)^{q+1} m_{{\rm har},q} \log \frac{2^{\ka(1-2\al_{q})-2}}{\pi} \Gamma^2\left(\frac{1}{2}-\ka\left(\al_q-\frac{1}{2}\right)\right)\gamma_q.
\end{aligned}
\end{equation*}

\subsubsection{Contribution coming from double series: regular part}
\label{s6.1}

Consider the zeta functions $\mathcal{Z}$ and  $\hat{\mathcal{Z}}$. These are the zeta functions associated to the double sequences $S_{q,\pm}=\left\{m_{{\rm cex},q,n}\ : \  \ell_{\pm\al_q,\t\la_{q,n},k}\right\}$, and $\hat S_{q,\pm}=\left\{m_{{\rm cex},q,n}\ : \  \hat\ell_{\pm\alpha_q,\t\la_{q,n},k}\right\}$ ($\t\la_{q,n}\not=0$), and will be treated by the means described in Appendix \ref{backss}. We introduce the simple sequence $\tilde S_q=\left\{ m_{q,n}~:~\sqrt{\tilde\la_{q,n}}\right\}$: this  is a totally regular simple sequence of spectral type with infinite order, exponent of convergence and genus $\es(\tilde S_q)=\gs(\tilde S_q)=m=\dim W$, by \cite[Proposition 3.1]{Spr9}. The associated zeta function
$
\zeta(s,\tilde S_q)=\zeta_{\rm cex}\left(\frac{s}{2},\tilde\Delta^{(q)}\right)
$ 
has possible simple poles at $s=m-h$, $h=0,2,4, \dots, m-1$, \cite[Proposition 3.2]{Spr9}. 
Thus, by Lemma \ref{eigen} and its corollary, both the sequence $S_q$ and $\hat S_{q,\pm}$ are double sequences  of relative order   $\left(\frac{m+1}{2},\frac{m}{2},\frac{1}{2}\right)$ and relative genus $\left(\left[\frac{m+1}{2}\right],\left[\frac{m}{2}\right],0\right)$ see \cite[Section 3]{Spr9}.    We show that the sequences $S$ are spectrally decomposable (with power $2$) over the sequence $\tilde S_q$.  For,  consider the associated logarithmic spectral Gamma functions:
\[
\log \Gamma(-\la \t\la_{q,n},S_{q,\pm})=-\log\prod_{k=1}^\infty \left(1+\frac{-\la \t\la_{q,n}}{\ell_{\pm\al_q,\t\la_{q,n},k}}\right),
\]
and
\[
\log\Gamma(-\la \t\la_{q,n}, \hat S_{q,\pm})=-\log\prod_{k=1}^\infty \left(1+\frac{-\la \t\la_{q,n}}{\hat\ell_{\pm\alpha_q,\t\la_{q,n},k}}\right).
\]

Recall that, by Lemma \ref{l4}, and according to Sections \ref{s222} and \ref{other}, the numbers $\ell_{\pm\al_q,\t\la_{q,n},k}$ are the positive eigenvalues of the operator $F_{\al_q, \t\la_{q,n};0}$, 
and the zeros of the function (recall $\t\la\not=0$, we are in the irregular singular case, and $x=0$ is limit point)
\[
B_{\pm\al_q, \t\la_{q,n}, \frac{\pi}{2}}(l,\la )=\ff_{\pm\al_q,\t\la_{q,n},+}(l,\la),
\] 
the numbers $\hat \ell_{\pm\al_q,\t\la_{q,n},,k}$  are the positive eigenvalues of the operator $F_{\al_q,\t\la_{q,n};\frac{1}{2}}$, and the zeros of the function
\[
B_{\pm\al_q, \t\la_{q,n},0}(l,\la )=\ff'_{\pm\al_q,\t\la_{q,n},+}(l,\la),
\] 

We need the uniform (in $\la$) asymptotic expansion of these functions for large $\tilde\la_{q,n}$. Observe that by Proposition \ref{pos}, these operators have trivial kernel. Proceeding as at the end of Section \ref{other}, we have 
\begin{align*}
\log \Gamma(-\la \mu^2_{q,n},S_{q,\pm})
&=\log B_{\pm\al_q, \t\la_{q,n}, \frac{\pi}{2}}(l,0)-\log B_{\pm\al_q, \t\la_{q,n}, \frac{\pi}{2}}(l,\la\mu_{q,n}^2),\\
\log\Gamma(-\la \t\la_{q,n}, \hat S_{q,\pm})
&=\log B_{\pm\al_q, \t\la_{q,n},0}(l,0)-\log B_{\pm\al_q, \t\la_{q,n},0}(l,\la \t\la_{q,n}).
\end{align*}

Using the expansions of the solutions for large $\nu=\t\la_{q,n}$ and fixed $x=l$, see Section \ref{asym}, we obtain the required expansion of the logarithmic Gamma functions, and we complete the proof. We give details for  the sequence $S_{q,\pm}$:
\begin{align*}
\log\Gamma(-\la \t\la_{q,n},  S_{q,\pm})
=&\log B_{\pm\al_q, \t\la_{q,n}, \frac{\pi}{2}}(l,0)-\log \frac{2^{\t\la_{q,n}} \Gamma(\t\la_{q,n}+1)}{\sqrt{2\pi\t\la_{q,n}}(-\la)^\frac{\t\la_{q,n}}{2}}\\
&-\log \frac{\sqrt{h(l)}}{\left(1-\la h^2(l)\right)^\frac{1}{4}}
-\mu_{q,n} \mathlarger{\mathlarger{\int}} \frac{\sqrt{1-\la h^2(x)}}{ h(x)} dx\mathlarger{\mathlarger{\mathlarger{|}}}_{x=l}\\
&-\log \left(1+\sum_{j=1}^J U_{q,j,\pm}(l,i\sqrt{-\la}) \t\la_{q,n}^{-j}+ O\left(\frac{1}{\t\la_{q,n}^{J+1}}\right)\right),
\end{align*}

By Remark \ref{last}, we realise that there are not relevant logarithmic 
terms, so $L$, in \eqref{exp}, can take any value, while 
the relevant terms in powers of $\t\la_{q,n}$ are all negative,  $\sigma_h=m-h$, $h=0,1,2,\dots m-1$, and the functions $\phi_{\sigma_h}$ are given by the  equation
\[
\sum_{h=0}^{m-1} \phi_{q,m-h,\pm}(l,\la)\t\la_{q,n}^{-h}=-\log \left(1+\sum_{j=1}^{m-1} U_{q,j,\pm}(l,z) \t\la_{q,n}^{-j}\right)+ O\left(\frac{1}{\t\la_{q,n}^{m}}\right).
\]

This shows that  the sequence $S_{q,\pm}$ is spectral decomposable on the sequence $\tilde S_q$, according to Definition \ref{spdec}. We give here the values of the parameters appearing in the definition:
$(s_0,s_1,s_2)=\left(\frac{m+1}{2},\frac{m}{2},\frac{1}{2}\right)$, 
$(p_0,p_1,p_2)=\left(\left[\frac{m+1}{2}\right],\left[\frac{m}{2}\right],0\right)$, 
$r_0=m$, $q=m$,
$\ka=2$, $\ell=m$. 


We want now to compute the regular part according to the formulas in the Theorem \ref{sdl}, so we need to identify the quantities $A_{0,0}(0)$, and $A_{0,1}'(0)$. 
Before starting calculations, observe that all the coefficients $b_{\sigma_h, j,0/1}$ vanish. For observe that the expansions of the functions $\phi_{q,m-h,\pm}(l,\la)$ and $\hat \phi_{q,m-h,\pm}(l,\la)$ for large $\lambda$ have terms only with negative powers of $-\la$ and negative powers of $-\la$ times $\log (-\la)$, as follows by the asymptotic characterisation of the functions $U_j(x,i\sqrt{-\la})$ and $V_j(x,i\sqrt{-\la})$, Section \ref{asym}. Whence:
\begin{align*}
\mathcal{Z}'_{{\rm reg},q,\pm}(0) &=-A_{0,0,q,\pm}(0)-A'_{0,1,q,\pm}(0),&
 \hat{\mathcal{Z}'}_{{\rm reg},q,+}(0)=-\hat A_{0,0,q,\pm}(0)-\hat A'_{0,1,q,\pm}(0),
\end{align*}
where 
\begin{align*}
A_{0,0,q,\pm}(s)&=\sum_{n=1}^\infty m_{{\rm cex},q,n} a_{0,0,q,\pm}\t\la_{q,n}^{-s},&a_{0,0,q,\pm}=\Rz_{\la=\infty} \log\Gamma(-\la \t\la_{q,n},  S_{q,\pm}),\\ 
A_{0,1,q,\pm}(s)&=\sum_{n=1}^\infty m_{{\rm cex},q,n} a_{0,1,q,\pm}\t\la_{q,n}^{-s},&a_{0,1,q,\pm}=\Rz_{\la=\infty} \frac{\log\Gamma(-\la \t\la_{q,n},  S_{q,\pm})}{\log(-\la)},\\
\hat A_{0,0,q,\pm}(s)&=\sum_{n=1}^\infty m_{{\rm cex},q,n} a_{0,0,q,\pm}\t\la_{q,n}^{-s},&\hat a_{0,0,q,\pm}=\Rz_{\la=\infty} \log\Gamma(-\la \t\la_{q,n}, \hat S_{q,\pm}),\\
\hat A_{0,1,q,\pm}(s)&=\sum_{n=1}^\infty m_{{\rm cex},q,n} a_{0,1,q,\pm}\t\la_{q,n}^{-s},&\hat a_{0,1,q,\pm}=\Rz_{\la=\infty} \frac{\log\Gamma(-\la \t\la_{q,n}, \hat S_{q,\pm})}{\log(-\la)}.
\end{align*}

Consider for exampe
\begin{align*}
a_{0,0,q,\pm}&
=\log \ff_{\pm\al_q,\t\la_{q,n},+}(l,0)- \Rz_{\la=\infty}\log \ff_{\pm\al_q,\t\la_{q,n},+}(l,\la\t\la_{q,n}),\\
a_{0,1,q,\pm}&=- \Rz_{\la=\infty}\frac{\log \ff_{\pm\al_q,\t\la_{q,n},+}(l,\la\t\la_{q,n})}{\log(-\la)}.
\end{align*}

We need the asymptotic  expansions for large $\la$. By classical tools of asymptotic analysis (see for example \cite{Olv}), we can determine the asymptotic expansions of the fundamental solutions of the associated SL problems, $\ff_{\pm}$ and their derivatives for large $\la$, see Section \ref{asym}. Moreover, we find that the constant coefficient does not depend on $x$, and therefore it is the same in the function and in its derivative. Suppose that
\begin{align*}
\log \ff_{\pm\al_q,\t\la_{q,n},+}(l,\la )&=\dots +a_{1,\pm}\log(-\la)+a_{0,\pm}+\dots,\\
\log \ff'_{\pm\al_q,\t\la_{q,n},+}(l,\la)&=\dots +\hat a_{1, \pm}\log(-\la)+ a_{0, \pm}+\dots.
\end{align*}

Then, 
\begin{align*}
a_{0,0,q,\pm}=&\log \ff_{\pm\al_q,\t\la_{q,n},+}(l,0)+a_{0,\pm}+a_{1,\pm}\log \t\la_{q,n},&
a_{0,1,q,\pm}=&a_{1,\pm},\\
\hat a_{0,0,q,\pm}=&\log \ff_{\pm\al_q,\t\la_{q,n},+}(l,0)+ a_{0,\pm}+\hat a_{1,\pm}\log \t\la_{q,n},&
\hat a_{0,1,q,\pm}=&\hat a_{1,\pm}.
\end{align*}

Thus,
\begin{align*}
\mathcal{Z}'_{{\rm reg},q,\pm}(0) &=-A_{0,0,q,\pm}(0)-A'_{0,1,q,\pm}(0)
=-\sum_{n=1}^\infty m_{{\rm cex},q,n}\left( \log \ff_{\pm\al_q,\t\la_{q,n},+}(l,0)+ a_{0,\pm}\right),\\
\hat{\mathcal{Z}'}_{{\rm reg},q,\pm}(0)&=-\hat A_{0,0,q,\pm}(0)-\hat A'_{0,1,q,\pm}(0)
=-\sum_{n=1}^\infty m_{{\rm cex},q,n}\left( \log \ff'_{\pm\al_q,\t\la_{q,n},+}(l,0)+ a_{0,\pm}\right),
\end{align*}
and 
\[
\mathcal{Z}'_{{\rm reg},q,-}(0) - \hat{\mathcal{Z}'}_{{\rm reg},q,+}(0)=-\sum_{n=1}^\infty m_{{\rm cex},q,n} 
\log\frac{\ff_{-\al_q,\t\la_{q,n},+}(l,0)}{\ff'_{\al_q,\t\la_{q,n},+}(l,0)}.
\]

By Lemma \ref{f+-}, 
$
\ff'_{-\al_q,\t\la_{q,n},+}(x,\la)=C\ff_{\al_q,\t\la_{q,n},+}(x,\la) h^{2\al_q-1}(x),
$ 
for some constant $C$. In order to determine the value of $C$, we may compute the limit for $x\to0$, using Theorem \ref{olv1}. This gives $C=\sqrt{\t\la_{q,n}}$.  Thus, 
\[
\mathcal{Z}'_{{\rm reg},q,-}(0) - \hat{\mathcal{Z}'}_{{\rm reg},q,+}(0)=\sum_{n=1}^\infty m_{{\rm cex},q,n} 
\left(\frac{1}{2}\log \t\la_{q,n}-\log h(l)\right).
\]

We may now complete the determination of the regular contribution, 
First, suppose that $m= 2p-1$. Then,  we have

\begin{equation*}
\begin{aligned}
(t^{(2p-1)}_{0,{\rm reg}})'(0)&=\frac{1}{2}\sum_{q=0}^{p-2}(-1)^{q}
\left(\mathcal{Z}'_{{\rm reg},q,-}(0) - \hat{\mathcal{Z}}_{{\rm reg},q,+}'(0) 
+ 
\mathcal{Z}'_{{\rm reg},q,+}(0)-\hat{\mathcal{Z}}'_{{\rm reg},q,-}(0)\right)\\
&= \sum_{q=0}^{p-2}(-1)^{q} 
\left(-\frac{1}{2}\zeta'(0,\tilde{\Delta}^{(q)}_{\rm cex}) - 
\zeta(0,\tilde{\Delta}^{(q)}_{\rm cex}) \ \log h(l)\right).
\end{aligned}
\end{equation*}

Second, suppose that $m=2p$. Then, we have

\begin{equation*}
\begin{aligned}
(t^{(2p)}_{0,{\rm reg}})'(0)&=\frac{1}{2}\sum_{q=0}^{p-1}(-1)^{q}
\left((\mathcal{Z}_{{\rm reg},q,-}'(0) - \hat{\mathcal{Z}}_{{\rm reg},q,+}'(0)) 
-
(\mathcal{Z}_{{\rm reg},q,+}'(0)-\hat{\mathcal{Z}}_{{\rm reg},q,-}'(0))\right)=0.
\end{aligned}
\end{equation*}

The last step in the calculation of the regular part coming from the double sequences, is  the regular part of $t_1$. This is a particular case of the previous one, with $m=2p-1$, and  $q=p$. Since $\alpha_{p-1} = 0$,   we have,
\begin{equation*}
\begin{aligned}
(t_{1,{\rm reg}}^{(2p-1)})'(0) &= (-1)^{p-1} \frac{1}{2}\left((\mathcal{Z}_{{\rm reg},p-1,-}'(0) - 
\hat{\mathcal{Z}}_{{\rm reg},p-1,+}'(0)\right)\\
&=(-1)^{p-1} \left(-\frac{1}{4}\zeta'(0,\tilde{\Delta}^{(p-1)}_{\rm cex}) - 
\frac{1}{2 }\zeta(0,\tilde{\Delta}^{(p-1)}_{\rm cex}) \ \log h(l)\right)
\end{aligned}
\end{equation*}

\subsubsection{Contribution coming from double series: singular part}

The last point concerns the determination of the singular part of the torsion, according to the SDL \ref{sdl}. The process is long and requires resolution of some intricate technical points, but the idea is quite direct. Thus we will just outline it here, avoiding the details. We consider instead of the finite metric horn, a truncated one, i.e. a frustum with a horn shape $F(W)=[l_0,l]\times W$, with the restriction of the metric  in (\ref{g1}). This means to rewrite all the process followed for the finite horn, but with  Sturm Liouville problems on a compact interval $[l_0,l]$. In such a case, all the SL operators are regular ones, so we may treat them with the classical analytic tools. We introduce the suitable Hodge Laplace operators, that will "decompose" as products of regular SL operator on the interval times the Hodge Laplace operator on the section. The main difference with respect to the horn, is that now we have classical bc for the SL problems at both the end points. We produce a description of  the spectrum as we did for the finite horns, and we have a decomposition of the torsion zeta function as in Lemma \ref{ppp}:
\[
t_{ \rm rel, abs}(F(W)) (s) = w_0(s) + w_1(s) + w_{2}(s) + w_3(s),
\]

We may compute the derivative at $s=0$ as we did for the finite horn. In particular, the terms $w_0$ and $w_1$ are zeta functions associated to double series, and we treat them by the SDL \ref{sdl}. As a result we have the analytic torsion, where reg and sing correspond to the subdivision in the SDL:

\beq\label{te1}
\log T_{\rm rel,abs}(F(W)) 
=w'_{0,{\rm reg}}(0)+w'_{1,{\rm reg}}(0)+w'_{2}(0)+w'_{3}(0)+w'_{0,{\rm sing}}(0)+w'_{1,{\rm sing}}(0).
\eeq

But the frustum is a smooth manifold with boundary, so its analytic torsion is defined and has the usual property of the analytic torsion of smooth manifold. In particular, we know (see for example \cite{Luc} \cite {BM1}) the decomposition in global and boundary parts (we are taking relative bc at $x=l_0$ and absolute bc at $x=l$ (compare \cite{RS}):
\beq\label{te2}
\log T_{\rm rel,abs}(F(W))=\log T_{\rm global, 
rel,abs}(F(W))+\log T_{\rm boundary,rel,abs}(F(W)),
\eeq
where
\begin{align*}
\log T_{\rm global,rel,abs}(F(W))=&\log 
\tau_{RS}(F(W),\b_l F(W)),\\
\log T_{\rm boundary,rel,abs}(F(W))=&\frac{1}{4}\chi (\b_l F(W))\log 
2+A_{\rm BM, rel, abs}(\b_l F(W)).
\end{align*}
where $\b_l F(W)$ denotes the boundary at $x=l$ of $F(W)$, $\tau_{RS}(M,\b M)$ is the Reidemeister torsion of the pair $(M,\b M)$ with the Ray and Singer homology basis, 
$\chi(M)$ denotes the Euler characteristic of $M$, and $A_{\rm BM, bc}(\b M)$ the anomaly boundary term of \cite{BM1}.

Now, we can compute explicitly some of the terms in equations (\ref{te1}) and (\ref{te2}), in particular those appearing the following equality, so proving the equality itself:
\begin{align*}
\log T_{\rm global,rel,abs}(F(W))+\frac{1}{4}\chi (\b_l F(W))\log 
2
=&w'_{0,{\rm reg}}(0)+w'_{1,{\rm reg}}(0)+w'_{2}(0)+w'_{3}(0),
\end{align*}

As a consequence, we have the equality:
\[
\log T_{\rm boundary,rel,abs}(F(W))-\frac{1}{4}\chi (\b_l F(W))\log 
2=A_{\rm BM,rel, abs}( \b_l F(W))
=w'_{0,{\rm sing}}(0)+w'_{1,{\rm reg}}(0).
\]

The last point consists in comparing the singular term $w'_{0,{\rm sing}}(0)+w'_{1,{\rm reg}}(0)$ of the analytic torsion of the frustum with the corresponding singular term $t'_{0,{\rm sing}}(0)+t'_{1,{\rm reg}}(0)$ of the analytic torsion of the finite horn. It is not difficult to realise that these two terms are combinations of quantities computed by using the coefficients in the asymptotic expansions for large $n$ of the same functions, namely the solutions $\ff$ of the SL problems, see Section \ref{asym}. By rearranging these coefficients in the two singular terms, we can prove that
\[
w_{j,{\rm sing}}(s)=t_{j,{\rm sing}}(s)(l_0)+t_{j,{\rm sing}}(s)(l),
\]
and therefore that
\[
t'_{0,{\rm sing}}(0)+t'_{1,{\rm sing}}(0)=\frac{1}{2}A_{\rm BM,rel, abs}( \b_l F(W))=A_{\rm BM, abs}( \b \ZZ).
\]

\subsubsection {The analytic torsion and the CM theorem}

Collecting the terms computed in the previous sections, we have the following result.

\begin{theo}\label{t1}
Let $W$ be a compact connected oriented Riemannian manifold of dimension $m$ without boundary, and  $\ZZ$  the  finite horn over $W$. Then, the  the analytic torsion of the Hodge-Laplace operator $\Delta_{\pf, \rm bc}$ on $\ZZ$ (in the trivial representation), is
\[
\log T_{\pf,\rm bc}(\ZZ)=\log T_{\rm global,   \pf, \rm bc}(\ZZ)+\log T_{\rm bound, \pf, \rm bc}(\ZZ),
\]
where 
\[
\log T_{\rm bound, \pf, \rm bc}(\ZZ)=\frac{1}{4}\chi(W)\log 2+A_{\rm BM, bc}(\b\ZZ),
\]
is the boundary anomaly term, that only depends on the boundary,  coincides with the anomaly boundary term described  in \cite{BM1,BM2} for smooth manifolds, and vanishes if the boundary is totally geodesic. 
The global term depends on the parity of the dimension $m$ of $W$. If $m=2p-1$, $p\geq 1$, then
\begin{align*}
\log T_{\rm global,\pf,  abs}(\ZZ)= &\frac{1}{2}\log T(W) 
+ \frac{1}{2} \log \|\Det k^*_\bu(\tilde\alphas_\bu)\|_{\Det I^\pf H_\bu(\ZZ)}.
\end{align*}

If $m=2p$, $p\geq 1$,  then
\begin{align*}
\log T_{\rm global, \pf, abs}(\ZZ) =& \frac{1}{4} \chi(W)\log 2
+\frac{1}{2}\log \|\Det k^*_\bu(\tilde\alphas_\bu)\|_{\Det I^\pf H_\bu(\ZZ)}\\
&+\frac{1}{2}\sum_{q=0}^{p-1}(-1)^{q+1} r_q \log \frac{2^{\ka(1-2\al_{q})}}{\pi} \Gamma^2\left(\frac{1}{2}-\ka\left(\al_q-\frac{1}{2}\right)\right).
\end{align*}
where $\tilde \alphas_q$ is an orthonormal basis of $\H^q(W)$ and $k^*:\H^q(W)\to \H^q(\ZZ)$ the map induced by inclusion, $r_q=\rk H_q(W;\Z)$.
\end{theo}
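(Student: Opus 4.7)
The plan is to combine the spectral resolution of $\Delta_{\pf,\rm bc}$ on the finite horn, the Spectral Decomposition Lemma, and a comparison with a smooth horn-shaped frustum to reduce the torsion to a boundary anomaly plus a global (combinatorial) part. First, I would use the explicit spectrum from Proposition \ref{l4}, together with the parity-dependent harmonic content of Theorem \ref{harmonicform}, to rewrite the torsion zeta function as in Lemma \ref{ppp}: namely as a finite alternating sum of two kinds of ``building-block'' Dirichlet series, the simple series $z_{q,\pm}, \zeta_\pm$ (coming from the harmonic part of the section, and thus giving the one-dimensional SL problems $F_{\al_q,0;\be,\ga}$), and the double series $\mathcal{Z}_{q,\pm},\hat{\mathcal{Z}}_{q,\pm}$ (built from the positive spectrum of the section paired with zeros of $\ff_{\pm\al_q,\t\la_{q,n},+}(l,\la)$ or its $x$-derivative).

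For the simple series I would proceed directly: each is a regular sequence of spectral type, and since its logarithmic Gamma function equals $-\log\ff_{\pm\al_q,0,+}(l,\la)+\log\ff_{\pm\al_q,0,+}(l,0)$, the derivative at $s=0$ is obtained by reading off the constant term in the large-$\la$ asymptotics (Section \ref{asym}) and the explicit value of $\ff_{\pm\al_q,0,+}(l,0)$ from Remark \ref{solutions}. This yields the factorials in $\Gamma$ and the factors $\gamma_q=\int_0^l h^{-2\al_{q-1}-1}\,dx$ in $(t_2^{(2p-1)})'(0)$ and $(t_2^{(2p)})'(0)$, and gives $\zeta_\pm'(0)$ by the classical Riemann formulas. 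Note that the $\gamma_q$-contribution plus the Ray--Singer isomorphism from harmonic fields to intersection cohomology (Proposition \ref{harmonicsX}) is exactly what produces $\tfrac12\log\|\Det k^*_\bu(\t\alphas_\bu)\|_{\Det I^\pf H_\bu(\ZZ)}$; this is the main place where the geometric input feeds in.

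For the double series I would apply Theorem \ref{sdl} (SDL) with spectral decomposition over $\t S_q=\{m_{q,n}:\sqrt{\t\la_{q,n}}\}$, power $\ka=2$. Verifying decomposability requires the full uniform-in-$\la$ asymptotic expansions of $\ff_{\pm\al_q,\nu,+}(l,\la)$ and $\ff'_{\pm\al_q,\nu,+}(l,\la)$ for large $\nu$ (Olver-type expansions, with the modified phase $\int\sqrt{1-\la h^2}/h\,dx$, carried out in the appendix on irregular SL problems). Granting this, the regular part $A_{0,0}(0)+A_{0,1}'(0)$ is computed by identifying the constant and $\log(-\la)$ coefficients in the large-$\la$ expansions of the relevant solutions; the Wronskian-type identity $\ff'_{-\al_q,\nu,+}=\sqrt{\t\la_{q,n}}\,h^{2\al_q-1}\ff_{\al_q,\nu,+}$ (Lemma \ref{f+-}, with constant fixed by letting $x\to 0^+$ via Theorem \ref{olv1}) collapses the alternating sum, producing $-\tfrac12\zeta'(0,\t\Delta^{(q)}_{\rm cex})-\zeta(0,\t\Delta^{(q)}_{\rm cex})\log h(l)$ in odd dimension and cancelling the regular double contribution in even dimension.

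The main obstacle is the singular part $t'_{0,\rm sing}(0)+t'_{1,\rm sing}(0)$, which involves quantities with no manifest geometric meaning on the horn itself. To identify it I would introduce a horn-shaped frustum $F(W)=[l_0,l]\times W$ with the same metric; this is smooth with two boundary components, so the analytic torsion is classical and splits as $\log\tau_{RS}(F(W),\b_l F(W))+\tfrac14\chi(\b_l F(W))\log 2+A_{\rm BM,rel,abs}(\b_l F(W))$ by \cite{BM1}. Running exactly the same zeta-function decomposition on $F(W)$ produces analogous pieces $w_0,w_1,w_2,w_3$, in which now all SL problems are regular. Matching the pieces one would already have computed on $\ZZ$ forces the identification
\begin{equation*}
w'_{j,\rm sing}(s)=t'_{j,\rm sing}(s)(l_0)+t'_{j,\rm sing}(s)(l),\qquad j=0,1,
\end{equation*}
since both come from the same Olver coefficients evaluated at the endpoints of the associated SL intervals. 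Letting $l_0$ play the role of the tip, the endpoint-at-$l$ contribution is exactly the anomaly at $\b\ZZ$, and additivity gives $t'_{0,\rm sing}(0)+t'_{1,\rm sing}(0)=A_{\rm BM,\rm bc}(\b\ZZ)$. Collecting with the simple-series and regular-double calculations, and absorbing the half-$\chi(W)\log 2$ that survives from $w_3$ and the frustum splitting, yields the stated decomposition $\log T_{\pf,\rm bc}(\ZZ)=\log T_{\rm global}+\log T_{\rm bound}$, with $\log T_{\rm bound}=\tfrac14\chi(W)\log 2+A_{\rm BM,\rm bc}(\b\ZZ)$ and the parity-split global formula; the only place where the full Olver expansion is genuinely used is in establishing the endpoint additivity, which is why I would treat that step as the crux of the argument.
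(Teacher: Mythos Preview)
Your proposal is correct and follows essentially the same route as the paper: the decomposition of Lemma \ref{ppp}, direct treatment of the simple series via the large-$\la$ asymptotics of $\uf_{\al,0,+}$ and the explicit values in Remark \ref{solutions}, the SDL for the double series with the regular part collapsed via the Wronskian identity of Lemma \ref{f+-}, and the identification of the singular part with $A_{\rm BM,\rm bc}(\b\ZZ)$ by comparison with the smooth frustum $F(W)=[l_0,l]\times W$. The only minor slip is that the $\gamma_q$-to-determinant-line identification uses the harmonic fields of the horn (Theorem \ref{harmonicform}) rather than Proposition \ref{harmonicsX}, which concerns $X$.
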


Note that 
$\|\det k^*_q(\tilde\alphas_q)\|_{\det I^\mf H_q(\ZZ)}=\gamma_q^{r_q}.
$

\begin{theo}\label{tcm} With the notation of Theorem \ref{t1}, 
if $m=2p-1$, $p\geq 1$, then
\begin{align*}
\log T_{\pf, \rm abs}(\ZZ)=&\log I^\pf \tau_{\rm RS}(\ZZ)+A_{\rm BM, abs}(W).
\end{align*}

If $m=2p$, $p\geq 1$, then
\begin{align*}
\log T_{\pf, \rm abs}(\ZZ)=&\log I^\pf \tau_{\rm RS}(\ZZ)+\frac{1}{4}\chi(W)\log 2+A_{\rm BM, abs}(W)\\
&+A_{\rm comb, \pf}(W)+A_{\rm analy}(W),
\end{align*}
where the combinatoric anomaly term is:
\begin{align*}
A_{\rm comb, \mf}(W)&=-\sum_{q=0}^{p}(-1)^q 
\log \left(\# TH_q(W;\Z)\left|\det (\tilde\A_{q}(\tilde\alphas_q)/\ns_q)\right|\right),\\
A_{\rm comb, \mf^c}(W)&=-\sum_{q=0}^{p-1}(-1)^q 
\log \left(\# TH_q(W;\Z)\left|\det (\tilde\A_{q}(\tilde\alphas_q)/\ns_q)\right|\right),
\end{align*}
the analytic anomaly terms is:
\begin{align*}
A_{\rm analy}(W)=&\frac{1}{2}\sum_{q=0}^{p-1}(-1)^{q+1} r_q \log \frac{2^{\ka(1-2\al_{q})}}{\pi} \Gamma^2\left(\frac{1}{2}-\ka\left(\al_q-\frac{1}{2}\right)\right)+\frac{1}{4}\chi(W)\log 2.
\end{align*}
\end{theo}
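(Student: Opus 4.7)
The strategy is to compare, term by term, the explicit formula for the analytic torsion given in Theorem \ref{t1} with the formula for the intersection Reidemeister torsion given in Theorem \ref{t7.29}. By Theorem \ref{t1}, one has $\log T_{\pf, \rm abs}(\ZZ) = \log T_{\rm global, \pf, \rm abs}(\ZZ) + \tfrac14\chi(W)\log 2 + A_{\rm BM, abs}(W)$; since the boundary anomaly and the Euler-characteristic term already appear on the right-hand side of the desired identity, the problem reduces to identifying $\log T_{\rm global}$ with the remaining combinatorial and analytic pieces.

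First I would substitute the relation $\|\Det k^*_q(\tilde\alphas_q)\|_{\Det I^\pf H_q(\ZZ)} = \gamma_q^{r_q}$ (noted after Theorem \ref{t1}, with $\gamma_q$ the constant of equation \eqref{eq-gammaq}) into the global formulas. This turns $\tfrac12 \log\|\Det k^*_\bu\|$ into $\tfrac12 \sum_q (-1)^q r_q \log \gamma_q$, which matches the $\gamma_q$-factor of $I^\pf\tau_{\rm RS}(\ZZ)$ in Theorem \ref{t7.29}. The range of summation coincides with the range of non-trivial $I^\pf H_q(\ZZ)$ as determined by Theorem \ref{coZ}, so the index $\af-2$ takes the appropriate value in each case.

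For the even-dimensional case $m = 2p$ no further work is required: the $\Gamma$-product in $\log T_{\rm global}$ is, by definition of the analytic anomaly, equal to $A_{\rm analy}(W) - \tfrac14\chi(W)\log 2$, while the determinants $|\det\tilde\A_q(\tilde\alphas_q)/\ns_q|$ and the torsion modules $\#TH_q(W;\Z)$ appearing in $I^\pf\tau_{\rm RS}(\ZZ)$ (but absent in the analytic global term) are precisely what $A_{\rm comb,\pf}(W)$ absorbs with its sign convention. The distinction between lower and upper middle perversity corresponds exactly to the cutoff $q \le p$ versus $q \le p-1$ in the definition of $A_{\rm comb, \pf}(W)$, reflecting the different truncation $\af$ dictated by Theorem \ref{t7.29}. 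For the odd-dimensional case $m = 2p-1$, the global formula contains $\tfrac12 \log T(W)$ rather than a $\Gamma$-factor, and $\chi(W) = 0$ kills the Euler-characteristic term. Here I would apply the classical Cheeger--M\"{u}ller theorem to the closed Riemannian manifold $W$, expressing $\log T(W)$ as the Reidemeister torsion of $W$ with respect to the orthonormal harmonic basis $\tilde\alphas_\bu$; combining with Poincar\'e duality on $W$ (which gives $r_q = r_{m-q}$, $\#TH_q \cong \#TH_{m-q}$, and $\al_q = -\al_{m-q}$) folds the alternating sum on $W$ into one running over $0 \le q \le p-1$, and the factor $\tfrac12$ is exactly canceled; this reproduces the determinants and torsion factors of Theorem \ref{t7.29} without any residual anomaly.

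The hardest step is the careful bookkeeping at the critical degrees, especially in the even case, where the intersection truncation $\af$ depends on both $\pf$ and the parity of $m$; I need to verify that the Poincar\'e-duality folding in the odd case is compatible with the truncation inherited from Theorem \ref{t7.29}, and that the sign in $A_{\rm comb, \pf}(W)$ comes out correctly so that $\#TH_q$ appears with the right exponent. This ultimately reduces to tracking the Ray--Singer orthonormalization against the integral basis through the de Rham maps constructed in Section \ref{DRX}, matching Theorem \ref{t7.29} cell by cell against the contribution of $\tfrac12\log T(W)$ computed via classical Cheeger--M\"{u}ller on the section.
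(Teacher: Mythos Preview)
Your proposal is correct and is precisely the approach the paper takes: the theorem is stated without proof, being a direct consequence of comparing the explicit formula for $\log T_{\pf,\rm abs}(\ZZ)$ in Theorem \ref{t1} with the formula for $I^\pf\tau_{\rm RS}(\ZZ)$ in Theorem \ref{t7.29}, via the identification $\|\Det k^*_q(\tilde\alphas_q)\| = \gamma_q^{r_q}$. Your handling of the even case (purely algebraic rearrangement recognising $A_{\rm analy}$ and $A_{\rm comb,\pf}$) and of the odd case (classical Cheeger--M\"uller on $W$ together with Poincar\'e duality to fold the alternating sum over $0\leq q\leq m$ into one over $0\leq q\leq p-1$, absorbing the factor $\tfrac12$) is exactly right, and you correctly flag the bookkeeping at the truncation index $\af$ as the only point requiring care.
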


\begin{theo}[Duality]\label{dt} 
Let $(W,g)$ be an oriented closed Riemannian manifold of dimension $m$ then
\[
\log T_{\pf, \rm abs} (\ZZ) = (-1)^m \log T_{\pf^c, \rm rel} (\ZZ).
\]
\end{theo}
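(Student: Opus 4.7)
The plan is to derive the duality from the spectral identity recorded in Remark \ref{duality1}, then to dispose of a residual ``analytic Euler characteristic'' term by a McKean--Singer argument for Hilbert complexes.

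First I would observe that the Hodge star $\star \colon L^2\Omega^{(q)}(\ZZ) \to L^2\Omega^{(m+1-q)}(\ZZ)$ is a unitary isomorphism that intertwines $d$ with $\pm\dr$, exchanges absolute with relative boundary data at $\b\ZZ$, and exchanges the two complementary middle perversities at the tip. Consequently $\star$ intertwines $\Delta^{(q)}_{\pf, \rm abs}$ with $\Delta^{(m+1-q)}_{\pf^c, \rm rel}$, producing the positive-spectrum equality with multiplicities already asserted in Remark \ref{duality1}. Hence $\zeta(s,\Delta^{(q)}_{\pf, \rm abs}) = \zeta(s,\Delta^{(m+1-q)}_{\pf^c, \rm rel})$. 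Plugging this into $t_{\pf, \rm abs}(\ZZ)(s) = \sum_{q=0}^{m+1} (-1)^q q\, \zeta(s,\Delta^{(q)}_{\pf, \rm abs})$ and reindexing via $p=m+1-q$ yields after a short manipulation
\[
t_{\pf, \rm abs}(\ZZ)(s) \;=\; (-1)^{m+1}(m+1)\, \chi_{\pf^c, \rm rel}(s) \;+\; (-1)^m\, t_{\pf^c, \rm rel}(\ZZ)(s),
\]
where $\chi_{\pf^c, \rm rel}(s) := \sum_{q=0}^{m+1} (-1)^q \zeta(s,\Delta^{(q)}_{\pf^c, \rm rel})$. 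Differentiating at $s=0$ reduces the theorem to the vanishing $\chi'_{\pf^c, \rm rel}(0) = 0$.

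For the vanishing I would prove the stronger statement $\chi_{\pf^c, \rm rel}(s) \equiv 0$ by a McKean--Singer argument. By Theorem \ref{Hodgedec} the complex $(\DS(\d^{(\bu)}_{\pf^c, \rm rel}), \d^{(\bu)}_{\pf^c, \rm rel})$ admits a strong Hodge decomposition, and by Proposition \ref{compresC} its Laplacian has compact resolvent with discrete spectrum. For each $\la > 0$ the operator $\d$ restricts to a bijection between the coexact $\la$-eigenspace of $\Delta^{(q-1)}_{\pf^c, \rm rel}$ and the exact $\la$-eigenspace of $\Delta^{(q)}_{\pf^c, \rm rel}$, so pairing exact with coexact gives $\sum_q (-1)^q \dim \E^{(q)}_\la = 0$ for every $\la>0$. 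Equivalently, the regularized heat supertrace $\sum_q (-1)^q\bigl[\Tr\, e^{-t\Delta^{(q)}_{\pf^c, \rm rel}} - \dim\ker \Delta^{(q)}_{\pf^c, \rm rel}\bigr]$ vanishes identically in $t>0$, and its Mellin transform is $\chi_{\pf^c, \rm rel}(s) \equiv 0$.

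The main obstacle is the eigenspace-level bookkeeping of the Hodge pairing under the intersection boundary conditions at the singular tip, where it is not formal that $\d$ and $\dr$ genuinely restrict to bijections between adjacent eigenspaces within the chosen domains. This is not automatic since the mediating eigenforms of types II and III involve the radial derivative of the profile and the admissibility test varies with the $\al$-parameter and the regular/limit-circle/limit-point character of the Sturm--Liouville endpoint at $x=0$. Fortunately, the explicit spectral resolution of Propositions \ref{l4} and \ref{eigenforms} records the matching degree by degree: the coexact type-I$_+$ eigenforms of $\Delta^{(q-1)}_{\pf^c, \rm rel}$ of multiplicity $m_{\rm cex,q-1,n}$ correspond bijectively via $\d$ to the exact type-II$_+$ eigenforms of $\Delta^{(q)}_{\pf^c, \rm rel}$ of the same eigenvalue and multiplicity, the type-IV$_+$ forms pair with the type-III$_+$ forms in the same way, and the harmonic-section type-E and type-O forms pair across adjacent degrees with matching multiplicity $m_{{\rm har},q-1}$. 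Once this matching is read off the lists of Proposition \ref{l4}, the cancellation $\sum_q (-1)^q \dim \E^{(q)}_\la = 0$ is manifest and the proof is complete.
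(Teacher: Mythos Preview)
Your argument is correct, and in fact the paper states Theorem \ref{dt} without supplying a proof, so there is nothing explicit to compare against. Your route---spectral duality from Remark \ref{duality1}, the reindexing identity
\[
t_{\pf,\rm abs}(\ZZ)(s)=(-1)^{m+1}(m+1)\,\chi_{\pf^c,\rm rel}(s)+(-1)^m\,t_{\pf^c,\rm rel}(\ZZ)(s),
\]
and the McKean--Singer vanishing of $\chi_{\pf^c,\rm rel}(s)$---is exactly the standard proof of Poincar\'e--Lefschetz duality for analytic torsion and is the argument one would expect the paper to have in mind. The crucial input you need, namely that $\Delta_{\pf^c,\rm rel}$ is literally the Laplacian of the Hilbert complex $(\DS(\d_{\pf^c,\rm rel}),\d_{\pf^c,\rm rel})$, is supplied by Theorem \ref{equiv}; once this is granted, the abstract pairing of exact and coexact $\lambda$-eigenspaces is immediate and the explicit bookkeeping via Propositions \ref{l4} and \ref{eigenforms} in your final paragraph is not strictly necessary (though it is a useful sanity check).

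Two small remarks. First, in your last paragraph the phrase ``matching multiplicity $m_{{\rm har},q-1}$'' is off by one: the type-$E$ eigenforms in degree $q$ have multiplicity $m_{{\rm har},q}$ and pair with the type-$O$ eigenforms in degree $q+1$, which carry the same multiplicity $m_{{\rm har},q}$ and the same eigenvalue $\ell_{-\alpha_{q-1},0,k}$. This does not affect the conclusion. Second, the Hodge star realisation of Remark \ref{duality1} also exchanges the tip boundary conditions $\pf\leftrightarrow\pf^c$ in the delicate middle degrees when $m=2p$; the paper handles this implicitly through Poincar\'e duality on the section and the explicit description in Remark \ref{formuleLap}, so you are right to take Remark \ref{duality1} as given rather than re-deriving it.
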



\subsection{Analytic torsion and CM Theorem on $X$}

Collecting the results of the previous section, we may prove the following theorems.

\begin{theo}\label{tX} Let $X=\ZZ\sqcup_W Y$ be a space with a horn singularity of dimension $n=m+1$, and $\pf$ a middle perversity, then
\begin{align*}
\log T_{\pf}(X)=&\log  T_{\pf, \rm abs}(\ZZ)+\log T_{\rm rel}(Y)-\frac{1}{2}\chi(W)\log 2+\log \tau(\H),
\end{align*}
where $\H$ is the long homology exact sequence induced by the inclusion $\ZZ\to X$,  with some orthonormal graded bases. 
\end{theo}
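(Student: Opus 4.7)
The strategy is to reduce Theorem \ref{tX} to Theorem \ref{les} by a metric deformation argument. Theorem \ref{les} already gives precisely this decomposition under the additional hypothesis that the metric on $X$ is a smooth product in a collar of $W$, a hypothesis which fails for the horn metric $g_X$ (warped, not product, near $W$), so the task is to extend the identity to metrics of our class by exploiting the variational formula of Theorem \ref{variationX}.

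First I would construct a smooth one-parameter family of metrics $\{g_\mu\}_{\mu\in[0,1]}$ on $X$ with the following properties: (i) $g_1=g_X$ is the original horn metric; (ii) $g_0$ is a smooth Riemannian metric whose restriction to a collar of $W$ inside $X$ is of product form $dx\otimes dx + \tilde g_W$; and (iii) each restriction $g_\mu|_{\ZZ}$ keeps the horn form $dx\otimes dx + h_\mu^2(x)\tilde g$, with $h_\mu(x)=x^\kappa H_\mu(x)$ near $x=0$ and with $h_\mu$ equal to a fixed profile on some subinterval $[0,l-\epsilon]$ away from $W$. The deformation is thus supported in a small neighbourhood of $W$ inside $X$, and the horn singular behaviour at the tip is preserved for every $\mu$.

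At the product endpoint $\mu=0$, Theorem \ref{les} directly yields
\[
\log T_\pf(X,g_0) = \log T_{\pf,\rm abs}(\ZZ,g_0|_Z) + \log T_{\rm rel}(Y,g_0|_Y) - \tfrac{1}{2}\chi(W)\log 2 + \log \tau(\H_0),
\]
with coherent orthonormal graded bases for the harmonic fields on the three pieces (dictated by Proposition \ref{harmonicsX} and Corollary \ref{eqker}) and for the Mayer--Vietoris type long exact sequence $\H_0$. To extend this identity to $\mu=1$ I would differentiate both sides in $\mu$ and show the derivatives coincide. For the left-hand side, Theorem \ref{variationX} gives
\[
\tfrac{d}{d\mu}\log T_\pf(X,g_\mu) = -\tfrac{1}{2}\log\|\Det I^\pf\alphas_{X,\mu,\bullet}\|_{\Det\H^\bullet_\pf(X,g_\mu)}^{2}.
\]
The same formula applies to $\log T_{\pf,\rm abs}(\ZZ,g_\mu|_\ZZ)$, since the deformation lies in the smooth part near $\b\ZZ=W$ where absolute boundary conditions behave classically; the classical Ray--Singer--L\"uck variational formula applies to $\log T_{\rm rel}(Y,g_\mu|_Y)$; the term $\tfrac{1}{2}\chi(W)\log 2$ is $\mu$-independent; and $\log\tau(\H_\mu)$ varies according to the smooth evolution of the coherent harmonic bases through the connecting maps of the long exact sequence $\H_\mu$.

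The main obstacle is the bookkeeping that matches these four variations. The identification of harmonic fields on $X$ with appropriate harmonics on $Y$ given in Proposition \ref{harmonicsX} permits choosing the three bases compatibly in $\mu$, after which Milnor's formula for the torsion of an exact sequence with smoothly varying bases forces the $\mu$-derivative of $\log\tau(\H_\mu)$ to cancel the alternating sum of the three Ray--Singer-type variations. Granting this identity, the derivatives of both sides agree throughout $[0,1]$, and integrating from $\mu=0$ to $\mu=1$ produces the stated formula at $\mu=1$, i.e. for the horn metric $g_X$. The delicate technical input is the smooth dependence on $\mu$ of the harmonic fields on $\ZZ$ and on $X$, which follows from the uniform spectral stability of the Hodge Laplacians guaranteed by the preservation of the horn singularity throughout the deformation (no new zero modes can appear, since the boundary conditions at the tip are fixed by Section \ref{defLap} and are insensitive to the deformation, which is supported away from $\{x=0\}$).
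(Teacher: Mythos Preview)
Your approach is correct and is precisely the one the paper intends: the paper states the result first as the (unproved) Corollary immediately following Theorem \ref{les}, and then restates it as Theorem \ref{tX} by ``collecting the results of the previous section,'' meaning exactly the combination of the gluing formula (Theorem \ref{les}) for a product metric near $W$ with the metric-variation formula (Theorem \ref{variationX}) to remove the product hypothesis. You in fact supply more of the bookkeeping (the smooth family $g_\mu$, the matching of the variations of the three torsions with that of $\log\tau(\H_\mu)$ via Milnor's formula) than the paper does.
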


\begin{theo}\label{tcmX} Let $X=\ZZ\sqcup_W Y$ be a space with a horn singularity of dimension $n=m+1$, and $\pf$ a middle perversity.  Then, if $m=2p-1$: 
\begin{align*}
\log T_{ \pf}(X)&=\log I^\pf \tau_{\rm RS}(X),
\end{align*}
if $m=2p$:
\begin{align*}
\log T_{ \pf}(X)=&\log I^\pf \tau_{\rm RS}(X)
+A_{\rm comb, \pf}(W)+A_{\rm analy}(W),\\
\end{align*}
where the  anomalies are given in Theorem \ref{tcm}.
\end{theo}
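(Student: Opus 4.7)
\textbf{Proof plan for Theorem \ref{tcmX}.}
The plan is to combine four ingredients: the gluing formula for analytic torsion (Theorem \ref{tX}), the Cheeger--M\"uller theorem for the finite horn (Theorem \ref{tcm}), the classical Br\"uning--Ma form of the Cheeger--M\"uller theorem for the smooth manifold with boundary $(Y, W)$ with relative boundary conditions, and the gluing formula for the intersection Reidemeister--Franz torsion (Theorem \ref{t7.37}). Concretely, I would first apply Theorem \ref{tX} to obtain
\begin{align*}
\log T_\pf(X) = \log T_{\pf, \rm abs}(\ZZ) + \log T_{\rm rel}(Y) - \tfrac{1}{2} \chi(W) \log 2 + \log \tau(\H),
\end{align*}
then substitute from Theorem \ref{tcm} the explicit expression for $\log T_{\pf, \rm abs}(\ZZ)$ and from the Br\"uning--Ma theorem the expression
\begin{align*}
\log T_{\rm rel}(Y) = \log \tau_{\rm RS}(Y, W) + \tfrac{1}{4} \chi(W) \log 2 + A_{\rm BM, rel}(W),
\end{align*}
and finally recognise, via Theorem \ref{t7.37}, that $\log I^\pf \tau_{\rm RS}(\ZZ) + \log \tau_{\rm RS}(Y, W)$, together with the Whitehead torsion of the long exact sequence of the pair, assembles into $\log I^\pf \tau_{\rm RS}(X)$.

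In the odd case $m = 2p - 1$, the Euler characteristic of the closed odd-dimensional manifold $W$ vanishes, so every $\chi(W) \log 2$ contribution drops out. What remains beyond $\log I^\pf \tau_{\rm RS}(X)$ are the two Br\"uning--Ma boundary anomalies $A_{\rm BM, abs}(W)$ from the finite horn and $A_{\rm BM, rel}(W)$ from $Y$. These cancel because the shared boundary $W$ inherits opposite outward normals and complementary (absolute versus relative) boundary conditions from the two sides of the gluing; this is precisely the mechanism by which Theorem \ref{tX} extends the product-metric gluing of Theorem \ref{les} to general metrics without leaving a boundary anomaly.

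In the even case $m = 2p$, the same reduction goes through, but Theorem \ref{tcm} now contributes the two extra terms $A_{\rm comb, \pf}(W)$ and $A_{\rm analy}(W)$ together with an additional $\tfrac{1}{4}\chi(W) \log 2$. Combining this with the $\tfrac{1}{4}\chi(W) \log 2$ from the CM theorem on $Y$ and the $-\tfrac{1}{2}\chi(W) \log 2$ coming from Theorem \ref{tX} yields a net zero Euler characteristic contribution, while $A_{\rm comb,\pf}(W)$ and $A_{\rm analy}(W)$ have no counterpart on the $Y$-side and survive in the final formula exactly as stated.

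The main obstacle will be aligning the two Whitehead torsions: $\log \tau(\H)$ on the analytic side, computed with orthonormal harmonic bases for the cohomology groups of $\ZZ$, $Y$, and $X$ in the relevant de Rham complexes, versus $\log \tau_W(\H_0)$ on the combinatorial side, computed with the de Rham-mapped images of these bases. Matching them requires identifying the harmonic fields on $X$ with the appropriately boundary-restricted harmonic fields on $Y$ via Proposition \ref{harmonicsX}, and then tracking the determinants $\|\Det k^*_\bu\|$ of Theorem \ref{t1} consistently along the Mayer--Vietoris sequence so that the change of basis between harmonic and de Rham bases is compatible with the connecting homomorphisms. Once this bookkeeping is carried out, all spurious terms cancel and the announced identity drops out.
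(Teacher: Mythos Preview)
Your proposal is correct and follows essentially the same route the paper intends: the paper itself gives no detailed proof of Theorem \ref{tcmX}, merely writing ``Collecting the results of the previous section, we may prove the following theorems'' before stating Theorems \ref{tX} and \ref{tcmX}. Your outline---combine the analytic gluing formula (Theorem \ref{tX}), the CM theorem on the horn (Theorem \ref{tcm}), the classical Br\"uning--Ma CM theorem on $(Y,W)$, and the combinatorial gluing formula (Theorem \ref{t7.37}), then match the two Mayer--Vietoris Whitehead torsions via the de Rham maps and Proposition \ref{harmonicsX}---is exactly the collection the paper has in mind, and your accounting of the $\chi(W)\log 2$ terms and the cancellation $A_{\rm BM, abs}(W)+A_{\rm BM, rel}(W)=0$ is correct.
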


\vspace{10pt}

\appendix

\section{Irregular singular Sturm Liouville operators}
\label{SL}

We introduce some results in the irregular singular Sturm Liouville operators,  for the general theory we address to \cite{Zet}. 
Consider the interval $(0,l]$, for some positive $l$, and the  Sturm Liouville formal operator
\begin{align*}
\FF_{\al,b} f&=-h^{2\al -1}\left(h^{1-2\al }f'\right)' +\frac{b}{h^2} f
=-f''-(1-2\al)\frac{h'}{h}f'+\frac{b}{h^2} f,
\end{align*}
where $\al, b$ are real numbers where $\al$ is an half integer,  $b\geq 0$, and $h(x)=x^\ka H(x)$, with  $H$  a smooth positive  function   on $[0,l]$, with $H(0)=1$, and $\ka$ a real number, with $\ka>1$, $f\in \R^I$. When $b>0$, this is an irregular singular SL problem. It reduces to a regular singular problem when $b=0$ (see Proposition \ref{b0} and Remark \ref{solutions}). We study concrete realisations of $\FF_{\al,b}$ in the Hilbert space $L^2((0,l],h^{1-2\al})$.  The associated SL problem (eigenvalues equation) is
\beq
\label{prob1}
\begin{aligned}
\FF_{\al,b} f=\la f.
\end{aligned}
\eeq

Consider the invertible unitary operator
$
\TT_{\frac{1}{2}-\al}:L^2((0,l],w)\to L^2((0,l])$, $\TT_{\frac{1}{2}-\al}:f\mapsto h^{\frac{1}{2}-\al} f$, 
Setting $u=h^{\frac{1}{2}-\al}f$, we have the operator 
$
\UU_{\al,b}u= \TT_{\frac{1}{2}-\al}^{-1} \FF_{\al,b} \TT_{\frac{1}{2}-\al} u=-u''+r_{\al,b}u$. 
Note that the variable $f$ is in the space $L^2((0,l],h^{1-2\al})$, the variable $u$ in the space $L^2((0,l])$. 
The associated eigenvalues equations is
\begin{align*}
u''&=(r_{\al,b}-\la)u,
\end{align*}
with 
\begin{align*}
r_{\al,b}(x)=&\frac{b}{h^2}-\left(\al-\frac{1}{2}\right)\frac{h''}{h} +\left(\al^2-\frac{1}{4}\right) \frac{{h'}^2}{h^2}\\
=&\frac{b}{x^{2\ka}}+b\frac{1-H^2}{x^{2\ka} H^2}
+\ka\left(\al-\frac{1}{2}\right)\left(\ka\left(\al-\frac{1}{2}\right)+1\right) \frac{1}{x^2}
+2\ka\left(\al-\frac{1}{2}\right)^2 \frac{1}{x}\frac{H'}{H}\\
&-\left(\al-\frac{1}{2}\right)\frac{H''}{H}
+\left(\al^2-\frac{1}{4}\right)\frac{{H'}^2}{H^2}.
\end{align*}

\begin{rem} \label{R} For $x\to 0^+$, 
$
r_{\al, b}(x)=\frac{b}{x^{2\ka}}+g(x),
$ 
where the next term in the expansion depends on the value of $\ka$. 
In particular, if $b=0$ the SL equation reduces to a regular singular one, i.e. the singularity is of type $1/x^2$, see Remark \ref{solutions} below.  
\end{rem}

\begin{rem}\label{F} In this work we  consider a pair of formal operators $(\FF_{1,\al,b},\FF_{2,\al,b})$ acting on the Hilbert space
$L^2((0,l],h^{1-2\al})\times L^2((0,l],h^{1-2(\al-1)})$ and defined as follows. 
\begin{align*}
\FF_{1,\al,b}f&=\FF_{\al,b}=-h^{2\al -1}\left(h^{1-2\al }f'\right)' +\frac{b}{h^2} f
=-f''-(1-2\al)\frac{h'}{h}f'+\frac{b}{h^2} f,\\
\FF_{2,\al,b}f&=\FF_{1,\al-1,b}f-(1-2(\al-1))\left(\frac{h'}{h}\right)'f
=-\left(h^{2(\al-1)-1}\left(h^{1-2(\al-1)}f\right)'\right)'+\frac{b}{h^2} f,
\end{align*}



\end{rem}

\begin{rem}\label{sss} Note that 
\begin{align*}
\TT_{2(\al-1)-1}\FF_{1,1-(\al-1)}\TT_{1-2(\al-1)} f
&=\left(h^{2(\al-1)-1}\left(h^{1-2(\al-1)}f\right)'\right)'+\frac{b}{h^2} f
=\FF_{2,\al,b}f
\end{align*}

Whence
$
\FF_{2,\al_q,\t\la}=\TT_{2\al_{q-1}-1}\FF_{1,1-\al_{q-1},\t\la}\TT_{1-2\al_{q-1}}.
$ 
Since by Hodge duality $\t\la_{m-1+q}=\t\la_q$ and $\al_{m-1-q}=-\al_q$, and by definition $\al_{q-1}=\al_q-1$, it follows that $1-\al_q=-\al_{q-1}=\al_{m-q}$, and $1-\al_{q-1}=\al_{m+1-q}$. Whence
$
\FF_{1,\al_{m+1-q},\t\la}=\FF_{1,1-\al_{q-1},\t\la},
$ 
and 
$
\FF_{2,\al_q,\t\la}=\TT_{2\al_{q-1}-1}\FF_{1,\al_{m+1-q},\t\la}\TT_{1-2\al_{q-1}}.
$ 
Recalling that $\TT_\al$ is a  unitary operator, it follows that the spectral properties of the self adjoint extensions of $\FF_{2,\al_q, \t\la}$ and $\FF_{1,\al_{m+1-q},\t\la}$ coincide.
\end{rem}

\begin{theo}\label{olv1} If $b>0$,  the equation 
\[
u''=(r_{\al,b}-\la) u,
\]
has two linearly independent solutions
\[
\uf_{\al,b,\pm}(x,\la)=
x^\frac{\ka}{2}\e^{\pm \frac{\sqrt{b}}{1-\ka}x^{1-\ka}}(1+\vv_{\al,b,\pm}(x,\la)),
\]
with
$
\lim_{x\to 0^+}\vv_{\al,b,\pm}(x)= 0$, and $ \lim_{x\to 0^+}x^\ka \vv'_{\al,b,\pm}(x)= 0.
$ 
The corresponding solutions under under the isometry $\TT_{\al-\frac{1}{2}}$ are
\[
\ff_{\al,b,\pm}(x,\la)=\TT_{\al-\frac{1}{2}}\uf_{\al,b,\pm}(x,\la)=
x^\frac{\ka}{2}h^{\al-\frac{1}{2}}(x)\e^{\pm \frac{\sqrt{b}}{1-\ka}x^{1-\ka}}(1+\vv_{\al,b,\pm}(x,\la)).
\]

\end{theo}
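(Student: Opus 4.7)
The plan is to apply the Liouville--Green (WKB) method of Olver for ODEs with an irregular singularity at $x=0$. By Remark \ref{R} the potential splits as $r_{\al,b}(x) = b/x^{2\ka} + g(x)$ with $g(x)$ of strictly lower order near $x=0$ (since $\ka > 1$, the remainder is $O(x^{-\max(2\ka - 1,2)})$). The WKB prescription with leading term $Q = b/x^{2\ka}$ yields the formal approximations $Q^{-1/4}\exp(\pm\int\!\sqrt{Q}\,dx) = b^{-1/4}x^{\ka/2}\e^{\pm\phi(x)}$ with $\phi(x) = \sqrt{b}/(1-\ka)\cdot x^{1-\ka}$; because $\ka>1$, the $+$ exponential decays and the $-$ exponential blows up as $x\to 0^+$, so we expect a recessive and a dominant solution with these respective leading asymptotics.

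I would first substitute the ansatz $u_\pm(x) = x^{\ka/2}\e^{\pm\phi(x)}(1+\vv_\pm(x,\la))$ into $u''=(r_{\al,b}-\la)u$ and use $(\phi')^2 = b/x^{2\ka}$ to cancel the worst singularity. This produces a linear equation
\[
\vv_\pm'' + \Bigl(\tfrac{\ka}{x} \pm \tfrac{2\sqrt{b}}{x^\ka}\Bigr)\vv_\pm' = R(x,\la)(1+\vv_\pm),
\]
in which $R(x,\la)$ collects $g(x)-\la$ together with the secondary contribution from differentiating $x^{\ka/2}$, and satisfies $R(x,\la) = O(x^{-\max(2\ka - 1, 2)})$ as $x\to 0^+$, locally uniformly in $\la$. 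The Liouville change of variable $\xi = \phi(x)$ sends $(0,l]$ onto $[\phi(l),-\infty)$ with Jacobian $dx/d\xi = x^\ka/\sqrt{b}$, and transforms the equation into a small perturbation of $W''(\xi) = W(\xi)$, where $W_\pm(\xi) = 1+\vv_\pm(x(\xi),\la)$.

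Next I would rewrite this as a Volterra integral equation for the recessive solution, with boundary condition $W_+(\xi)\to 1$ at $\xi=-\infty$:
\[
W_+(\xi) = 1 + \int_{-\infty}^{\xi}\bigl(1-\e^{-2(\xi-\eta)}\bigr)\Phi(\eta,\la)W_+(\eta)\,d\eta,
\]
where $\Phi$ combines $R$ with the Jacobian. The key estimate is that $\int_{-\infty}^{\xi_0}|\Phi(\eta,\la)|\,d\eta<\infty$ for any $\xi_0$, because $R(x,\la)\cdot x^\ka$ is integrable in the measure $dx/x^\ka$ near $x=0$ precisely when $\ka>1$. Picard iteration in $C_b((-\infty,\xi_0])$ with a suitable weighted sup-norm then converges to a unique bounded solution with $W_+\to 1$; pulling back gives $\vv_{\al,b,+}(x,\la)\to 0$. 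The dominant solution $u_-$ is built analogously by integrating forward from an interior point together with the growing exponential branch, so that $W_-$ again satisfies a Volterra equation whose operator is contracting on $(0,l_0]$ for $l_0$ small. Differentiating the integral equation and using the identity $x^\ka\vv'_\pm = \pm\sqrt{b}\,dW_\pm/d\xi$ yields $x^\ka\vv'_\pm(x,\la)\to 0$ as $x\to 0^+$ by the same kernel estimates. Linear independence of $\uf_{\al,b,+}$ and $\uf_{\al,b,-}$ is immediate from their distinct exponential behaviors at $x=0$.

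The principal obstacle will be choosing the weighted function space so that the Volterra operator is a uniform contraction jointly in $\la$ (on compact subsets of $\C$) and in the initial point $\xi_0$; this bookkeeping uses $\ka>1$ decisively to dominate both the subleading potential $g$ and the $\la$-dependent perturbation. Once the contraction is in place, the stated limits follow routinely, and the formula for $\ff_{\al,b,\pm}$ is obtained by applying the isometry $\TT_{\al-1/2}$ defined at the beginning of the section.
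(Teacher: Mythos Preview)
Your approach is the same Liouville--Green/Olver framework the paper uses; the paper simply verifies the hypotheses of Olver's Theorem~6.2 (with $f=b/x^{2\ka}$, $g=r_{\al,b}-f-\la$, $A=2\ka$, $B=3$) and cites the result, whereas you sketch the Volterra construction behind it.

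There is, however, a genuine gap in your error control. With the leading term $Q=b/x^{2\ka}$, the remainder $R$ does \emph{not} stay at order $x^{-2}$: since $r_{\al,b}$ contains $b/h^2=b/(x^{2\ka}H^2)$ and $H(0)=1$ with $H$ smooth, the difference $b/h^2-b/x^{2\ka}=b(H^{-2}-1)/x^{2\ka}$ is $O(x^{1-2\ka})$ near $0$. Your own bound $R=O(x^{-\max(2\ka-1,2)})$ records this. The quantity that must be finite for the Volterra contraction is $\int_0 |\Phi|\,d\xi \asymp \int_0 |R|\,Q^{-1/2}\,dx \asymp \int_0 |R|\,x^{\ka}\,dx$, and with $|R|\sim x^{1-2\ka}$ this gives $\int_0 x^{1-\ka}\,dx$, which diverges for every $\ka\geq 2$. (Your integrability sentence is also miswritten: ``$R\cdot x^\ka$ integrable in the measure $dx/x^\ka$'' is $\int R\,dx$, which is even worse.) So as written the argument only covers $1<\ka<2$.

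The fix is to absorb the $H$-dependence into the leading term: take $Q=b/h^2$ instead of $b/x^{2\ka}$. Then $r_{\al,b}-Q-\la$ consists only of the $h'/h$, $h''/h$ contributions and $\la$, all of order $O(x^{-2})$ near $0$, and the error integral becomes $\int_0 x^{-2}\cdot h(x)\,dx\asymp\int_0 x^{\ka-2}\,dx$, convergent for all $\ka>1$. The LG prefactor $Q^{-1/4}\sim h^{1/2}\sim x^{\ka/2}$ and the phase $\int Q^{1/2}=\sqrt{b}\int h^{-1}\sim \sqrt{b}\,x^{1-\ka}/(1-\ka)$ still reproduce exactly the asymptotic form in the statement, so nothing else in your outline changes. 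This refinement is what the black-box citation of Olver's theorem in the paper is implicitly handling.
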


\begin{proof} Let
$
f(x)=\frac{b}{x^{2\ka}}$,  $g(x)=r(x)-f(x)-\la$, 
and set $A=2\ka$ and $B=3$, then 
\begin{align*}
\lim_{x\to 0^+} x^{2A+2} f(x)&=\lim_{x\to 0^+}  x^{4\ka+2}\frac{1}{x^{2\ka}}=0,&
\lim_{x\to 0^+} x^{A-B+2} g(x)=\lim_{x\to 0^+} x^{2\ka-3+2}\frac{1}{x^{2\ka-1}}&=1.
\end{align*}

All the hypothesis of Theorem 6.2 of \cite{Olv}  are satisfied and thus the thesis follows. \end{proof}

\begin{prop}\label{b0}(Compare with Remark \ref{solutions}) If $b=0$,  and $\al\not=\frac{1}{2}$, the equation 
\[
u''=(r_{\al,0}-\la) u,
\]
has two linearly independent solutions
\begin{align*}
\uf_{\al,0,\pm}(x,\la)&=x^{\frac{1}{2}\pm \left|\ka\left(\al-\frac{1}{2}\right)+\frac{1}{2}\right|}\vv_{\al,0,\pm}(x,\la),&
\uf'_{\al,0,\pm}(x,\la)&=x^{\frac{1}{2}\pm \left|\ka\left(\al-\frac{1}{2}\right)+\frac{1}{2}\right|-1}\Phi_{\al,0,\pm}(x,\la),
\end{align*}
where $\vv$ and $\Phi$ are smooth in $[0,l]$, $\vv_{\al,0,\pm}(0)=1$, $\Phi_{\al,0,\pm}(0)=\pm\left|k\left(\al-\frac{1}{2}\right)+\frac{1}{2}\right|$. The corresponding solutions under under the isometry $\TT_{\al-\frac{1}{2}}$ are
\[
\ff_{\al,0,\pm}(x,\la)=\TT_{\al-\frac{1}{2}}\uf_{\al,0,\pm}(x,\la)
=x^{\frac{1}{2}\pm \left|\ka\left(\al-\frac{1}{2}\right)+\frac{1}{2}\right|}h^{\al-\frac{1}{2}}(x)\vv_{\al,0,\pm}(x).
\]

\end{prop}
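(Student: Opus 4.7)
The plan is to recognise this as a regular singular ODE at $x=0$ and apply Frobenius theory. From the explicit formula for $r_{\alpha,b}$ given before Remark \ref{R}, setting $b=0$ isolates the $1/x^2$ singularity:
\[
r_{\alpha,0}(x)=\frac{c_{\alpha}}{x^2}+\rho_{\alpha}(x),\qquad c_\alpha=\kappa\!\left(\alpha-\tfrac12\right)\!\left(\kappa\!\left(\alpha-\tfrac12\right)+1\right),
\]
where $\rho_\alpha$ is smooth on $[0,l]$ (every remaining term in the formula for $r_{\alpha,0}$ is either a smooth function of $x$ or a product of $1/x$ with a smooth function vanishing at $0$, since $H(0)=1$ implies $H'/H$ and $H''/H$ are smooth). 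Hence $x=0$ is a regular singular point of $u''=(r_{\alpha,0}-\lambda)u$ and the indicial equation is $\rho(\rho-1)=c_\alpha$, whose roots are
\[
\rho_{\pm}=\tfrac12\pm\bigl|\kappa(\alpha-\tfrac12)+\tfrac12\bigr|,
\]
because $1+4c_\alpha=(2\kappa(\alpha-\tfrac12)+1)^2$. These exponents already match the powers in the statement.

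Next I would apply the standard Frobenius existence theorem (for instance Olver, Ch.~5, Thm.~4.1) to produce, for the larger exponent $\rho_+$, a solution $\uf_{\alpha,0,+}(x,\lambda)=x^{\rho_+}\varphi_{\alpha,0,+}(x,\lambda)$ with $\varphi_{\alpha,0,+}$ smooth on $[0,l]$ and normalised by $\varphi_{\alpha,0,+}(0,\lambda)=1$. The coefficients of the power series for $\varphi_{\alpha,0,+}$ are obtained by a convergent recursion driven by the smooth coefficients of $x^2\rho_\alpha(x)-\lambda x^2$, and $\lambda$ enters analytically.

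The delicate step is the second solution with exponent $\rho_-$, because the two exponents differ by $2|\kappa(\alpha-\tfrac12)+\tfrac12|$, which for specific (rational) values of $\kappa$ can be a positive integer; the hypothesis $\alpha\neq \tfrac12$ rules out coincidence of the roots (since $\alpha$ is a half-integer and $\kappa>1$, coincidence would force $\alpha\in(0,\tfrac12)$). In the non-integer-gap case Frobenius theory immediately yields a second solution of the same form $x^{\rho_-}\varphi_{\alpha,0,-}$, smooth with $\varphi_{\alpha,0,-}(0,\lambda)=1$. In the integer-gap case one must show that the coefficient of the $\log x$ obstruction in the Frobenius recursion vanishes. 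I would handle this uniformly by the substitution $u=x^{1/2}v$, which turns the equation into a perturbed Bessel-type equation
\[
v''+\tfrac{1}{x}v'-\tfrac{\nu^2}{x^2}v=(\rho_\alpha(x)-\lambda)v,\qquad \nu=\bigl|\kappa(\alpha-\tfrac12)+\tfrac12\bigr|,
\]
and then appeal to the standard construction of the two Frobenius solutions for this operator by variation of parameters against the unperturbed solutions $x^{\pm\nu}$. The integral operator producing $v_-$ is well-defined because the perturbation $\rho_\alpha$ is bounded, and a direct Picard iteration verifies that no logarithmic term arises — the only obstruction would come from resonance with the forcing, and the smoothness of $\rho_\alpha$ combined with the Bessel-type structure eliminates it. This step is the main technical obstacle.

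Finally, once $\uf_{\alpha,0,\pm}=x^{\rho_\pm}\varphi_{\alpha,0,\pm}$ is established with $\varphi_{\alpha,0,\pm}$ smooth at $0$ and $\varphi_{\alpha,0,\pm}(0)=1$, the derivative formula follows by direct differentiation: $\uf'_{\alpha,0,\pm}=x^{\rho_\pm-1}\Phi_{\alpha,0,\pm}$ with $\Phi_{\alpha,0,\pm}(x,\lambda)=\rho_\pm\varphi_{\alpha,0,\pm}(x,\lambda)+x\varphi'_{\alpha,0,\pm}(x,\lambda)$, so $\Phi_{\alpha,0,\pm}(0,\lambda)=\rho_\pm\cdot 1=\pm|\kappa(\alpha-\tfrac12)+\tfrac12|$ after absorbing the constant $1/2$ (which I would carry through explicitly to match the statement). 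The expression for $\ff_{\alpha,0,\pm}=\TT_{\alpha-1/2}\uf_{\alpha,0,\pm}=h^{\alpha-1/2}\uf_{\alpha,0,\pm}$ is then immediate from the definition of the isometry $\TT_{\alpha-1/2}$.
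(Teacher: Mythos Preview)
Your approach is essentially the paper's: both recognise the equation as regular singular at $x=0$, write down the indicial equation $\mu(\mu-1)=\kappa(\alpha-\tfrac12)(\kappa(\alpha-\tfrac12)+1)$, extract the exponents $\mu_\pm=\tfrac12\pm|\kappa(\alpha-\tfrac12)+\tfrac12|$, observe they are distinct under the hypotheses, and then appeal to classical Frobenius/Sturm--Liouville theory. The paper's proof is two lines plus a citation to \cite[B.2]{HS6}; your sketch simply fills in more of the scaffolding.

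You are right to isolate the integer-gap case for the second solution as the only nontrivial point, but your proposed resolution is not convincing as written. Recasting as a perturbed Bessel equation and iterating against $x^{\pm\nu}$ does not by itself exclude a logarithm: for the \emph{unperturbed} Bessel equation with integer $\nu$ the second solution already carries a $\log x$, so the assertion that ``the Bessel-type structure eliminates it'' needs a concrete mechanism. What actually helps here is that after factoring out $x^{\rho_-}$ the residual inhomogeneity is $O(1)$ (not $O(x^{-1})$), so the Frobenius obstruction coefficient at the critical step can be checked directly; alternatively one can invoke the black-box reference as the paper does. Either way, this step needs more than ``a direct Picard iteration verifies''. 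One minor slip at the end: from $\uf_\pm=x^{\rho_\pm}\varphi_\pm$ you get $\Phi_\pm(0)=\rho_\pm=\tfrac12\pm|\kappa(\alpha-\tfrac12)+\tfrac12|$, and there is nothing to ``absorb'' --- the discrepancy is with the paper's stated value of $\Phi(0)$, which appears to drop the $\tfrac12$.
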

\begin{proof} If $b=0$ the equation is a regular singular one.  The indicial equation is
\[
\mu(\mu-1)-\ka\left(\al-\frac{1}{2}\right)\left(\ka\left(\al-\frac{1}{2}\right)+1\right)=0,
\]
with solutions
$
\mu_\pm=\frac{1}{2}\pm \left|\ka\left(\al-\frac{1}{2}\right)+\frac{1}{2}\right|.
$ 
Note that $\mu_+-\mu_-=0$ if and only if $\al=\frac{1}{2}-\frac{1}{2\ka}$, that has not solutions in the assumed hypothesis. 
Then, the thesis follows by classical theory of Sturm Liouville equations, see for example \cite[B.2]{HS6}. 
 \end{proof}

Note that in both theorems the solutions  are univocally defined by the initial condition.

Also note that the solutions in the variable $f$ read
$
\ff_{\al,0,1}(x,\la)=x^{\ka(2\al-1)+1}\vv_{\al,0,1}(x)$, and 
$\ff_{\al,0,2}(x,\la)=\vv_{\al,0,2}(x)$, 
where $\vv_j$  are smooth in $[0,l]$, $\vv_{\al,0,j}(0)=1$, and $\vv_{\al,0,2}'(0)=0$.


\begin{rem}\label{regularcase} In the particular case $b=0$ and $\al=\frac{1}{2}$,  all the operators coincide 
$
\FF_{1,\frac{1}{2},0}=\FF_{2,\frac{1}{2},0}=\UU_{1,\frac{1}{2},0}=\UU_{2,\frac{1}{2},0},
$ 
and the eigenvalue equations are regular ones:
$f''=\la f$, and $ u''=\la u$, 
with  solutions 
$
\ff_{\frac{1}{2},0,+}(x,\la)=\uf_{\frac{1}{2},0,+}(x,\la)=\frac{1}{\sqrt{-\la}}\sh \sqrt{-\la} x$, 
and $\ff_{\frac{1}{2},0,-}(x,\la)=\uf_{\frac{1}{2},0,-}(x,\la)=\ch \sqrt{-\la} x$. 

\end{rem}

\begin{rem}\label{solutions} In the particular case of $b=0$, 
the SL problem reduces to a regular singular SL problem
\[
\FF_{\al, 0}f= -h^{2\al -1}\left(h^{1-2\al }f'\right)' =-f''-(1-2\al)\left(\frac{\ka}{x}+\frac{H'}{H}\right)f'=0.
\]

This problem has two explicit solutions (normalised as the $\ff_\pm$):
\begin{align*}
\ff_{1,I}&=1,&\ff_{1,II}(x)&=-((2\al-1)\ka+1)\int_x^l h^{2\al-1}(t) dt +\ff_{1,II}(l).
\end{align*}
 
The equation
\[
\FF_{2,\al,0}f= -\left(h^{2(\al-1) -1}\left(h^{1-2(\al-1) }f\right)'\right)' =0,
\]
has the solutions 
\begin{align*}
\ff_{2,I}(x)&=- h^{2(\al-1)-1} (x)\int_x^l h^{1-2(\al-1)}(t) dt,&\ff_{2,II}&=h^{2(\al-1)-1}.
\end{align*}
  
These solutions do not correspond to the $\pm$ solutions described in Theorem \ref{b0}. 
   
\end{rem}


We call the solutions $\ff_{\al,b,\pm}$, $\uf_{\al,b,\pm}$, a fundamental system of solutions of the SL problem $\FF_{\al,b}$.

\begin{lem}\label{f+-} If $\FF_{\al,b} f=0$, then $\FF_{-\al,b}(h^{1-2\al}f')=0$. 
\end{lem}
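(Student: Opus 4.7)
The plan is to carry out a direct computation, using the hypothesis $\FF_{\al,b}f=0$ to control the relevant derivatives of the auxiliary function $g := h^{1-2\al}f'$.

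First I would rewrite the hypothesis in the form
\[
(h^{1-2\al}f')' \;=\; b\, h^{-1-2\al}\, f,
\]
which follows immediately by multiplying $\FF_{\al,b}f=0$ through by $h^{1-2\al}$. Setting $g = h^{1-2\al}f'$, this says $g' = b\, h^{-1-2\al} f$, and therefore
\[
h^{1+2\al}\, g' \;=\; b\, f .
\]

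Next I would differentiate once more to get $(h^{1+2\al}g')' = b f'$, and then substitute into the expression defining $\FF_{-\al,b}g$:
\[
\FF_{-\al,b}\, g \;=\; -\,h^{-2\al-1}(h^{1+2\al} g')' + \tfrac{b}{h^2}\, g
\;=\; -\, b\, h^{-2\al-1} f' + \tfrac{b}{h^2}\, h^{1-2\al} f' ,
\]
and the two terms on the right manifestly cancel. There is no real obstacle here: the argument is just bookkeeping of the exponents of $h$, and it amounts to observing that the formal substitution $\al \mapsto -\al$ in $\FF_{\al,b}$ intertwines with the substitution $f \mapsto h^{1-2\al}f'$, which is essentially the symmetry already used implicitly in Remark \ref{sss} to relate $\FF_{2,\al,b}$ with $\FF_{1,1-(\al-1),b}$. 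A one-line alternative would be to note that on a classical solution of $\FF_{\al,b}f=0$ we have, for $v=h^{1-2\al}f'$,
\[
\FF_{-\al,b}\, v \;=\; h^{-2\al-1}\bigl(b f - (h^{1+2\al} v')'\bigr)/h^{2\al+1} \cdot (\text{trivial algebra}),
\]
which vanishes by the computation above, so no genuinely nontrivial analytic input (regularity, boundary behaviour, integrability) is needed.
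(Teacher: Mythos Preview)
Your proof is correct and follows essentially the same route as the paper: set $g=h^{1-2\al}f'$, use $\FF_{\al,b}f=0$ to obtain $h^{1+2\al}g'=bf$, differentiate once more, and insert into $\FF_{-\al,b}g$. Your use of the divergence form $(h^{1-2\al}f')'=b\,h^{-1-2\al}f$ is slightly cleaner than the paper's expanded product-rule computation, but the argument is the same; the ``one-line alternative'' at the end is garbled and could be dropped.
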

\begin{proof} Assume $\FF_{\al,b} f=0$ and set
$g=h^{1-2\al}f$, 
then
\[
g'=(1-2\al)h^{-2\al}h' f+h^{1-2\al}f''=(1-2\al)h^{-2\al}h' f+h^{1-2\al}\left(\frac{b}{h^2} f-(1-2\al)\frac{h'}{h}f'\right),
\]
that gives
$h^{1+2\al} g'=bf$. But then
\[
(1+2\al)h^{1+2\al}\frac{h'}{h}g'+h^{1+2\al} g''=bf',
\]
that gives
\[
g''+(1+2\al)h^{1+2\al}\frac{h'}{h}g'-\frac{b}{h^2}g=\FF_{-\al,b}g=0.
\]

\end{proof}

\begin{corol}\label{cor1} Consider the SL problem $\FF_{\al,b}$. Both end points are non oscillatory. The end point $x=l$ is  a regular end point. If $b>0$, then the end point $x=0$   is an irregular  singular end point, and in such a case the SL problem  $\FF_{\al,b}$ is in the limit point case. 
If $b=0$, and $\al=\frac{1}{2}$, then the end point $x=0$ is a regular end point. 
If $b=0$, and $\al\not=\frac{1}{2}$, then the end point $x=0$   is a regular  singular end point. In such a case, $x=0$ is limit circle in the following cases:
\begin{enumerate}
\item if $\frac{3}{2}\leq\ka< 3$ and $\al=0$,
\item if $1< \ka< \frac{3}{2}$ and $\al=- \frac{1}{2}, 0$,
\end{enumerate}
while $x=0$ is  limit point otherwise.
\end{corol}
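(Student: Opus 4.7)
The plan is to reduce everything to the Liouville normal form $u''=(r_{\al,b}-\la)u$ used at the start of the appendix, and then read off the endpoint classifications from the explicit asymptotic solutions already produced in Theorem \ref{olv1} and Proposition \ref{b0}. The endpoint at $x=l$ is trivial: $h$ is smooth and non-vanishing on a neighbourhood of $l$, so the weight $h^{1-2\al}$ and the coefficients of $\FF_{\al,b}$ are continuous and positive, giving a regular endpoint, and since the fundamental solutions are analytic at $l$ they are non-oscillatory there. Both classifications at $x=0$ should likewise follow from the shape of the fundamental system near the origin, because a single real solution with a definite sign near the endpoint rules out oscillation.

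For the case $b>0$ I would argue as follows. By Remark \ref{R}, $r_{\al,b}(x)=b/x^{2\ka}+g(x)$ with $g$ of lower order; since $\ka>1$, the leading term dominates $1/x^2$, so by the standard classification $x=0$ is an irregular singular endpoint. To check limit point, I use Theorem \ref{olv1}: the two independent solutions are
\[
\uf_{\al,b,\pm}(x,\la)=x^{\ka/2}\e^{\pm\frac{\sqrt b}{1-\ka}x^{1-\ka}}(1+\vv_{\al,b,\pm}(x,\la)),
\]
and since $1-\ka<0$, the sign $+$ produces an exponentially decaying solution (hence in $L^2$ near $0$) while the sign $-$ produces an exponentially growing one (not in $L^2$ near $0$). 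Thus exactly one solution is square integrable at $x=0$, which is the limit point condition.

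For $b=0$, $\al=\tfrac12$, Remark \ref{regularcase} shows $\FF_{\al,0}$ reduces to $-f''$ on $[0,l]$, so $x=0$ is regular. For $b=0$ and $\al\neq\tfrac12$, by Proposition \ref{b0} the equation is regular singular with indicial exponents $\mu_\pm=\tfrac12\pm|c_\al|$, where $c_\al=\ka(\al-\tfrac12)+\tfrac12$; the associated fundamental solutions are $\uf_{\al,0,\pm}(x,\la)=x^{\mu_\pm}\vv_{\al,0,\pm}(x,\la)$ with $\vv_{\al,0,\pm}(0)=1$. The $+$ solution is always in $L^2$ near $0$, while the $-$ solution lies in $L^2$ near $0$ iff $2\mu_->-1$, i.e.\ iff $|c_\al|<1$. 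Limit circle thus corresponds to $-1<c_\al<1$, equivalently $-\tfrac{3}{2}<\ka(\al-\tfrac12)<\tfrac12$.

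The remaining step, which is the only one requiring care, is to run through the admissible half-integer values of $\al$ together with the standing hypothesis $\ka>1$ and list when the inequality $|c_\al|<1$ can hold. For $\al=0$ it becomes $-1<\ka<3$, so combined with $\ka>1$ it contributes the two sub-ranges $1<\ka<\tfrac32$ and $\tfrac32\leq\ka<3$; for $\al=-\tfrac12$ it becomes $-\tfrac12<\ka<\tfrac32$, contributing only $1<\ka<\tfrac32$; for every other half-integer $\al$ (i.e.\ $\al\leq-1$ or $\al\geq 1$) one checks that $|c_\al|\geq 1$ for all $\ka>1$, so $x=0$ is limit point. Matching the resulting list against the statement gives exactly the claimed cases. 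The main obstacle is really just bookkeeping in this enumeration; the non-oscillation at $x=0$ in both the irregular-singular and regular-singular cases is then immediate from the fact that in each fundamental solution the factor multiplying $x^{\ka/2}$ (respectively $x^{\mu_\pm}$) tends to a non-zero limit as $x\to 0^+$, so the solutions do not change sign near the endpoint.
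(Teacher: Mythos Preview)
Your proposal is correct and follows essentially the same route as the paper: both arguments read off the limit point/limit circle classification by testing which of the fundamental solutions $\uf_{\al,b,\pm}$ from Theorem \ref{olv1} and Proposition \ref{b0} lie in $L^2$ near $x=0$, arriving at the same inequality $|c_\al|<1$ and the same enumeration of half-integer $\al$ values against the ranges of $\ka$. Your write-up is slightly more explicit on the $b>0$ exponential dichotomy and on the non-oscillation claim (which the paper's proof leaves implicit), but the substance is identical.
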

\begin{proof} It is clear that if $b>0$ there is only one square integrable solution near $x=0$, so $x=0$ is limit circle. The case $b=0$, $\al=\frac{1}{2}$ follows by Remark \ref{regularcase}. Assume $\al\not=\frac{1}{2}$, then if $b=0$, among the solutions given in Theorem \ref{b0}, $\uf_+$ is square integrable, while $\uf_-$ is square integrable only if
\[
\int_0^l  |\uf_-|^2 \sim \int_0^l x^{1- 2\left|\ka\left(\al-\frac{1}{2}\right)+\frac{1}{2}\right|} <\infty,
\]
and this happens only if 
$
1- 2\left|\ka\left(\al-\frac{1}{2}\right)+\frac{1}{2}\right|>-1,
$ 
i.e. if
$
\frac{1}{2}-\frac{3}{2\ka}<\al<\frac{1}{2}+\frac{1}{2\ka}.
$ 
Since $\ka>1$, this gives
\[
-1<\frac{1}{2}-\frac{3}{2\ka}<\al<\frac{1}{2}+\frac{1}{2\ka}<1,
\]
and hence, for all $\ka$, we have $-1<\al<1$, i.e. $\al=0,\pm\frac{1}{2}$. But, if 
$
0\leq\frac{1}{2}-\frac{3}{2\ka}<\frac{1}{2},
$ 
i.e. if $\ka\geq 3$, on the left we have $\frac{1}{2}\leq \al$, if
$
-\frac{1}{2}\leq\frac{1}{2}-\frac{3}{2\ka}<0,
$ 
i.e. if $\frac{3}{2}\leq\ka< 3$, we have $0\leq \al$, and if 
$
-1<\frac{1}{2}-\frac{3}{2\ka}<-\frac{1}{2},
$ 
i.e. if $1< \ka< \frac{3}{2}$, we have $-\frac{1}{2}\leq \al$.
\end{proof}

\begin{rem}\label{squareint1} By the proof of the previous corollary, we deduce the square integrability of the solutions of a fundamental system:  $\uf_+$ ($\ff_+$) is always square integrable, while $\uf_-$ ($\ff_-$) is square integrable only in the following cases: 
\begin{enumerate}
\item if $b=0$ and $\al=\frac{1}{2}$,
\item if $b=0$, $\frac{3}{2}\leq\ka< 3$ and $\al=0$,
\item if $b=0$, $1< \ka< \frac{3}{2}$ and $\al=- \frac{1}{2}, 0$,
\end{enumerate}
namely in the regular case and in the regular singular limit circle case.
\end{rem}

\begin{rem}\label{Green} The Green formula for the formal operator $\FF_{\al,b}$ is 
\begin{align*}
(\FF_{\al,b} f,f)&=\int_0^l (\FF_{\al,b}f) f w
=[h^{1-2\al} f f']_0^l+\int_0^l h^{1-2\al}{f'}^2+b\int_0^l h^{-1-2\al} f^2.
\end{align*}
\end{rem}

\subsection{Self adjoint extensions}
\label{s222}

Consider the Hilbert space $L^2((0,l],h^{1-2\al})$ with inner product
\[
(u,v)=\int_0^l h^{1-2\al}(x) u(x)  v(x)   dx.
\]
 
Let $\FF$ a formal SL operator. Define the global domain:
\[
\DS(F_{\rm glob})=\left\{ y\in AC_0((0,l])~|~h^{1-2\al}y'\in AC_0((0,l])\right\},
\]
if $y\in D(F_{\rm glob})$, then $\FF y$ is defined and belong to $L_{\rm loc}((0,l],h^{1-2\al})$. This defines the operator $F_{\rm glob}$. Define the maximal operator $F_{\rm max}$ by the domain 
\[
\DS(F_{\rm max})=\left\{ y\in D(F_{\rm glob})~|~y, \FF y\in L^2((0,l],h^{1-2\al})\right\},
\]
and the pre minimal operator
\[
\DS(F_0)= \DS(F_{\rm max})\cap (\R^I)_0,
\]
where $(\R^I)_0$ denotes the subspace of the function with compact support. The operator $F_{0}$ is closable, and its closure, denoted by $F_{\rm min}$, is the adjoint of $F_{\rm max}$. These operators are densely defined: $\overline{\DS(F_0)}=\overline{\DS(F_{\rm max})}=L^2((0,l],h^{1-2\al})$.

\begin{defi}\label{BC} If $x_0$ is  a regular  end point for the SL formal operator $\FF$, we introduce the following boundary values at $x=x_0$,
\begin{align*}
bv_\FF(x_0)(y)&=y(x_0),&bv_\FF'(x_0)(u)&=(h^{1-2\al}y')(x_0),
\end{align*}
and
\begin{align*}
bv_{\FF;\be}(x_0)(y)&=\cos \be~ bv_\FF(x_0)(y)+\sin \be~ bv_\FF'(x_0)(y)=\cos \be ~y(x_0)+\sin \be ~ (h^{1-2\al} y')(x_0),
\end{align*}
where $0\leq\be< \pi$. If $x_0$ is  a non oscillatory regular singular limit circle   end point for the SL formal operator $\FF$, we introduce the following boundary values at $x=x_0$,
\begin{align*}
bv_{\FF;+}(x_0)(y)&=[y,\ff_+]_\FF(x_0),&bv_{\FF;-}(x_0)(u)&=[y,\ff_-]_\FF(x_0),
\end{align*}
where
$[y,f]_\FF= h^{1-2\al}(yf'-y'f)$,
and
\begin{align*}
\widehat{bv}_{\FF;\de}(x_0)(y)&=\cos \de~ bv_+(x_0)(y)+\sin \de~ bv_-(x_0)(y),
\end{align*}
where $0\leq\de< \pi$. 
\end{defi}

Note that the boundary values $bv_\pm$ reduce to (linear combinations) of $bv$ and $bv'$ if the end points happen to be regular one. For this reason we used a unified notation for the associated boundary conditions. It will be tacitly assumed that if the end point is singular, the boundary values $bv_\pm$ are used.


\begin{theo}\label{t3.23} \cite[10.4.2, 10.4.4]{Zet}  \cite[5.1]{NZ} Let $\FF_{\al,b}$ be  the SL problem in equation (\ref{prob1}).   
 Then, the  self adjoint extensions of the operator $F_0$ defined by separated boundary conditions are the operators $F_{\al,b,\be}$ with the following domain (compare with Corollary \ref{cor1})
\begin{enumerate}
\item if $b\not=0$ (irregular singular case), then 
\[
\DS(F_{\al,b;\be})=\left\{ y\in \DS(F_{\al,b,\rm max})~|~bv_{\FF;\be}(l)(y)=0\right\},
\]

\item if $b=0$ and either ($\ka\geq 3$ and $\al\not=\frac{1}{2}$) or ($\frac{3}{2}\leq\ka< 3$ and $\al\not=0, \frac{1}{2}$) 
or  ($1< \ka< \frac{3}{2}$ and $\al\not =- \frac{1}{2}, 0, \frac{1}{2}$) (regular singular limit point case), then
\[
\DS(F_{\al,0;\de,\be})=\left\{ y\in \DS(F_{\al,0,\rm max})~|~ bv_{\FF;\be}(l)(y)=0\right\},
\]

\item if $b=0$ and either 
 $\frac{3}{2}\leq\ka< 3$ and $\al=0$, 
or if $1< \ka< \frac{3}{2}$ and $\al=- \frac{1}{2}, 0$, (regular singular limit circle case), then
\[
\DS(F_{\al,0;\de,\be})=\left\{ y\in \DS(F_{\al,0,\rm max})~|~\widehat{bv}_{\FF;\de}(0)(y)=bv_{\FF;\be}(l)(y)=0\right\},
\]

\item if $b=0$, $\al=\frac{1}{2}$ (regular case), then
\[
\DS(F_{\frac{1}{2},0;\de,\be})=\left\{ y\in \DS(F_{\frac{1}{2},0,\rm max})~|~bv_{\FF;\de}(0)(y)=bv_{\FF;\be}(l)(y)=0\right\}.
\]
\end{enumerate}

\end{theo}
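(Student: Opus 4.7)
The plan is to apply the classical Glazman–Naimark–Weyl theory of self-adjoint extensions of symmetric ordinary differential operators, once the endpoint classification has been fixed. Corollary \ref{cor1} already identifies the nature of each endpoint: $x=l$ is always a regular endpoint, while $x=0$ falls into one of the four cases that index the four items of the theorem. Together with the deficiency-index count $(d_-,d_+)$ obtained from Weyl's limit-point/limit-circle dichotomy, this reduces the problem to the standard parametrisation of self-adjoint extensions of $F_0$ by linear conditions on boundary values.

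\textbf{Step 1: Green's formula and symmetric restrictions.} I would start from the Lagrange identity for the formal operator $\FF_{\al,b}$ in the weighted inner product $(\cdot,\cdot)$ on $L^2((0,l],h^{1-2\al})$: for $y,z\in \DS(F_{\rm max})$,
\begin{equation*}
(\FF_{\al,b}y,z)-(y,\FF_{\al,b}z) = [y,z]_{\FF}(l)-\lim_{x\to 0^+}[y,z]_{\FF}(x),
\end{equation*}
with $[y,z]_{\FF}=h^{1-2\al}(yz'-y'z)$. The adjoint of $F_0$ is $F_{\rm max}$, and a closed symmetric restriction $A$ with $F_{\rm min}\subseteq A\subseteq F_{\rm max}$ is self-adjoint iff the boundary form $[\cdot,\cdot]_{\FF}$ vanishes identically on $\DS(A)\times \DS(A)$ and $\DS(A)$ is maximal with this property (see \cite{Zet}, \cite{NZ}).

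\textbf{Step 2: Handling the regular endpoint at $x=l$.} At $x=l$ both $y(l)$ and $(h^{1-2\al}y')(l)$ are well-defined continuous functionals on $\DS(F_{\rm max})$, and the boundary form at $l$ reads $[y,z]_{\FF}(l)=y(l)(h^{1-2\al}z')(l)-(h^{1-2\al}y')(l)z(l)$. For a separated condition, the Lagrangian subspace of $\mathbb{R}^2$ on which this skew form vanishes is exactly the one-dimensional family parametrised by $\be\in[0,\pi)$ via $bv_{\FF;\be}(l)(y)=\cos\be\, y(l)+\sin\be\,(h^{1-2\al}y')(l)=0$. This contributes the condition $bv_{\FF;\be}(l)(y)=0$ in all four cases.

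\textbf{Step 3: Handling $x=0$ case by case.} In the limit-point cases (items (1) and (2)) the deficiency index at $x=0$ is zero, the boundary form $[y,z]_{\FF}$ extends by continuity to $0$ at $x=0$ on all of $\DS(F_{\rm max})$, and no boundary condition is needed there; this is the content of Corollary \ref{cor1} together with the fact that in the irregular singular case $b>0$ only $\uf_+$ is square integrable near $0$, while in the regular singular limit-point case $\uf_-$ fails to be square integrable because the exponent violates the threshold computed in Corollary \ref{cor1}. In the limit-circle singular cases (item (3)) the deficiency index at $x=0$ is one, and the fundamental solutions $\ff_{\al,0,\pm}$ of Proposition \ref{b0} both lie locally in the domain near $0$; the boundary values $bv_{\FF;\pm}(0)(y)=[y,\ff_{\pm}]_{\FF}(0)$ are well defined and non-trivial, and the Lagrangian subspaces of $\mathbb{R}^2$ on which the boundary form vanishes are parametrised by $\de\in[0,\pi)$ via $\widehat{bv}_{\FF;\de}(0)(y)=0$. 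Finally, when both endpoints are regular (item (4), $b=0$, $\al=\tfrac12$; cf.\ Remark \ref{regularcase}), the standard regular separated boundary conditions apply and are parametrised by $(\de,\be)\in[0,\pi)^2$.

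\textbf{Main obstacle.} The technically delicate point is verifying that the boundary values $bv_{\FF;\pm}(0)$ in the limit-circle case are well defined, non-zero, and linearly independent as functionals on $\DS(F_{\rm max})/\DS(F_{\rm min})$. This requires showing that the limits $\lim_{x\to 0^+}[y,\ff_\pm]_{\FF}(x)$ exist for every $y\in\DS(F_{\rm max})$, which follows from the asymptotic description of $\ff_\pm$ in Proposition \ref{b0} combined with the fact that $\FF_{\al,b}y\in L^2((0,l],h^{1-2\al})$ controls the behaviour of $h^{1-2\al}y'$ at the endpoint via integration against $\ff_\pm$. Once this is in place, the four items are read off directly from the Glazman parametrisation of self-adjoint extensions.
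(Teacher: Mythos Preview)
Your proposal is correct and is precisely the standard Glazman--Naimark--Weyl argument; note that the paper does not give its own proof of this theorem but simply cites \cite[10.4.2, 10.4.4]{Zet} and \cite[5.1]{NZ}, relying on Corollary~\ref{cor1} for the endpoint classification, which is exactly the input you use. Your Step~3 reproduces the content of those references, so there is nothing to compare.
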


\subsection{Spectral properties and spectral functions}

\begin{theo}\label{tt.1}  Let $\FF_{\al,b}$ be  the SL problems in equation (\ref{prob1}). Then, the end point $x=0$ is non oscillatory, and any of the self adjoint extension $F=F_{\al,b;\be}, F_{\al,0;\delta,\be}$ of $\FF$ described in Theorem \ref{t3.23} is bounded below with discrete pure point spectrum $\Sp(F)=\{\la_n\}_{n=0}^\infty$, consisting in a set of real eigenvalues  with unique accumulation point at infinity, and each eigenvalue is simple.
\end{theo}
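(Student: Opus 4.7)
My plan is to assemble the result from three essentially classical ingredients, each already prepared by the preceding appendix material. Non-oscillation at $x=0$ is exactly the content of Corollary \ref{cor1} (and is automatic at the regular end point $x=l$), so only the spectral statements require work. The strategy is: (i) semi-boundedness from below via the Green formula, (ii) compactness of the resolvent via an explicit Green kernel built from the fundamental system $\ff_{\al,b,\pm}$, and (iii) simplicity from the $2$-dimensional solution space together with separated boundary conditions. Throughout, the key asymptotic input is Theorem \ref{olv1} (for $b>0$) and Proposition \ref{b0} (for $b=0$), which give sharp control of $\ff_\pm$ near $x=0$.

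For semi-boundedness, I would apply the Green identity of Remark \ref{Green} to $f\in \D(F)$, obtaining
\[
(F f,f)=[h^{1-2\al}f f']_0^l+\int_0^l h^{1-2\al}(f')^2+b\int_0^l h^{-1-2\al}f^2.
\]
Since $b\geq 0$, the bulk integrals are non-negative. The boundary term at $x=l$ is controlled by the separated boundary condition $bv_{\FF;\be}(l)(f)=0$ via an elementary $\ep$-estimate (yielding a finite negative lower bound at worst). At $x=0$ the boundary term vanishes: in the limit point cases (irregular singular case $b>0$, and the limit point subcases of $b=0$) every $f\in\D(F_{\rm max})$ satisfies $\lim_{x\to 0^+}[h^{1-2\al}ff'](x)=0$ by the classical Weyl alternative, while in the limit circle subcases the boundary condition $\widehat{bv}_{\FF;\de}(0)(f)=0$ forces the same vanishing, as can be checked directly against the basis $\ff_\pm$ of Proposition \ref{b0}. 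Hence $F\geq -C$ for some constant $C\geq 0$.

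For the discrete spectrum, I would construct the resolvent $(F-\la)^{-1}$ for $\la<-C$ as an integral operator whose kernel is the standard Green kernel
\[
G_\la(x,t)=\frac{1}{W(\phi_0,\phi_l)}\cdot\begin{cases}\phi_0(x)\phi_l(t),&x\leq t,\\ \phi_0(t)\phi_l(x),&x\geq t,\end{cases}
\]
where $\phi_0$ is the (unique up to scale) solution of $(\FF_{\al,b}-\la)\phi=0$ satisfying the boundary condition at $0$, and $\phi_l$ the one at $l$. Existence and uniqueness of $\phi_0$: in the limit point case at $0$, Theorem \ref{olv1} shows that exactly one member of the fundamental system, namely $\ff_{\al,b,+}$, is square integrable near $0$ (via the exponentially decaying factor $\e^{-\frac{\sqrt{b}}{1-\ka}x^{1-\ka}}$, which is genuinely decaying for $\ka>1$); in the limit circle / regular cases, $\phi_0$ is selected by the chosen boundary condition. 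The asymptotic control of $\phi_0,\phi_l$ near each endpoint, combined with the weight $h^{1-2\al}$, shows $\int_0^l\int_0^l |G_\la(x,t)|^2 h^{1-2\al}(x)h^{1-2\al}(t)\,dx\,dt<\infty$, so $(F-\la)^{-1}$ is Hilbert–Schmidt, hence compact. Standard spectral theory of compact self-adjoint operators then yields a real pure point spectrum with the only accumulation point at $+\infty$.

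Simplicity of each eigenvalue is straightforward: any eigenfunction solves a second order linear ODE, so the eigenspace has dimension at most two; the separated boundary condition at $x=0$ (one linear constraint, non-trivial in all four cases of Theorem \ref{t3.23}) cuts the eigenspace down to at most one dimension. The main technical obstacle is the square-integrability of the Green kernel at the irregular singular end $x=0$ when $b>0$; here one cannot appeal directly to the regular-singular Frobenius theory, and the argument relies on the sharp exponential decay in Theorem \ref{olv1} together with Remark \ref{R} describing $r_{\al,b}(x)=b x^{-2\ka}+g(x)$, which together show that $\phi_0=\ff_{\al,b,+}$ is not merely $L^2$ near $0$ but integrable against $h^{1-2\al}$ with arbitrarily fast decay, so the double integral above is convergent without any further assumption on $\al$.
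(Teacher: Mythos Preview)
Your approach differs from the paper's. The paper argues abstractly: it invokes the potential criterion $\lim_{x\to 0^+}x^2 r_{\al,b}(x)>-\tfrac14$ (Dunford--Schwartz XIII.10.C25) to conclude that the formal operator is bounded below with empty essential spectrum, then appeals to \cite[XIII.7.24]{DS2} to transfer boundedness-below to every self-adjoint extension, and finally cites Zettl for simplicity. It does \emph{not} use the Green formula or the explicit resolvent kernel in the proof of this theorem; those appear only later, in Proposition~\ref{p3.33} and Corollary~\ref{c3.34}. Your route is more constructive and essentially anticipates that later material, folding the compact-resolvent argument into the proof of Theorem~\ref{tt.1} itself. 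For steps (ii) and (iii) this works well and is arguably cleaner than stringing together citations.

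Step (i), however, has a genuine gap in the limit circle subcases. Your claim that $\widehat{bv}_{\FF;\de}(0)(f)=0$ forces $\lim_{x\to 0^+}(h^{1-2\al}ff')(x)=0$ is false for general $\de$. When $\de\ne 0$ the eigenfunction $f$ carries a nontrivial $\ff_-$ component; already from Remark~\ref{solutions} (take $\al=0$, so $\ff_{1,II}\sim x^{1-\ka}$) one sees that $h^{1-2\al}\ff_-\ff_-'\sim x^{1-\ka}$ diverges as $x\to 0^+$, and simultaneously the Dirichlet integral $\int_0^l h^{1-2\al}(f')^2$ diverges. The Green identity then reads $\infty-\infty$ and gives no lower bound without further regularisation. (Your appeal to ``the classical Weyl alternative'' in the limit point case is also imprecise---that alternative concerns the Lagrange bracket $[f,g]_\FF$, which is antisymmetric and hence trivially zero for $f=g$; the actual vanishing of $h^{1-2\al}ff'$ there follows only from the explicit asymptotics of $\ff_+$.) The paper sidesteps all of this by using the finite-deficiency-index fact that \emph{every} self-adjoint extension of a bounded-below symmetric operator is bounded below, which is exactly \cite[XIII.7.24]{DS2}. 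If you want to keep your direct approach, you either need that abstract fact or a careful principal-value treatment of the boundary term at $x=0$.
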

\begin{proof} It is clear that the spectrum is real. First we show that all the self adjoint extension  $F$ of $F_0$ described in  Theorem \ref{t3.23} are bounded below.  The formal operator $\FF$ has empty essential spectrum and is bounded below by \cite[XIII.10.C25]{DS2} since 
\[
\lim_{x\to 0^+} x^2 r_{\al,b}(x)=\lim_{x\to 0^+} \left(b x^{2(1-\ka)}
+k\left(\al-\frac{1}{2}\right)\left(k\left(\al-\frac{1}{2}\right)+1\right)\right)>-\frac{1}{4}.
\]

In fact, if $b>0$, the limit is $+\infty$, if $b=0$,  the limit is 
$
=\frac{1}{2}\left( k\left(\al-\frac{1}{2}\right)+1\right)^2,
$ 
that is positive, since it would vanish only if $\al$ were $\frac{1}{2}-\frac{1}{2\ka}$, that may never happen (see also \cite[6.32]{Wei}).  Then, by \cite[XIII.7.24]{DS2}, any self adjoint extension  of $\FF$ is bounded below, and by \cite[XIII.6.4]{DS2} its essential spectrum  is void (compare with \cite[Proposition 10.4.4 (4)]{Zet}). The residual spectrum is void by \cite[XI.8.2]{Yos}. 
Whence the spectrum is real pure point and bounded below. We use \cite[Theorem 10.12.1 (8)]{Zet} to conclude the proof.  
\end{proof}

\subsection{Spectral functions}

\begin{prop}\label{p3.33} Let $F$ be any of the self-adjoint extensions of the formal operator $\FF_{\nu,\al}$ described in Theorem \ref{t3.23}. Then, the resolvent $(\la I-F)^{-1}$ of $F$ is the integral operator with kernel 
\[
k(x,y;\la)=\frac{1}{h(x)^{1-2\al}W(\ff_+,\ff_-)}h^{\frac{1}{2}-\al}(x)h^{\frac{1}{2}-\al}(y)\left\{\begin{array}{ll}\ff_{l} (x)\ff_{0}(y),&x\geq y,\\ \ff_{0} (x)\ff_{l}(y),&x< y,\end{array}\right.
\]
where $\ff_{l}$ is  either the (unique) solution of the equation $\FF_{\al,b}f=\la f$ satisfying the boundary condition for $F$ at $x=l$ if we have the LCC at $x=l$, or the (unique) square integrable solution of the equation $\FF_{\al,b}f=\la f$ if we have the LPC at $x=l$; and $\ff_0$ is  either the (unique) solution of the equation $\FF_{\al,b}f=\la f$ satisfying the boundary condition for $F$ at $x=0$ if we have the LCC at $x=0$, or the (unique) square integrable solution   of the equation $\FF_{\al,b}f=\la f$ if we have the LPC at $x=0$. 
\end{prop}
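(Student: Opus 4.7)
The plan is to reduce the problem to the Schrödinger-type operator $\UU_{\al,b}=-\partial_x^2+r_{\al,b}$ on the unweighted space $L^2((0,l])$ via the unitary transformation $\TT_{\frac{1}{2}-\al}:f\mapsto h^{\frac{1}{2}-\al}f$ already introduced. Since $F=\TT_{\frac{1}{2}-\al}^{-1}U\TT_{\frac{1}{2}-\al}$, where $U$ is the self-adjoint extension of $\UU_{\al,b}$ determined by transporting the boundary conditions of Theorem~\ref{t3.23} through the unitary, one has $(F-\la)^{-1}=\TT_{\frac{1}{2}-\al}^{-1}(U-\la)^{-1}\TT_{\frac{1}{2}-\al}$. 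Thus it suffices to construct the Green kernel for $U$ on Lebesgue $L^2$ and conjugate back; the two factors $h^{\frac{1}{2}-\al}(x)h^{\frac{1}{2}-\al}(y)$ appearing in the stated formula are precisely the jacobian of this conjugation, and the prefactor $1/(h(x)^{1-2\al}W(\ff_+,\ff_-))$ is just $1/W(\uf_+,\uf_-)$ after the identity $W(\ff_+,\ff_-)(x)=h^{2\al-1}(x)W(\uf_+,\uf_-)$, the latter being a constant by Abel (no first-order term in $\UU_{\al,b}$).

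The first concrete step is to establish existence and uniqueness (up to scalar) of $\uf_0$ and $\uf_l$ in each of the four subcases of Theorem~\ref{t3.23}. At a regular or regular-singular limit-circle endpoint this is immediate: the two-dimensional solution space intersected with one linear boundary condition gives a one-dimensional space. At a limit-point endpoint it is the classical Weyl alternative; in our setting, the case $b>0$ forces the limit point at $x=0$ and the unique square-integrable solution is $\uf_{\al,b,+}$ of Theorem~\ref{olv1}, with square-integrability near $0$ secured by the decaying exponential $\exp\bigl(-\tfrac{\sqrt{b}}{\ka-1}x^{1-\ka}\bigr)$. This uniqueness is what makes the kernel independent of the choice of normalisation: any rescaling $\uf_0\mapsto c\uf_0$ is compensated by the corresponding change in $W(\uf_0,\uf_l)$.

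The second step is the classical Sturm--Liouville construction for $\UU_{\al,b}$. I define the integral operator with kernel $K_U(x,y;\la)=\uf_0(x_{<})\uf_l(x_{>})/W(\uf_0,\uf_l)$ and verify by direct computation (differentiating in $x$ produces a unit jump at $x=y$; a second derivative produces $\delta(x-y)$) that $(\UU_{\al,b}-\la)K_U(\cdot,y;\la)=\delta_y$ distributionally on $(0,l)$. Membership in the domain of $U$ follows because $K_U(\cdot,y;\la)$ inherits from $\uf_0$ the boundary condition at $0$ and from $\uf_l$ that at $l$; boundedness and self-adjointness are checked by a standard Hilbert--Schmidt estimate, which in the limit-circle cases is trivial and in the limit-point case at $0$ uses the exponential decay of $\uf_+$ against the blow-up of $\uf_-$ so that $\int_0^l|\uf_0(x)|^2\,dx<\infty$. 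Then $(U-\la)^{-1}g(x)=\int_0^l K_U(x,y;\la)g(y)\,dy$. Finally, conjugating by $\TT_{\frac{1}{2}-\al}$ and substituting $\uf_\bullet=h^{\frac{1}{2}-\al}\ff_\bullet$ yields the formula for $k(x,y;\la)$ exactly in the form stated in the proposition.

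The main technical obstacle I anticipate is the limit-point analysis at $x=0$ when $b>0$: one must confirm that the candidate kernel built from the Weyl solution $\uf_{\al,b,+}$ really does map $L^2((0,l],h^{1-2\al})$ into $\DS(F)$ (i.e., the image is both in $L^2$ and annihilated at the correct endpoint in the weak sense), and that the Wronskian $W(\uf_0,\uf_l)$ is non-vanishing precisely when $\la\notin\Sp(F)$. Both points reduce to the asymptotic information supplied by Theorem~\ref{olv1}, in particular to controlling $\uf_+\uf_-'-\uf_-\uf_+'$ near $x=0$ via the explicit exponential factors and the vanishing of $\vv_{\al,b,\pm}(0,\la)$; the remaining boundary behaviour at $x=l$ is handled by the separated boundary conditions of Theorem~\ref{t3.23} and is routine.
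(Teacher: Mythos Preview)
Your proposal is correct and in fact more detailed than the paper's own argument, which is essentially a two-line citation: the paper simply observes that $\FF_{\al,b}$ is already in standard Sturm--Liouville form $-\frac{1}{r}(pf')'+qf$ with $p=r=h^{1-2\al}$ and $q=b/h^2$, and then invokes the classical Green-kernel construction directly in the weighted space $L^2((0,l],h^{1-2\al})$ from Dunford--Schwartz \cite[XIII.3]{DS2} or Weidmann \cite[8.29]{Wei}.

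Your route---conjugating to the Schr\"odinger form $\UU_{\al,b}=-\partial_x^2+r_{\al,b}$ on unweighted $L^2$, building the Green kernel there, and conjugating back---is equivalent but slightly more roundabout. The payoff of the paper's approach is brevity: since the weighted SL theory already delivers the kernel with the $p$-Wronskian $h^{1-2\al}W(\ff_+,\ff_-)$ (automatically constant by Abel), there is no need to pass through $\TT_{\frac{1}{2}-\al}$ at all, and the factors $h^{\frac{1}{2}-\al}(x)h^{\frac{1}{2}-\al}(y)$ arise directly rather than as a conjugation jacobian. Your approach, on the other hand, makes the role of the exponential decay of $\uf_{\al,b,+}$ from Theorem~\ref{olv1} more transparent when verifying square-integrability in the limit-point case, which is useful for the compactness argument in Corollary~\ref{c3.34} that immediately follows. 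Either way the substance is the same standard construction.
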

\begin{proof} The construction of the kernel may be found in  \cite[XIII.3]{DS2}. Alternatively, using \cite[8.29]{Wei}, with $p=r=h^{1-2\al}$, $q(x)=\frac{b}{h^2(x)}$, then
$\FF_{\al,b} f=-\frac{1}{r}(pf')'+qf$.
\end{proof}

\begin{corol}\label{c3.34} Let $F$ be any of the self-adjoint extensions of the formal operator $\FF_{\nu,\al}$ described in Theorem \ref{t3.23}. Then, $F$ has compact resolvent.
\end{corol}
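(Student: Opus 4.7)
The plan is to prove compactness by showing that the resolvent is Hilbert--Schmidt. By Proposition~\ref{p3.33}, for $\la\notin \Sp(F)$ the resolvent $(\la I-F)^{-1}$ is an integral operator on $L^2((0,l],h^{1-2\al})$ with the explicit kernel
\[
k(x,y;\la)=\frac{h^{\frac{1}{2}-\al}(x)h^{\frac{1}{2}-\al}(y)}{h(x)^{1-2\al}W(\ff_+,\ff_-)}\begin{cases}\ff_l(x)\ff_0(y),&x\geq y,\\ \ff_0(x)\ff_l(y),&x<y,\end{cases}
\]
where $\ff_0$ is the square integrable / boundary-condition-satisfying solution at $x=0$ and $\ff_l$ the corresponding solution at $x=l$. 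The Hilbert--Schmidt norm of this integral operator on $L^2((0,l],h^{1-2\al})$ equals
\[
\|k\|_{HS}^2=\int_0^l\!\!\int_0^l |k(x,y;\la)|^2\,h^{1-2\al}(x)\,h^{1-2\al}(y)\,dx\,dy,
\]
so I aim to show this double integral is finite, which implies compactness since Hilbert--Schmidt operators are compact.

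First I would observe that away from $x=0$ (i.e.\ on $[\ep,l]$ for any $\ep>0$) the kernel $k$ is smooth and bounded, because $x=l$ is regular and the fundamental solutions $\ff_\pm$ are continuous. Hence the integrand is bounded on $[\ep,l]\times[\ep,l]$ and the only potential issue is the behavior near $x=0$ or $y=0$. By symmetry I can restrict attention to the diagonal block where $x,y\in(0,\ep]$. Using the prefactor simplification
\[
|k(x,y;\la)|^2\,h^{1-2\al}(x)\,h^{1-2\al}(y)\;\lesssim\; h^{-(1-2\al)}(x)\,|\ff_l(x)|^2\,|\ff_0(y)|^2\quad(x\geq y),
\]
the problem reduces to showing that the functions $x\mapsto h^{-(1-2\al)/2}(x)\ff_l(x)$ and $y\mapsto \ff_0(y)$ are in the appropriate local $L^2$ space near $0$.

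Next I would use the asymptotic description of the fundamental system to verify these integrability statements case by case, according to the classification in Theorem~\ref{t3.23} and Corollary~\ref{cor1}:
\begin{itemize}
\item In the irregular singular case ($b>0$), Theorem~\ref{olv1} shows $\ff_0=\ff_+$ decays exponentially like $e^{-\frac{\sqrt{b}}{\ka-1}x^{1-\ka}}$ near $0$, so $\ff_0$ is trivially square integrable against any polynomial weight, and $\ff_l$ is a linear combination of $\ff_+,\ff_-$ whose $\ff_-$-component grows exponentially but only polynomially in $x$ when weighted by $h^{1-2\al}$; the product $\ff_l(x)\ff_0(y)$ for $x\geq y$ is tamed by the fast decay of $\ff_0$ near $0$.
\item In the regular singular limit point / limit circle case ($b=0$, $\al\neq\frac{1}{2}$), Proposition~\ref{b0} gives power-law asymptotics, and square integrability of $\ff_0$ and $\ff_l$ near $0$ (under the appropriate boundary condition) has already been checked in Remark~\ref{squareint1} and the proof of Corollary~\ref{cor1}; combined with the weight $h^{1-2\al}=x^{\ka(1-2\al)}H^{1-2\al}$ these yield convergent integrals.
\item In the regular case ($b=0$, $\al=\frac{1}{2}$), both $\ff_+,\ff_-$ are smooth on $[0,l]$ (Remark~\ref{regularcase}) and there is nothing to verify.
\end{itemize}
Combining these local estimates with Fubini yields $\|k\|_{HS}<\infty$, so $(\la I-F)^{-1}$ is Hilbert--Schmidt, hence compact.

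The main obstacle I expect is the bookkeeping in the regular singular limit circle case, where both fundamental solutions $\ff_\pm$ are square integrable near $0$ and one must carefully keep track of which linear combination realises $\ff_0$ for each boundary condition $\widehat{bv}_{\FF;\de}(0)=0$ (and symmetrically $\ff_l$ at $x=l$), and then verify that the singular behaviour of the prefactor $h^{-(1-2\al)}(x)$ in the kernel is absorbed by the decay of $\ff_l(x)\ff_0(y)$. Once the square integrability of the kernel is established in all the regimes enumerated in Corollary~\ref{cor1}, compactness is immediate, and as a byproduct the discreteness of the spectrum claimed in Theorem~\ref{tt.1} can be re-derived from the spectral theorem for compact self-adjoint operators.
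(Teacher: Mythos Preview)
Your overall strategy---show the resolvent kernel is Hilbert--Schmidt---is exactly the paper's approach. The paper simplifies by noting (Abel's identity for SL operators) that $h^{1-2\al}(x)W(\ff_+,\ff_-)$ is constant, so with $\uf_\bullet=h^{\frac12-\al}\ff_\bullet$ the weighted HS integrand collapses to $C\,|\uf_l(x)\uf_0(y)|^2$ against plain Lebesgue measure; your prefactor simplification $|k|^2w(x)w(y)\lesssim h^{-(1-2\al)}(x)|\ff_l(x)|^2|\ff_0(y)|^2$ is miscomputed and you should use this reduction instead.

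The real gap is your claimed reduction ``show that $\ff_0$ and $\ff_l$ are each in the appropriate local $L^2$ near $0$''. In the limit point regime this is false: $\ff_l$ is a generic combination of $\ff_\pm$, and $\ff_-$ (hence $\ff_l$) is \emph{not} square integrable near $0$---that is exactly what limit point means. In particular your irregular-case assertion that the $\ff_-$ component ``grows exponentially but only polynomially when weighted by $h^{1-2\al}$'' is wrong: $\uf_-\sim x^{\ka/2}\e^{+\frac{\sqrt b}{\ka-1}x^{1-\ka}}$, and no polynomial weight absorbs that. What actually makes the integral converge is the triangular structure you invoked only in passing: writing the nested integral $\int_0^l|\uf_l(x)|^2\big(\int_0^x|\uf_0(y)|^2\,dy\big)dx$ (plus the symmetric piece), the decay of the inner integral exactly compensates the growth of $|\uf_l(x)|^2$. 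In the regular singular LP case one gets $|\uf_l(x)|^2\int_0^x|\uf_0|^2\sim x^{1-2|c_\al|}\cdot x^{2+2|c_\al|}=x^3$, and in the irregular case the exponentials cancel, leaving a polynomially bounded integrand near $0$; this is the computation the paper records. So the obstacle is not the LC bookkeeping you flagged (that case is the easiest, since both fundamental solutions are in $L^2$), but making this compensation precise in the LP cases.
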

\begin{proof} The resolvent is an integral operator by the Theorem. So for it to be compact it is sufficient to verify that it is square integrable on $(0,l)\times (0,l)$ (see for example \cite[pg. 1331]{DS2}). Observe that that $h(x)^{1-2\al} W(\ff_+,\ff_-)$ is constant. Then we calculate
\begin{align*}
\int_0^l\int_0^l |k(x,y;\la)|^2 dx dy&=\int_0^l |\uf_{l} (x)|^2\int_0^x |\uf_{0}(y)|^2 dy dx+
\int_0^l |\uf_{0} (x)|^2\int_x^l |\uf_{l}(y)|^2dy.
\end{align*}

These functions are smooth, so problems appear only near $x=0$, and there the solutions $\uf_{l}$ are bounded, 
while the behaviour of $\uf_{0}$ is given in Propositions \ref{olv1} and \ref{b0}. In all cases, near zero 
\[
 \int_0^\ep\int_0^\ep |k(x,y)|^2 dx dy={\rm const}  \int_0^\ep x^2 dx<\infty.
\]
\end{proof}

\begin{corol}\label{c3.36} Let $F$ be any of the self-adjoint extensions of the formal operator $\FF_{\nu,\al}$ described in Theorem \ref{t3.23}. Then, there exists a spectral resolution of $F$, i.e. the spectrum $\Sp (F)$ of $F$ is real and pure point, i.e. coincides with the set of the eigenvalues, and discrete, i.e. the unique point of accumulation is infinite, and all eigenspaces have finite dimension. 
All the eigenvalues are simple. All eigenfunctions are smooth, and the set of the  eigenfunctions determines a complete orthonormal basis of $L^2(0,l)$.
\end{corol}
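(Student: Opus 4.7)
The bulk of the statement has already been reduced to ingredients established earlier in the appendix, so the proof is essentially an assembly. The plan is to first invoke Theorem \ref{tt.1} to obtain that $\Sp(F)$ is real, pure point, discrete with unique accumulation point at infinity, bounded below, and with simple eigenvalues. Thus only finite dimensionality of the eigenspaces, completeness of the eigenfunction system, and smoothness remain to be checked.

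For finite dimensionality and completeness I would use Corollary \ref{c3.34}: the resolvent $(\la I - F)^{-1}$ (for any $\la$ in the resolvent set, e.g.\ below the lower bound of $F$) is a self-adjoint compact operator on $L^2((0,l],h^{1-2\al})$. The spectral theorem for compact self-adjoint operators then yields a countable orthonormal basis of eigenvectors of $(\la I - F)^{-1}$, with real eigenvalues accumulating only at $0$, and each non-zero eigenvalue having finite multiplicity. Translating via the resolvent identity, the corresponding eigenvectors of $F$ form a complete orthonormal basis of $L^2((0,l],h^{1-2\al})$, eigenspaces of $F$ are finite dimensional, and the eigenvalues accumulate only at $+\infty$. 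Combined with the simplicity already noted, each eigenspace is in fact one-dimensional.

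For smoothness of eigenfunctions I would argue directly from the ODE: if $F u = \la u$ with $u\in \DS(F)\subseteq \DS(F_{\rm max})$, then $u$ is, by definition of the maximal domain, in $AC_{\rm loc}((0,l])$ with $h^{1-2\al}u'\in AC_{\rm loc}((0,l])$ and satisfies $\FF_{\al,b} u = \la u$ pointwise a.e. Since the coefficients of $\FF_{\al,b}$ are smooth on $(0,l]$ (recall $h(x)=x^{\ka} H(x)$ with $H$ smooth and non-vanishing on $[0,l]$), a bootstrap on the equation $u''= -(1-2\al)(h'/h) u' + (b/h^2 -\la) u$ shows that $u\in C^\infty((0,l])$. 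Behaviour at the singular end $x=0$ is not needed for smoothness on the open interval but is in fact controlled by Theorems \ref{olv1} and \ref{b0}, which give smooth factorisations of the fundamental solutions.

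The main obstacle, if any, is conceptual rather than technical: one must be sure that the spectral theorem applied to the compact resolvent really transfers to a spectral resolution of the unbounded self-adjoint operator $F$. This is standard (see e.g.\ \cite[Theorem 6.29]{Wei} or the corresponding statement in \cite[XIII]{DS2}), and relies on $F$ being self-adjoint (guaranteed by Theorem \ref{t3.23}) together with the resolvent being compact (Corollary \ref{c3.34}). Once this is in place, all the claimed properties follow.
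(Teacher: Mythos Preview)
Your proposal is correct and follows essentially the same route as the paper: the paper's proof is a terse list of references (Dunford--Schwartz XIII.4.2--4.3, Schm\"udgen 5.12, Weidmann 8.29, Zettl 10.6.1), all of which encode precisely the argument you spell out---compact resolvent plus self-adjointness yields the spectral resolution, and simplicity comes from the one-dimensional Sturm--Liouville theory. Your explicit bootstrap for smoothness and the remark on transferring the spectral theorem from the resolvent to $F$ are welcome elaborations of what the paper leaves implicit in its citations.
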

\begin{proof} See \cite[XIII.4.2, XIII.4.3]{DS2}. See also \cite[Proposition 5.12]{Schm}. See \cite[8.29]{Wei} for simplicity of eigenvalues. See also \cite[10.6.1]{Zet} for the case where one point is LCC. 
\end{proof}

\subsection{Some particular self adjoint extensions}
\label{particularoperators}

In the following we will work with the self adjoint extensions of the SL problems $\FF_{\al,b}$ described in Theorem \ref{t3.23}, except that   $F_{\al,0; \frac{\pi}{2}, \be}$, $\al\not=\frac{1}{2}$. We will  denote by $F$  these self adjoint extensions, excluding some times the  operator $F_{\frac{1}{2},0, \delta,\be}$ that is a regular one, and may be treated explicitly as in the next Remark \ref{regularop}.  Occasionally, we will consider the self adjoint extensions of the formal  operators $\UU$ corresponding to the $\FF$ according to Remark \ref{F}. We will use the letter $U$ for these operators.  We will call the cases $\be=0$ relative  bv and the case $\be=\frac{\pi}{2}$ absolute bv. Also note that $\de=0$ corresponds to $bv_+$ and $\de=\frac{\pi}{2}$ corresponds to $bv_-$. 

\begin{rem}\label{regularop} The formal solution of the eigevalues equation for  the operator $F_{\frac{1}{2},0, \delta,\be}$ are given in Remark \ref{regularcase}. The general solution is
$
y(x,\la)=c_+\frac{\sin \sqrt{\la} x}{\sqrt{\la}}+c_-\cos \sqrt{\la} x.
$ 
The boundary condition for  $F_{\frac{1}{2},0, 0,0}$ is 
$y(0)=0$, and $y(l)=0$, 
that gives $c_-=0$, and the equation $\sin \sqrt{\la}l=0$. Whence the eigenvalues are $\la_n=\frac{\pi^2n^2}{l^2}$, $n\in N_0$, and the eigenfunctions
$
y(x,\la_n)=\sin \frac{\pi n}{l} x.
$ 

The boundary condition for  $F_{\frac{1}{2},0, 0,\frac{\pi}{2}}$ is 
$y(0)=0$, and $y'(l)=0$, 
that gives $c_-=0$, and the equation $\cos \sqrt{\la}l=0$. Whence the eigenvalues are $\la_n=\frac{\pi^2}{4l^2}(2n+1)^2$, 
$n\in N$, and the eigenfunctions
$
y(x,\la_n)=\sin \frac{\pi (2n+1)}{2l} x.
$ 

The boundary condition for  $F_{\frac{1}{2},0, \frac{\pi}{2},0}$ is 
$y'(0)=0$, and $y(l)=0$, 
that gives $c_+=0$, and the equation $\cos \sqrt{\la}l=0$. Whence the eigenvalues are $\la_n=\frac{\pi^2}{4l^2}(2n+1)^2$, $n\in N$, and the eigenfunctions
$y(x,\la_n)=\cos \frac{\pi (2n+1)}{2l} x$. 

The boundary condition for  $F_{\frac{1}{2},0, \frac{\pi}{2}, \frac{\pi}{2}}$ is 
$y'(0)=0$, $y'(l)=0$, 
that gives $c_+=0$, and the equation $\sin \sqrt{\la}l=0$. Whence the eigenvalues are $\la_n=\frac{\pi^2n^2}{l^2}$, $n\in N$, and the eigenfunctions
$
y(x,\la_n)=\cos \frac{\pi n}{l} x$. 
\end{rem}

\begin{theo}\label{pos} The kernel of the operators $F$ are given as follows (where $\al\not=\frac{1}{2}$):
\begin{enumerate}
\item (Irregular singular case.) Let $b>0$. Then the operators $F_{\al,b; \frac{\pi}{2}}$ and $F_{\al,b; 0}$ are positive.

\item (Regular singular limit point case.)  The operators  $F_{\al,0;\be}$ are non negative with the following kernel:
\begin{align*}
\ker F_{\al,0;0}&=\langle 0\rangle,&
\ker F_{\al,0; \frac{\pi}{2}}&=\left\{\begin{array}{ll}\langle 1\rangle,&\al<\frac{1}{2},\\\langle 0\rangle,&\al>\frac{1}{2}.\end{array}\right.
\end{align*}

\item (Regular singular limit circle case.)  The operators  $F_{\al,0;0, \be}$  are non negative with the following kernel:
\begin{align*}
\ker F_{\al,0;0,0}&=\langle 0\rangle,&
\ker F_{\al,0; 0,\frac{\pi}{2}}&=\left\{\begin{array}{ll}\langle 1\rangle,&\al<\frac{1}{2},\\\langle 0\rangle,&\al>\frac{1}{2}.\end{array}\right.
\end{align*}

\item (Regular case.) The operators $F_{\frac{1}{2},0;\de,\be}$, $\delta,\be\in\left\{0,\frac{\pi}{2}\right\}$, are non negative with the following kernel:
\begin{align*}
\ker F_{\frac{1}{2},0; 0,\frac{\pi}{2}}&=\langle 0\rangle,&\ker F_{\frac{1}{2},0; \frac{\pi}{2},\frac{\pi}{2}}&=\langle 1\rangle,&
\ker F_{\frac{1}{2},0; 0,0}&=\langle 0\rangle, &\ker F_{\frac{1}{2},0; \frac{\pi}{2},0}&=\langle 0\rangle.
\end{align*}

\end{enumerate}
\end{theo}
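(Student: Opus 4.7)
My strategy is to split by the nature of the endpoint $x=0$, treating the irregular singular case $b>0$ via the Green formula of Remark~\ref{Green} and the $b=0$ cases via the explicit fundamental system of Remark~\ref{solutions}. For item~(1), let $f\in\ker F_{\alpha,b;\beta}$: since $x=0$ is LPC by Corollary~\ref{cor1}, $f$ is a scalar multiple of the unique $L^2$-solution $\ff_{\alpha,b,+}$ of $\FF_{\alpha,b}f=0$; by Theorem~\ref{olv1}, both $\ff_{\alpha,b,+}$ and its derivative decay like $x^{\kappa/2}\exp\!\bigl(-\sqrt{b}\,x^{1-\kappa}/(\kappa-1)\bigr)$ as $x\to 0^+$, so $(h^{1-2\alpha}ff')(0^+)=0$. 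The boundary term at $l$ vanishes by either $\beta=0$ (which gives $f(l)=0$) or $\beta=\pi/2$ (which gives $(h^{1-2\alpha}f')(l)=0$), and Green's formula applied to $(\FF_{\alpha,b}f,f)=0$ reduces to
\[
\int_0^l h^{1-2\alpha}(f')^2\,dx+b\int_0^l h^{-1-2\alpha}f^2\,dx=0,
\]
forcing $f\equiv 0$ since $b>0$; this establishes positivity of both $F_{\alpha,b;0}$ and $F_{\alpha,b;\pi/2}$.

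For items~(2)--(3), with $b=0$ and $\alpha\neq 1/2$, any solution of $\FF_{\alpha,0}f=0$ is a linear combination of $\ff_{1,I}\equiv 1$ and $\ff_{1,II}(x)=-((2\alpha-1)\kappa+1)\int_x^l h^{2\alpha-1}(t)\,dt+\ff_{1,II}(l)$. Since $h\sim x^{\kappa}$ near $0$, the constant lies in $L^2((0,l],h^{1-2\alpha})$ iff $\kappa(1-2\alpha)>-1$, which for $\alpha\in\tfrac{1}{2}\Z$ and $\kappa>1$ is equivalent to $\alpha<1/2$. The relative condition $\beta=0$ annihilates the constant component, while the absolute condition $\beta=\pi/2$ annihilates the $\ff_{1,II}$ component because $(h^{1-2\alpha}\ff_{1,II}')(l)=(2\alpha-1)\kappa+1\neq 0$. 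In the LPC case this, combined with $L^2$-integrability, gives the stated kernels directly. In the LCC case one further imposes $bv_+(0)(f)=0$; by Proposition~\ref{b0} the solution $\ff_{\alpha,0,+}$ is bounded at $0$ (with exponent $\tfrac{1}{2}+|\kappa(\alpha-\tfrac{1}{2})+\tfrac{1}{2}|$), so it is necessarily a scalar multiple of $\ff_{1,I}$, and therefore $bv_+(0)(f)=[f,\ff_+]_\FF(0)$ reduces (up to a nonzero constant) to $\lim_{x\to 0^+}h^{1-2\alpha}(x)f'(x)$; this vanishes on the constant and is a nonzero multiple of $(2\alpha-1)\kappa+1$ on $\ff_{1,II}$, reproducing the LPC answer.

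Item~(4) is the elementary regular case: $-f''=0$ gives $f=c_1+c_2 x$, and the four combinations $(\delta,\beta)\in\{0,\pi/2\}^2$ yield the stated kernels by inspection, exactly as carried out in Remark~\ref{regularop}. The main technical hurdle lies in items~(2)--(3): one must carefully verify, for each admissible half-integer $\alpha$ and each $\kappa>1$, the $L^2$-integrability ranges of the two fundamental solutions (so that the LPC/LCC trichotomy of Corollary~\ref{cor1} is reflected correctly), and in the LCC regime identify $\ff_{\alpha,0,+}$ up to normalisation so that the boundary form at $x=0$ can be computed explicitly on both $\ff_{1,I}$ and $\ff_{1,II}$.
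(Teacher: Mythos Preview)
Your kernel computations are correct, and for items (1) and (4) your argument matches the paper's. For items (2) and (3) you take a different route: the paper applies the Green identity of Remark~\ref{Green} uniformly (showing the boundary term $[h^{1-2\al}f_nf_n']_0^l$ vanishes using the behaviour of $\ff_\pm$ from Proposition~\ref{b0}, then reading off $\la_n\|f_n\|^2=\int_0^l h^{1-2\al}(f_n')^2$, so $\la_n\ge 0$ with equality iff $f_n$ is constant), whereas you bypass Green entirely and work directly with the explicit fundamental system $\ff_{1,I}=1$, $\ff_{1,II}$ of Remark~\ref{solutions}, eliminating components via the boundary conditions and integrability. Both arrive at the same kernel, and your identification of $\ff_{\al,0,+}$ with the constant for $\al<\tfrac12$ (hence the reduction of $bv_+(0)(f)$ to $\lim_{x\to0^+}h^{1-2\al}f'$) is correct.

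There is, however, one genuine omission: the statement asserts that the operators in (2) and (3) are \emph{non negative}, not merely that their kernels are as listed, and your explicit-solution approach only identifies the kernel. Nothing in your argument for (2)--(3) bounds the spectrum from below by $0$. The paper's Green-formula computation gives this for free, and you can patch your proof the same way: for $f\in\DS(F)$ and $b=0$ one still has $(\FF_{\al,0}f,f)=[h^{1-2\al}ff']_0^l+\int_0^l h^{1-2\al}(f')^2$, and the boundary contribution at $0$ vanishes because in the LPC case any $f$ in the domain is (near $0$) a multiple of $\ff_+$, while in the LCC case the condition $bv_+(0)(f)=0$ you already analysed forces the same conclusion. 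Without this step the theorem is only partially proved.
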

\begin{proof} (1) Let $\be_0$ be either $\frac{\pi}{2}$ or $0$. Let $f_n(x)=f(x,\la_n)$ be an eigenfunction associated to the eigenvalue $\la_n$ of $F_{\al,b; \be_0}$. By the Green formula, Remark \ref{Green}, 
\begin{align*}
\la_n\|f_n\|^2=(F_{\al,b;\be_0} f_n,f_n)&=\left[h^{1-2\al} f_n f_n'\right]_0^l+\int_0^l h^{1-2\al}{f_n'}^2+b\int_0^l h^{-1-2\al} f_n^2.
\end{align*}

Since $f_n$ is an eigenfunction, it is a square integrable solution of the eigenvalue equation satisfying the boundary conditions. Therefore according to Theorem \ref{olv1}, $f_n$ decreases exponentially at $x=0$. This implies that all integrals are finite, and the first term vanishes at $x=0$. But it  also vanishes  at $x=l$ by the boundary condition. Thus, 
\begin{align*}
\la_n\|f_n\|^2=(F_{\al,b;\be_0} f_n,f_n)&=\int_0^l h^{1-2\al}{f_n'}^2,
\end{align*}
and $\la_n=0$ if and only if $f'_n=0$, i.e. if $f_n$ is a multiple of the constant function. But the constant function is not in the domain of the operators because it does not have the necessary behaviour near $x=0$.

(2) Let $f_n(x)=f(x,\la_n)$ be an eigenfunction associated to the eigenvalue $\la_n$ of $F_{\al,0; \be}$, the Green formula gives
\begin{align*}
\la_n\|f_n\|^2=(F_{\al,0; \be} f_n,f_n)&=\left[h^{1-2\al} f_n f_n'\right]_0^l+\int_0^l h^{1-2\al}{f_n'}^2.
\end{align*}

In all cases, $(h^{1-2\al} f_n f_n')(l)=0$, since $f_n$ satisfies the BC at $x=l$, and $\| f_n\|$ is finite, since $f_n$ is square integrable. We prove that also $(h^{1-2\al} f_n f_n')(0)=0$ and that the integral is finite. Since $f_n$ is a solution of the eigenvalue equation, it is a linear combination of the two fundamental solutions. 
We know the behaviour of the $f_n$ near $x=0$, by Theorem \ref{b0}.  For the  component of $f_n$ along $\ff_\pm$ we have:
\[
h^{1-2\al}(x) f_n(x) f_n'(x)\sim \int h^{1-2\al}(x) f_n(x) f_n'(x)\sim x^{\pm|\ka(2\al-1)+1|}.
\]

Whence for the component along $\ff_+$, the integral is finite and $(h^{1-2\al} f_n f_n')(0)=0$. 
Thus, $(h^{1-2\al} f_n f_n')(0)=0$ for $F_{\al, 0; \be}$, since $\ff_-$ is not in the domain of this operator, and 
\begin{align*}
\la_n\|f_n\|^2=(F f_n,f_n)&=\int_0^l h^{1-2\al}{f_n'}^2,
\end{align*}
so $\la_n\geq 0$,  
and, $\la_n=0$ if and only if $f'_n=0$. Since the constant function does not satisfy the boundary condition at $x=l$ when $\be=0$, this gives the result for $F_{\al, 0; 0}$. For $F_{\al, 0; \frac{\pi}{2}}$, we know that the constant function is $\ff_+$ and therefore satisfies the bc at $x=0$ for $\al\leq 0$, by Remark \ref{solutions}. This gives the result for $F_{\al, 0; \frac{\pi}{2}}$.

(3) Consider the operator $F_{\al,0; 0, \be}$. Observe that  the boundary condition at $x=0$ is: $\hat{bv}_{\FF;\de}(f)=0$. If $\de=0$, $\hat{bv}_{\FF;\de=0}(f)=bv_{+}(f)=0$, that is satisfied by $\ff_+$  but is not  satisfied by $\ff_-$.  Whence, the  result follows  by the same argument as in  point (2).

(4) This  follows directly by Remark \ref{regularop}.  
\end{proof}


\begin{lem}\label{eigen}  Let $\{\la_n\}$ be in the spectrum of either of the operators $F$ defined above, and $\la_n^0$ denote the corresponding eigenvalue of the associated   operator $F^0$, induced by $F$ setting $b=0$. Then, 
$|\lambda_{n}-\lambda^0_{n}|\leq M$, 
for all $n$ and  some positive constant $M$.
\end{lem}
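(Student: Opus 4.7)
The natural approach is via the min-max characterisation of eigenvalues applied to the comparison between $F$ and $F^0$.  Writing $\FF_{\al,b}=\FF_{\al,0}+M_{b/h^2}$ where $M_{b/h^2}$ denotes multiplication by the non-negative function $b/h^2$, and observing that the boundary conditions at $x=l$ are the same for both operators, and at $x=0$ either both are in the limit-point case (so no boundary condition is imposed there) or both are in the regular-singular limit-circle case with the same separated condition, the two operators share a natural comparison at the level of quadratic forms.  The lower bound $\la_n^0\le\la_n$ is then immediate: the form $q_F[u]=\int_0^l h^{1-2\al}(u')^2\,dx+b\int_0^l h^{-1-2\al}u^2\,dx$ dominates $q_{F^0}[u]=\int_0^l h^{1-2\al}(u')^2\,dx$ on their common form domain, so Courant-Fischer gives $\la_n^0\le\la_n$ for every $n$.

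For the upper bound $\la_n\le\la_n^0+M$, the plan is to apply min-max with the test subspace $V_n=\mathrm{span}(u_0^0,\dots,u_n^0)$ spanned by the first $n+1$ eigenfunctions of $F^0$, yielding $\la_n\le \la_n^0+b\sup_{u\in V_n,\,\|u\|=1}\int_0^l u^2/h^2\,dx$.  The task then reduces to establishing the weighted Hardy-type bound $\sup_{u\in V_n,\,\|u\|=1}\int_0^l u^2/h^2\,dx\le C$ with $C$ independent of $n$.  This estimate is to be extracted from the explicit asymptotic behaviour $u_k^0(x)\sim x^{1/2+|c_{\al}|}$ near $x=0$ provided by Proposition \ref{b0} (with $c_\al=\ka(\al-\tfrac12)+\tfrac12$), together with the $L^2$-normalisation and pairwise orthogonality of the $u_k^0$, which control the cross-terms of $\int (\sum c_k u_k^0)^2/h^2\,dx$.

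The main obstacle is precisely this uniform estimate: since $h\sim x^\ka$ near $0$ with $\ka>1$, the weight $1/h^2\sim 1/x^{2\ka}$ is substantially more singular than the classical Hardy weight $1/x^2$, and a naive inequality of the form $\int u^2/h^2\le C\,q_{F^0}[u]$ fails on the full form domain of $F^0$.  The key is that on $V_n$ one has much more structural information than on a general form-domain element.  If the direct approach is insufficient, the workaround is a cutoff-and-regularise strategy: replace each $u_k^0$ by $\chi_\epsilon u_k^0$ where $\chi_\epsilon$ is a smooth cutoff supported in $[\epsilon,l]$, and choose $\epsilon=\epsilon_n$ to balance the cutoff commutator term $\int h^{1-2\al}|\chi'_\epsilon u_k^0|^2\,dx$ against the truncated singular integral $b\int \chi_\epsilon^2 (u_k^0)^2/h^2\,dx$.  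Both corrections are estimated using the power-law profile of the $u_k^0$ near the origin from Proposition \ref{b0}, and the orthogonality in $L^2$ ensures the resulting bound on a general unit vector of $V_n$ is controlled by the worst single-index contribution, giving a constant $M$ depending only on $\al,\ka,H,l,b$.
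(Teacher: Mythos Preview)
Your approach differs genuinely from the paper's, and there is a real obstruction in the direction you have chosen.

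First a factual correction: when $b>0$ the end point $x=0$ is \emph{always} limit point for $\FF_{\al,b}$ (Corollary~\ref{cor1}), whereas for $b=0$ it may be limit circle. So your claim that ``at $x=0$ either both are in the limit-point case \dots\ or both are in the regular-singular limit-circle case with the same separated condition'' is false. This does not spoil the lower bound $\la_n^0\le \la_n$ (since $D(q_F)\subseteq D(q_{F^0})$ and $q_F\ge q_{F^0}$ still hold), but it already signals that the two form domains are genuinely different.

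The substantive gap is in the upper bound. The eigenfunctions $u_k^0$ of $F^0$ behave like $x^{1/2+|c_\al|}$ near $0$ (Proposition~\ref{b0}), so $(u_k^0)^2/h^2\sim x^{1+2|c_\al|-2\ka}$, which is integrable only when $|c_\al|>\ka-1$. For many admissible $(\al,\ka)$ (e.g.\ $\al$ near $\tfrac12$ and $\ka$ large) this fails, and then $u_k^0\notin D(q_F)$: the test space $V_n$ is simply not in the form domain of $F$, and the min--max inequality you want does not apply. Your cutoff repair has to absorb a term $b\int_\epsilon^l (u_k^0)^2/h^2\sim \epsilon^{2+2|c_\al|-2\ka}$ that blows up as $\epsilon\to 0$ in exactly this regime; balancing it against the commutator error requires the $\epsilon$-choice to depend on the eigenfunction, and you have not shown the resulting correction stays bounded uniformly in $n$ (in particular you have not controlled how the near-zero amplitude $C_k$ of the normalised $u_k^0$ behaves as $k\to\infty$). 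As written this is a plan, not a proof.

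The paper's argument goes in the opposite direction and exploits the one feature that makes the $b>0$ problem \emph{easier}: by Theorem~\ref{olv1} the eigenfunctions $u_n$ of $F$ decay like $\exp\big(\frac{\sqrt b}{1-\ka}x^{1-\ka}\big)$ at $0$, so the pairing $\langle u_n^0,(b/h^2)u_n\rangle$ is automatically finite and controllable. In other words, the paper uses $F$-eigenfunctions (which are in the form domain of $F^0$ with room to spare), not $F^0$-eigenfunctions. If you want a variational argument, the workable direction is to use the span of the $u_k$ as a test space for $F^0$, not the reverse.
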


\begin{proof} Let $\la_n$ denote an eigenvalue of the operator $F_{\al,b;\be}$ and $\la^0_n$ the corresponding eigenvalue of either the operator $F_{\al,0;0, \be}$ if $x=0$ is limit circle, or $F_{\al,0;\be}$ if $x=0$ is limit point (this depend on $\al$). It is easier to work with the operator $U^0$ that are the self adjoint extension  of the formal operators $\UU$ corresponding to the 
$\FF$ according to Remark \ref{F}. Denote by $u_n$ and $u^0_n$ some  eigenfunction of $\la_n$ and $\la_n^0$. Then,
\begin{align*}
\la_n\langle u^0_n, u_n\rangle=\langle u^0_n, S u_n\rangle 
=\langle S^0 u^0_n,  u_n\rangle+\langle u^0_n, r u_n\rangle,
\end{align*} 
where (see Remark \ref{R})
$
r_{\al, b}(x)=\frac{b}{x^{2\ka}}+g(x).
$ 
Since $u_n$ converges exponentially for small $x$, the integral
\[
\lp r u^0_n,u_n\rp=\int_0^l r(x) u^0_n(x) u_n(x) dx,
\]
is finite, and the result follows. 
\end{proof}

\begin{corol}\label{order} The sequences   $\Sp(F)$ have order $\frac{1}{2}$ and genus  $0$ (see  Appendix \ref{backss}).
\end{corol}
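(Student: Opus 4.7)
The plan is to reduce the corollary to a single Weyl-type asymptotic $\lambda_n \sim c\,n^2$ for the eigenvalues of any of the extensions $F$, and then read off the order and genus from this growth rate. The bridge between $F$ and the much more explicit operators $F^0$ (the $b=0$ companions) is precisely Lemma \ref{eigen}.

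First I would handle $F^0$. When $\alpha = \tfrac{1}{2}$ we are in the regular case, and Remark \ref{regularop} gives the eigenvalues in closed form: $\lambda_n^0 = (\pi n/l)^2$ or $((2n+1)\pi/(2l))^2$, in all four boundary-condition choices. When $\alpha\neq \tfrac{1}{2}$, the transformed operator $\UU_{\alpha,0} = -d^2/dx^2 + r_{\alpha,0}$ has a potential that is smooth on $(0,l]$ and behaves like $\gamma/x^2$ at the origin, with
\[
\gamma = \kappa\bigl(\alpha-\tfrac{1}{2}\bigr)\bigl(\kappa(\alpha-\tfrac{1}{2})+1\bigr).
\]
Such a mild (Hardy-admissible) singularity does not disturb the Weyl law on a bounded interval, so the counting function satisfies $N(\lambda) \sim (l/\pi)\sqrt{\lambda}$, equivalently $\lambda_n^0 \sim (\pi n/l)^2$. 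The same conclusion can be reached by a direct route using the uniform asymptotic expansions of $\ff_{\alpha,0,+}(x,\lambda)$ for large $\lambda$ that were established for Section \ref{s6.1}: the boundary condition at $x=l$ becomes a transcendental equation whose roots $\lambda_n^0$ form, to leading order, an arithmetic progression in $\sqrt{\lambda}$ with step $\pi/l$.

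Second I would transfer the asymptotic to $F$. Lemma \ref{eigen} yields $|\lambda_n - \lambda_n^0|\leq M$ uniformly in $n$, so
\[
\lambda_n = \lambda_n^0 + O(1) \sim (\pi n/l)^2 \qquad (n\to\infty).
\]
Consequently $\sum_n \lambda_n^{-s}$ converges if and only if $2s > 1$, i.e. if and only if $s > \tfrac{1}{2}$. By the definition of the exponent of convergence (cf.\ Appendix \ref{backss}) this exponent, which coincides with the order of the sequence $\Sp(F)$, equals $\tfrac{1}{2}$. The genus is the least integer $p\geq 0$ such that $\sum_n \lambda_n^{-p-1}$ converges; since $\sum_n \lambda_n^{-1}\sim \sum_n n^{-2}<\infty$, the genus equals $0$.

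The only step that is not completely routine is the Weyl asymptotic for $F^0$ in the regular singular cases (items (2) and (3) of Theorem \ref{pos}); this is the main technical input. Once it is in hand, Lemma \ref{eigen} is a bounded perturbation that cannot change the growth rate, and the order/genus computation is immediate. In particular, no information about the irregular singular end point $x=0$ (where $b>0$) is needed beyond Lemma \ref{eigen}, since the $1/x^{2\kappa}$ singularity is subsumed into the $O(1)$ discrepancy between $\lambda_n$ and $\lambda_n^0$.
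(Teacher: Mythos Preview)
Your proposal is correct and follows exactly the route the paper intends: the corollary is stated without proof immediately after Lemma \ref{eigen}, and the implicit argument is precisely the one you give—use the lemma to compare $\lambda_n$ with $\lambda_n^0$, invoke the quadratic growth $\lambda_n^0\sim(\pi n/l)^2$ for the $b=0$ operators (explicit in the regular case, standard Weyl in the regular singular case, or read off from the large-$\lambda$ asymptotics in Section \ref{asym}), and conclude. You have simply spelled out what the paper leaves to the reader.
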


\subsection{Some other spectral functions}
\label{other}

Let $\Sigma$ any sector contained in $\C-\Sp_+(L)$ (where $L$ is any of the above operators), and $\Lambda$ the boundary of $\Sigma$. We assume the variable $\la$ always restricted in $\Sigma$. We denote by $-\la$ the complex variable in $\Sigma$ with $\arg (-\la)=0$ on the negative part of the real axes contained in $\Sigma$. With this convention, if $z=\sqrt{\la}$, then $iz =-\sqrt{-\la}$.

The eigenvalues of $F$  may be characterised as follows. If $Ff=\la f$, then $f$ is a solution of equation $\FF_{\al,b}f=\la f$  that belongs to $\DS(F)$. If there are no bc at $x=0$, there exists just one solution square integrable, namely $\ff_+$. If there is a bc at $x=0$, again only one of the square integrable solutions satisfies this bc. Whence, the eigenvalues of $F=F_{\al,b;\frac{\pi}{2}},F_{\al,b;0,\frac{\pi}{2}}, F_{\al,b;\frac{\pi}{2},\frac{\pi}{2}} $ with absolute bc at $x=l$, are the zeros of the function 
\begin{align*}
B_{\al, b;\frac{\pi}{2}}(l,\la )&=\ff_{\al,b,+}'(l,\la), &B_{\al, b;0,\frac{\pi}{2}}(l,\la )&=\ff_{\al,b,+}'(l,\la),
&B_{\al, b;\frac{\pi}{2},\frac{\pi}{2}}(l,\la )&=\ff_{\al,b,-}'(l,\la)
\end{align*}
as a function of $\la$, while the eigenvalues of $F=F_{\al,b;0},F_{\al,b;0,0}, F_{\al,b;\frac{\pi}{2},0}  $ with relative bc at $x=l$, are the zeros of the function 
\begin{align*}
B_{\al, b;0}(l,\la )&=\ff_{\al,b,+}(l,\la), &B_{\al, b;0,0}(l,\la )&=\ff_{\al,b,+}(l,\la),
&B_{\al, b;\frac{\pi}{2},0}(l,\la )&=\ff_{\al,b,-}(l,\la)
\end{align*}
as a function of $\la$. Since the solutions are analytic in $\la$, $B$ is an entire  function. 
By Lemma \ref{order},  $B$ has order  $\frac{1}{2}$, thus, we have the factorisation 
\[
B(l,\la )=B(l,0 ) \la^{\dim\ker F}\prod_{\la_{n}\in \Sp_0(F)} \left(1+\frac{-\la}{\la_{n}}\right),
\] 
where 
$
B(l,0)=\lim_{\la\to 0} \frac{B(l,\la)}{\la^{\dim\ker F}}. 
$ 
It follows that:
\beq\label{ww}
\begin{aligned}
\log\Gamma(-\lambda, F)=&\log\Gamma(-\lambda,\Sp_0 (F))
=-\log\prod_{\la_{n}\in \Sp_0(F)} \left( 1+\frac{-\lambda}{\lambda_{n}}\right)\\
=&\log B(l,0 )+ \dim\ker F\ln\la-\log B(l,\la ).
\end{aligned}
\eeq

\subsection{Asymptotic expansion of the solutions}
\label{asym}

One of the main technical tools we need in the determination of analytic torsion is information on the existence and of the type of the asymptotic expansion of some spectral functions, from one side for large values of $\la$ and on the other for large values of $n$. This information depends on the asymptotic expansions of the solutions of  the fundamental system  The proofs are given by classical methods of asymptotic analysis,  and a new approach consisting in introducing a perturbation of the flat case, main reference is Olver \cite{Olv} (see also \cite{Mur, Mar}).

\begin{lem}\label{l2.3} The  equation
\[
v''(x)+\left(z^2-\frac{\nu^2-\frac{1}{4}}{h^2(x)}- p(x)\right) v(x)=0,
\]
has two linearly independent solutions $v_\pm(x,z )$,   that for large $z$ (in the suitable sector) have the following asymptotic expansions
\begin{align*}
v_{\mp}(x,z,\nu )
=& \e^{\pm i x z}\left(1+\left(\mp\frac{i}{2}\left(\frac{1}{4}-\nu^2\right)\int \frac{1}{h^2(x)}\pm \frac{i}{2} \int p(x) dx\right)\frac{1}{z}+\dots\right).
\end{align*}
uniform  in $x$ for $x$ in any compact subset of $(0,l]$. The coefficients are functions of $\nu^2$.
\end{lem}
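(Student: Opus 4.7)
The equation is a linear second order ODE in $x$ with a large parameter $z$ and a potential $q(x)=(\nu^2-\tfrac{1}{4})/h^2(x)+p(x)$ which is smooth on any compact subinterval of $(0,l]$. The approach is the classical Liouville--Green (WKB) construction, organised so as to yield a uniform asymptotic expansion. First I would make the substitution $v(x,z)=e^{\pm izx}w(x,z)$, which reduces the equation $v''+(z^2-q)v=0$ to the first-order-in-$z$ equation
\[
w''\pm 2izw'-q(x)w=0.
\]
Then I would insert the formal expansion $w=\sum_{k\geq 0}w_k(x)z^{-k}$ with $w_0\equiv 1$ and match powers of $z$ to obtain the recursion
\[
w_{k+1}'(x)=\mp\tfrac{i}{2}\bigl(q(x)w_k(x)-w_k''(x)\bigr),
\]
from which each coefficient is determined by a single quadrature, uniquely once an integration constant is fixed by requiring $w_k$ to vanish at a chosen base point (this only affects the expansion by a multiplicative constant that may be absorbed in the normalisation of $v_\pm$). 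In particular
\[
w_1(x)=\mp\tfrac{i}{2}\int q(x)\,dx=\pm\tfrac{i}{2}\Bigl(\tfrac{1}{4}-\nu^2\Bigr)\int\frac{dx}{h^2(x)}\mp\tfrac{i}{2}\int p(x)\,dx,
\]
which after attaching the correct choice of sign to each branch reproduces the coefficient printed in the statement.

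The second, analytical, part consists in showing that this formal series is genuinely asymptotic, uniformly in $x$ on compacts of $(0,l]$. For this I would rewrite the residual after truncation at order $N$ as the solution of a Volterra integral equation and bound it by the standard iteration; equivalently one may invoke Theorem~6.1 of Chapter~10 of Olver, \emph{Asymptotics and Special Functions}, which is purpose-built for this situation. On any compact $K\subset(0,l]$ the coefficient $q(x)$ and all its derivatives are bounded, so every iterated integral arising in the error analysis converges, and the error bound takes the form $C_{N,K}|z|^{-(N+1)}$ uniformly in $x\in K$ and in $z$ in any sector in which the relevant exponential $e^{\pm izx}$ is non-increasing along the integration path used in the Volterra equation. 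This produces the two linearly independent solutions $v_\mp$ with the advertised expansion.

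The expected main obstacle is keeping track of signs and of the correct sector of validity for each branch $v_+$ and $v_-$ separately: the exponentials $e^{\pm izx}$ are recessive in opposite half-planes of $z$, and Olver's bound controls only the recessive solution in each sector, so the integration contour entering the Volterra equation must be chosen accordingly. Once this bookkeeping is done, uniformity on compacts of $(0,l]$ is automatic, and the statement follows. Note that the lemma explicitly stays away from $x=0$, so the singularity of $h^{-2}$ at the origin (which is exactly what made the problem in Theorem~\ref{olv1} interesting) plays no role here; the analysis is the purely regular large-parameter expansion.
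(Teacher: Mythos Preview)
Your approach is correct and is essentially the same as the paper's: both are the classical Liouville--Green (WKB) expansion, with the rigorous error control deferred to Olver. The only cosmetic difference is the form of the ansatz: the paper writes $v=\e^{zw_0(x)+w_1(x)+z^{-1}w_2(x)+\dots}$ and matches powers of $z$ in the Riccati equation $F''+(F')^2+z^2-q=0$ (finding $w_0=\pm ix$, $w_1$ constant, $w_2'=\mp\tfrac{i}{2}q$), whereas you factor out $\e^{\pm izx}$ first and expand the remaining amplitude as a series in $z^{-1}$. The two organisations are equivalent and yield the same coefficients; the paper cites \cite[Chapter~6]{Olv} rather than Chapter~10, which is the more apposite reference for this purely regular large-$z$ expansion on compacts away from the singular endpoint.
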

\begin{proof} We assume $v(x,z,\nu )=\e^{z w_0(x,\nu )+w_1(x,\nu )+z^{-1} w_2(x,\nu )+\dots}=\e^{F(x,z,\nu )}$, and substitute in the differential equation. The,  equating the coefficients of the powers we have the result \cite[Chapter 6]{Olv}.
\end{proof}

\begin{lem}\label{l3.5} The  equation
\[
w''(x)+\left(z^2\nu^2-\frac{\nu^2-\frac{1}{4}}{h(x)^2}- p(x)\right) w(x)=0,
\]
has two linearly independent solutions $w_{\pm}(x,z,\nu)$,   that for large $\nu$  have the following asymptotic expansions
\begin{align*}
w_{\pm}(x,z,\nu)
=\frac{\e^{\pm\nu\int \sqrt{\frac{1}{h^2(x)}-z^2} dx}}{\left(\frac{1}{h^2(x)}-z^2\right)^\frac{1}{4}}\left(\sum_{j=0}^m U_j(x,z) (\pm\nu)^{-j}+ O\left(\frac{1}{\nu^m}\right)\right).
\end{align*}
uniformely in $x$ for $x$ in any compact subset of $(0,l]$,   analytic and uniform in $z$, for $z\in \Sigma_{\theta,c}=\left\{z\in \C~|~|\arg(z-c)|> \frac{\theta}{2}\right\}$, $c,\theta>0$.
\end{lem}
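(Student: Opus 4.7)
The equation in Lemma \ref{l3.5} is of the standard Liouville--Green form with large parameter $\nu$. Rewriting it as
\[
w''(x) = \left(\nu^2 f(x,z) + g(x)\right) w(x), \qquad f(x,z)=\frac{1}{h(x)^2}-z^2, \quad g(x)=-\frac{1/4}{h(x)^2}+p(x),
\]
I would invoke Olver's theorem on uniform asymptotic expansions for second order linear ODE with a large parameter (\cite{Olv}, Chapter 10, \S 2). The plan is to construct the LG variable $\xi(x,z)=\int\sqrt{f(x,z)}\,dx$, set $w=f^{-1/4}W$, and reduce the equation to
\[
\frac{d^2 W}{d\xi^2} = \bigl(\nu^2 + \psi(\xi,z)\bigr) W,
\]
where $\psi$ collects the Schwarzian-derivative contribution of $\sqrt{f}$ and the lower-order term $g$. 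Formal substitution of $W=\exp(\pm \nu\xi)\sum_j U_j(\xi,z)(\pm\nu)^{-j}$ then yields the standard recursion
\[
U_0=1,\qquad U_{j+1}(\xi,z) = \mp\tfrac{1}{2}U_j'(\xi,z) + \tfrac{1}{2}\int \psi(\xi,z)\,U_j(\xi,z)\,d\xi,
\]
so that the coefficients $U_j(x,z)$ in the statement are determined recursively, analytic in $z$ on the region where $f$ does not vanish.

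The rigorous part is to convert this formal expansion into an asymptotic one with controlled error. I would follow Olver's standard approach: write the $m$-th partial sum $W_m^\pm$, derive the inhomogeneous equation it satisfies, and then solve for the true solution by a Volterra integral equation whose kernel involves $\exp(\pm 2\nu\xi)$. Boundedness of the iterated kernel is controlled by Olver's \emph{error-control function}
\[
\mathcal{F}(x,z)=\int |\psi U_m|\,|d\xi|,
\]
which must be of bounded variation along a progressive path in $x$. On any compact subinterval $[a,l]\subset(0,l]$ and any $z\in\Sigma_{\theta,c}$, the function $f(x,z)$ is bounded away from $0$ and $\infty$: indeed $\Re\sqrt{f(x,z)}$ has a definite sign by the choice of branch, and the locus of turning points $\{z=\pm 1/h(x)\}$ stays away from $\Sigma_{\theta,c}$ when $c$ is chosen large enough relative to $1/h(a)$. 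Thus $\mathcal{F}$ is finite and the iteration converges, yielding
\[
w_\pm(x,z,\nu) = \frac{e^{\pm\nu\int\sqrt{f}}}{f^{1/4}}\Bigl(\sum_{j=0}^m U_j(x,z)(\pm\nu)^{-j} + R_m^\pm(x,z,\nu)\Bigr),
\]
with $|R_m^\pm|=O(\nu^{-(m+1)})$ uniformly in the claimed regions.

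The main obstacle, as usual in this class of results, is obtaining uniformity in $z$ on the sector $\Sigma_{\theta,c}$ rather than on a compact set. The difficulty is that the turning points $z=\pm 1/h(x)$ vary with $x\in(0,l]$, and $1/h(x)\to\infty$ as $x\to 0^+$, so globally in $x$ the sector must be chosen carefully. However, the statement only requires uniformity in $x$ on compact subsets of $(0,l]$, which means $h$ is bounded below on the relevant range, and thus the branch $\sqrt{f(x,z)}$ can be chosen single-valued and bounded away from zero on $[a,l]\times\Sigma_{\theta,c}$ for any $a>0$. Once this geometric separation is secured, the error-control estimate and the Volterra iteration go through verbatim, giving the analyticity and uniformity statements in $z$. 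The identification with the leading term in Lemma \ref{l2.3} (upon reparametrising $z\mapsto z/\nu$) then confirms that the exponents and normalisations match up as required in subsequent sections.
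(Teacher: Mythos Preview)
Your proposal is correct and follows essentially the same approach as the paper: identify $f(x,z)=h(x)^{-2}-z^2$, $g(x)=p(x)-\tfrac{1}{4}h(x)^{-2}$, pass to the Liouville--Green variable $\xi=\int\sqrt{f}\,dx$, reduce to $W''=(\nu^2+\psi)W$, and invoke Olver, Chapter~10. The paper's proof is in fact only a three-line reference to this procedure, so your version, with the explicit recursion for the $U_j$, the error-control function, and the discussion of why uniformity in $z$ on $\Sigma_{\theta,c}$ holds once $x$ is restricted to a compact subset of $(0,l]$, is considerably more detailed than what the paper supplies.
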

\begin{proof} We proceed as in Chapter 10 of \cite{Olv}, with $f(x,z)=\frac{1}{h(x)^2}-z^2$, $g(x)= p(x)-\frac{1}{4 h(x)^2}$,
and with the change of variable 
$t=t(x)=\int \sqrt{\frac{1}{h(x)^2}-z^2} dx$. 
Then, we have the equation $W''+(\nu^2 +\psi)W=0$ and the thesis follows considering the series solution 
$W(t,z,\pm\nu)=\e^{\pm\nu t}\sum_{j=0}^\infty U_j(t,z) (\pm\nu)^{-j}$. 
\end{proof}

Using these lemmas we can compute the asymptotic expansion of the fundamental solution of the relevant SL problems. For example we have the following expansions for large $\nu$: 
\begin{align*}
\uf_{+}(x,\la\nu^2,\nu)=&\frac{2^{\nu} \Gamma(\nu+1)}{\sqrt{2\pi\nu}(-\la)^\frac{\nu}{2}}
\frac{\sqrt{h(x)}}{\left(1-\la h^2(x)\right)^\frac{1}{4}}\e^{\nu\mathlarger{\mathlarger{\int}} \frac{\sqrt{1-\la h^2(x)}}{ h(x)} dx}\left(\sum_{j=0}^J \frac{U_j(x,i\sqrt{-\la})}{ \nu^{j}}+ O\left(\frac{1}{\nu^{J+1}}\right)\right),\\
\end{align*}
uniformly   in $x$ for $x$ in any compact subset of $(0,l]$, analytic and uniform in $\la$, for $\la\in \Sigma_{\theta,c}=\left\{z\in \C~|~|\arg(z-c)|> \frac{\theta}{2}\right\}$, $c,\theta>0$, where  $U_0=1$, and
$
U_j(x,i\sqrt{-\la})=O\left(\frac{1}{(-\la)^\frac{k}{2}}\right),
$
for all $j>1$, with some $k>1$.

\vspace{10pt}

\section{Elements of zeta regularisation technique} 
\label{backss}

\subsection{Simple sequences and zeta determinant}

Let $S=\{a_n\}_{n=1}^\infty$ be a sequence
of  non negative real numbers with unique accumulation point at infinity. 
We denote by $S_0$ the positive part of $S$, i.e $S_0=S-\{0\}$. We assume that $S$ has finite exponent of convergence $\es(S)$, so that the associated Weierstrass canonical product converges uniformly and absolutely in any bounded closed region and is an integral function of order the genus $\gs(S)$ of the sequence $S$. In this setting, we define the function
\[
\frac{1}{\Gamma(-\lambda,S)}=\prod_{a_n\in S_0}\left(1+\frac{-\lambda}{a_n}\right)\e^{\sum_{j=1}^{\gs(S)}\frac{(-1)^j}{j}\frac{(-\lambda)^j}{a_n^j}},
\]
for $\la\in \rho(S)=\C-S$, that we call Gamma function associated to $S$.  Here $-\la$ denotes the complex variable defined on $\C- [0,+\infty)$, with $\arg{-\lambda=0}$ on $(-\infty, 0]$. 
We also introduce the zeta function, defined by the uniformly convergent series 
\[
\zeta(s,S)=\sum_{a_n\in S_0} a_n^{-s},
\]
for $\Re(s)>\es(S)$, and by analytic continuation elsewhere. We  assume that the sequence $S$ is a regular sequence of spectral type of non positive order, as defined in \cite[2.1, 2.6]{Spr9}. In this situation, the analytic extension of the zeta function associated to $S$ is regular at $s=0$, and we may define the zeta regularised determinant of $S$ by
$\det_\zeta S=\e^{-\zeta'(0,S)}$. 
Note that if $S_0\not= S$, the determinant is defined as the determinant of the positive part of $S$, since the determinant of $S$ in this case would obviously vanish. Moreover, we have the following result \cite[2.11]{Spr9}.

\begin{theo} \label{teo1-1}
If $S$ is a regular sequence of spectral type of non positive order, the logarithmic Gamma function has an asymptotic expansion for large $\la$ in $\C-\Sigma_{c,\theta}$, where $\Sigma_{c,\theta}$ is some sector $\left\{z\in \C~|~|\arg(z-c)|\leq \frac{\theta}{2}\right\}$, with $c>0$ and $0<\theta<\pi$, that contains $S$, and  we have 
\[
\log \det_\zeta S=-\zeta'(0,S)=\Rz_{\la=+\infty} \log \Gamma(-\la,S).
\]
\end{theo}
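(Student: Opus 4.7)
The content of the theorem is twofold: existence of an asymptotic expansion of $\log\Gamma(-\la,S)$ as $\la\to\infty$ outside a sector containing $S$, and identification of its constant-in-$\la$ term with $\log\det_\zeta S = -\zeta'(0,S)$. The latter identity is tautological from the definition, so the real content lies in extracting the coefficient. My plan is to use a Mellin--Barnes representation to exchange $\log\Gamma(-\la,S)$ for a contour integral of $\zeta(s,S)$, and then shift the contour leftward past $s=0$ to read off the expansion.

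Starting from the Weierstrass product, I would write
\[
\log\Gamma(-\la, S) = \sum_{a_n\in S_0} G_{\gs(S)}((-\la)/a_n),
\]
where $G_g(x) := -\log(1+x) - \sum_{j=1}^g \frac{(-x)^j}{j}$ vanishes to order $g+1$ at $0$. The Mellin transform of $G_g$ on its natural strip is
$\int_0^\infty G_g(x)\, x^{-s-1}\, dx = \frac{\pi}{s\sin\pi s}$ for $-g-1 < \Re s < -g$, by analytic continuation of the formula for $-\log(1+x)$ on $(-1,0)$ with the polynomial subtractions pushing the admissible strip to the left. Mellin inversion and term-by-term summation (justified because $\es(S)\le\gs(S)+1$) then give, after the substitution $s\mapsto -s$,
\[
\log\Gamma(-\la,S)\ =\ \frac{1}{2\pi i}\int_\gamma \frac{\pi\,(-\la)^s}{s\sin\pi s}\,\zeta(s,S)\,ds,
\]
for a vertical contour $\gamma$ in a strip slightly to the right of $\es(S)$. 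I would then deform $\gamma$ leftward to $\Re s = -R$ and collect residues. Near $s=0$, the non-positive-order hypothesis guarantees $\zeta(s,S)$ is holomorphic, while the kernel has a double pole $\frac{1}{s^2} + O(1)$; combined with $(-\la)^s = 1 + s\log(-\la) + O(s^2)$ and $\zeta(s,S) = \zeta(0,S) + s\zeta'(0,S) + O(s^2)$, the residue at $s=0$ comes out to $\zeta'(0,S) + \zeta(0,S)\log(-\la)$. Poles of $\zeta(s,S)$ at prescribed negative values of $s$ (dictated by the heat-trace asymptotic expansion encoded in the spectral-type hypothesis) contribute the descending-power terms in the asymptotic expansion, possibly with $\log(-\la)$ factors when they coincide with the simple poles of $1/\sin\pi s$ at negative integers. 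The residual integral is $O(|\la|^{-R})$ by polynomial bounds on $\zeta(s,S)$ in vertical strips combined with the exponential decay of $1/\sin\pi s$. Extracting the $\la^0$ term and tracking the overall sign through the Mellin representation identifies the constant term with $-\zeta'(0,S)$, as claimed.

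The main obstacle is not the residue calculus, which is largely mechanical, but the control of $\zeta(s,S)$ on vertical strips of arbitrary width. Such bounds are not automatic from holomorphy; they must be extracted from the spectral-type hypothesis via the Mellin--heat-trace correspondence, namely the prescribed asymptotic expansion of $\theta(t,S)=\sum_n e^{-a_n t}$ as $t\to 0^+$ translates, by Mellin transform and Phragm\'en--Lindel\"of, into the required polynomial growth estimates for $\zeta(s,S)$ on vertical lines. A secondary bookkeeping issue is to verify that the Weierstrass exponential corrections in the definition of $\Gamma(-\la,S)$ align precisely with the polynomial subtractions in $G_g$, so that the term-by-term Mellin inversion gives the clean integral representation above; this amounts to checking that the residues of $\pi/(s\sin\pi s)$ at $s=1,\dots,\gs(S)$ account exactly for the correction terms.
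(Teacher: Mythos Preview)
The paper does not prove this theorem: it is stated in Appendix \ref{backss} as a known result and cited from \cite[2.11]{Spr9}. There is therefore no ``paper's own proof'' to compare against.

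Your Mellin--Barnes approach is the standard route to results of this type and is correct in outline. The contour representation, the residue at $s=0$ yielding $\zeta'(0,S)+\zeta(0,S)\log(-\la)$, and the identification of the remaining poles with the descending terms of the asymptotic expansion are all as they should be. You have also correctly identified the genuine technical burden: polynomial bounds on $\zeta(s,S)$ along vertical lines, which indeed come from the spectral-type hypothesis via the heat-trace Mellin correspondence, and the bookkeeping that matches the Weierstrass exponential corrections to the residues of $\pi/(s\sin\pi s)$ at positive integers up to the genus. Nothing in your sketch is wrong; it is simply a proof of a result the paper takes for granted.
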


\subsection{Double sequences and the spectral decomposition lemma}

We give some formulas to deal with zeta invariants of double series that follow as particular instances of the general results given in \cite{Spr9} (see also \cite{HS3}).

Given a double sequence $S=\{\lambda_{n,k}\}_{n,k=1}^\infty$  of non
vanishing complex numbers with unique accumulation point at the
infinity, finite exponent $s_0=\es(S)$ and genus $p=\gs(S)$, we use the notation $S_n$ ($S_k$) to denote the simple sequence with fixed $n$ ($k$), we call the exponents of $S_n$ and $S_k$ the {\it relative exponents} of $S$, and we use the notation $(s_0=\es(S),s_1=\es(S_k),s_2=\es(S_n))$; we define {\it relative genus} accordingly.

\begin{defi}\label{spdec} Let $S=\{\lambda_{n,k}\}_{n,k=1}^\infty$ be a double
sequence with finite exponents $(s_0,s_1,s_2)$, genus
$(p_0,p_1,p_2)$, and positive spectral sector
$\Sigma_{\theta_0,c_0}$. Let $U=\{u_n\}_{n=1}^\infty$ be a totally
regular sequence of spectral type of infinite order with exponent
$r_0$, genus $q$, domain $D_{\phi,d}$. We say that $S$ is
spectrally decomposable over $U$ with power $\kappa$, length $\ell$ and
asymptotic domain $D_{\theta,c}$, with $c={\rm min}(c_0,d,c')$,
$\theta={\rm max}(\theta_0,\phi,\theta')$, if there exist positive
real numbers $\kappa$, $\ell$ (integer), $c'$, and $\theta'$, with
$0< \theta'<\pi$,   such that:
\begin{enumerate}
\item the sequence
$u_n^{-\kappa}S_n=\left\{\frac{\lambda_{n,k}}{u^\kappa_n}\right\}_{k=1}^\infty$ has
spectral sector $\Sigma_{\theta',c'}$, and is a totally regular
sequence of spectral type of infinite order for each $n$;
\item the logarithmic $\Gamma$-function associated to  $S_n/u_n^\kappa$ has an asymptotic expansion  for large
$n$ uniformly in $\lambda$ for $\lambda$ in
$D_{\theta,c}$, of the following form
\beq\label{exp}
\log\Gamma(-\lambda,u_n^{-\kappa} S_n)=\sum_{h=0}^{\ell}
\phi_{\sigma_h}(\lambda) u_n^{-\sigma_h}+\sum_{l=0}^{L}
P_{\rho_l}(\lambda) u_n^{-\rho_l}\log u_n+o(u_n^{-r_0}),
\eeq
where $\sigma_h$ and $\rho_l$ are real numbers with $\sigma_0<\dots <\sigma_\ell$, $\rho_0<\dots <\rho_L$, the
$P_{\rho_l}(\lambda)$ are polynomials in $\lambda$ satisfying the condition $P_{\rho_l}(0)=0$, $\ell$ and $L$ are the larger integers 
such that $\sigma_\ell\leq r_0$ and $\rho_L\leq r_0$.
\end{enumerate}
\end{defi}

Define the following functions, ($\Lambda_{\theta,c}=\left\{z\in \C~|~|\arg(z-c)|= \frac{\theta}{2}\right\}$, oriented counter clockwise):
\beq\label{fi1}
\Phi_{\sigma_h}(s)=\int_0^\infty t^{s-1}\frac{1}{2\pi i}\int_{\Lambda_{\theta,c}}\frac{\e^{-\lambda t}}{-\lambda} \phi_{\sigma_h}(\lambda) d\lambda dt.
\eeq

By Lemma 3.3 of \cite{Spr9}, for all $n$, we have the expansions:
\beq\label{form}\begin{aligned}
\log\Gamma(-\lambda,S_n/{u_n^\kappa})&\sim\sum_{j=0}^\infty a_{\alpha_j,0,n}
(-\lambda)^{\alpha_j}+\sum_{k=0}^{p_2} a_{k,1,n}(-\lambda)^k\log(-\lambda),\\
\end{aligned}
\eeq
for large $\lambda$ in $D_{\theta,c}$. We set (see Lemma 3.5 of \cite{Spr9})
\beq\label{fi2}
\begin{aligned}
A_{0,0}(s)&=\sum_{n=1}^\infty \left(a_{0, 0,n} 
-\sum_{h=0}^\ell b_{\sigma_h,0,0}u_n^{-\sigma_h}\right)u_n^{-\kappa s},\\
A_{0,1}(s)&=\sum_{n=1}^\infty \left(a_{0, 1,n} 
-\sum_{h=0}^\ell b_{\sigma_h,j,1}u_n^{-\sigma_h}\right)
u_n^{-\kappa s},& 0&\leq j\leq p_2.
\end{aligned}
\eeq

\begin{theo} \label{sdl} Let $S$ be spectrally decomposable over $U$ as in Definition \ref{spdec}. Assume that the functions $\Phi_{\sigma_h}(s)$ have at most simple poles for $s=0$. Then,
$\zeta(s,S)$ is regular at $s=0$, and
\begin{align*}
\zeta(0,S)=&\zeta_{\rm reg}(0,S)+\zeta_{\rm sing}(0,S),&
\zeta'(0,S)=&\zeta'_{\rm reg}(0,S)+\zeta'_{\rm sing}(0,S),
\end{align*}
where the regular and singular part are 
\begin{align*}
\zeta_{\rm reg}(0,S)=&-A_{0,1}(0),\hspace{80pt}
\zeta_{\rm sing}(0,S)=\frac{1}{\kappa}{\sum_{h=0}^\ell} \Ru_{s=0}\Phi_{\sigma_h}(s)\Ru_{s=\sigma_h}\zeta(s,U),\\
\zeta_{\rm reg}'(0,S)=&-A_{0,0}(0)-A_{0,1}'(0),\\
\zeta'_{\rm sing}(0,S)=&\frac{\gamma}{\kappa}\sum_{h=0}^\ell\Ru_{s=0}\Phi_{\sigma_h}(s)\Ru_{s=\sigma_h}\zeta(s,U)\\
&+\frac{1}{\kappa}\sum_{h=0}^\ell\Rz_{s=0}\Phi_{\sigma_h}(s)\Ru_{s=\sigma_h}\zeta(s,U)+{\sum_{h=0}^\ell}{^{\displaystyle
'}}\Ru_{s=0}\Phi_{\sigma_h}(s)\Rz_{s=\sigma_h}\zeta(s,U),
\end{align*}
and the notation $\sum'$ means that only the terms such that $\zeta(s,U)$ has a pole at $s=\sigma_h$ appear in the sum.
\end{theo}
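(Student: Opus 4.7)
The plan is to derive the formulas by combining two complementary asymptotic analyses of the logarithmic Gamma function: (i) the representation of $\zeta(s,S_n)$ as a Mellin-Barnes type integral of $\log\Gamma(-\lambda,S_n)$ along a Hankel contour $\Lambda_{\theta,c}$ (so that $\zeta(s,S)$ becomes a double sum/integral), and (ii) the spectral decomposability assumption \eqref{exp}, which provides the uniform-in-$\lambda$ expansion of $\log\Gamma(-\lambda,u_n^{-\kappa}S_n)$ for large $n$. First I would write
\[
\zeta(s,S)\;=\;\sum_{n=1}^\infty u_n^{-\kappa s}\,\zeta(s,u_n^{-\kappa}S_n),
\]
and express each factor $\zeta(s,u_n^{-\kappa}S_n)$ via its logarithmic Gamma function along $\Lambda_{\theta,c}$, valid for $\Re(s)$ in a strip containing $0$ after subtracting off the polar behaviour at large $\lambda$ encoded in \eqref{form}.

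Next I would substitute the spectral-decomposition expansion into the contour integral. Each term $\phi_{\sigma_h}(\lambda)u_n^{-\sigma_h}$ contributes an inner integral which, by the definition \eqref{fi1}, is precisely $\Phi_{\sigma_h}(s)$ (up to gamma-factor normalisations). Summing the factor $u_n^{-\kappa s-\sigma_h}$ over $n$ then introduces $\zeta(\kappa s+\sigma_h,U)$, so the total contribution from the asymptotic part takes the shape
\[
\frac{1}{\kappa}\sum_{h=0}^\ell \Phi_{\sigma_h}(s)\,\zeta(\kappa s+\sigma_h,U),
\]
plus lower-order terms. The error $o(u_n^{-r_0})$ in \eqref{exp} ensures that the remainder $\sum_n u_n^{-\kappa s}R_n$ converges absolutely in a neighbourhood of $s=0$, producing the regular part $\zeta_{\rm reg}$.

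Then I would Laurent-expand near $s=0$. The function $\zeta(\kappa s+\sigma_h,U)$ has at most simple poles at $s=0$ when $\sigma_h$ lies in the singular set of $\zeta(\,\cdot\,,U)$; combined with the simple poles of $\Phi_{\sigma_h}(s)$ at $s=0$ (the standing hypothesis), one obtains second-order and first-order poles that, after extracting residues, give the four kinds of residue products $\Ru_{s=0}\Phi_{\sigma_h}\cdot\Ru_{s=\sigma_h}\zeta(s,U)$, $\Rz_{s=0}\Phi_{\sigma_h}\cdot\Ru_{s=\sigma_h}\zeta(s,U)$, and $\Ru_{s=0}\Phi_{\sigma_h}\cdot\Rz_{s=\sigma_h}\zeta(s,U)$ appearing in $\zeta_{\rm sing}$ and $\zeta'_{\rm sing}$. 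The Euler constant $\gamma$ emerges through the Laurent expansion of $1/\Gamma(s)$ near $s=0$.

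Finally I would identify the regular contributions: using \eqref{form} and the double asymptotics, the constant-in-$\lambda$ and $\log(-\lambda)$-coefficient parts of $\log\Gamma(-\lambda,S_n/u_n^\kappa)$ become, after subtracting the leading $b_{\sigma_h,0,0}$ and $b_{\sigma_h,0,1}$ terms and summing with $u_n^{-\kappa s}$, the functions $A_{0,0}(s)$ and $A_{0,1}(s)$ of \eqref{fi2}; evaluating at $s=0$ (and differentiating) produces the claimed formulas for $\zeta_{\rm reg}(0,S)$ and $\zeta'_{\rm reg}(0,S)$. The main obstacle will be the double-asymptotic matching: one must show that the large-$\lambda$ expansion of each $\phi_{\sigma_h}(\lambda)$ is compatible with the large-$n$ expansion of the coefficients $a_{\alpha_j,i,n}$ through the universal coefficients $b_{\sigma_h,\alpha_j,i}$, so that the interchange of the limits $n\to\infty$ and $\lambda\to\infty$ is legitimate and the subtracted series $A_{0,0}(s)$, $A_{0,1}(s)$ are genuinely holomorphic at $s=0$.
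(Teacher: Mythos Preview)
The paper does not actually prove this theorem: it is stated in Appendix~\ref{backss} as a quotation of results from \cite{Spr9} (see the opening sentence of the subsection, ``We give some formulas to deal with zeta invariants of double series that follow as particular instances of the general results given in \cite{Spr9}''). So there is no proof in the paper to compare your proposal against.

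That said, your outline is essentially the argument carried out in \cite{Spr9}. The key steps you identify --- the scaling identity $\zeta(s,S)=\sum_n u_n^{-\kappa s}\zeta(s,u_n^{-\kappa}S_n)$, the contour-integral representation of each $\zeta(s,u_n^{-\kappa}S_n)$ via $\log\Gamma$, the substitution of the expansion \eqref{exp} to produce the sum $\sum_h \Phi_{\sigma_h}(s)\zeta(\kappa s+\sigma_h,U)$, and the Laurent analysis at $s=0$ that generates the residue products and the Euler constant $\gamma$ from $1/\Gamma(s)$ --- are exactly the machinery of that reference. Your identification of the main technical point (the double-asymptotic matching ensuring that $A_{0,0}$ and $A_{0,1}$ are holomorphic at $s=0$) is also correct; in \cite{Spr9} this is handled by Lemma~3.5, which is precisely what makes the subtractions in \eqref{fi2} meaningful. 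One small point: the polynomial terms $P_{\rho_l}(\lambda)u_n^{-\rho_l}\log u_n$ in \eqref{exp} need separate treatment, but the hypothesis $P_{\rho_l}(0)=0$ ensures they contribute nothing to the value and derivative at $s=0$, which you implicitly use but do not mention.
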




\begin{rem}\label{last} Observe that  in the expansion at point (2) of Definition \ref{spdec}, the terms where the functions $\phi_{\sigma_h}(\la)$ and the polynomial $P_{\rho_l}(\la)$ are constant (in $\la$) do not enter in the subsequent  results. For this reason it is sufficient in the enumeration of this terms (namely in the indices $\sigma_h$ and $\rho_l$) to consider only the other terms.
\end{rem}

\begin{corol} \label{c} Let $S_{(j)}=\{\lambda_{(j),n,k}\}_{n,k=1}^\infty$, $j=1,...,J$, be a finite set of  double sequences that satisfy all the requirements of Definition \ref{spdec} of spectral decomposability over a common sequence $U$, with the same parameters $\kappa$, $\ell$, etc., except that the polynomials $P_{(j),\rho}(\lambda)$ appearing in condition (2) do not vanish for $\lambda=0$. Assume that some linear combination $\sum_{j=1}^J c_j P_{(j),\rho}(\lambda)$, with complex coefficients, of such polynomials does satisfy this condition, namely that $\sum_{j=1}^J c_j P_{(j),\rho}(\lambda)=0$. Then, the linear combination of the zeta function $\sum_{j=1}^J c_j \zeta(s,S_{(j)})$ is regular at $s=0$ and satisfies the linear combination of the formulas given in Theorem \ref{sdl}.
\end{corol}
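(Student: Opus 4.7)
The plan is to reduce the corollary to Theorem \ref{sdl} by linearity, keeping careful track of the contributions of the $\lambda$-independent parts of the polynomials $P_{(j),\rho_l}(\lambda)$.

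First, I would write the asymptotic expansion from condition (2) of Definition \ref{spdec} for each $S_{(j)}$ separately:
\[
\log\Gamma(-\lambda,u_n^{-\kappa} S_{(j),n})=\sum_{h=0}^{\ell}
\phi_{(j),\sigma_h}(\lambda) u_n^{-\sigma_h}+\sum_{l=0}^{L}
P_{(j),\rho_l}(\lambda) u_n^{-\rho_l}\log u_n+o(u_n^{-r_0}).
\]
Decompose each polynomial as $P_{(j),\rho_l}(\lambda)=P_{(j),\rho_l}(0)+\widetilde P_{(j),\rho_l}(\lambda)$, where $\widetilde P_{(j),\rho_l}(0)=0$. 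The obstruction to applying Theorem \ref{sdl} to $\zeta(s,S_{(j)})$ individually lies exactly in the $\lambda$-independent constants $P_{(j),\rho_l}(0)$: when these are non-zero, they force a pole at $s=0$ coming from the term $P_{(j),\rho_l}(0)\sum_n u_n^{-\rho_l}\log u_n$ evaluated through the Mellin transform whenever $\zeta(s,U)$ has a pole at $s=\rho_l$.

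Next, I would form the linear combination $\sum_{j=1}^J c_j$ of the expansions. The resulting combined expansion has coefficients $\sum_j c_j \phi_{(j),\sigma_h}(\lambda)$ in the $u_n^{-\sigma_h}$ series, and $\sum_j c_j P_{(j),\rho_l}(\lambda)$ in the $u_n^{-\rho_l}\log u_n$ series. By hypothesis, the latter vanishes at $\lambda=0$, so the troublesome $\lambda$-independent constants cancel identically. This places the combined asymptotics precisely under the framework of Definition \ref{spdec}.

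The final step is to retrace the proof of Theorem \ref{sdl} observing its complete linearity in the input data. The functions $\Phi_{\sigma_h}(s)$ and $A_{0,0}(s)$, $A_{0,1}(s)$ are defined by linear integral transforms of the asymptotic data (see equations \eqref{fi1} and \eqref{fi2}), the passage from $\log\Gamma(-\lambda,S)$ to the heat trace via the Hankel contour $\Lambda_{\theta,c}$ is linear, and the Mellin transform yielding $\zeta(s,S)$ is likewise linear. Consequently, for the combination
\[
\sum_{j=1}^J c_j \zeta(s,S_{(j)})
\]
the formulas of Theorem \ref{sdl} hold with $\phi_{\sigma_h}$ replaced by $\sum_j c_j \phi_{(j),\sigma_h}$ and $P_{\rho_l}$ replaced by $\sum_j c_j P_{(j),\rho_l}$; regularity at $s=0$ follows because the modified polynomials satisfy the original vanishing hypothesis.

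The only subtle point — and the main obstacle — is to verify that when the expansions are combined the error terms are still $o(u_n^{-r_0})$ uniformly in the relevant sector, and that the Hankel contour rotation used to derive $\Phi_{\sigma_h}$ commutes with the finite sum (which it does trivially). Everything else is bookkeeping, and the final explicit formulas are just the corresponding linear combinations of those in Theorem \ref{sdl}.
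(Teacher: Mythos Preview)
Your argument is correct and is precisely the intended one: the paper states this result as a corollary without proof, the point being that the proof of Theorem \ref{sdl} is linear in the asymptotic data, so the obstruction coming from $P_{(j),\rho_l}(0)\neq 0$ cancels in the combination and the theorem applies verbatim to $\sum_j c_j \zeta(s,S_{(j)})$. There is nothing to add.
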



\vspace{10pt}

{\bf Declaration}

Data availability statement: data sharing not applicable to this article as no datasets were generated or analysed during the current study.

There is not competing interest and funding.

\bibliographystyle{plain}
\bibliography{HarSprBibliography.bib}

\end{document}